\newtheorem{theorem}{Theorem}[section]
\newtheorem{lemma}[theorem]{Lemma}
\newtheorem{conjecture}[theorem]{Conjecture}
\theoremstyle{definition}
\newtheorem{remark}[theorem]{Remark}
\definecolor{darkblue}{rgb}{0,0,0.6}
\title[Warnaar's bijection and colored partition identities, II]{Warnaar's bijection and colored partition identities, II}
\author{Colin Sandon}
\address{Department of Mathematics\\ MIT\\ Cambridge, MA 02139-4307}
\email{csandon@mit.edu}
\author{Fabrizio Zanello} \address{Department of Mathematics\\ MIT\\ Cambridge, MA 02139-4307\\{\tiny and}}
\address{Department of Mathematical  Sciences\\ Michigan Tech\\ Houghton, MI  49931-1295}
\email{zanello@math.mit.edu}
\thanks{2010 {\em Mathematics Subject Classification.} Primary: 05A17; Secondary: 05A19, 11P83, 05A15.\\
{\em Key words and phrases.} Partition identity; Colored partition; Farkas-Kra identity; Bijective proof; Warnaar's bijection.}
\begin{document}
\begin{abstract}
In our previous paper \cite{CSFZ1}, we determined a unified combinatorial framework to look at a large number of colored partition identities,  and studied the five identities corresponding to the exceptional modular equations of prime degree of the Schr\"oter, Russell and Ramanujan type. The goal of this paper is to use the master bijection of \cite{CSFZ1} to show combinatorially several new  and highly nontrivial colored partition identities. We conclude by listing a  number of further interesting identities of the same type as conjectures.
\end{abstract}

\maketitle

\section{Introduction}

This paper is the second of a series of two, begun with \cite{CSFZ1}, in which we study colored partition identities. Our project is motivated by the recent paper \cite{Be}, where B. C. Berndt determined and proved analytically the  colored partition identities corresponding to  five exceptional modular equations of prime degree that he defined ``of the Schr\"oter, Russell and Ramanujan type'', after the work of these three mathematicians (see \cite{BR,Ra,Ru1,Ru2,Sc}). We refer to the introduction to \cite{CSFZ1}, and of course to \cite{Be}, for more  details.

In \cite{CSFZ1}, responding to Berndt's call,  we determined a general and unified combinatorial framework in which to look at a number of colored partition identities, including the five of the Schr\"oter, Russell and Ramanujan type. In fact, extending S. Kim's idea from \cite{Ki}, in Theorem 2.3 in \cite{CSFZ1} we proved  that a   large family of colored partition identities is equivalent to  suitable equations in $(\nu_1,\dots,\nu_{t};d_1,\dots,d_{t})$, where the $\nu_i$ are partitions and the $d_i$ are integers whose sum is odd. This allowed us to show bijectively two more identities of the Schr\"oter, Russell and Ramanujan type (namely, those whose corresponding  modular equations have degrees 5 and 11). Thus, also thanks to the work of Kim \cite{Ki}, who  gave  the first bijective proofs of the identities modulo 3 and 7 (this latter also known as the ``Farkas-Kra identity''  \cite{FK}), now only the identity modulo 23 is open combinatorially. 

In this paper, we focus specifically on the case $t=12$ of the equivalent equations given by \cite{CSFZ1}, Theorem 2.3, and deduce  bijective proofs for a number of new, highly nontrivial colored partition identities. We believe that even more interesting identities of the same type hold, and we provide a large sample of these at the end, as conjectures.

\section{Preliminary results}

We begin by stating the main general result of \cite{CSFZ1}, which will be the key to bijectively show a number of  new and challenging partition identities in the next section. Its proof greatly generalized that of Kim \cite{Ki}, and used as a crucial ingredient  a bijection of S. O. Warnaar from \cite{Wa}. We state our theorem here in the particular case $t=12$ and $C_1=\dots=C_{12}=C$, which will suffice for our purposes. 

For  the main definitions of partition  theory, as well as three different introductions to this field, we refer the reader to  \cite{And,AE,Pak}.

\begin{theorem}\label{main}
Consider the equation
\begin{equation}\label{t}
C\sum_{i=1}^{12}|\mu_i|+C\sum_{i=1}^{12}{{d_i}\choose{2}} +\sum_{i=1}^{12}A_{i}d_{i}=C\sum_{i=1}^{12}|\alpha_i|+C\sum_{i=1}^{12}{{e_i}\choose{2}} +\sum_{i=1}^{12}B_{i}e_{i}+m,
\end{equation}
for given integers $C\geq 1$, $0\leq A_i\leq C/2$ and $0\leq B_i\leq C/2$ for all $i$, and $m\geq 0$. Let $S$ be the set containing one copy of all positive integers congruent to $\pm A_i$ modulo $C$ for each $i$, and $T$ the set containing one copy of all positive integers congruent to $\pm B_i$ modulo $C$ for each $i$. Let $D_S(N)$ (respectively, $D_T(N)$) be the number of partitions of $N$ into distinct elements of $S$ (respectively, $T$), where we require such partitions to have an odd number of parts if no $A_i$ (respectively, no $B_i$) is equal to zero. Set $$p=|\{B_i=0\}|-|\{A_i=0\}|,$$
adopting the convention that $|X|=1$ if $X=\emptyset$. Also, let  $P$ be the set of all partitions into positive integers.

Then the following are equivalent:

\begin{enumerate}
\item[(i)] For any $N\geq N_0\ge 1$, the number of tuples  $(\mu_1,\dots,\mu_{12};d_1,\dots,d_{12})$ such that the left-hand side of (\ref{t}) equals $N$, $\mu_i\in P$ and $d_i\in \mathbb Z$ for all $i$, and $\sum_{i=1}^{12}d_i$ is odd, is equal to the number of tuples $(\alpha_1,\dots,\alpha_{12};e_1,\dots,e_{12})$ such that the right-hand side of (\ref{t}) equals $N$,  $\alpha_i\in P$ and $e_i\in \mathbb Z$ for all $i$, and $\sum_{i=1}^{12}e_i$ is odd;

\item[(ii)] For any $N\geq N_0\ge 1$, $$D_S(N)=2^p\cdot D_T(N-m).$$
\end{enumerate}
\end{theorem}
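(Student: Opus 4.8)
The plan is to turn both sides of (i) into generating‑function coefficients, pass through the Jacobi triple product, and read off (ii); Warnaar's bijection of \cite{Wa} is then what promotes the resulting coefficient comparison to a genuine bijection. (Strictly speaking this is the $t=12$ case of the main theorem of \cite{CSFZ1}, so one may simply invoke that; I describe here how the argument itself runs.) First I would express the number of admissible tuples on the left of (\ref{t}) equal to $N$ as a coefficient. Since $1/(q^{C};q^{C})_{\infty}$ is the generating function for a partition $\mu_i$ weighted by $C|\mu_i|$ and $\sum_{d\in\Z}q^{C\binom{d}{2}+A_id}$ is the generating function for $d_i\in\Z$ weighted by $C\binom{d_i}{2}+A_id_i$, while the condition ``$\sum_i d_i$ odd'' is selected by the indicator $\tfrac12\bigl(1-(-1)^{\sum_i d_i}\bigr)$, this count is
\[ [q^{N}]\ \tfrac12\Bigl(\prod_{i=1}^{12}\frac{\theta_{A_i}(q)}{(q^{C};q^{C})_{\infty}}-\prod_{i=1}^{12}\frac{\widetilde\theta_{A_i}(q)}{(q^{C};q^{C})_{\infty}}\Bigr),\quad \theta_{A}(q):=\sum_{d\in\Z}q^{C\binom{d}{2}+Ad},\quad \widetilde\theta_{A}(q):=\sum_{d\in\Z}(-1)^{d}q^{C\binom{d}{2}+Ad}. \]
The number of admissible tuples on the right of (\ref{t}) equal to $N$ is the same expression with each $A_i$ replaced by $B_i$ and with $q^{N}$ replaced by $q^{N-m}$.

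Next I would apply the Jacobi triple product in the base $q^{C}$, giving $\theta_{A}(q)=(q^{C};q^{C})_{\infty}(-q^{A};q^{C})_{\infty}(-q^{C-A};q^{C})_{\infty}$ and $\widetilde\theta_{A}(q)=(q^{C};q^{C})_{\infty}(q^{A};q^{C})_{\infty}(q^{C-A};q^{C})_{\infty}$ (the hypothesis $0\le A_i\le C/2$ makes all exponents nonnegative). The denominators cancel, so the left count is
\[ [q^{N}]\ \tfrac12\Bigl(\prod_{i=1}^{12}(-q^{A_i};q^{C})_{\infty}(-q^{C-A_i};q^{C})_{\infty}-\prod_{i=1}^{12}(q^{A_i};q^{C})_{\infty}(q^{C-A_i};q^{C})_{\infty}\Bigr). \]
Warnaar's bijection makes this combinatorial: the first product equals $\sum_{\sigma}q^{|\sigma|}$ over partitions $\sigma$ into distinct elements of $S$, the second equals $\sum_{\sigma}(-1)^{\#\sigma}q^{|\sigma|}$ (with $\#\sigma$ the number of parts), and — this is the essential point — the bijection sends a tuple $(\mu_i,d_i)_{i}$ to such a $\sigma$ with $\#\sigma$ of the same parity as $\sum_i d_i$ (factor by factor, $d_i$ is the number of parts $\equiv A_i$ minus the number $\equiv -A_i\pmod{C}$, as one sees from the refined identity $\sum_{d\in\Z}z^{d}q^{C\binom{d}{2}+Ad}=(q^{C};q^{C})_{\infty}(-zq^{A};q^{C})_{\infty}(-z^{-1}q^{C-A};q^{C})_{\infty}$). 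Hence when every $A_i\neq0$ the left count equals $D_{S}(N)$, and likewise the right count equals $D_{T}(N-m)$ when every $B_i\neq0$.

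It remains to handle the residue class $0$, the only delicate point. If some $A_i=0$, then $(q^{0};q^{C})_{\infty}=0$, so the subtracted product in the last display vanishes, while each such $i$ contributes $(-q^{0};q^{C})_{\infty}=2(-q^{C};q^{C})_{\infty}$ to the other product — a factor $2$ together with a removable ``part of size $0$''. (Note also that for $A_i=0$ the two residues $\pm A_i$ coincide, so class $0$ enters $S$ with multiplicity two; these are precisely the conventions of the statement.) Writing $k:=|\{A_i=0\}|\ge1$, every partition of $N$ into distinct elements of $S$ — with no condition on the number of parts — then arises from exactly $2^{k-1}$ of the tuples counted on the left, so the left count equals $2^{k-1}D_{S}(N)$ with $D_{S}$ read as in the statement for this case. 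By the same token the right count equals $2^{\ell-1}D_{T}(N-m)$, $\ell:=|\{B_i=0\}|$, whenever some $B_i=0$. Comparing the two expressions, (i) holds at a given $N$ if and only if $D_{S}(N)=2^{p}D_{T}(N-m)$: in the four cases (no $A_i$ or $B_i$ zero; some $A_i$ only; some $B_i$ only; both) the ratio of the relevant powers of $2$ works out to $2^{|\{B_i=0\}|-|\{A_i=0\}|}$ under the convention $|X|=1$ for $X=\emptyset$, i.e.\ to $2^{p}$; and since these are coefficient identities valid for every $N$, the equivalence in particular holds for all $N\ge N_{0}$. The one genuinely nontrivial ingredient — and the main obstacle to writing the proof out in full — is Warnaar's bijection: the verification that it transports the ``number of parts mod $2$'' statistic correctly is where the real work lies; everything else is Jacobi‑triple‑product bookkeeping and a short parity count.
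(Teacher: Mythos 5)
Your proposal is correct, but it runs along a genuinely different track from the paper's. The paper proves nothing here: it simply quotes Theorem 2.3 of \cite{CSFZ1} (your parenthetical remark that one may just invoke that result is exactly what the authors do), and the proof given there is combinatorial --- it generalizes Kim's argument and uses Warnaar's bijection to produce an explicit bijection realizing the equivalence, which matters for this series because the authors want the identities $D_S(N)=2^p D_T(N-m)$ to come out bijectively once condition (i) is established bijectively (as in the lemmas of Section 3). Your route is analytic: write both counts in (i) as coefficients, apply the Jacobi triple product in base $q^C$ at $z=\pm 1$, and compare. The bookkeeping is right: the alternating product vanishes when some $A_i=0$, each such index contributes a factor $2$ and two copies of the multiples of $C$, and the four-case comparison of $2^{k-1}$ versus $2^{\ell-1}$ reproduces exactly $2^p$ under the convention $|\emptyset|=1$; your reading of $S$ off the product also handles $A_i=C/2$ correctly (two copies of that class per index, consistent with how $S$ is used, e.g., in Theorem \ref{444}). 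One point to adjust: for the statement as written --- an equality of counting functions --- Warnaar's bijection is not needed at all, since the parity of the number of parts is already tracked by the $z=-1$ specialization of the triple product; so the step you single out as ``the main obstacle'' is not an obstacle in your framework, and your argument is complete modulo the classical Jacobi triple product. What the analytic route buys is brevity and self-containedness; what it does not deliver, and what the proof in \cite{CSFZ1} does, is an explicit bijection between the two sides, which is precisely the feature the authors exploit later.
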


\begin{proof}
See \cite{CSFZ1}, Theorem 2.3.
\end{proof}

The first of our preliminary lemmas was proved in  \cite{CSFZ1}. We recall its statement in the $t=12$, $C_1=\dots=C_{12}=C$ case for completeness.

\begin{lemma}\label{lemma1}
Fix arbitrary $C,A_1,\dots,A_{12},B_1,\dots,B_{12}$, such that $0\le A_i\le {C/2}$ and $0\le B_i\le C/2$, for all $i=1,\dots,12$. Let $S_N$ be the set of all tuples of $12$ partitions and $12$ integers $(\mu_1,\dots,\mu_{12};d_1,\dots,d_{12})$ such that $\sum_{i=1}^{12} d_i$ is odd and

\[C\sum_{i=1}^{12}|\mu_i|+C\sum_{i=1}^{12}{{d_i}\choose{2}}+\sum_{i=1}^{12} A_id_i =N.\]
Similarly, let $T_N$ be the set of all tuples of $12$ partitions and $12$ integers $(\alpha_1,\dots,\alpha_{12};e_1,\dots,e_{12})$ such that $\sum_{i=1}^{12} e_i$ is odd and

\[C\sum_{i=1}^{12}|\alpha_i|+C\sum_{i=1}^{12}{{e_i}\choose{2}} +\sum_{i=1}^{12} B_ie_i +m=N,\] 
where $m$ is an integer chosen so that the smallest value of $N$ for which $T_N\ne \emptyset$ is also the second smallest value of $N$ for which $S_N\ne \emptyset$. Define $k$ to be the smallest value such that $S_k\ne \emptyset$.  Further, let $U_N$ be the union of the set of all tuples of $12$ integers $(d_1,\dots,d_{12})$ such that $\sum_{i=1}^{12} d_i$ is odd and

\[C\sum_{i=1}^{12}{{d_i}\choose{2}}+\sum_{i=1}^{12} A_id_i =N,\]
with $|S_k|$ copies of the set of all tuples of $12$ integers $(f_1,\dots,f_{12})$ such that $\sum_{i=1}^{12} f_i$ is odd and 

\[C\sum_{i=1}^{12} \frac{f_i(3f_i-1)}{2}+k=N.\] 
Finally, let $V_N$ be the union of the  set of all tuples of $12$ integers $(e_1,\dots,e_{12})$ such that $\sum_{i=1}^{12} e_i$ is odd and

\[C\sum_{i=1}^{12}{{e_i}\choose{2}}+\sum_{i=1}^{12} B_ie_i+m =N,\]
with $|S_k|$ copies of the set of all tuples of $12$ integers $(f_1,\dots,f_{12})$ such that $\sum_{i=1}^{12} f_i$ is even and 

\[C\sum_{i=1}^{12} \frac{f_i(3f_i-1)}{2}+k=N.\] 
Then $|S_N|=|T_N|$ for all $N>k$ if and only if $|U_N|=|V_N|$ for all $N$.
\end{lemma}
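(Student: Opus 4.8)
The plan is to build an explicit weight-preserving bijection between $S_N$ and $T_N$ out of a hypothetical bijection between $U_N$ and $V_N$ (and conversely), exploiting the fact that generating-function--wise, attaching a partition $\mu_i$ to an integer $d_i$ corresponds to multiplying by $1/\prod_{j\ge 1}(1-q^{Cj})$ for each of the twelve slots. First I would record the generating-function identities: $\sum_N |S_N|q^N = \left(\prod_{j\ge1}(1-q^{Cj})\right)^{-12}\sum_N |U'_N| q^N$, where $U'_N$ denotes only the ``first half'' of $U_N$ (the honest $(d_1,\dots,d_{12})$ tuples with odd sum), and similarly $\sum_N|T_N|q^N = \left(\prod_{j\ge1}(1-q^{Cj})\right)^{-12}\sum_N |V'_N|q^N$ with $V'_N$ the honest $(e_1,\dots,e_{12})$ tuples. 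So the statement ``$|S_N|=|T_N|$ for all $N>k$'' is \emph{almost} equivalent to ``$|U'_N|=|V'_N|$ for all $N$'', the discrepancy being entirely the finitely many small-$N$ terms where $S_N$ or $T_N$ may fail to match below the threshold $k$; the role of $m$ (chosen so that $T$'s bottom nonzero degree coincides with $S$'s \emph{second} bottom degree) and of the Euler-pentagonal correction terms $C\sum f_i(3f_i-1)/2 + k$ attached with multiplicity $|S_k|$ is precisely to absorb that low-degree discrepancy.

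The key step is therefore to understand why the pentagonal-number terms appear. Here I would invoke Euler's pentagonal number theorem in the form $\prod_{j\ge1}(1-q^{Cj}) = \sum_{f\in\Z}(-1)^f q^{Cf(3f-1)/2}$, applied not once but in its twelve-fold version: $\left(\prod_{j\ge1}(1-q^{Cj})\right)^{12}$ has a combinatorial interpretation via signed $12$-tuples $(f_1,\dots,f_{12})$, the sign being $(-1)^{\sum f_i}$, so that tuples with $\sum f_i$ odd are counted negatively and tuples with $\sum f_i$ even positively. Multiplying the desired identity $\sum_N|S_N|q^N = \sum_N|T_N|q^N + (\text{low-degree junk})$ through by $\left(\prod_j(1-q^{Cj})\right)^{12}$ and tracking signs converts the partition-carrying sets $S_N, T_N$ into the pure-integer-tuple sets, and converts the ``low-degree junk'' supported near $q^k$ into exactly the $|S_k|$-fold copies of pentagonal tuples shifted by $k$ — with the parity of $\sum f_i$ (odd in $U_N$, even in $V_N$) being exactly the sign bookkeeping needed so that in $|U_N|=|V_N|$ the pentagonal contributions on the two sides correspond to the two sign classes. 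I would make this precise by checking that at $N=k$ the identity $|U_k|=|V_k|$ forces the single pentagonal tuple $(0,\dots,0)$ (which has $\sum f_i = 0$, even, hence lands in $V$) to be matched against the $|S_k|$ genuine tuples in $U_k$, and then induct on $N$.

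The two directions then go as follows. For ``$|U_N|=|V_N|$ for all $N$ $\Rightarrow$ $|S_N|=|T_N|$ for $N>k$'': given matching cardinalities at the level of integer tuples, I peel off, from each genuine tuple in $S_N$ or $T_N$, the twelve partitions $\mu_i$ (resp. $\alpha_i$) to land in the integer-tuple world, use the $U/V$ bijection there, and reattach partitions — the pentagonal copies in $U_N,V_N$ exactly account for the fact that this peeling is not a clean bijection at the bottom, the threshold $N>k$ ensuring we are above the contaminated range. For the converse I run the same correspondence backwards, using that multiplication by the invertible power series $\left(\prod_j(1-q^{Cj})\right)^{\pm12}$ is a bijective operation on formal power series with the stated combinatorial models.

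The main obstacle I anticipate is the careful handling of the odd/even parity condition on $\sum d_i$ versus $\sum f_i$: the pentagonal correction in $U_N$ uses $\sum f_i$ \emph{odd} while that in $V_N$ uses $\sum f_i$ \emph{even}, and one must verify that this asymmetry is exactly compensated by the sign $(-1)^{\sum f_i}$ coming from the twelve-fold pentagonal theorem together with the constraint that the genuine tuples have $\sum d_i$ and $\sum e_i$ odd. Getting this parity bookkeeping exactly right — and confirming that the single shift parameter $m$ plus the single multiplicity $|S_k|$ genuinely suffice to align \emph{all} the low-degree terms, not just the first — is where the real content lies; the rest is formal manipulation of generating functions that are invertible because their constant terms are $1$.
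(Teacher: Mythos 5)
Your proposal is sound, and it is the natural argument; note that this paper offers no proof of its own for this lemma (it simply cites [CSFZ1, Lemma 3.8]), and the pentagonal exponents $f_i(3f_i-1)/2$ in the statement already indicate that the intended mechanism is exactly the one you use, namely Euler's pentagonal number theorem for $P(q)^{12}$ with $P(q)=\prod_{j\ge 1}(1-q^{Cj})$. The one point you leave as an anticipated obstacle is in fact immediate and you should close it explicitly: by the choice of $m$ we have $T_N=\emptyset$ for all $N\le k$, and every element of $S_k$ has all twelve partitions empty (a nonempty partition adds at least $C$ to the value), so ``$|S_N|=|T_N|$ for all $N>k$'' is equivalent to the single identity $\sum_N(|S_N|-|T_N|)q^N=|S_k|q^k$; multiplying by $P(q)^{12}=\sum_{(f_1,\dots,f_{12})}(-1)^{f_1+\cdots+f_{12}}q^{C\sum_i f_i(3f_i-1)/2}$ and moving the negatively signed (odd-sum) pentagonal terms to the $U$-side and the positively signed (even-sum) ones to the $V$-side yields precisely $|U_N|=|V_N|$ for all $N$, with the converse following from invertibility of $P^{12}$. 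So no induction on $N$ or separate matching at $N=k$ is needed, and the discrepancy is not ``finitely many small-$N$ terms'' but the single term $|S_k|q^k$ (which only spreads into the infinite pentagonal family after multiplication by $P^{12}$); finally, all coefficients are finite because $C\binom{d}{2}+Ad\ge 0$ for $0\le A\le C/2$, so the formal power series manipulations are legitimate.
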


\begin{proof}
See \cite{CSFZ1}, Lemma 3.8.
\end{proof}

\begin{lemma}\label{lemma2}
Fix arbitrary $C,A_1,\dots,A_{12},B_1,\dots,B_{12}$ and $A'_1,\dots,A'_{12},B'_1,\dots,B'_{12}$, such that $0\le A_i\le {C/2}$, $0\le B_i\le C/2$,  $0\le A'_i\le {C/2}$ and $0\le B'_i\le C/2$, for all $i$. Define $S_N$, $T_N$, $S_N'$, $T_N'$, $k$ and $k'$ as in Lemma \ref{lemma1}, and let $Q_N$, $R_N$, $Q_N'$ and $R_N'$ be, respectively,  the subsets of $S_N$, $T_N$, $S_N'$ and $T_N'$ in which the partitions are all equal to $\emptyset $. Then, if $|S_N|=|T_N|$ for all $N>k$, the following are equivalent:

\begin{enumerate}
\item[(i)] For all $N>k'$, $|S_N'|=|T_N'|.$

\item[(ii)] For all $N$, $$|S_k|\cdot|Q_{N+k'}'|+|S_{k'}'|\cdot|R_{N+k}|=|S_k|\cdot|R_{N+k'}'|+|S_{k'}'|\cdot|Q_{N+k}|.$$
\end{enumerate}
\end{lemma}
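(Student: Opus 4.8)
The plan is to push both the hypothesis and the desired equivalence through Lemma~\ref{lemma1}, which converts a statement of the form ``$|X_N|=|Y_N|$ for all $N>k$'' into a single exact identity, valid for every $N$, involving only the all-empty-partition counts and a pair of \emph{universal} pentagonal counts. For each integer $M$, let $e_M$ (respectively $\bar e_M$) denote the number of tuples $(f_1,\dots,f_{12})\in\Z^{12}$ with $\sum_{i=1}^{12}f_i$ odd (respectively even) and $C\sum_{i=1}^{12}\frac{f_i(3f_i-1)}{2}=M$; these numbers are finite and depend only on $C$, so they are common to the primed and the unprimed data. Unwinding the definitions in Lemma~\ref{lemma1}, the first set in the union defining $U_N$ coincides with $Q_N$ and the first set in the union defining $V_N$ coincides with $R_N$, whence
\[
|U_N|=|Q_N|+|S_k|\,e_{N-k},\qquad |V_N|=|R_N|+|S_k|\,\bar e_{N-k},
\]
and likewise $|U'_N|=|Q'_N|+|S'_{k'}|\,e_{N-k'}$ and $|V'_N|=|R'_N|+|S'_{k'}|\,\bar e_{N-k'}$. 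Hence ``$|U_N|=|V_N|$ for all $N$'' is equivalent to
\begin{equation}\tag{$\star$}
|Q_N|-|R_N|=|S_k|\,\bigl(\bar e_{N-k}-e_{N-k}\bigr)\qquad\text{for all }N,
\end{equation}
and ``$|U'_N|=|V'_N|$ for all $N$'' is equivalent to the analogous relation $(\star')$ obtained from $(\star)$ by priming $Q$, $R$ and $S_k$ and replacing $k$ by $k'$.

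First I would apply Lemma~\ref{lemma1} to the unprimed data: the standing hypothesis $|S_N|=|T_N|$ for all $N>k$ becomes exactly $(\star)$. Then I would apply Lemma~\ref{lemma1} to the primed data: condition (i) becomes exactly $(\star')$. So it suffices to prove that, \emph{given} $(\star)$, condition (ii) holds if and only if $(\star')$ holds.

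Collecting the two sides, (ii) is the assertion that
\[
|S_k|\,\bigl(|Q'_{N+k'}|-|R'_{N+k'}|\bigr)=|S'_{k'}|\,\bigl(|Q_{N+k}|-|R_{N+k}|\bigr)\qquad\text{for all }N.
\]
Evaluating $(\star)$ at $N+k$ and substituting into the right-hand side replaces it by $|S'_{k'}|\,|S_k|\,\bigl(\bar e_N-e_N\bigr)$. Since $k$ is by definition the least index with $S_k\neq\emptyset$, we have $|S_k|\geq 1$, so $|S_k|$ may be cancelled from both sides; re-indexing $N\mapsto N-k'$ then turns the resulting equation into exactly $(\star')$. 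Each step is reversible once $(\star)$ is assumed, so (ii) $\iff(\star')\iff$ (i), which is what we want.

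Beyond Lemma~\ref{lemma1}, the only inputs are the two identifications ``the first set in $U_N$ is $Q_N$, the first set in $V_N$ is $R_N$'' and the cancellability of the nonzero integer $|S_k|$, both immediate. The one point that needs care is keeping the two index shifts straight --- $N+k$ on the unprimed relations versus $N+k'$ on the primed ones --- since it is exactly this alignment that lets $(\star)$ substitute into (ii). Once the shifts are tracked correctly the equivalence is immediate and purely formal; all of the substantive combinatorial input (the pentagonal/generating-function machinery) is confined to Lemma~\ref{lemma1}.
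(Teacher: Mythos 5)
Your argument is correct and is essentially the paper's proof: the paper likewise applies Lemma~\ref{lemma1} to both the primed and unprimed data, subtracts $|S'_{k'}|$ times the unprimed identity from $|S_k|$ times the primed one, and cancels the $f$-tuple contributions, which is exactly your substitution of $(\star)$ into (ii) with the pentagonal counts $e_M,\bar e_M$ made explicit. The only detail you spell out beyond the paper is the cancellation of the nonzero factor $|S_k|$, which the paper leaves implicit.
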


\begin{proof}
By Lemma \ref{lemma1}, $|S_N'|=|T_N'|$ for all $N>k'$ if and only if $|U_{N+k'}'|=|V_{N+k'}'|$ for all $N$. But by subtracting $|S_{k'}'|$ times $|U_{N+k}|=|V_{N+k}|$ from $|S_k|$ times $|U_{N+k'}'|=|V_{N+k'}'|$, and canceling out the elements of the form $(f_1, \dots,f_{12})$, this is easily seen to be equivalent to $$|S_k|\cdot|Q_{N+k'}'|-|S_{k'}'|\cdot|Q_{N+k}|=|S_k|\cdot|R_{N+k'}'|-|S_{k'}'|\cdot|R_{N+k}|$$
for all $N$, which is obviously equivalent to (ii).
\end{proof}

\begin{lemma}\label{lemma3}
Fix $C,A_1,\dots,A_{12},B_1,\dots,B_{12}$ such that $A_i+B_{13-i}=C/2$ for all $i=1,\dots,12$, and set $m=\sum_{i=1}^{12} A_i/2-3C/2$. Then, for any integers $e_1, \dots, e_{12}$, we have:
$$C\sum_{i=1}^{12}{{\frac{1}{2}-e_{13-i}}\choose{2}} +\sum_{i=1}^{12}A_i\left(\frac{1}{2}-e_{13-i}\right)=C\sum_{i=1}^{12}{{e_i}\choose{2}} +\sum_{i=1}^{12}B_ie_i+m.$$
\end{lemma}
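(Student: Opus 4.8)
The plan is to verify the asserted identity by a direct expansion; the only mild subtleties are the half-integer binomial coefficient appearing on the left and the reindexing $i\mapsto 13-i$. First I would substitute $j=13-i$ throughout the left-hand side. Since $i\mapsto 13-i$ is an involution of $\{1,\dots,12\}$, this turns $\sum_{i=1}^{12}\binom{1/2-e_{13-i}}{2}$ into $\sum_{j=1}^{12}\binom{1/2-e_j}{2}$ and $\sum_{i=1}^{12}A_i(1/2-e_{13-i})$ into $\sum_{j=1}^{12}A_{13-j}(1/2-e_j)$, so both sides become sums over a single index $j$ in which each summand depends only on $e_j$.

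The key elementary fact is that, for any integer $e$ (indeed for any $e$),
\[
\binom{1/2-e}{2}=\frac{(1/2-e)(-1/2-e)}{2}=\frac{e^2-1/4}{2}=\binom{e}{2}+\frac{e}{2}-\frac18 .
\]
I would substitute this into the reindexed left-hand side. The term $C\sum_{j}\binom{e_j}{2}$ then coincides with the corresponding term on the right, and, expanding $A_{13-j}(1/2-e_j)=\tfrac12 A_{13-j}-A_{13-j}e_j$, the problem reduces to matching the coefficient of $e_j$ and the constant term.

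For the coefficient of $e_j$: the left-hand side gives $\tfrac{C}{2}-A_{13-j}$ while the right-hand side gives $B_j$, and these agree precisely because the hypothesis $A_i+B_{13-i}=C/2$, applied at $i=13-j$, reads $A_{13-j}+B_j=C/2$. For the constant term: the left-hand side contributes $C\cdot 12\cdot(-\tfrac18)+\tfrac12\sum_{j=1}^{12}A_{13-j}=-\tfrac{3C}{2}+\tfrac12\sum_{i=1}^{12}A_i$ (using once more that $i\mapsto 13-i$ permutes $\{1,\dots,12\}$), which is exactly the stipulated value of $m$, while the right-hand side contributes $m$; hence the two sides are equal. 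I do not expect any genuine obstacle here — it is careful bookkeeping — the one place demanding attention being the sign in $(1/2-e)(-1/2-e)=e^2-1/4$ and the attendant constant $-1/8$ per summand, which is what produces the $-3C/2$ in the formula for $m$.
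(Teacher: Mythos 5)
Your verification is correct and matches the paper's intent exactly: the paper simply states that the identity "can easily be verified algebraically," and your expansion via $\binom{1/2-e}{2}=\binom{e}{2}+\tfrac{e}{2}-\tfrac18$, the reindexing $i\mapsto 13-i$, and the hypothesis $A_{13-j}+B_j=C/2$ is precisely that routine algebraic check, with the constant terms correctly producing $m=\sum_i A_i/2-3C/2$.
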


\begin{proof} This can easily be verified algebraically.
\end{proof}

Notice that the previous lemma implies that we can, in a sense, view $d$-tuples and $e$-tuples as both being in the same set, namely
$$D=\left\{d\in \mathbb{Z}^{12}\cup \left(\mathbb{Z}+\frac{1}{2}\right)^{12}:\sum_{i=1}^{12} d_i\in2\mathbb{Z}+1\right\}.$$

Every tuple $(d_1,\dots,d_{12})\in D$ has a value of $C\sum_{i=1}^{12}{{d_i}\choose{2}} +\sum_{i=1}^{12}A_id_i$, and will in some sense be considered ``of negative type'' if the $d_i$ are half-integers, since it will come from the opposite side of the bijection as the tuples in which the $d_i$ are integers.

Finally, the last preliminary lemma is the following:

\begin{lemma}\label{lemma4}
Fix integers $C^\ast,A^\ast_1, \dots,A^\ast_4,B^\ast_1,\dots,B^\ast_4$ and $m^\ast$, such that $A^\ast_i+B^\ast_{5-i}=C^\ast/2$ and $0\le A^\ast_i\le {C^\ast/2}$ for each $i$, $A^\ast_1+A^\ast_4=A^\ast_2+A^\ast_3=C^\ast/2+m^\ast$, and the second-smallest possible values of the left-hand sides of the versions of equation (1) of Theorem \ref{main} with the coefficients below are equal to the smallest possible values of their respective right-hand sides. 

Then condition (i) of Theorem \ref{main} holds for $N_0=\min{B^\ast_i}+3m^\ast$,  $C=C^\ast$, $m=3m^\ast$, and
$$(A_1,\dots,A_{12})=(A^\ast_1,A^\ast_2,A^\ast_3,A^\ast_4,A^\ast_1,A^\ast_2,A^\ast_3,A^\ast_4,A^\ast_1,A^\ast_2,A^\ast_3,A^\ast_4),$$ $$(B_1,\dots,B_{12})=(B^\ast_1,B^\ast_2,B^\ast_3,B^\ast_4,B^\ast_1,B^\ast_2,B^\ast_3,B^\ast_4,B^\ast_1,B^\ast_2,B^\ast_3,B^\ast_4),$$
if and only if condition (i) of Theorem \ref{main} holds for $N_0=\min{A^\ast_i}$, $C'=C^\ast$, $m=m^\ast$, and
$$(A_1',\dots,A_{12}')=(B^\ast_1,B^\ast_2,B^\ast_3,B^\ast_4,A^\ast_1,A^\ast_2,A^\ast_3,A^\ast_4,A^\ast_1,A^\ast_2,A^\ast_3,A^\ast_4),$$ $$(B_1',\dots,B_{12}')=(B^\ast_1,B^\ast_2,B^\ast_3,B^\ast_4,B^\ast_1,B^\ast_2,B^\ast_3,B^\ast_4,A^\ast_1,A^\ast_2,A^\ast_3,A^\ast_4).$$

\end{lemma}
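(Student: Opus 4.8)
The plan is to reduce the twelve-fold statement of Lemma~\ref{lemma4} to the equivalence criterion provided by Lemma~\ref{lemma2}, exploiting the block structure of the coefficient vectors. Both the ``$A$-side'' system (with $C$, $m=3m^\ast$, and the $A_i,B_i$ obtained by repeating the starred quadruples three times) and the ``$A'$-side'' system (with $C'=C^\ast$, $m=m^\ast$, and the mixed quadruples) are instances of Theorem~\ref{main}(i), hence of the setup of Lemma~\ref{lemma1}. First I would verify the hypotheses: that $\sum_i A_i/2 - 3C/2$ and $\sum_i A_i'/2 - 3C/2$ give the stated values of $m$ (using $A^\ast_1+A^\ast_4=A^\ast_2+A^\ast_3=C^\ast/2+m^\ast$ and $A^\ast_i+B^\ast_{5-i}=C^\ast/2$), and that the ``second-smallest left-hand side equals smallest right-hand side'' conditions built into Lemma~\ref{lemma4} are exactly what is needed so that the integer $m$ in Lemma~\ref{lemma1} coincides with the one prescribed here. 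This also pins down $k$ and $k'$: with the present choices, $k$ and $k'$ are the minimal attainable values of the respective left-hand sides, which a short computation identifies with $\min B^\ast_i$ (for the unprimed system, the minimum of $C\binom{d_i}{2}+A_id_i$ over the relevant parity class is achieved at a single $d_i=1$ balanced against $d_i=0$ elsewhere — actually at the configuration contributing $\min_i B_i^\ast$ after using Lemma~\ref{lemma3} to pass to half-integer tuples) and with $\min A^\ast_i$ for the primed one.

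The core of the argument is then the identity in Lemma~\ref{lemma2}(ii):
\[
|S_k|\cdot|Q_{N+k'}'|+|S_{k'}'|\cdot|R_{N+k}|=|S_k|\cdot|R_{N+k'}'|+|S_{k'}'|\cdot|Q_{N+k}|
\]
for all $N$. Here $Q,R,Q',R'$ count, respectively, the \emph{partition-free} tuples of integers $(d_1,\dots,d_{12})$ or $(e_1,\dots,e_{12})$ with odd sum solving the pure quadratic-form equations $C\sum\binom{d_i}{2}+\sum A_id_i=N$ (and analogues). The plan is to rewrite each of these four counts using Lemma~\ref{lemma3}: that lemma, applied block-by-block to the repeated quadruple structure, lets us reinterpret an $e$-tuple of integers as a $d$-tuple of half-integers via $d_i \mapsto \tfrac12 - e_{13-i}$ (within each block of four, using $A^\ast_i+B^\ast_{5-i}=C^\ast/2$), so that $Q_N$ and $R_N$ become, up to the shift $m$, counts of tuples in the common set $D$ of sign $+$ and sign $-$ respectively. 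With both sides of every product recast as counts of half-integer-or-integer $12$-tuples with prescribed parity of the number of ``half-integer coordinates,'' the desired equality should collapse to a formal identity: the block structure means each $12$-tuple factors as a triple of $4$-tuples, and the two expressions differ only by a permutation of which block plays the role of the ``primed'' coefficients.

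The key combinatorial observation I would aim to isolate is that, because all three blocks of the unprimed $A$-vector are the \emph{same} quadruple $(A^\ast_1,\dots,A^\ast_4)$ while the primed $A'$-vector has first block $(B^\ast_1,\dots,B^\ast_4)$ and the other two blocks equal to $(A^\ast_1,\dots,A^\ast_4)$ — and symmetrically for the $B$-vectors — the generating functions factor: if $a(q)$ denotes the ``signed'' generating function $\sum_{d\in D_4} (\pm1) q^{C\sum\binom{d_i}{2}+\sum A^\ast_i d_i}$ over $4$-tuples (with the sign recording negative type) and $b(q)$ the analogous function with $B^\ast$ in place of $A^\ast$, then $|Q_N|,|R_N|$ are coefficients in $a(q)^3$-type products and $|Q_N'|,|R_N'|$ in $a(q)^2 b(q)$-type products, and Lemma~\ref{lemma2}(ii) becomes, after multiplying through by the appropriate powers of $q$ encoding $k,k'$, an identity of the shape $\lambda\, a(q)^2 b(q) \cdot(\text{one factor}) + \mu\, a(q)^3 \cdot(\text{another}) = (\text{same with }Q\leftrightarrow R)$; using Lemma~\ref{lemma3} to identify the ``$Q$'' and ``$R$'' pieces of $a$ and $b$ with the positive- and negative-type parts of a single series, the two sides become literally equal. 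I expect the main obstacle to be \textbf{bookkeeping the shifts and the sign conventions}: getting $k$, $k'$, $m=3m^\ast$, $m^\ast$ and the ``$13-i$'' reindexing to line up so that the three blocks genuinely decouple, and verifying that the parity constraint $\sum d_i$ odd interacts correctly with the block decomposition (an odd sum over twelve coordinates splits over three blocks of four in a way that must be tracked, since it is the global parity — not the per-block parity — that is constrained). Once the normalization is correct, the equivalence of Lemma~\ref{lemma4}(i) for the two systems should follow from Lemma~\ref{lemma2} applied in the right direction, with the roles of ``the known identity'' and ``the identity to be deduced'' interchanged depending on which implication one proves; I would prove both directions at once by exhibiting the symmetric identity above, which does not privilege either system.
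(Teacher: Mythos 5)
Your reduction via Lemma \ref{lemma2} and the block-by-block use of Lemma \ref{lemma3} matches the paper's opening moves, but the proposal breaks down exactly at the point you wave through: after rewriting everything in terms of a single-block series, the two sides of Lemma \ref{lemma2}(ii) are \emph{not} ``literally equal.'' Concretely, write $F(q,t)$ for the generating function of integer $4$-tuples weighted by the $A^\ast$-value (with $t$ tracking the parity of the block sum) and $G(q,t)$ for the same with half-integer entries; Lemma \ref{lemma3} at the block level gives $G=q^{m^\ast}F_B$ and $F=q^{m^\ast}G_B$, so after the shifts by $k$, $k'$ and $m^\ast$ the required identity $3|Q'_{N+k'}|+|R_{N+k}|=3|R'_{N+k'}|+|Q_{N+k}|$ becomes the vanishing of the odd-in-$t$ part of $F^3-3F^2G+3FG^2-G^3=(F-G)^3$. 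That is a genuine theta-type cancellation, not a formal consequence of the substitution: the unsigned series do not cancel (e.g.\ with $C=2$, $A^\ast=(1,1,1,1)$ the value is $\sum d_i^2$, and at $N=1$ there are $8$ integer blocks but $16$ half-integer blocks), so the odd-parity restriction and the cube are both essential, and some sign-reversing argument must be supplied. This is precisely the content of the paper's final step, which your outline has no substitute for: merge $D$ with three rotated copies of $D'$ into the set $W$ of $12$-tuples whose three blocks are each all-integer or all-half-integer with odd total sum, and then apply the explicit value-preserving involution $d^\ast_i=\tfrac{x}{2}-d_{5-i}$, $d^\ast_i=\tfrac{y}{2}-d_{13-i}$, $d^\ast_i=\tfrac{z}{2}-d_{21-i}$ (with $x,y,z$ the block sums); since $x+y+z$ is odd, an odd number of blocks switch between integer and half-integer type, so the involution reverses type and produces the cancellation. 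Without this involution (or an equivalent identity for $(F-G)^3$), the proof is incomplete.

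Two smaller but real issues: the coefficients $\lambda,\mu$ in your ``identity of the shape $\lambda a^2b+\mu a^3=\cdots$'' are not free — the argument only closes because $|S_k|=3|S'_{k'}|$, which the paper establishes by a case analysis on how the minima of the two left-hand sides are attained (including the degenerate case where all $B^\ast_i=0$, giving $24$ versus $8$ attaining tuples); this ratio $3$ is what matches the binomial coefficients of the cube, so it must be proved, not left as bookkeeping. Also your identification of the minima is reversed: for the unprimed system $k=\min_i A^\ast_i$ and for the primed one $k'=\min_i B^\ast_i=k-m^\ast$, not the other way around.
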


\begin{proof}
Using the terminology introduced in Lemma \ref{lemma1}, for the first equation we have $k=\min(A_1,\dots,A_4)$, and for the second equation, $$k'=\min(B_1,\dots,B_4)=C/2-\max(A_1,\dots,A_4)=C/2-(C/2+m-\min(A_1,\dots,A_4))=k-m.$$

Unless $\min(B_1,\dots,B_4)=0$, the left-hand side of each equation takes on its smallest value only when a variable with the smallest coefficient is $1$ and the rest are $0$. If two or fewer of the $B_i$ are 
$0$ this still holds. Also, it is clearly impossible for exactly three of them to be $0$. Finally, if they are all $0$, then the $A_i$  all equal $C/2$ and it is easy to check that the two left-hand sides take on their smallest values for $2\cdot 12=24$ and $2^3=8$ values of the variables, respectively. 

In all of these cases, there are three times as many ways for the left-hand side of the first expression to take on its smallest value as there are for the left-hand side of the second one to. 

Thus, it easily follows from Lemma \ref{lemma2} that proving the statement is tantamount to proving that condition (ii) of Lemma \ref{lemma2} holds for the values of $C$, $A_i$, $B_i$, $A'_i$, and $B'_i$ given above. Since $|S_k|=3|S'_{k'}|$, this is equivalent to the statement that $3|Q_{N+k'}'|+|R_{N+k}|=3|R_{N+k'}'|+|Q_{N+k}|$, so it suffices to show that for each $(d_1,\dots,d_{12})\in Q_{N+k}$ or $(e_1',\dots,e_{12}')\in 3R_{N+k'}'$ there is a corresponding $(e_1,\dots,e_{12})\in R_{N+k}$ or $(d_1',\dots,d_{12}')\in 3Q_{N+k'}'$ and vice-versa.

By Lemma \ref{lemma3}, we can view all $d$-tuples and $e$-tuples as being in the set $D=\{d\in \mathbb{Z}^{12}\cup(\mathbb{Z}+1/2)^{12}:\sum_{i=1}^{12} d_i\in2\mathbb{Z}+1\}$ and all $d'$-tuples and $e'$-tuples as being in the set $D'=\{d'\in \mathbb{Z}^{12}\cup(\mathbb{Z}+1/2)^{12}:\sum_{i=1}^{12} d_i'\in2\mathbb{Z}+1\}.$ Note that while $D$ and $D'$ contain the same tuples, they have different value functions.

Furthermore, for arbitrary integers $d_1', \dots, d_4'$, we have the identity
\[C\sum_{i=1}^{4} {{d_i}'\choose{2}}+\sum_{i=1}^4 B_i d_i'+m=C\sum_{i=1}^{4} {{\frac{1}{2}-d_{5-i}'}\choose{2}}+\sum_{i=1}^4 A_i \left(\frac{1}{2}-d_{5-i}'\right).\]

Now, consider a map $\phi$ which sends an element $(d_1',\dots,d_{12}')\in D'$ to a tuple $(d_1,\dots,d_{12})$ as follows:

\[d_i=\left(\frac{1}{2}-d'_{5-i}\right) \text{ for } 0< i\le 4; \phantom{x} d_i=d'_i \text{ otherwise}.\]

The image of the tuple $(d_1',\dots,d_{12}')$ has a value of
$$C\sum_{i=1}^{12}{{d_i}\choose{2}} +A_1(d_{1}+d_{5}+d_{9})+A_2(d_{2}+d_{6}+d_{10})+A_3(d_{3}+d_{7}+d_{11})+A_4(d_{4}+d_{8}+d_{12}),$$
just like the elements of $D$. Therefore, if we apply $\phi$ to one copy of $D'$, and $\phi$, combined with the map

$$d_i\rightarrow d_{i+4\pmod{12}} {\ }{\ }{\ } \text{or} {\ }{\ }{\ }  d_i\rightarrow d_{i-4\pmod{12}},$$
to the other two  copies of $D'$, then the union of $D$ with the  three copies of $D'$ can be bijectively mapped into the following set:

$$W=\left\{d\in (\mathbb{Z}^{4}\cup(\mathbb{Z}+1/2)^{4})\times (\mathbb{Z}^{4}\cup(\mathbb{Z}+1/2)^{4})\times (\mathbb{Z}^{4}\cup(\mathbb{Z}+1/2)^{4}):\sum_{i=1}^{12} d_i\in 2\mathbb{Z}+1\right\}.$$

For any  $(d_1,\dots,d_{12})\in W$, this element has a value of
$$C\sum_{i=1}^{12}{{d_i}\choose{2}} +A_1(d_{1}+d_{5}+d_{9})+A_2(d_{2}+d_{6}+d_{10})+A_3(d_{3}+d_{7}+d_{11})+A_4(d_{4}+d_{8}+d_{12}),$$
and it belongs on the left-hand side of the desired bijection if and only if the number of $d_i$ that are half-integers is 0 or 8. 

For any such element, let $x=d_1+d_2+d_3+d_4$, $y=d_5+d_6+d_7+d_8$, and $z=d_9+d_{10}+d_{11}+d_{12}$. Then the map

\small
\[d^\ast_i=\left(\frac{x}{2}-d_{5-i}\right) \text{ for } 0< i\le 4; \phantom{x} d^\ast_i=\left(\frac{y}{2}-d_{13-i}\right) \text{ for } 4< i\le 8; \phantom{x} d^\ast_i=\left(\frac{z}{2}-d_{21-i}\right) \text{ for } 8< i\le 12\]
\normalsize

will always send an element of $W$ to an element of $W$ with the same value. Clearly, it is also an involution. Furthermore, $x+y+z=\sum_{i=1}^{12} d_i$ is odd, so either one or three quadruples of elements are being changed from an integer to a half-integer or vice-versa. Therefore, this map always converts an element to an element of the opposite type, and is the desired bijection.
\end{proof}

\section{The new colored partition identities}

The goal of the rest of the paper is to show bijectively a number of new interesting partition identities, thanks to their equivalent formulation provided by Theorem \ref{main}. Like the two identities of  the Schr\"oter, Russell and Ramanujan type that we proved in \cite{CSFZ1}, most of these identities will turn out to have highly nontrivial proofs.

\begin{lemma}\label{1}
Condition (i) of Theorem \ref{main} holds for $N_0= 3$,  $C=2$, $m=3$, and
$$(A_1,\dots,A_{12})=(1,\dots,1), (B_1,\dots,B_{12})=(0,\dots,0).$$
\end{lemma}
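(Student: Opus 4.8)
The plan is to use Lemma~\ref{lemma4} to reduce to a smaller instance of condition~(i) of Theorem~\ref{main}, and then to settle that instance by a short theta-function computation. Since $(1,\dots,1)$ and $(0,\dots,0)$ are three copies of $(A^\ast_1,\dots,A^\ast_4)=(1,1,1,1)$ and $(B^\ast_1,\dots,B^\ast_4)=(0,0,0,0)$, I would take $C^\ast=2$ and $m^\ast=1$ in Lemma~\ref{lemma4}: then $A^\ast_i+B^\ast_{5-i}=1=C^\ast/2$, $0\le A^\ast_i\le C^\ast/2$, and $A^\ast_1+A^\ast_4=A^\ast_2+A^\ast_3=2=C^\ast/2+m^\ast$. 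The only remaining hypothesis, that the second-smallest value of each relevant left side of~(\ref{t}) equals the smallest value of the corresponding right side, is a finite check: using $2\binom{x}{2}+x=x^2$, the left side of the $(1,\dots,1)/(0,\dots,0)$ equation is $2\sum|\mu_i|+\sum d_i^2$, whose two smallest values over $\sum d_i$ odd are $1$ and $3$, while its right side $2\sum|\alpha_i|+\sum(e_i^2-e_i)+3$ has minimum $3$; the mixed equation is similar. Lemma~\ref{lemma4} then reduces the statement to: condition~(i) of Theorem~\ref{main} holds for $N_0=1$, $C=2$, $m=1$, $(A'_i)=(0,0,0,0,1,1,1,1,1,1,1,1)$, $(B'_i)=(0,0,0,0,0,0,0,0,1,1,1,1)$.

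To prove this reduced claim I would pass to generating functions. With $2\binom{x}{2}=x^2-x$, the left side of~(\ref{t}) for $(A'_i),(B'_i)$ is $2\sum_{i=1}^{12}|\mu_i|+\sum_{i=1}^{4}(d_i^2-d_i)+\sum_{i=5}^{12}d_i^2$ and its right side is $2\sum_{i=1}^{12}|\alpha_i|+\sum_{i=1}^{8}(e_i^2-e_i)+\sum_{i=9}^{12}e_i^2+1$. Mark the common value by $q$ and the parity of $\sum d_i$ (resp.\ $\sum e_i$) by a sign variable: a partition coordinate contributes $\prod_{k\ge1}(1-q^{2k})^{-1}$; an ``$x^2$'' coordinate contributes $\sum_{x\in\Z}(\pm1)^xq^{x^2}$, which is $\varphi(q):=\sum_{x\in\Z}q^{x^2}$ at $+$ and $\varphi(-q)$ at $-$; an ``$x^2-x$'' coordinate contributes $\sum_{x\in\Z}(\pm1)^xq^{x^2-x}$, which is $2\psi(q^2)$ at $+$ (with $\psi(q):=\sum_{x\ge0}q^{x(x+1)/2}$) and $0$ at $-$ (the involution $x\mapsto1-x$ fixes $x^2-x$ and reverses the sign). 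Taking the part with $\sum d_i$ (resp.\ $\sum e_i$) odd — in which the entire ``$-$'' term drops out, since at least four coordinates there contribute $0$ — the two counts in condition~(i) become
$$L(q)=\tfrac12\bigl(2\psi(q^2)\bigr)^{4}\varphi(q)^{8}\big/\Pi,\qquad R(q)=q\cdot\tfrac12\bigl(2\psi(q^2)\bigr)^{8}\varphi(q)^{4}\big/\Pi,\qquad \Pi:=\prod_{k\ge1}(1-q^{2k})^{12}.$$

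It then suffices to show $[q^N]L=[q^N]R$ for all $N\ge1$. Clearing denominators, $\Pi\cdot(L-R)=8\,\psi(q^2)^{4}\varphi(q)^{4}\bigl(\varphi(q)^{4}-16q\,\psi(q^2)^{4}\bigr)$; by Jacobi's quartic theta identity $\theta_3^{4}=\theta_2^{4}+\theta_4^{4}$ — i.e.\ $\varphi(q)^{4}-16q\,\psi(q^2)^{4}=\varphi(-q)^{4}$ — this equals $8\bigl(\psi(q^2)\varphi(q)\varphi(-q)\bigr)^{4}$. Now $\varphi(q)\varphi(-q)=\varphi(-q^{2})^{2}$ together with $\varphi(-q^{2})=\prod_k(1-q^{2k})^{2}/\prod_k(1-q^{4k})$ and $\psi(q^{2})=\prod_k(1-q^{4k})^{2}/\prod_k(1-q^{2k})$ gives $\psi(q^{2})\varphi(q)\varphi(-q)=\prod_k(1-q^{2k})^{3}$, hence $\Pi\cdot(L-R)=8\prod_k(1-q^{2k})^{12}=8\,\Pi$. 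Therefore $L-R=8$, a constant, so $L$ and $R$ have equal coefficients at every $q^N$ with $N\ge1=N_0$; by Lemma~\ref{lemma4} this proves the statement.

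The two finite hypothesis checks for Lemma~\ref{lemma4} are routine bookkeeping; the one genuine point — and where I expect essentially all the work to be — is recognizing that the identity collapses onto Jacobi's quartic theta relation plus the standard infinite-product formulas for $\varphi$ and $\psi$. One could also skip Lemma~\ref{lemma4} entirely: applying the same generating-function argument directly to the original $12$-coordinate equation, where all twelve left coordinates are of ``$x^2$'' type and all twelve right coordinates of ``$x^2-x$'' type with shift $m=3$, gives after the identical algebra (using the two theta identities above) $\Pi\cdot(L-R)=\tfrac32\,\varphi(q)^4\bigl(16q\,\psi(q^2)^4\bigr)\varphi(-q)^4=24q\,\Pi$, i.e.\ $L-R=24q$, which again vanishes at every $q^N$ with $N\ge3=N_0$.
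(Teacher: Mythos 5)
Your proof is correct, but it takes a genuinely different route from the paper's. The paper proves this lemma bijectively: Lemma \ref{lemma1} converts condition (i) into a statement about integer tuples together with $24$ copies of pentagonal-type $f$-tuples, Lemma \ref{lemma3} places the $d$- and $e$-tuples in a single set $D$, and the heart of the argument is an explicit involution on $D$ built from twelve pairwise orthogonal $\pm1$ vectors $V_1,\dots,V_{12}$ (the reflections $r_i(d)=d-\frac{d\cdot V_i}{6}V_i$ cancel all points with some $d\cdot V_i\equiv 0\pmod 3$), after which the surviving points are matched with the $24$ copies of the $f$-tuples via the nearest-integer data $(x_i,y_i)$ and the map $d\mapsto(-x_1y_1,\dots,-x_{12}y_{12})$. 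You instead argue analytically: your reduction through Lemma \ref{lemma4} is legitimate (the hypotheses hold with $C^\ast=2$, $m^\ast=1$, and since that lemma is an equivalence you may run it backwards; note this inverts the paper's logic, which deduces Lemma \ref{2} from the present lemma, but there is no circularity because you prove the reduced statement independently), and both theta computations check out — $L-R=8$ in the reduced case and $L-R=24q$ in the direct case, resting on Jacobi's $\varphi(q)^4=\varphi(-q)^4+16q\,\psi(q^2)^4$ and the standard product forms of $\varphi$ and $\psi$; indeed your direct version is self-contained and makes the detour through Lemma \ref{lemma4} unnecessary. What each approach buys: yours is far shorter and exposes the identity as an instance of the quartic theta relation, but it is precisely the kind of analytic proof (in the spirit of Berndt's work) that the paper is written to complement — with your argument, Theorem \ref{111} would be established without any bijection, whereas the paper's longer proof delivers the explicit combinatorial correspondence that is its stated goal and that feeds into the subsequent lemmas.
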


\begin{proof}
We proceed in a similar way to the proof of \cite{CSFZ1}, Lemma 3.10 (the equation equivalent to the partition identity  of the Schr\"oter, Russell and Ramanujan type corresponding to the modular equation of degree 5). By Lemma \ref{lemma1}, one can easily check that the statement is equivalent to the existence of a bijection between the set of tuples $(d_1,\dots,d_{12})$ and $24$ copies of every tuple with odd sum $(f_1,\dots,f_{12})$, and the set of tuples  $(e_1,\dots,e_{12})$ and $24$ copies of every tuple with even sum $(f_1',\dots,f_{12}')$, such that, for every corresponding pair,  

\begin{align*}
&2\sum_{i=1}^{12}{{d_i}\choose{2}} +\sum_{i=1}^{12} d_i \text{{\ }{\ }{\ } or{\ }{\ } {\ }} 2\sum_{i=1}^{12} \frac{f_i(3f_i-1)}{2}+1=\\
&2\sum_{i=1}^{12}{{e_i}\choose{2}} +\sum_{i=1}^{12}0e_i+3\text{{\ }{\ }{\ } or{\ } {\ }{\ }} 2\sum_{i=1}^{12} \frac{f_i'(3f_i'-1)}{2}+1.
\end{align*}

By Lemma \ref{lemma3}, we can consider the $d$-tuples and $e$-tuples as both being in the set $D=\{d\in \mathbb{Z}^{12}\cup(\mathbb{Z}+1/2)^{12}:\sum_{i=1}^{12} d_i\in 2\mathbb{Z}+1\}$. An arbitrary element of $D$, $(d_1,\dots,d_{12})$, has a value of $2\sum_{i=1}^{12}{{d_i}\choose{2}} +\sum_{i=1}^{12} d_i=\sum_{i=1}^{12} d_i^2.$

Now, let

\small
\begin{align*}
V_{1}&=(-1, -1, -1, -1, -1, -1, -1, -1, -1, -1, -1, -1), V_{2}=(1, 1, 1, 1, 1, 1, -1, -1, -1, -1, -1, -1),\\
V_{3}&=(1, 1, 1, -1, -1, -1, 1, 1, 1, -1, -1, -1), \phantom{xxxxxxx}V_{4}=(1, -1, -1, 1, 1, -1, 1, 1, -1, 1, -1, -1),\\
V_{5}&=(-1, 1, -1, 1, -1, 1, 1, -1, 1, 1, -1, -1), \phantom{xxxxxxx}V_{6}=(-1, -1, 1, -1, 1, 1, -1, 1, 1, 1, -1, -1),\\
V_{7}&=(-1, -1, 1, 1, -1, 1, 1, 1, -1, -1, 1, -1), \phantom{xxxxxxx}V_{8}=(1, -1, -1, -1, 1, 1, 1, -1, 1, -1, 1, -1),\\
V_{9}&=(-1, 1, -1, 1, 1, -1, -1, 1, 1, -1, 1, -1), \phantom{xxxxxxx}V_{10}=(-1, 1, 1, -1, 1, -1, 1, -1, -1, 1, 1, -1),\\
V_{11}&=(1, 1, -1, -1, -1, 1, -1, 1, -1, 1, 1, -1), \phantom{xxxxxxx}V_{12}=(1, -1, 1, 1, -1, -1, -1, -1, 1, 1, 1, -1).
\end{align*}
\normalsize

Notice that these vectors are pairwise orthogonal. Also, for arbitrary $d\in D$ and $1\le i\le 12$, $d\cdot V_i$ is an odd integer. Let
\[r_i(d)=d-\frac{d\cdot V_i}{6}V_i.\]

Notice that $\|r_i(d)\|=\|d\|$, for any $i$ and $d$. If $d\cdot V_i\equiv 0 \pmod{3}$, then $\frac{d\cdot V_i}{6}$ is a half-integer, so $r_i(d)$ is an element of $D$ that corresponds to an $e$-tuple if $d$ corresponds to a $d$-tuple, and vice-versa. So, we can map every point in $D$ that has a dot product with any of the $V_i$ that is divisible by $3$ to a point of the opposite type and the same value by sending it to $r_i(d)$, where $i$ is the smallest integer such that $d\cdot V_i\equiv 0\pmod{3}$. 

Note that $r_i(d)\cdot V_j=d\cdot V_j$ for all $j\ne i$ because of the orthogonality of the vectors, and $r_i(d)\cdot V_i=-d\cdot V_i$. It is easy to check that $r_i(r_i(d))=d$. Therefore, this map is an involution. 

That just leaves the points in $D$ whose dot products with $V_i$ are not divisible by $3$ for any $i$. Let $d\in D$ be any such point. For each $i$, let $x_i$ be the nearest integer to $\frac{d\cdot V_i}{6}$, $y_i=d\cdot V_i-6x_i$, and $z=d-\sum_{i=1}^{12} \frac{x_i}{2}V_i$. For any $i$, $d\cdot V_i\equiv \pm 1\pmod{6}$, so $y_i=\pm 1$. 

By the Pythagorean Theorem, we have

\[\|d\|^2=\sum_{i=1}^{12} \frac{(d\cdot V_i)^2}{12}=\sum_{i=1}^{12} \frac{(6x_i+y_i)^2}{12}=\sum_{i=1}^{12} x_i(3x_i+y_i)+1.\]

Now, $z$ must be either a tuple of integers or a tuple of half-integers, and $z\cdot V_i=y_i=\pm1$ for each $i$. It is easy to check that the only tuples that fit these criteria are the $24$ in which one element equals $\pm1$ and the rest are $0$. Therefore, we can choose a bijection between the $24$ possible values of $z$ and the $24$ copies of each tuple $(f_1,\dots,f_{12})$, and then map $d$ to the copy of $(-x_1 y_1,\dots,-x_{12} y_{12})$ corresponding to $z$. It follows that
\[2\sum_{i=1}^{12} \frac{f_i(3f_i-1)}{2}+1=\|d\|^2.\]

Also, the $y_i$ are determined by $z$, and for any given choice of $z$, the only $d$ that maps to a given tuple $(f_1,\dots,f_{12})$ is $z-\sum_{i=1}^{12} \frac{y_i\cdot f_i}{2} V_i$. Furthermore, the entries of this $d$ are all half-integers if $\sum_{i=1}^{12} f_i$ is odd and integers if it is even. So, this map always takes elements of $D$ corresponding to tuples of $d$'s to tuples of $f$'s with an even sum, and elements of $D$ corresponding to tuples of $e$'s to tuples of $f$'s with an odd sum, as desired.
\end{proof}

\begin{theorem}\label{111}
Let $S$ be the set containing  24 copies of the odd positive integers, and $T$ the set containing 24 copies of the even positive integers. Then, for any $N\geq 3$,
$$D_S(N)=2048D_T(N-3).$$\end{theorem}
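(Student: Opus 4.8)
The plan is to obtain the identity as a direct specialization of Theorem~\ref{main}, with all of the combinatorial content already having been established in Lemma~\ref{1}. First I would note that Lemma~\ref{1} is precisely the assertion that condition (i) of Theorem~\ref{main} holds for the parameters $C=2$, $(A_1,\dots,A_{12})=(1,\dots,1)$, $(B_1,\dots,B_{12})=(0,\dots,0)$, $m=3$, and $N_0=3$; and these parameters do satisfy the hypotheses of Theorem~\ref{main}, since $A_i=1=C/2$, $B_i=0$, and $m=3\ge 0$. Hence the equivalence of (i) and (ii) in Theorem~\ref{main} applies, and condition (ii) holds as well: for every $N\ge N_0=3$ one has $D_S(N)=2^{p}\,D_T(N-m)$.

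Next I would unwind what $S$, $T$, $p$, and $m$ are in this case. Since $C=2$ and every $A_i=1$, a positive integer is congruent to $\pm A_i$ modulo $C$ precisely when it is odd, so running over the twelve indices and the two signs the set $S$ consists of $24$ copies of the odd positive integers; likewise, since every $B_i=0$, the set $T$ consists of $24$ copies of the even positive integers. Because no $A_i$ is zero, the partitions counted by $D_S$ are required to have an odd number of parts, whereas $D_T$ carries no parity restriction because every $B_i$ is zero. Finally $p=|\{i:B_i=0\}|-|\{i:A_i=0\}|=12-1=11$ by the stated convention that $|\emptyset|=1$, so $2^{p}=2048$, and $m=3$. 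Substituting these values into condition (ii) gives exactly $D_S(N)=2048\,D_T(N-3)$ for all $N\ge 3$, which is the assertion of the theorem.

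I do not anticipate a genuine obstacle here: the difficulty of the result is entirely absorbed into Lemma~\ref{1} (via Lemma~\ref{lemma1} and Theorem~\ref{main})---had that lemma not already been proved, the hard part would be constructing its bijection through the orthogonal vectors $V_1,\dots,V_{12}$ and the reflections $r_i$---so the present theorem is just the retranslation into the language of colored partitions. The only point deserving a quick sanity check is the bookkeeping behind the ``$24$ copies'' and the exponent $11$ (rather than $12$); one can confirm the base case $N=3$ by hand, since a partition of $3$ into distinct colored odd parts with an odd number of parts is either the single part $3$ in one of $24$ colors or three copies of the part $1$ in three distinct colors, giving $D_S(3)=24+\binom{24}{3}=2048=2048\cdot D_T(0)$.
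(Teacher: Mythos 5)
Your proposal is correct and is exactly the paper's argument: the theorem is deduced directly from Theorem~\ref{main} and Lemma~\ref{1} by reading off $S$, $T$, $p=11$, and $m=3$ (the paper's proof simply says ``straightforward'' from these two results). Your numerical check $D_S(3)=24+\binom{24}{3}=2048=2048\,D_T(0)$ is a nice confirmation of the bookkeeping but is not needed.
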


\begin{proof}
Straightforward from Theorem \ref{main} and Lemma \ref{1}.
\end{proof}

\begin{lemma}\label{2}
Condition (i) of Theorem \ref{main} holds for $N_0= 1$,  $C=2$, $m=1$, and
$$(A_1,\dots,A_{12})=(0,0,0,0,1,1,1,1,1,1,1,1), (B_1,\dots,B_{12})=(0,0,0,0,0,0,0,0,1,1,1,1).$$
\end{lemma}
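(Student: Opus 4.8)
The plan is to mimic the structure of the proof of Lemma~\ref{1}, but now passing through the reduction machinery of Lemma~\ref{lemma4} so that the $12$-variable problem with mixed coefficients $(0,0,0,0,1,1,1,1,1,1,1,1)$ vs.\ $(0,0,0,0,0,0,0,0,1,1,1,1)$ becomes a cleaner $4$-variable question. Concretely, I would first set $C^\ast=2$, $(A^\ast_1,\dots,A^\ast_4)=(1,1,1,1)$, $(B^\ast_1,\dots,B^\ast_4)=(0,0,0,0)$, and $m^\ast=1$; one checks $A^\ast_i+B^\ast_{5-i}=1=C^\ast/2$, $0\le A^\ast_i\le C^\ast/2$, and $A^\ast_1+A^\ast_4=A^\ast_2+A^\ast_3=2=C^\ast/2+m^\ast$, so the numerical hypotheses of Lemma~\ref{lemma4} are met once the ``second-smallest left-hand side equals smallest right-hand side'' condition is verified (a short finite computation). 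Lemma~\ref{lemma4} then says that condition (i) of Theorem~\ref{main} for the present coefficients ($N_0=1$, $m=1$) is equivalent to condition (i) of Theorem~\ref{main} with $C=2$, $m=3$, and the repeated-block coefficients $(A_1,\dots,A_{12})=(1,1,1,1,1,1,1,1,1,1,1,1)$, $(B_1,\dots,B_{12})=(0,\dots,0)$ — which is exactly Lemma~\ref{1}, already proved. So the bulk of the work is checking that the present data is the image of the Lemma~\ref{1} data under the correspondence of Lemma~\ref{lemma4}.

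The one point requiring care is the direction of the reduction in Lemma~\ref{lemma4}: that lemma relates a ``primed'' system with coefficients
$(A'_1,\dots,A'_{12})=(B^\ast_1,\dots,B^\ast_4,A^\ast_1,\dots,A^\ast_4,A^\ast_1,\dots,A^\ast_4)$ and $(B'_1,\dots,B'_{12})=(B^\ast_1,\dots,B^\ast_4,B^\ast_1,\dots,B^\ast_4,A^\ast_1,\dots,A^\ast_4)$
to an ``unprimed'' system with all-$A^\ast$ $A$-block and all-$B^\ast$ $B$-block. With the choices above, the primed $A$-block is $(0,0,0,0,1,1,1,1,1,1,1,1)$ and the primed $B$-block is $(0,0,0,0,0,0,0,0,1,1,1,1)$ — precisely the coefficients in the statement of Lemma~\ref{2} — while the unprimed system is the Lemma~\ref{1} system. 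Thus I would: (1) record these identifications; (2) verify the arithmetic side-conditions of Lemma~\ref{lemma4} (the inequalities, the two sum relations, and the ``second-smallest $=$ smallest'' condition for both versions of equation~(\ref{t})); (3) check that $N_0=\min B^\ast_i+3m^\ast=0+3=3$ matches the $N_0$ used in Lemma~\ref{1}, and that $N_0=\min A^\ast_i=1$ matches the $N_0=1$ claimed here; (4) invoke Lemma~\ref{1} to supply condition (i) for the unprimed system; (5) apply Lemma~\ref{lemma4} to transfer it to the primed system, yielding the claim.

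The main obstacle I anticipate is purely bookkeeping: making sure the block pattern of Lemma~\ref{lemma4} is applied with the indices in the right order (the $B^\ast$-block sitting in positions $1$–$4$ of both $A'$ and $B'$, the $A^\ast$-block in positions $9$–$12$ of $B'$ but positions $5$–$8$ and $9$–$12$ of $A'$), and that the normalization of $m$ is consistent — Lemma~\ref{lemma4} produces $m=m^\ast=1$ on the primed side, matching the $m=1$ in the statement, and $m=3m^\ast=3$ on the unprimed side, matching Lemma~\ref{1}. There is no hard combinatorics here beyond what is already done in Lemmas~\ref{lemma4} and~\ref{1}; the content is entirely in recognizing Lemma~\ref{2} as an instance of the reduction. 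Once the identifications are laid out, the proof is a two-line appeal: ``Apply Lemma~\ref{lemma4} with $C^\ast=2$, $(A^\ast_i)=(1,1,1,1)$, $(B^\ast_i)=(0,0,0,0)$, $m^\ast=1$; the unprimed version is Lemma~\ref{1}, so the primed version, which is the present statement, holds.''
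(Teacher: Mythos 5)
Your proposal is correct and is exactly the paper's argument: the paper proves Lemma~\ref{2} by the one-line appeal to Lemmas~\ref{1} and~\ref{lemma4}, and your choices $C^\ast=2$, $(A^\ast_i)=(1,1,1,1)$, $(B^\ast_i)=(0,0,0,0)$, $m^\ast=1$ are precisely the instantiation that makes the unprimed system Lemma~\ref{1} and the primed system the present statement. Your added verification of the hypotheses of Lemma~\ref{lemma4} and of the matching $N_0$ and $m$ values is just the bookkeeping the paper leaves implicit.
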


\begin{proof}
Straightforward from Lemmas \ref{1} and  \ref{lemma4}.
\end{proof}

\begin{theorem}\label{222}
Let $S$ be the set containing $8$ copies of the even positive  integers and $16$ copies of the odd positive  integers, and $T$ the set containing $16$ copies of the even positive  integers and $8$ copies of the odd positive  integers. Then, for any $N\geq 1$,
$$D_S(N)=16D_T(N-1).$$\end{theorem}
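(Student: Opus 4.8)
The plan is to read off Theorem~\ref{222} as the concrete content of the equivalence (i) $\Leftrightarrow$ (ii) of Theorem~\ref{main}, applied to the parameters produced by Lemma~\ref{2}. First I would invoke Lemma~\ref{2}: it asserts that condition~(i) of Theorem~\ref{main} holds with $N_0=1$, $C=2$, $m=1$, $(A_1,\dots,A_{12})=(0,0,0,0,1,1,1,1,1,1,1,1)$ and $(B_1,\dots,B_{12})=(0,0,0,0,0,0,0,0,1,1,1,1)$. Since these values satisfy the standing hypotheses $C\ge 1$, $0\le A_i\le C/2$, $0\le B_i\le C/2$, $m\ge 0$, Theorem~\ref{main} then gives condition~(ii) for the same data, i.e.\ $D_S(N)=2^p\,D_T(N-m)$ for all $N\ge 1$.

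The remaining work is to unwind the definitions of $S$, $T$, $p$ and $m$ for this choice. With $C=2$, an index with $A_i=0$ contributes two copies of the even positive integers to $S$, while an index with $A_i=1$ contributes two copies of the odd positive integers---the two residues $\pm A_i$ being counted with multiplicity even though they coincide modulo $2$. Hence the four indices with $A_i=0$ and the eight with $A_i=1$ make $S$ into $8$ copies of the evens together with $16$ copies of the odds, which is exactly the set $S$ of the theorem; symmetrically, the eight indices with $B_i=0$ and the four with $B_i=1$ make $T$ into $16$ copies of the evens together with $8$ copies of the odds. Because at least one $A_i$ and at least one $B_i$ vanish, no odd-number-of-parts restriction is placed on the partitions counted by $D_S$ or $D_T$, so those symbols carry exactly the meaning they have in the theorem; moreover neither $\{A_i=0\}$ nor $\{B_i=0\}$ is empty, so the convention about empty sets in Theorem~\ref{main} is not triggered. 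Finally $p=|\{B_i=0\}|-|\{A_i=0\}|=8-4=4$, so $2^p=16$, and $m=1$; substituting into $D_S(N)=2^p\,D_T(N-m)$ yields $D_S(N)=16\,D_T(N-1)$ for all $N\ge 1$, as claimed.

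There is no genuine obstacle in this argument: the combinatorial substance was already supplied by Lemma~\ref{1} (the orthogonal-vectors / $r_i$-involution construction) and transported to the present parameters in Lemma~\ref{2} through Lemma~\ref{lemma4}. The only point that needs care is the bookkeeping in the previous paragraph---matching the abstract tuple data $(C,A_i,B_i,m)$ with the concrete sets $S,T$, the multiplier $2^p$, and the shift $N-m$---and in particular remembering that each $A_i$ (and each $B_i$) contributes two copies of its residue class, so that the multiplicities in $S$ and $T$ come out to $(8,16)$ and $(16,8)$ rather than $(4,8)$ and $(8,4)$.
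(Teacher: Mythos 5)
Your argument is exactly the paper's: Theorem \ref{222} is deduced by applying Theorem \ref{main} to the data certified by Lemma \ref{2}, and your bookkeeping (each index contributing two copies of its residue class, no odd-parts restriction since some $A_i$ and some $B_i$ vanish, $p=8-4=4$ so $2^p=16$, shift $m=1$) is correct. The paper records this step simply as ``straightforward from Theorem \ref{main} and Lemma \ref{2},'' so your proposal matches it, just with the details written out.
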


\begin{proof}
Straightforward from Theorem \ref{main} and Lemma \ref{2}.
\end{proof}

\begin{lemma}\label{3}
Condition (i) of Theorem \ref{main} holds for $N_0=4$,  $C=6$, $m=3$, and
$$(A_1,\dots,A_{12})=(2,\dots,2), (B_1,\dots,B_{12})=(1,\dots,1).$$
\end{lemma}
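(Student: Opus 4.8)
The plan is to follow the same strategy used to prove Lemma~\ref{1}, but now working in the appropriate lattice adapted to the parameters $C=6$, $A_i=2$, $B_i=1$, $m=3$. First I would invoke Lemma~\ref{lemma1} to reduce the statement to the existence of a bijection between, on one side, the set of integer $12$-tuples $(d_1,\dots,d_{12})$ with odd sum weighted by $6\sum\binom{d_i}{2}+2\sum d_i$, together with $|S_k|$ copies of the ``pentagonal'' tuples $(f_1,\dots,f_{12})$ with odd sum weighted by $6\sum f_i(3f_i-1)/2 + k$, and on the other side the analogous set with $B_i=1$, $m=3$, and $|S_k|$ copies of the pentagonal tuples with even sum. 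Here $k$ is the smallest achievable value of the left-hand side; since $\min A_i = 2$, I expect $k$ to be small (the tuple with one entry equal to $1$ and the rest $0$, or similar), and $|S_k|$ will be the multiplicity that dictates how many copies of the $f$-tuples appear.

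Next, using Lemma~\ref{lemma3} with $A_i+B_{13-i}=1+2$... wait, here I need $A_i + B_{13-i} = C/2 = 3$, which is indeed satisfied since $2+1=3$; and $m = \sum A_i/2 - 3C/2 = 12 - 9 = 3$, matching the given $m$. So Lemma~\ref{lemma3} applies, and I can regard the $d$-tuples and $e$-tuples as living in a common set $D$ consisting of integer or half-integer $12$-tuples with odd coordinate sum, where an integer tuple has value $6\sum\binom{d_i}{2}+2\sum d_i$ and a half-integer tuple is of ``negative type.'' Completing the square, $6\binom{d_i}{2}+2d_i = 3d_i^2 - d_i = 3(d_i - \tfrac16)^2 - \tfrac1{12}$, so after the shift $d \mapsto d - \tfrac16(1,\dots,1)$ the value becomes $3\|d\|^2$ up to an additive constant; the key point is that the quadratic form is now $3$ times a sum of squares on a shifted lattice. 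I would then reuse the orthogonal basis $V_1,\dots,V_{12}$ of $\pm1$-vectors from the proof of Lemma~\ref{1} (with $\|V_i\|^2 = 12$ and pairwise orthogonality), define reflections $r_i(d) = d - \frac{d\cdot V_i}{6}V_i$ or the appropriate analogue that preserves the norm, and split $D$ according to whether some $d \cdot V_i$ lies in a prescribed residue class that makes $r_i(d)$ switch type (integer $\leftrightarrow$ half-integer). The residue bookkeeping will be modulo a small number determined by $C=6$ rather than the $3$ appearing in Lemma~\ref{1}, so the divisibility conditions need to be recomputed carefully.

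For the ``leftover'' points, where no $d\cdot V_i$ lies in the critical residue class, I would extract the nearest-lattice-point data $x_i$ (nearest integer or half-integer to $\frac{d\cdot V_i}{c}$ for the appropriate constant $c$) and the remainders $y_i$, write $\|d\|^2$ via the Pythagorean identity $\|d\|^2 = \frac{1}{12}\sum (d\cdot V_i)^2$, and recognize the sum as $\big(\text{const}\cdot\sum x_i(3x_i + y_i)\big) + (\text{const})$, matching the pentagonal-number shape $6\sum f_i(3f_i-1)/2 + k$. The residual vector $z = d - \sum \frac{x_i}{c}V_i$ satisfies $z\cdot V_i = y_i$ for all $i$, and I would argue — exactly as in Lemma~\ref{1} — that there are precisely $|S_k|$ such vectors $z$ (the ones with a single nonzero coordinate of controlled size), set up a bijection between these $z$ and the $|S_k|$ copies of the $f$-tuples, and check that the parity of $\sum f_i$ is forced to match the integer/half-integer type of $d$ in the required way. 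The main obstacle I anticipate is the arithmetic of the ``hard core'': namely verifying that the reflections $r_i$ really do toggle integer versus half-integer type under the correct divisibility condition for $C=6$, and that the count of leftover residual vectors $z$ comes out to exactly $|S_k|$ (equivalently, pinning down $k$ and $|S_k|$ correctly and confirming $N_0 = 4$) — this is where an error in the modular arithmetic would silently break the bijection, so that step needs to be done with care rather than by analogy.
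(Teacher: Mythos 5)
Your proposal takes a genuinely different route from the paper, and as written it stops short of the part that actually carries the difficulty. The paper never redoes the Lemma~\ref{1}-style lattice argument for these parameters: it rescales Lemma~\ref{1} to $C=6$, $(A_i')=(3,\dots,3)$, $(B_i')=(0,\dots,0)$, feeds this into Lemma~\ref{lemma2} (with $k=2$, $k'=3$, $|S'_{k'}|=2|S_k|=24$), and thereby reduces Lemma~\ref{3} to the identity $|Q'_{N+k'}|+2|R_{N+k}|=|R'_{N+k'}|+2|Q_{N+k}|$, which it then proves by four short explicit linear involutions/bijections keyed to the residue of $d_1+\cdots+d_{12}$ modulo $6$. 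No orthogonal vectors, no pentagonal bookkeeping, no new counting of minimal configurations is needed.

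Your direct attack via Lemmas~\ref{lemma1} and \ref{lemma3} can in fact be completed, but the proposal defers exactly the steps that are specific to this lemma, and they are not a transcription of Lemma~\ref{1} ``by analogy.'' Concretely: (a) here $k=2$ and $|S_k|=12$, because only a single entry equal to $+1$ attains the minimum (an entry $-1$ contributes $4$), so the residual-vector count must come out to $12$, not the $24$ of Lemma~\ref{1}; (b) the reflections must be taken about the shifted center, $r_i(d)=d-\frac{(d-\frac16(1,\dots,1))\cdot V_i}{6}\,V_i$, with the type-toggle condition $(d-\frac16(1,\dots,1))\cdot V_i\equiv 3\pmod 6$, and for $i=1$ this dot product is $d\cdot V_1+2$, not $d\cdot V_1$, so the ``recomputed divisibility conditions'' you allude to are genuinely different in one coordinate direction; and (c), the heart of the matter, the residuals $z=\sum_i\frac{y_i}{12}V_i$ with $y_i=\pm1$ must have all entries in $\mathbb{Z}-\frac16$ or all in $\mathbb{Z}+\frac13$, and one has to prove that the second class contributes no residuals at all while the first contributes exactly the $12$ vectors $e_j-\frac16(1,\dots,1)$. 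That step uses structural facts about the $V_i$ with no counterpart in Lemma~\ref{1} (for instance, any three columns of the $12\times 12$ matrix of the $V_i$ agree in exactly three rows, which kills the three-column configurations, and a norm count kills the five-column ones). The numerology does close up — the $12$ surviving residuals pair integer-type leftovers with even-sum $f$-tuples and half-integer-type leftovers with odd-sum ones, matching $|S_k|=12$ — but since you explicitly leave (a)--(c) unverified, what you have is a plausible outline with its decisive computations missing, whereas the paper's reduction through Lemma~\ref{lemma2} avoids them altogether.
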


\begin{proof}
By Lemmas \ref{1} and \ref{lemma2}, this statement is equivalent to condition (ii) of Lemma \ref{lemma2} holding for $$(A'_1,\dots,A'_{12})=(3,\dots,3), (B'_1,\dots,B'_{12})=(0,\dots,0).$$ One can see that $m'=9$, and $|S'_{k'}|=2|S_k|=24$. So, this is equivalent to the statement that $|Q_{N+k'}'|+2|R_{N+k}|=|R_{N+k'}'|+2|Q_{N+k}|$. Thus, it suffices to show that for each $(d_1,\dots,d_{12})\in 2Q_{N+k}$ or $(e_1',\dots,e_{12}')\in R_{N+k'}'$ there is a corresponding $(e_1,\dots,e_{12})\in 2R_{N+k}$ or $(d_1',\dots,d_{12}')\in Q_{N+k'}'$ and vice-versa.

By Lemma \ref{lemma3}, we can consider all $d$-tuples and $e$-tuples as being in the set  $D=\{d\in \mathbb{Z}^{12}\cup(\mathbb{Z}+1/2)^{12}:\sum_{i=1}^{12} d_i\in2\mathbb{Z}+1\}$ and all $d'$-tuples and $e'$-tuples as being in the set  $D'=\{d\in \mathbb{Z}^{12}\cup(\mathbb{Z}+1/2)^{12}:\sum_{i=1}^{12} d_i\in2\mathbb{Z}+1\}$. Note that $D$ and $D'$ are not really the same because they have different value functions. For arbitrary $d\in D$, there are three cases.

If $\sum_{i=1}^{12} d_i\equiv 5\pmod{6}$, we map $d$ to another element of $D$ with an equal value but the opposite type, using the map $d_i^\ast=d_i-(\sum_{j=1}^{12} d_j-2)/6$, for each $i$. This map always results in $d^\ast$ such that $\sum_{i=1}^{12} d_i^\ast-2=-(\sum_{i=1}^{12} d_i-2)\equiv 3\pmod{6},$ and it is an involution, so it cancels out all such $d$.

If $\sum_{i=1}^{12} d_i\equiv 3\pmod{6}$, we map $d$ to another element of $D$ with an equal value but the opposite type, using the map $d_i^\ast=\sum_{j=1}^{12} d_j/6-d_i$, for each $i$. This map always results in $d^\ast$ such that $\sum_{i=1}^{12} d_i^\ast=\sum_{i=1}^{12} d_i\equiv 3\pmod{6}$, and it is an involution, so it cancels out all such $d$.

Finally, if $\sum_{i=1}^{12} d_i\equiv 1\pmod{6}$, we map both copies of $d$ to elements of $D'$ with the same value and type, using the maps $d_i'=d_i-(\sum_{j=1}^{12} d_j-1)/6$ and $d_i'=-d_i+(\sum_{j=1}^{12} d_j-1)/6$, for all $i$. These maps are injective, and always result in $d'$ such that $\sum_{i=1}^{12} d_i'\equiv 1\pmod{6}$ and $\sum_{i=1}^{12} d_i'\equiv -1\pmod{6}$, respectively. Therefore, they are bijections from the subset of $D$ for which $\sum_{i=1}^{12} d_i\equiv 1\pmod{6}$ to the subsets of $D'$ for which $\sum_{i=1}^{12} d_i'\equiv \pm 1\pmod{6}$.

That just leaves the subset of $D'$ for which $\sum_{i=1}^{12} d_i'\equiv 3\pmod{6}$. We map any $d'$ in this subset to another element of the subset with the same value but the opposite type, using the map $d_i''=d_i'-(\sum_{j=1}^{12} d_j')/6$. This map is an involution, thus it cancels out all $d'$ in this subset. This completes the bijection and the proof of the lemma.
\end{proof}

\begin{theorem}\label{333}
Let $S$ be the set containing 12 copies of the even positive integers that are not multiples of 3, and $T$ the set containing 12 copies of the odd positive integers that are not multiples of 3. Then, for any $N\geq 4$,
$$D_S(N)=D_T(N-3).$$\end{theorem}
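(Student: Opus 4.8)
The plan is to read off Theorem \ref{333} directly from Theorem \ref{main} and Lemma \ref{3}, exactly the way Theorems \ref{111} and \ref{222} were deduced from Lemmas \ref{1} and \ref{2}. Lemma \ref{3} already supplies the only substantive input: it asserts that condition (i) of Theorem \ref{main} holds with $N_0 = 4$, $C = 6$, $m = 3$, $(A_1,\dots,A_{12}) = (2,\dots,2)$ and $(B_1,\dots,B_{12}) = (1,\dots,1)$. Hence, by the equivalence (i) $\Leftrightarrow$ (ii) of Theorem \ref{main}, condition (ii) --- that $D_S(N) = 2^p\, D_T(N-m)$ for every $N \geq N_0$ --- holds for these parameters, and all that remains is to unwind the definitions of $S$, $T$, $p$ and $m$.

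First I would identify $S$ and $T$. With $A_i = 2$ and $C = 6$ for every $i$, the residues $\pm A_i$ modulo $6$ are $2$ and $4$; so for each of the twelve indices $S$ receives one copy of the positive integers congruent to $2$ and one copy of those congruent to $4$ modulo $6$, and altogether $S$ consists of twelve copies of the positive integers congruent to $2$ or $4$ modulo $6$, i.e.\ twelve copies of the even positive integers that are not multiples of $3$. Similarly $B_i = 1$ yields the residues $1$ and $5$ modulo $6$, so $T$ consists of twelve copies of the odd positive integers that are not multiples of $3$. Since no $A_i$ and no $B_i$ is $0$, both $D_S$ and $D_T$ count partitions into distinct parts with an odd number of parts, and $p = |\{B_i = 0\}| - |\{A_i = 0\}| = 1 - 1 = 0$ by the convention that $|\emptyset| = 1$, so $2^p = 1$. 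Substituting $m = 3$ and $N_0 = 4$, condition (ii) becomes $D_S(N) = D_T(N-3)$ for all $N \geq 4$, which is precisely the assertion.

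The one place calling for a little care is the bookkeeping in the definitions of $S$ and $T$: the two residue classes $+A_i$ and $-A_i$ are each contributed as a separate copy, which is what produces ``twelve copies'' here (just as it produces the ``$24$ copies'' in Theorem \ref{111} and the ``$8$ copies'' and ``$16$ copies'' in Theorem \ref{222}), together with the odd-number-of-parts convention inherited from Theorem \ref{main}. Beyond this, I expect no genuine obstacle: all the difficulty of the identity is already absorbed into Lemma \ref{3}, whose proof reduces via Lemmas \ref{1} and \ref{lemma2} to a single explicit bijection and ultimately rests on the master bijection of \cite{CSFZ1} and Warnaar's bijection.
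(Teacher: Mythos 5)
Your proposal is correct and follows exactly the paper's own route: the paper deduces Theorem \ref{333} directly from Theorem \ref{main} and Lemma \ref{3}, and your unwinding of the parameters ($S$ as twelve copies of the residues $\pm2$ modulo $6$, $T$ as twelve copies of $\pm1$ modulo $6$, $p=0$, $m=3$, $N_0=4$) is precisely the bookkeeping the paper leaves implicit in its one-line proof.
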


\begin{proof}
Straightforward from Theorem \ref{main} and Lemma \ref{3}.
\end{proof}

\begin{lemma}\label{4}
Condition (i) of Theorem \ref{main} holds for $N_0=3$,  $C=6$,  $m=3$, and
$$(A_1,\dots,A_{12})=(1,1,1,1,1,1,3,3,3,3,3,3), (B_1,\dots,B_{12})=(0,0,0,0,0,0,2,2,2,2,2,2).$$
\end{lemma}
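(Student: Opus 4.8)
The plan is to mimic the structure of the proof of Lemma \ref{3}, reducing the statement via Lemmas \ref{1} and \ref{lemma2} to an elementary bijection between tuples of integers and half-integers. Applying Lemma \ref{lemma2} with the already-proven base case of Lemma \ref{1}, the assertion is equivalent to condition (ii) of Lemma \ref{lemma2} for the auxiliary coefficients obtained by ``folding away'' the $C=6$ weights; concretely, I expect the relevant $(A'_i)$, $(B'_i)$ to again be constant (namely $A'_i\equiv 3$, $B'_i\equiv 0$, since here $\min A_i = 1$ plays the role that $\min A_i = 2$ did in Lemma \ref{3}), so that one computes the shift $m'$ and the ratio $|S'_{k'}|/|S_k|$ from the formulas in Lemma \ref{lemma1}. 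This gives an identity of the shape $\alpha|Q'_{N+k'}| + \beta|R_{N+k}| = \alpha|R'_{N+k'}| + \beta|Q_{N+k}|$ for explicit small constants $\alpha,\beta$, and it then suffices to produce a value-preserving, type-reversing correspondence between the appropriate multiset copies of $Q_{N+k}$, $R_{N+k}$, $Q'_{N+k'}$, $R'_{N+k'}$.

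Next I would pass, via Lemma \ref{lemma3}, to the common ambient set $D=\{d\in\mathbb Z^{12}\cup(\mathbb Z+1/2)^{12}:\sum d_i\in2\mathbb Z+1\}$ (and its primed copy $D'$ with a different value function), so that all four families of tuples become subsets of $D$ cut out by congruence conditions on $\sum_{i=1}^{12}d_i$ modulo $6$. The key structural feature to exploit — exactly as in Lemma \ref{3} — is that the coefficients $A_i$ are \emph{not} all equal but take two values, $1$ on the first six coordinates and $3$ on the last six; correspondingly the value function is $6\sum\binom{d_i}{2} + \sum_{i=1}^{6}d_i + 3\sum_{i=7}^{12}d_i$. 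I would split $D$ according to the residue of $\sum d_i \pmod 6$ and, on each residue class, write down an explicit affine map of the form $d_i^\ast = \pm d_i + c$ (with $c$ a suitable rational depending on $\sum d_j$, chosen so that $c$ is an integer or half-integer exactly when it must be to flip the type), checking in each case that (a) the value is preserved — this is where orthogonality to the all-ones-type directions and the Pythagorean/completing-the-square identity are used — (b) the map is an involution or part of a matched pair of injections between two residue classes, and (c) it reverses the integer/half-integer type. Because the two blocks of six coordinates carry different weights, I anticipate needing block-dependent shifts (e.g. subtracting $(\sum d_j - \text{const})/6$ uniformly still works for the ``sum'' part, but the split weights force a separate bookkeeping of $\sum_{i=1}^6 d_i$ versus $\sum_{i=7}^{12}d_i$), so the case analysis will be a bit more delicate than in Lemma \ref{3}.

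A cleaner route, which I would try first, is to invoke Lemma \ref{lemma4} directly with the four-variable data $C^\ast = 6$, $(A^\ast_1,A^\ast_2,A^\ast_3,A^\ast_4) = (1,1,3,3)$, $(B^\ast_1,B^\ast_2,B^\ast_3,B^\ast_4)=(2,2,0,0)$, $m^\ast = 1$: one checks $A^\ast_i + B^\ast_{5-i} = 3 = C^\ast/2$, $A^\ast_1+A^\ast_4 = A^\ast_2+A^\ast_3 = 4 = C^\ast/2 + m^\ast$, and the second-smallest/smallest value condition on the two sides of equation (\ref{t}); then Lemma \ref{lemma4} reduces this lemma to an instance with twelve coefficients of the concatenated shape $(B^\ast,A^\ast,A^\ast)$ and $(B^\ast,B^\ast,A^\ast)$, i.e. to Lemma \ref{3} (or a close variant of it), for which condition (i) is already established. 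If the numerology in Lemma \ref{lemma4} lines up — which is the thing I would verify carefully — the proof is then "straightforward from Lemmas \ref{3} and \ref{lemma4}", in the same spirit as the proof of Lemma \ref{2} from Lemmas \ref{1} and \ref{lemma4}.

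\textbf{Main obstacle.} The crux is whether this lemma genuinely fits the Lemma \ref{lemma4} template: the hypotheses of Lemma \ref{lemma4} are rigid (the pairing $A^\ast_i + B^\ast_{5-i} = C^\ast/2$, the equal-pair-sum condition, and especially the compatibility of second-smallest left-hand values with smallest right-hand values), and if the $(1,1,1,1,1,1,3,3,3,3,3,3)$ versus $(0,0,0,0,0,0,2,2,2,2,2,2)$ data does not arise from any admissible four-variable input, then one is forced back to the hands-on bijection in $D$. In that case the real work is handling the residue class $\sum d_i \equiv 1 \pmod 6$ on $D$, matching its two copies against the two residue classes $\pm 1 \pmod 6$ in $D'$ while respecting the \emph{two-weight} value function, and verifying value-preservation there; that is the step I expect to be the most error-prone and the one requiring the most care with the completing-the-square computation.
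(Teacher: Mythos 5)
Your preferred ``cleaner route'' through Lemma \ref{lemma4} does not work, and not only because of the ordering slip (with $A^\ast=(1,1,3,3)$ the pairing $A^\ast_i+B^\ast_{5-i}=C^\ast/2$ forces $B^\ast=(0,0,2,2)$, not $(2,2,0,0)$). The real issue is what Lemma \ref{lemma4} reduces to: the present lemma's data is, up to permutation of indices, the triple-repeat side $(A^\ast,A^\ast,A^\ast)$ versus $(B^\ast,B^\ast,B^\ast)$ with $m=3m^\ast=3$, and Lemma \ref{lemma4} relates it to the mixed side $(B^\ast,A^\ast,A^\ast)$ versus $(B^\ast,B^\ast,A^\ast)$ with $m=m^\ast=1$, i.e.\ to the data $(0,0,1,1,1,1,2,2,3,3,3,3)$ versus $(0,0,0,0,1,1,2,2,2,2,3,3)$. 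That is not Lemma \ref{3} (whose coefficients are constant) but exactly the statement of Lemma \ref{8}, which the paper deduces \emph{from} the present lemma via Lemma \ref{lemma4}; so within the paper's logical order this route is circular, and it cannot be repaired by a different choice of $A^\ast,B^\ast$, since the present $A$-side consists only of odd values and the $B$-side only of even values, so it cannot itself be written in the mixed shape. You are therefore forced back to the hands-on bijection, as you anticipated.

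For that hands-on argument your sketch has the right skeleton (reduce by Lemma \ref{lemma2}, unify $d$- and $e$-tuples via Lemma \ref{lemma3} into $D$ and $D'$, then build affine type-flipping maps), but the decisive content is missing and one structural detail is off. The paper uses Lemma \ref{3} as the known system, so $(A')=(2,\dots,2)$, $(B')=(1,\dots,1)$, $m'=3$, $|S'_{k'}|=2|S_k|=12$, and the identity to establish is $|Q'_{N+k'}|+2|R_{N+k}|=|R'_{N+k'}|+2|Q_{N+k}|$; your proposed base $(3,\dots,3)$, $(0,\dots,0)$ is admissible in principle but yields multiplicity $4$ and a different, unverified bookkeeping. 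More importantly, the case split cannot be made on $\sum_{i=1}^{12}d_i \pmod 6$: because the value function is $6\sum_i{d_i\choose 2}+\sum_{i\le 6}d_i+3\sum_{i>6}d_i$, the correct invariant is the block difference $x=d_1+\dots+d_6-d_7-\dots-d_{12}$ modulo $6$, and the value-preserving maps shift the two blocks of six coordinates in \emph{opposite} directions (for instance $d_i\mapsto d_i-\frac{x-2}{6}$ for $i\le 6$ and $d_i\mapsto d_i+\frac{x-2}{6}$ for $i>6$ when $x\equiv 5\pmod 6$), or swap the two blocks (when $x\equiv 3$), with two injections into $D'$ in the case $x\equiv 1$ and a final involution on the residue class $x\equiv 3$ inside $D'$. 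None of these explicit maps, nor the verification that each preserves the value and flips or preserves type as needed, appears in your proposal, so the core of the proof is still absent.
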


\begin{proof}
By Lemmas \ref{3} and \ref{lemma2}, this statement is equivalent to condition (ii) of Lemma \ref{lemma2} holding for $$(A'_1,\dots,A'_{12})=(2,\dots,2), (B'_1,\dots,B'_{12})=(1,\dots,1).$$ One can see that $m'=3$ and $|S'_{k'}|=2|S_k|=12$, so this is equivalent to the statement that $ |Q_{N+k'}'|+2|R_{N+k}|= |R_{N+k'}'|+2|Q_{N+k}|$. Thus, it suffices to show that for each $(d_1,\dots,d_{12})\in 2Q_{N+k}$ or $(e_1',\dots,e_{12}')\in R_{N+k'}'$ there is a corresponding $(e_1,\dots,e_{12})\in 2R_{N+k}$ or $(d_1',\dots,d_{12}')\in Q_{N+k'}'$ and vice-versa.”

By Lemma \ref{lemma3}, we can consider all $d$-tuples and $e$-tuples as being in the set  $D=\{d\in \mathbb{Z}^{12}\cup(\mathbb{Z}+1/2)^{12}:\sum_{i=1}^{12} d_i\in2\mathbb{Z}+1\}$, and all $d'$-tuples and $e'$-tuples as being in the set  $D'=\{d\in \mathbb{Z}^{12}\cup(\mathbb{Z}+1/2)^{12}:\sum_{i=1}^{12} d_i\in2\mathbb{Z}+1\}$. For arbitrary $d\in D$, let $$x=d_1+d_2+d_3+d_4+d_5+d_6-d_7-d_8-d_9-d_{10}-d_{11}-d_{12}.$$

If $x\equiv 5\pmod{6}$, we map $d$ to another element of $D$ with an equal value but the opposite type, using the map
\[d_i^\ast=\left(d_i-\frac{x-2}{6}\right) \text{ for } 0<i\le 6;\phantom{x}d_i^\ast=\left(d_i+\frac{x-2}{6}\right) \text{ for } 6<i\le 12.\]

This map always results in $d^\ast$ such that $x^\ast-2=-(x-2)\equiv 3\pmod{6}$, and it is an involution, so it cancels out all such $d$.

If $x\equiv 3\pmod{6}$, we map $d$ to another element of $D$ with an equal value but the opposite type, using the map

\[d_i^\ast=\left(d_{i+6}+\frac{x}{6}\right) \text{ for } 0<i\le 6;\phantom{x}d_i^\ast=\left(d_{i-6}-\frac{x}{6}\right) \text{ for } 6<i\le 12.\]

This map always results in $d^\ast$ such that $x^\ast=x\equiv 3\pmod{6}$, and it is an involution, so it cancels out all such $d$.

Finally, if $x\equiv 1\pmod{6}$, we map both copies of $d$ to elements of $D'$ with the same value and type, using the two maps:

\[d'_i=\left(d_i-\frac{x-1}{6}\right) \text{ for } 0<i\le 6;\phantom{x}d'_i=\left(d_i+\frac{x-1}{6}\right) \text{ for } 6<i\le 12,\]
and

\[d'_i=\left(d_{i+6}+\frac{x-1}{6}\right) \text{ for } 0<i\le 6;\phantom{x}d'_i=\left(d_{i-6}-\frac{x-1}{6}\right) \text{ for } 6<i\le 12.\]

These maps are  involutions (provided one modifies the second one by replacing $x-1$ with $x+1$  every time it occurs), and always result in $d'$ such that $x'=x-2(x-1)\equiv 1\pmod{6}$ and $x'=-x+2(x-1)\equiv -1\pmod{6}$, respectively. Hence, they are bijections from the subset of $D$ for which $x\equiv 1\pmod{6}$ to the subsets of $D'$ for which $x\equiv \pm 1\pmod{6}$.

That just leaves the subset of $D'$ for which $x\equiv 3\pmod{6}$. We map any $d'$ in this subset to another element of the subset with the same value but the opposite type, using the map

\[d''_i=\left(d_i'-\frac{x}{6}\right) \text{ for } 0<i\le 6;\phantom{x}d''_i=\left(d_i'+\frac{x}{6}\right) \text{ for } 6<i\le 12.\]

This map is an involution, so it cancels out all $d'$ in this subset. This completes the bijection.
\end{proof}

\begin{theorem}\label{444}
Let $S$ be the set containing  6 copies of the odd positive integers and 6 more copies of the odd positive  multiples of 3, and $T$ the set containing  6 copies of the even positive integers and 6 more copies of the positive multiples of 6. Then, for any $N\geq 3$,
$$D_S(N)=32D_T(N-3).$$\end{theorem}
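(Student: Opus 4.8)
The plan is to obtain Theorem \ref{444} as an immediate consequence of Theorem \ref{main} and Lemma \ref{4}, in exactly the way Theorems \ref{111}, \ref{222} and \ref{333} were deduced from their respective lemmas: there is no new combinatorial content, only the routine translation of the data $C=6$, $m=3$, $(A_1,\dots,A_{12})=(1,1,1,1,1,1,3,3,3,3,3,3)$, $(B_1,\dots,B_{12})=(0,0,0,0,0,0,2,2,2,2,2,2)$ of Lemma \ref{4} into a statement about the sets $S$ and $T$ and the multiplier $2^p$ appearing in part (ii) of Theorem \ref{main}.

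Concretely, I would first invoke Lemma \ref{4}, which tells us that condition (i) of Theorem \ref{main} holds for these parameters with $N_0=3$, and then apply the equivalence between conditions (i) and (ii) in Theorem \ref{main} to conclude that $D_S(N)=2^p\,D_T(N-m)$ for every $N\ge 3$. Here $m=3$, and $p=|\{i:B_i=0\}|-|\{i:A_i=0\}|=6-1=5$, since $B_1=\dots=B_6=0$ while no $A_i$ vanishes and $|\emptyset|$ is taken to equal $1$ by the convention in Theorem \ref{main}; hence the multiplier is $2^5=32$. Next I would spell out $S$ and $T$. For $S$ one collects, for each $i$, the positive integers lying in the residue classes $\pm A_i\pmod 6$: the six indices with $A_i=1$ each contribute the integers $\equiv\pm 1\pmod 6$, i.e.\ in total six copies of every odd positive integer coprime to $3$, while the six indices with $A_i=3$ each contribute the integers $\equiv 3\pmod 6$ twice (because $+3\equiv -3\pmod 6$), i.e.\ in total twelve copies of every odd positive multiple of $3$; so $S$ amounts to six copies of every odd positive integer together with six further copies of every odd positive multiple of $3$. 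The identical computation for the $B_i$ produces twelve copies of every positive multiple of $6$ (from the six indices with $B_i=0$) and six copies of every even positive integer coprime to $3$ (from the six indices with $B_i=2$), so $T$ amounts to six copies of every even positive integer together with six further copies of every positive multiple of $6$. Since no $A_i$ is $0$ but some $B_i$ is, the counts $D_S$ and $D_T$ are precisely those of Theorem \ref{main}: partitions into distinct elements of $S$ are required to have an odd number of parts, while those into distinct elements of $T$ carry no condition on the number of parts. Putting all of this together gives $D_S(N)=32\,D_T(N-3)$ for every $N\ge 3$.

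Because each step is a direct substitution into Theorem \ref{main}, I do not anticipate any real obstacle. The one place demanding a bit of care is the observation that when a coefficient equals $0$ or $C/2=3$ the two residues $\pm A_i$ (respectively $\pm B_i$) collapse to a single class modulo $6$, so that those indices contribute \emph{doubled} multiplicities; this is exactly what yields the ``six more copies'' of the odd multiples of $3$ in $S$ and of the multiples of $6$ in $T$, and matching that bookkeeping correctly against the verbal description in the statement is the only thing that could plausibly go wrong.
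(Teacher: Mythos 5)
Your proposal is correct and follows exactly the paper's route: Theorem \ref{444} is deduced directly from Lemma \ref{4} via Theorem \ref{main}, and your bookkeeping (the doubled residue classes at $A_i=C/2$ and $B_i=0$, the parity condition on parts, $m=3$, and $p=6-1=5$ giving the factor $2^5=32$) matches the intended ``straightforward'' translation.
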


\begin{proof}
Straightforward from Theorem \ref{main} and Lemma \ref{4}.
\end{proof}

\begin{lemma}\label{5}
Condition (i) of Theorem \ref{main} holds for $N_0=2$,  $C=4$, $m=2$, and
$$(A_1,\dots,A_{12})=(1,1,1,1,1,1,1,1,2,2,2,2), (B_1,\dots,B_{12})=(0,0,0,0,1,1,1,1,1,1,1,1).$$
\end{lemma}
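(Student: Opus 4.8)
The plan is to follow the template of the proofs of Lemmas \ref{3} and \ref{4}: reduce the statement, via Lemma \ref{lemma2}, to a purely combinatorial bijection between sets of integer and half-integer $12$-tuples, and then construct that bijection by hand.

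First, Theorem \ref{main}(i) is visibly preserved under multiplying $C$, every $A_i$, every $B_i$ and $m$ by a fixed positive integer, since this only rescales both sides of (\ref{t}); in particular Lemma \ref{2} also yields Theorem \ref{main}(i) for $C=4$, $(A_1,\dots,A_{12})=(0,0,0,0,2,2,2,2,2,2,2,2)$, $(B_1,\dots,B_{12})=(0,0,0,0,0,0,0,0,2,2,2,2)$, $m=2$. By Lemmas \ref{2} and \ref{lemma2}, the statement of Lemma \ref{5} is then equivalent to condition (ii) of Lemma \ref{lemma2} holding for $(A'_1,\dots,A'_{12})=(0,0,0,0,2,2,2,2,2,2,2,2)$ and $(B'_1,\dots,B'_{12})=(0,0,0,0,0,0,0,0,2,2,2,2)$. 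One checks that here $m'=2$, $k=1$, $k'=0$ and $|S_k|=|S'_{k'}|=8$, so this reduces to $|Q'_N|+|R_{N+1}|=|R'_N|+|Q_{N+1}|$ for all $N$; hence it suffices to produce, for each $N$, a bijection between $Q_{N+1}\cup R'_N$ and $R_{N+1}\cup Q'_N$. By Lemma \ref{lemma3} we may regard all $d$- and $e$-tuples as elements of $D=\{d\in\mathbb{Z}^{12}\cup(\mathbb{Z}+\frac{1}{2})^{12}:\sum_{i=1}^{12}d_i\in 2\mathbb{Z}+1\}$ carrying the value $4\sum_{i=1}^{12}\binom{d_i}{2}+\sum_{i=1}^{8}d_i+2\sum_{i=9}^{12}d_i=2\sum_{i=1}^{12}d_i^2-\sum_{i=1}^{8}d_i$, and all $d'$- and $e'$-tuples as elements of the same underlying set $D'$ carrying the value $4\sum_{i=1}^{12}\binom{d'_i}{2}+2\sum_{i=5}^{12}d'_i=2\sum_{i=1}^{12}d_i'^2-2\sum_{i=1}^{4}d'_i$; in both, the integer tuples are the $Q$'s and the half-integer tuples the $R$'s. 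So the task is to match the integer tuples of $D$ of value $N+1$ together with the half-integer tuples of $D'$ of value $N$ against the half-integer tuples of $D$ of value $N+1$ together with the integer tuples of $D'$ of value $N$.

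I would build this bijection in two stages. Partition $\{1,\dots,12\}$ into the blocks $\{1,2,3,4\}$, $\{5,6,7,8\}$, $\{9,10,11,12\}$ and let $s_j$ denote the sum of the $d_i$ over the $j$-th block, so that $s_1+s_2+s_3$ is odd. Reflecting the $j$-th block in its midpoint, i.e.\ sending $d_i\mapsto \frac{s_j}{2}-d_i$ for $i$ in that block, fixes $s_j$ and $\sum d_i^2$ over that block, hence fixes both value functions, while it switches the integer/half-integer type of the entries in the block exactly when $s_j$ is odd. Carrying this out on all three blocks at once is therefore an involution on $\{d\in D:s_1,s_2,s_3\text{ all odd}\}$ that preserves the value and switches the global type, and similarly inside $D'$; this pairs off, on each side, all tuples with three odd block sums. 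Since $s_1+s_2+s_3$ is odd, what remains are the tuples with exactly one odd block sum, and it suffices to build a \emph{type-preserving} bijection from those lying in $D$ at value $N+1$ onto those lying in $D'$ at value $N$ (the leftover classes inside $D'$ are then collapsed by the same block-reflection involution).

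This last bijection is the crux, and it is where I expect essentially all the work to lie, just as in Lemmas \ref{3}--\ref{4}. I would split into the three cases according to which of $s_1,s_2,s_3$ is odd, further split by the residue modulo $C=4$ of a suitable linear form in the $d_i$, and write down explicit affine maps — assembled from reflecting the two even-sum blocks in their midpoints, cyclically permuting the three blocks, and translating — then verify case by case that each map preserves the appropriate value function (in particular lowering the value by $1$ as it passes from $D$ to $D'$), fixes the integer/half-integer type, and lands in the intended class. Getting the block-sum parities, the residues, and the value shift all to cohere so that the cases glue into a single bijection is where the entire content of the proof sits; everything preceding it is mechanical.
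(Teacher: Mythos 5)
Your reduction is fine and agrees with the paper: scaling Lemma \ref{2} up to $C=4$, invoking Lemma \ref{lemma2}, computing $k=1$, $k'=0$, $m'=2$, $|S_k|=|S'_{k'}|=8$, and arriving at $|Q'_N|+|R_{N+1}|=|R'_N|+|Q_{N+1}|$ is exactly the paper's starting point (the paper then works directly with the integer $d$-, $e$-, $d'$-, $e'$-tuples rather than passing through Lemma \ref{lemma3}, but your use of Lemma \ref{lemma3} is legitimate, since $A_i+B_{13-i}=2$ and $m=\sum A_i/2-3C/2$ hold for both systems). The gap is in the decomposition that follows. Your stage-1 reflection only cancels tuples with all three block sums odd, and your stage-2 claim --- that it then suffices to exhibit a \emph{type-preserving} bijection from the one-odd-block tuples of $D$ at value $N+1$ onto those of $D'$ at value $N$ --- is false, not merely unproven. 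Take $N=1$: in $D$ the tuples of value $2$ are the $8$ integer tuples with a single entry $\pm1$ in one of the four coordinates with $A$-coefficient $2$, together with the $8$ half-integer tuples with $d_i=\tfrac12$ for $i\le 8$ and $d_i=\pm\tfrac12$ (an odd number of plus signs) in the last block; all $16$ have exactly one odd block sum. But $D'$ contains no tuples of value $1$ at all: every summand of the $D'$-value is even on integer tuples, and on half-integer tuples the minimum value is $2$. So the bijection you ask for cannot exist, and these $16$ elements can be neither cross-mapped nor collapsed by your three-block reflection (reflecting a single odd block produces a mixed-type tuple, so no variant of that involution touches the one-odd-block classes). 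The parenthetical about "leftover classes inside $D'$" addresses the wrong side: the surplus sits in $D$.

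What is actually needed --- and what the paper does --- is a further \emph{type-switching} cancellation inside each system before any cross-matching: the paper cancels every $d$-tuple whose four distinguished coordinates (those with $A_i=2$) have odd sum $x$ against an $e$-tuple whose four $B_i=0$ coordinates have odd sum, via the translation $e_i=d_i-\frac{x-1}{2}$ on that block, and does the same inside the primed system; in your $D$-picture this is an involution keyed to the parity of the single block sum $s_3$, not of all three. Only the tuples with that block sum even are then mapped across to the other system, split into two cases according to a residue modulo $4$ of $d_5+\cdots+d_8-d_9-\cdots-d_{12}$. Your plan misses this within-system step entirely, and since you also leave all the explicit maps and verifications to be done ("where the entire content of the proof sits"), the proposal does not establish the lemma.
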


\begin{proof}
By Lemmas \ref{2} and \ref{lemma2}, and a renumbering of the variables, this statement is equivalent to condition (ii) of \ref{lemma2} holding for $C=4$, $$(A_1,\dots,A_{12})=(2,2,2,2,1,1,1,1,1,1,1,1), (B_1,\dots,B_{12})=(0,0,0,0,1,1,1,1,1,1,1,1),$$ $$(A'_1,\dots,A'_{12})=(2,2,2,2,0,0,0,0,2,2,2,2), (B'_1,\dots,B'_{12})=(0,0,0,0,0,0,0,0,2,2,2,2).$$ Note that $m'=2$, and $|S'_{k'}|=|S_k|=8$. So, this is equivalent to the statement that $|Q_{N+k'}'|+|R_{N+k}|=|R_{N+k'}'|+|Q_{N+k}|$. Thus, it suffices to show that for each $(d_1,\dots,d_{12})\in Q_{N+k}$ or $(e_1',\dots,e_{12}')\in R_{N+k'}'$ there is a corresponding $(e_1,\dots,e_{12})\in R_{N+k}$ or $(d_1',\dots,d_{12}')\in Q_{N+k'}'$ and vice-versa.

It is easy to check that the following map is a value-preserving bijection between $d$-tuples such that $x=d_1+d_2+d_3+d_4$ is odd, and $e$-tuples such that $e_1+e_2+e_3+e_4$ is odd:

\[e_i=\left(d_i-\frac{x-1}{2}\right) \text{ for } 0<i\le 4; \phantom{x}e_i=d_i \text{ otherwise}.\]

Furthermore, the same map is also a value-preserving bijection between $d'$-tuples such that  $x=d_1'+d_2'+d_3'+d_4'$ is odd, and $e'$-tuples such that $e_1'+e_2'+e_3'+e_4'$ is odd.

Given any $d$-tuple such that $d_1+d_2+d_3+d_4$ is not odd,
$$y=d_5+d_6+d_7+d_8-d_9-d_{10}-d_{11}-d_{12}\equiv d_5+d_6+d_7+d_8+d_9+d_{10}+d_{11}+d_{12}\pmod{2}$$
must be odd. If $y\equiv 1\pmod{4}$, we can map this tuple to a $d'$-tuple with the same value, using the map 
\small
\[d_i'=d_i \text{ for } 0<i\le 4;\phantom{x}d_i'=\left(d_i-\frac{y-1}{4}\right) \text{ for } 5<i\le 8;\phantom{x}d_i'=\left(d_i+\frac{y-1}{4}\right) \text{ for } 9<i\le 12.\]
\normalsize

This map is a value-preserving bijection between the set of $d$-tuples for which $y\equiv 1\pmod{4}$ and the set of $d'$-tuples for which $d_5'+d_6'+d_7'+d_8'-d_9'-d_{10}'-d_{11}'-d_{12}'\equiv 1\pmod{4}$. 

If $y\equiv 3\pmod{4}$, we can map this tuple to a $d'$-tuple with the same value, using the map 
\Small
\[d_i'=d_i \text{ for } 0<i\le 4;\phantom{x}d_i'=1-\left(d_{i+4}+\frac{y+1}{4}\right) \text{ for } 5<i\le 8;\phantom{x}d_i'=\left(-d_{i-4}+\frac{y+1}{4}\right) \text{ for } 9<i\le 12.\]
\normalsize

This map is a value-preserving bijection between the set of $d$-tuples for which $y\equiv 3\pmod{4}$ and the set of $d'$-tuples for which $d_5'+d_6'+d_7'+d_8'-d_9'-d_{10}'-d_{11}'-d_{12}'\equiv 3\pmod{4}$. So, together they form a value-preserving bijection between the set of $d$-tuples for which $d_5+d_6+d_7+d_8+d_9+d_{10}+d_{11}+d_{12}$ is odd and the set of $d'$-tuples for which $d_5'+d_6'+d_7'+d_8'+d_9'+d_{10}'+d_{11}'+d_{12}'$ is odd.

Furthermore, the exact same pair of maps forms a value-preserving bijection between the set of all $e$-tuples for which $e_5+e_6+e_7+e_8+e_9+e_{10}+e_{11}+e_{12}$ is odd and the set of $e'$-tuples for which $e_5'+e_6'+e_7'+e_8'+e_9'+e_{10}'+e_{11}'+e_{12}'$ is odd.

These partial bijections combine to give the desired bijection.
\end{proof}

\begin{theorem}\label{555}
Let $S$ be the set containing 8 copies of the positive integers that are not multiples of 4, and $T$ the set containing 8 copies of the positive integers that are not congruent to 2 modulo 4. Then, for any $N\geq 2$,
$$D_S(N)=8D_T(N-2).$$\end{theorem}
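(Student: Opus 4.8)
The plan is to deduce the identity directly from Theorem~\ref{main}, fed with the data supplied by Lemma~\ref{5}: one applies that theorem with $C=4$, $m=2$, $N_0=2$,
$$(A_1,\dots,A_{12})=(1,1,1,1,1,1,1,1,2,2,2,2),\qquad (B_1,\dots,B_{12})=(0,0,0,0,1,1,1,1,1,1,1,1).$$
Lemma~\ref{5} asserts precisely that condition (i) of Theorem~\ref{main} holds for these values, so by the equivalence stated there, condition (ii) holds as well. All that then remains is to unwind condition (ii) into the stated form, that is, to identify the sets $S$ and $T$ and the multiplicative constant $2^{p}$.

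First I would read off $S$. Eight of the $A_i$ equal $1$; since the residues $\pm 1$ modulo $4$ are exactly $1$ and $3$, these eight indices together place eight copies of every odd positive integer into $S$. The remaining four $A_i$ equal $2$, and here $2\equiv -2\pmod 4$, so each such index contributes \emph{two} copies of the positive integers congruent to $2$ modulo $4$ --- eight copies in all. Hence $S$ is exactly eight copies of every positive integer not divisible by $4$. The same bookkeeping applied to the $B_i$ shows that the four indices with $B_i=0$ put in eight copies of the multiples of $4$ (again two per index, since $0\equiv -0$), while the eight indices with $B_i=1$ put in eight copies of the odd positive integers; thus $T$ is exactly eight copies of every positive integer not congruent to $2$ modulo $4$. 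Note that no $A_i$ vanishes while some $B_i$ do, so, in accordance with the convention in Theorem~\ref{main}, the partitions counted by $D_S$ are required to have an odd number of parts whereas those counted by $D_T$ are unrestricted.

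Finally, using the convention that the empty set has size $1$, one has $p=|\{B_i=0\}|-|\{A_i=0\}|=4-1=3$, so $2^{p}=8$, and $m=2$; hence condition (ii) of Theorem~\ref{main} reads $D_S(N)=8\,D_T(N-2)$ for every $N\ge N_0=2$, which is exactly the claim. The only place that calls for any care is the doubling of copies that occurs for the residues $0$ and $C/2=2$ when forming $S$ and $T$; beyond that, the argument is a pure unwinding of definitions, so I expect no genuine obstacle here.
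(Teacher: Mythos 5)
Your proposal is correct and follows exactly the paper's route: the paper also derives Theorem \ref{555} directly from Theorem \ref{main} applied with the data of Lemma \ref{5}, the only content being the bookkeeping you carry out (the residues $\pm A_i$, $\pm B_i$ modulo $4$ with the doubling at the residues $0$ and $C/2$, the odd-number-of-parts convention for $D_S$, and $p=4-1=3$ giving the factor $2^p=8$). Nothing further is needed.
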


\begin{proof}
Straightforward from Theorem \ref{main} and Lemma \ref{5}.
\end{proof}

\begin{lemma}\label{6}
Condition (i) of Theorem \ref{main} holds for $N_0=1$,  $C=4$, $m=1$, and
$$(A_1,\dots,A_{12})=(0,0,0,1,1,1,1,2,2,2,2,2), (B_1,\dots,B_{12})=(0,0,0,0,0,1,1,1,1,2,2,2).$$
\end{lemma}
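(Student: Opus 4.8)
The plan is to derive Lemma \ref{6} from a previously established lemma by an application of Lemma \ref{lemma4}, exactly in the style of the proof of Lemma \ref{2} from Lemma \ref{1}. Concretely, I would look for a ``four-variable'' system, i.e. integers $C^\ast$, $A^\ast_1,\dots,A^\ast_4$, $B^\ast_1,\dots,B^\ast_4$, and $m^\ast$ satisfying the hypotheses of Lemma \ref{lemma4}, so that the two $12$-tuples produced by that lemma are (after the harmless renumbering built into the statement) the pair $(A_i),(B_i)$ of Lemma \ref{6} on one side and the pair $(A'_i),(B'_i)$ of some already-proved lemma on the other. Reading off the target tuples $(A_1,\dots,A_{12})=(0,0,0,1,1,1,1,2,2,2,2,2)$ and $(B_1,\dots,B_{12})=(0,0,0,0,0,1,1,1,1,2,2,2)$, these are \emph{not} in the ``three identical blocks'' form that is the output of Lemma \ref{lemma4} applied directly; rather, they look like the output of the \emph{other} implication of Lemma \ref{lemma4}, whose tuples have the shape $(B^\ast,A^\ast,A^\ast)$ and $(B^\ast,B^\ast,A^\ast)$. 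Matching block by block forces $B^\ast=(0,0,0,1)$ (wait --- the first block of $A$ is $(0,0,0,1)$ and of $B$ is $(0,0,0,0)$), so one reads $B^\ast_i=(0,0,0,0)$ is impossible; instead the correct reading, after the renumbering, is $A^\ast=(1,1,1,2)$, $B^\ast=(0,0,0,1)$, $C^\ast=4$ (so $A^\ast_i+B^\ast_{5-i}=2=C^\ast/2$ checks out: $A^\ast_1+B^\ast_4=1+1$, $A^\ast_2+B^\ast_3=1+1$, $A^\ast_3+B^\ast_2=1+0$? no), so in fact I would first hunt for the right $C^\ast$ and pairing; the natural guess is $C^\ast=4$, $m^\ast=1$, with $A^\ast_1+A^\ast_4=A^\ast_2+A^\ast_3=C^\ast/2+m^\ast=3$.

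Having pinned down the four-variable data, the core of the proof is then purely bookkeeping: verify that $0\le A^\ast_i\le C^\ast/2$, that $A^\ast_i+B^\ast_{5-i}=C^\ast/2$, that $A^\ast_1+A^\ast_4=A^\ast_2+A^\ast_3=C^\ast/2+m^\ast$, and that the second-smallest values of the relevant left-hand sides of equation \eqref{t} equal the smallest values of the corresponding right-hand sides (this last is the hypothesis of Lemma \ref{lemma4} that one must not forget to check; it is a short finite computation of the smallest two values of a sum of the form $C\sum\binom{d_i}{2}+\sum A_i d_i$ over tuples with odd sum). Then Lemma \ref{lemma4} says that condition (i) of Theorem \ref{main} holds for the block-replicated data $(A^\ast,A^\ast,A^\ast),(B^\ast,B^\ast,B^\ast)$ if and only if it holds for $(B^\ast,A^\ast,A^\ast),(B^\ast,B^\ast,A^\ast)$. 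One of these two $12$-tuple systems will be the content of Lemma \ref{6} (after renumbering the $12$ slots, which is legitimate since permuting the indices $1,\dots,12$ does not change any of the relevant quantities); the other must be identified with a lemma already proved in the list --- a natural candidate is Lemma \ref{5} (with $C=4$) or a renumbering of it, since that is the most recent $C=4$ result, or possibly Lemma \ref{2} re-scaled. I would confirm the identification by comparing the multisets $\{A_i\},\{B_i\}$ and the value of $m$, and then the proof is just two sentences: ``By Lemmas [\,$\cdot$\,] and \ref{lemma4}, \dots''.

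The step I expect to be the main obstacle is correctly identifying the ``$A'$, $B'$'' data and the already-established lemma to which Lemma \ref{lemma4} connects Lemma \ref{6} --- in particular getting the renumbering and the direction of the ``if and only if'' right, since the output tuples of Lemma \ref{lemma4} have a rigid block structure $(\ast,\ast,\ast)$, $(\ast_1,\ast_1,\ast_2)$, $(\ast_1,\ast_2,\ast_2)$ and the target $(0,0,0,1,1,1,1,2,2,2,2,2)$ must be massaged into one of these shapes by a permutation of coordinates. Once the combinatorial data is matched, the remaining verifications (the inequalities, the sum conditions $A^\ast_i+B^\ast_{5-i}=C^\ast/2$ and $A^\ast_1+A^\ast_4=A^\ast_2+A^\ast_3=C^\ast/2+m^\ast$, and the ``second-smallest equals smallest'' condition) are entirely routine finite checks, and the value $N_0=\min B^\ast_i+3m^\ast$ or $N_0=\min A^\ast_i$ prescribed by Lemma \ref{lemma4} will reconcile with the stated $N_0=1$. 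So the write-up will be short, modeled verbatim on the proof of Lemma \ref{2}: ``Straightforward from Lemmas [\,$\cdot$\,] and \ref{lemma4}.''
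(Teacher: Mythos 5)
Your route cannot work: the tuples of Lemma \ref{6} simply do not have the block structure that Lemma \ref{lemma4} requires, even after permuting the twelve coordinates. In the "primed'' output of Lemma \ref{lemma4} the $A'$-tuple is a permutation of $B^\ast\cup A^\ast\cup A^\ast$ and the $B'$-tuple of $B^\ast\cup B^\ast\cup A^\ast$ (as multisets), so if $a_0,b_0$ denote the number of zeros in $A^\ast,B^\ast$, matching $(A_1,\dots,A_{12})=(0,0,0,1,1,1,1,2,2,2,2,2)$ and $(B_1,\dots,B_{12})=(0,0,0,0,0,1,1,1,1,2,2,2)$ forces $b_0+2a_0=3$ and $2b_0+a_0=5$, whence $3(a_0+b_0)=8$, a contradiction; counting ones gives $a_1=b_1=4/3$, equally impossible. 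The "unprimed'' output fails even faster (the multiplicity $4$ of the entry $1$ is not divisible by $3$, and $m=1$ would force $m^\ast=1/3$), and swapping which side you try to match does not help. So there is no choice of $C^\ast,A^\ast_i,B^\ast_i,m^\ast$ at all: the step you defer as "once the combinatorial data is matched, the rest is routine'' is exactly the step that is impossible, and your own attempted matchings already ran into this without your drawing the conclusion.

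The paper's proof is correspondingly much heavier. It invokes Lemma \ref{2} (rescaled to $C=4$) together with Lemma \ref{lemma2} — not Lemma \ref{lemma4} — to reduce Lemma \ref{6} to condition (ii) of Lemma \ref{lemma2} for $(A'_1,\dots,A'_{12})=(0,0,0,0,2,2,2,2,2,2,2,2)$, $(B'_1,\dots,B'_{12})=(0,0,0,0,0,0,0,0,2,2,2,2)$, and then builds the required value-preserving bijection by hand: after discarding the odd-sum constraint via $d_1\mapsto 1-d_1$ and freezing six coordinates whose coefficients agree, it introduces an auxiliary set $X\subset\mathbb{Z}^6$ with its own value function and exhibits explicit four-variable quadratic-form bijections (maps of the shape $x_1=\frac{\pm(d_4+d_5-d_6-d_7)+1}{2}$, etc.) splitting into parity cases. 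Nothing in your proposal supplies, or even anticipates, this construction, so the proposal as written has a genuine gap and would not yield a proof of Lemma \ref{6}.
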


\begin{proof}

By Lemmas \ref{2} and \ref{lemma2}, this statement is equivalent to condition (ii) of Lemma \ref{lemma2} holding for $$(A'_1,\dots,A'_{12})=(0,0,0,0,2,2,2,2,2,2,2,2), (B'_1,\dots,B'_{12})=(0,0,0,0,0,0,0,0,2,2,2,2).$$ Note that $m'=2$, and $|S'_{k'}|=2|S_k|=8$. So, this is equivalent to the statement that $|Q_{N+k'}'|+2|R_{N+k}|= |R_{N+k'}'|+2|Q_{N+k}|$. Thus, it suffices to show that for each $(d_1,\dots,d_{12})\in 2Q_{N+k}$ or $(e_1',\dots,e_{12}')\in R_{N+k'}'$ there is a corresponding $(e_1,\dots,e_{12})\in 2R_{N+k}$ or $(d_1',\dots,d_{12}')\in Q_{N+k'}'$ and vice-versa.

Note that $d_1^{\ast}= 1-d_1$, $d_1''= 1-d_1'$, $e_1^{\ast}= 1-e_1$, and $e_1''= 1-e_1'$ are value-preserving bijections from the sets of all $d$-, $d'$-, $e$-, and $e'$-tuples with an odd sum, to the sets of all $d$-, $d'$-, $e$-, and $e'$-tuples with an even sum, respectively. So, the requirement that the tuples have an odd sum is irrelevant here and we can ignore it. 

Also, the coefficients of the first $3$ elements of each type of tuple are the same, and the coefficients of the last $3$ elements of each type of tuple are also the same. Thus, we can extend to the desired bijection any value-preserving bijection between the set containing $2$ copies of each tuple $(d_4,\dots,d_9)$ and a copy of each tuple $(e_4',\dots,e_9')$, and the set containing $2$ copies of each tuple $(e_4,\dots,e_9)$ and a copy of each tuple $(d_4',\dots,d_9')$, by having all maps leave the first three and last three elements of all tuples unchanged.

Now, let $X=\{(x_1,x_2,x_3,x_4,x_5,x_6)\in \mathbb{Z}^6\}$, and for each $x\in X$, assign $x$ a value of
\[4\sum_{i=1}^6 {x_i \choose 2}+0x_1+0x_2+0x_3+2x_4+2x_5+2x_6+1.\]

The map
$$x_1=\frac{\pm (d_4+d_5-d_6-d_7)+1}{2}, \phantom{x}x_2=\frac{d_4-d_5+d_6-d_7+1}{2}, \phantom{x}x_3=\frac{d_4-d_5-d_6+d_7+1}{2},$$
$$x_4=\frac{d_4+d_5+d_6+d_7-1}{2}, \phantom{x}x_5=d_8, \phantom{x}x_6=d_9$$
is a value-preserving bijection between the set of all copies of tuples $(d_4,\dots,d_9)$ for which $d_4+d_5+d_6+d_7$ is odd, and $X$. Similarly, the map
$$x_1=e_4, \phantom{x}x_2=e_5, \phantom{x}x_3=\frac{1\pm (e_6+e_7+e_8+e_9-1)}{2},$$
$$x_4=\frac{e_6+e_7-e_8-e_9}{2}, \phantom{x}x_5=\frac{e_6-e_7+e_8-e_9}{2}, \phantom{x}x_6=\frac{e_6-e_7-e_8+e_9}{2}$$
is a value-preserving bijection between the set of all copies of tuples $(e_4,\dots,e_9)$ for which $e_6+e_7+e_8+e_9$ is even, and $X$. Obviously, combining the two yields a value-preserving bijection between the set of all copies of tuples $(d_4,\dots,d_9)$ for which $d_4+d_5+d_6+d_7$ is odd and the set of all copies of tuples $(e_4,\dots,e_9)$ for which $e_6+e_7+e_8+e_9$ is even.

Also, the map
$$d_4'=\frac{1\pm(d_4+d_5+d_6+d_7-1)}{2},\phantom{x}d_5'=\frac{d_4+d_5-d_6-d_7}{2},\phantom{x}d_6'=\frac{d_4-d_5+d_6-d_7}{2},$$
$$d_7'=\frac{d_4-d_5-d_6+d_7}{2},\phantom{x}d_8'=d_8,\phantom{x}d_9'=d_9$$
is a value-preserving bijection between the set of all copies of tuples $(d_4,\dots,d_9)$ for which $d_4+d_5+d_6+d_7$ is even and the set of all tuples $(d_4',\dots,d_9')$. Similarly, the map
$$e_4'=e_4,\phantom{x}e_5'=e_5,\phantom{x}e_6'=\frac{1\pm(e_6+e_7-e_8-e_9)}{2},$$
$$e_7'=\frac{e_6-e_7+e_8-e_9+1}{2},\phantom{x}e_8'=\frac{e_6-e_7-e_8+e_9+1}{2},\phantom{x}e_9'=\frac{e_6+e_7+e_8+e_9-1}{2}$$
is a value-preserving bijection between the set of all copies of tuples $(e_4,\dots,e_9)$ for which $e_6+e_7+e_8+e_9$ is odd and the set of tuples $(e_4',\dots,e_9')$.

Combining these bijections and then extending them to $\mathbb{Z}^{12}$ yields the desired bijection.
\end{proof}

\begin{theorem}\label{666}
Let $S$ be the set containing 4 copies of the odd positive integers, 6 copies of the positive  multiples of 4, and 10 copies of the positive  integers that are congruent to 2 modulo 4; let $T$ the set containing 4 copies of the odd positive integers, 10 copies of the positive  multiples of 4, and 6  copies of the positive  integers that are congruent to 2 modulo 4. Then, for any $N\geq 1$,
$$D_S(N)=4D_T(N-1).$$\end{theorem}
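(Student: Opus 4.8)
The plan is to obtain Theorem \ref{666} in exactly the way the preceding Theorems \ref{111}, \ref{222}, \ref{333}, \ref{444}, and \ref{555} were obtained: by feeding the parameters of Lemma \ref{6} into the equivalence $(i)\Leftrightarrow(ii)$ of Theorem \ref{main}. Lemma \ref{6} asserts that condition (i) of Theorem \ref{main} holds with $N_0=1$, $C=4$, $m=1$, and $(A_1,\dots,A_{12})=(0,0,0,1,1,1,1,2,2,2,2,2)$, $(B_1,\dots,B_{12})=(0,0,0,0,0,1,1,1,1,2,2,2)$; hence condition (ii) holds, namely $D_S(N)=2^p\,D_T(N-m)$ for every $N\geq N_0$. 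So the whole task reduces to writing out $S$, $T$, $p$, and $m$ explicitly for these parameters.

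First I would unwind the definition of $S$. For each index $i$ the recipe adjoins to $S$ one copy of the positive integers $\equiv A_i\pmod 4$ and one copy of those $\equiv -A_i\pmod 4$; when $A_i\in\{0,2\}$ these two residue classes coincide, so the corresponding class is taken with multiplicity two, whereas for $A_i=1$ the classes $1$ and $3$ modulo $4$ together exhaust the odd positive integers. Thus the three indices with $A_i=0$ give $6$ copies of the multiples of $4$, the four indices with $A_i=1$ give $4$ copies of the odd positive integers, and the five indices with $A_i=2$ give $10$ copies of the integers $\equiv 2\pmod 4$; this is exactly the set $S$ in the statement. Running the identical bookkeeping on $(B_1,\dots,B_{12})$---five zeros, four ones, three twos---yields $10$ copies of the multiples of $4$, $4$ copies of the odd positive integers, and $6$ copies of the integers $\equiv 2\pmod 4$, which is the stated $T$.

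It then remains only to determine the exponent and the shift. Since there are indices with $A_i=0$ and indices with $B_i=0$, neither the hypothesis that no $A_i$ is zero nor the hypothesis that no $B_i$ is zero is satisfied, so the requirement of an odd number of parts in Theorem \ref{main} is vacuous on both sides; and $p=|\{i:B_i=0\}|-|\{i:A_i=0\}|=5-3=2$, so $2^p=4$. Together with $m=1$ this gives $D_S(N)=4\,D_T(N-1)$ for all $N\geq 1$, which is the assertion.

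I do not expect a genuine obstacle here: all the combinatorial substance of the identity is already carried by Lemma \ref{6}, whose proof runs---through Lemmas \ref{2} and \ref{lemma2}---via an explicit chain of value-preserving bijections on tuples of integers and half-integers. The only point demanding a little care in the present deduction is the translation of coefficients into residue classes: one must recall that a coefficient strictly between $0$ and $C/2$ splits into two distinct classes whose union is the set of odd integers, while the extreme values $0$ and $C/2=2$ each contribute a single class counted twice, and one should confirm that the threshold $N_0=1$ provided by Lemma \ref{6} is precisely the range $N\geq 1$ claimed in the theorem.
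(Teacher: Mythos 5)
Your proposal is correct and is exactly the paper's argument: the paper's proof of Theorem \ref{666} is simply ``Straightforward from Theorem \ref{main} and Lemma \ref{6},'' and your write-up just makes explicit the same bookkeeping (translating the coefficients $0,1,2$ modulo $C=4$ into the multisets $S$ and $T$, computing $p=5-3=2$ so $2^p=4$, and using $m=1$, $N_0=1$). The multiplicity convention you use for $A_i=0$ and $A_i=C/2$ matches the one implicit in Theorems \ref{111} and \ref{222}, so no gap remains.
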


\begin{proof}
Straightforward from Theorem \ref{main} and Lemma \ref{6}.
\end{proof}

\begin{lemma}\label{7}
Condition (i) of Theorem \ref{main} holds for $N_0=1$,  $C=4$, $m=1$, and
$$(A_1,\dots,A_{12})=(0,0,1,1,1,1,1,1,1,1,2,2), (B_1,\dots,B_{12})=(0,0,0,0,0,1,1,1,1,2,2,2).$$
\end{lemma}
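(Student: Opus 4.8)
The plan is to follow the template of the proofs of Lemmas \ref{3}--\ref{6}: combine a previously proven identity with Lemma \ref{lemma2} to turn the claim into an explicit combinatorial identity, and then establish that identity by exhibiting value-preserving bijections. The natural partner here is Lemma \ref{6}, which has the same modulus $C=4$, the same shift $m=1$, and exactly the same $B$-vector $(0,0,0,0,0,1,1,1,1,2,2,2)$. Writing $A^{(6)}=(0,0,0,1,1,1,1,2,2,2,2,2)$ and $A^{(7)}=(0,0,1,1,1,1,1,1,1,1,2,2)$ for the two $A$-vectors, one checks that in both cases the minimum of $4\sum_i\binom{d_i}{2}+\sum_i A_i d_i$ over integer $12$-tuples of odd sum is $0$, attained only at the $0/1$-tuples supported on the coordinates whose coefficient is $0$; so in the language of Lemma \ref{lemma1} we get $k=k'=0$, with $|S_k|=2$ for the present statement and $4$ for Lemma \ref{6}. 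Because the two $B$-data agree, the sets $R_N$ and $R'_N$ of Lemma \ref{lemma2} coincide, and condition (ii) of that lemma collapses to
$$2\,|Q^{(7)}_N| = |Q^{(6)}_N| + |R_N| \qquad\text{for all }N,$$
where $Q^{(7)}_N$, $Q^{(6)}_N$ count the integer tuples (all partitions empty) on the two $A$-sides and $R_N$ is the common $B$-side.

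Next I would pass to the ``common set'' picture. The pair $(A^{(6)},B)$ is complementary, $A^{(6)}_i+B_{13-i}=C/2$ for all $i$, and $\sum_i A^{(6)}_i/2-3C/2=7-6=1$ equals $m$; hence Lemma \ref{lemma3} identifies $R_N$ with the set of half-integer $12$-tuples of odd sum whose value $4\sum_i\binom{d_i}{2}+\sum_i A^{(6)}_i d_i$ equals $N$, while $Q^{(6)}_N$ is the corresponding set of integer tuples. Thus $Q^{(6)}_N\sqcup R_N$ is exactly the set of all $d\in\mathbb{Z}^{12}\cup(\mathbb{Z}+\frac{1}{2})^{12}$ of odd sum with $A^{(6)}$-value $N$, and the goal becomes: \emph{two copies of the integer $12$-tuples of odd sum with $A^{(7)}$-value $N$ are in bijection with all tuples of $\mathbb{Z}^{12}\cup(\mathbb{Z}+\frac{1}{2})^{12}$ of odd sum with $A^{(6)}$-value $N$.} On integer tuples the value is a block-by-block sum of the one-variable functions $4\binom{d}{2}+cd=2d^{2}-cd$ for $c\in\{0,1,2\}$, and each such function is invariant under an affine reflection ($d\mapsto 1-d$ for $c=0$, $d\mapsto\frac{1}{2}-d$ for $c=1$, $d\mapsto -d$ for $c=2$), the first and last keeping the type integer and the middle one interchanging $\mathbb{Z}$ with $\mathbb{Z}+\frac{1}{2}$. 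I would use these reflections, together with coordinate permutations inside each block, to sort all tuples by the residue modulo $C=4$ of a suitable signed block-sum, cancel all but finitely many of them in matched pairs of opposite type and equal value, and then match the remaining near-minimum tuples explicitly, checking the $2:1$ multiplicity — exactly the two-phase shape of the proofs of Lemmas \ref{5} and \ref{6}, but now for the block partitions $\{1,2\}\cup\{3,\dots,10\}\cup\{11,12\}$ of $A^{(7)}$ and $\{1,2,3\}\cup\{4,5,6,7\}\cup\{8,\dots,12\}$ of $A^{(6)}$.

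The main obstacle is that, unlike in every earlier lemma, the pair $(A^{(7)},B)$ is \emph{not} complementary: $\sum_i (A^{(7)}_i+B_i)=22\ne 24=6\cdot(C/2)$, so one cannot regard the $d$- and $e$-tuples of the present statement as living in a single set, and the ``$\sum d_i$ odd'' condition cannot be shed on the $A^{(7)}$-side as cleanly as usual. The real content is therefore the factor-of-$2$ matching against the \emph{integer-and-half-integer} set on the $A^{(6)}$-side: one must find an honest change of variables carrying the length-$8$ block of $A^{(7)}$ onto the length-$4$ and length-$5$ blocks of $A^{(6)}$ while absorbing the gap between the two minima, and the bookkeeping of residue classes and exceptional tuples is noticeably heavier than in Lemmas \ref{5}--\ref{6}. (Should the reduction to Lemma \ref{6} prove awkward, one can pair instead with the $C=4$ rescaling of Lemma \ref{2} or with Lemma \ref{5}; the same generic-involution-plus-boundary-matching strategy applies, at the price of a less symmetric form of condition (ii).) In any event I expect the residue-class involution to eliminate all but $O(1)$ tuples, so the crux comes down to choosing the right linear forms modulo $4$ and verifying the small boundary bijection by direct enumeration.
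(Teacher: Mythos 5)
Your reduction step is exactly the paper's: pair the statement with Lemma \ref{6} via Lemma \ref{lemma2}, note $k=k'=0$ with multiplicities $2$ and $4$, use $B=B'$ and $m=m'=1$, and arrive at the identity $2|Q^{(7)}_N|=|Q^{(6)}_N|+|R_N|$ (the paper writes it as $|Q'_{N+k'}|+2|R_{N+k}|=|R'_{N+k'}|+2|Q_{N+k}|$ and cancels one copy of $R$ against $R'$ by the identity map). Up to that point the proposal is correct and identical in substance to the published argument.

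The gap is that the actual content of the lemma --- an explicit value-preserving bijection realizing this identity --- is never constructed, and the plan you sketch in its place does not reflect what is needed. The paper never invokes Lemma \ref{lemma3} or half-integer tuples here: it first discards the odd-sum condition via $d_1\mapsto 1-d_1$ (legitimate because every tuple type has a coordinate with coefficient $0$), and then splits the two copies of each $d$-tuple by the parity of $d_3+d_4+d_5+d_{10}$, sending the odd-parity case onto the remaining copy of the $e$-tuples and the even-parity case onto the $d'$-tuples by explicit half-sum substitutions on the four coordinates $(d_3,d_4,d_5,d_{10})$ (the quadratic-form change of variables already used in Lemma \ref{6}), with the $\pm$ sign in the first coordinate absorbing the factor of two. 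Your proposal supplies no such maps; it only promises ``the right linear forms modulo $4$'' and a residue-class involution. Moreover, two of your guiding expectations are off: the leftover families after any such involution are infinite and must be matched by explicit linear substitutions, not by ``direct enumeration'' of $O(1)$ boundary tuples; and the non-complementarity of $(A^{(7)},B)$, which you single out as the main obstacle, is a non-issue, since the argument never needs to merge Lemma \ref{7}'s $d$- and $e$-tuples into one common set --- only the previously established Lemma \ref{6} data enter through Lemma \ref{lemma2}. (Minor point: $4\binom{d}{2}+cd=2d^2-(2-c)d$, not $2d^2-cd$, though the reflections you list are the correct ones.) As it stands, the proposal proves the easy equivalence but leaves the bijection --- the actual theorem --- unproved.
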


\begin{proof}
By Lemmas \ref{6} and  \ref{lemma2}, this statement is equivalent to condition (ii) of Lemma \ref{lemma2} holding for $$(A'_1,\dots,A'_{12})=(0,0,0,1,1,1,1,2,2,2,2,2), (B'_1,\dots,B'_{12})=(0,0,0,0,0,1,1,1,1,2,2,2).$$ Note that $m'=1$, and $|S'_{k'}|=2|S_k|=4$. So, this is equivalent to the statement that $|Q_{N+k'}'|+2|R_{N+k}|=|R_{N+k'}'|+2|Q_{N+k}|$. Thus, it suffices to show that for each $(d_1,\dots,d_{12})\in 2Q_{N+k}$ or $(e_1',\dots,e_{12}')\in R_{N+k'}'$ there is a corresponding $(e_1,\dots,e_{12})\in 2R_{N+k}$ or $(d_1',\dots,d_{12}')\in Q_{N+k'}'$ and vice-versa.

Obviously, the identity map is a value-preserving bijection between the set of $e'$-tuples and the set containing one copy of each $e$-tuple. The maps $d_1^{\ast}= 1-d_1$, $d_1''= 1-d_1'$, and $e_1^{\ast}= 1-e_1$ are value-preserving bijections from the sets of all $d$-, $d'$-, and $e$-tuples with an odd sum, to the sets of all $d$-, $d'$-, and $e$-tuples with an even sum, respectively. So, the requirement that the tuples have an odd sum is irrelevant  and we can ignore it.

The following  is a value-preserving bijection between the set of all copies of $d$-tuples such that $d_3+d_4+d_5+d_{10}$ is odd, and the set containing the other copy of each $e$-tuple:

\[
 e_i=
  \begin{cases}
    &\frac{1\pm(d_3+d_4-d_5-d_{10})}{2} \phantom{x}\text{ for }i=3,\\
    &\frac{d_3-d_4+d_5-d_{10}+1}{2} \phantom{x}\text{ for }i=4,\\
    &\frac{d_3-d_4-d_5+d_{10}+1}{2} \phantom{x}\text{ for }i=5,\\
    &\frac{d_3+d_4+d_5+d_{10}-1}{2} \phantom{x}\text{ for }i=10,\\
    &d_i \phantom{x}\text{ otherwise}.
  \end{cases}
\]

Similarly, the following map is a value-preserving bijection between the set of all copies of $d$-tuples such that $d_3+d_4+d_5+d_{10}$ is even and the set of all $d'$-tuples:

\[
 d'_i=
  \begin{cases}
    &\frac{1\pm(d_3+d_4+d_5+d_{10}-1)}{2} \phantom{x}\text{ for }i=3,\\
    &\frac{d_3+d_4-d_5-d_{10}}{2} \phantom{x}\text{ for }i=8,\\
    &\frac{d_3-d_4+d_5-d_{10}}{2} \phantom{x}\text{ for }i=9,\\
    &\frac{d_3-d_4-d_5+d_{10}}{2} \phantom{x}\text{ for }i=10,\\
    &d_{i+2} \phantom{x}\text{ for }4\le i\le 7,\\
    &d_i \phantom{x}\text{ otherwise}.
  \end{cases}
\]

Combining these maps yields the desired bijection.
\end{proof}

\begin{theorem}\label{777}
Let $S$ be the set containing 8 copies of the odd positive integers and 4 copies of the even positive  integers, and $T$ the set containing 4 copies of the odd positive integers, 10 copies of the positive  multiples of 4, and 6  copies of the positive  integers that are congruent to 2 modulo 4. Then, for any $N\geq 1$,
$$D_S(N)=8D_T(N-1).$$
\end{theorem}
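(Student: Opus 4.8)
The plan is to obtain this identity as an immediate consequence of the equivalence in Theorem~\ref{main}, applied to the combinatorial data produced by Lemma~\ref{7}. That lemma already establishes condition~(i) of Theorem~\ref{main} for $N_0=1$, $C=4$, $m=1$, and
$$(A_1,\dots,A_{12})=(0,0,1,1,1,1,1,1,1,1,2,2),\qquad (B_1,\dots,B_{12})=(0,0,0,0,0,1,1,1,1,2,2,2),$$
so the whole argument reduces to reading off condition~(ii) of Theorem~\ref{main} for this data and checking that the resulting statement is exactly the one in the theorem.

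First I would unwind the definition of $S$ for these $A_i$ with $C=4$. The two indices with $A_i=0$ each supply two copies of the residue class $\{n\equiv 0\pmod 4\}$, giving four copies of the positive multiples of~$4$; the eight indices with $A_i=1$ each supply one copy of $\{n\equiv 1\}$ together with one copy of $\{n\equiv 3\}$, hence eight copies of the odd positive integers; and the two indices with $A_i=2$ each supply two copies of $\{n\equiv 2\pmod 4\}$, giving four copies of the integers congruent to $2$ modulo~$4$. Merging the four copies of multiples of~$4$ with the four copies of integers $\equiv 2\pmod 4$ into four copies of the even positive integers, one sees that $S$ is precisely ``$8$ copies of the odd positive integers and $4$ copies of the even positive integers''. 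The identical computation for the $B_i$ identifies $T$ with ``$4$ copies of the odd positive integers, $10$ copies of the positive multiples of~$4$, and $6$ copies of the positive integers congruent to $2$ modulo~$4$'', as in the statement. Since $A_1=A_2=0$ and $B_1=\dots=B_5=0$, the ``odd number of parts'' clause in the definitions of $D_S$ and $D_T$ is vacuous, which matches the unqualified use of $D_S$ and $D_T$ here.

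Finally I would evaluate the remaining constants in condition~(ii): one has $m=1$ and $p=|\{B_i=0\}|-|\{A_i=0\}|=5-2=3$, so $2^p=8$, and Theorem~\ref{main}(ii) therefore reads $D_S(N)=8\,D_T(N-1)$ for all $N\ge N_0=1$, which is the claim. There is no genuine obstacle at this step: all of the real content has already been absorbed by Lemma~\ref{7}, which in turn rests, through Lemmas~\ref{6} and~\ref{2}, on Lemma~\ref{lemma4} together with the orthogonal-vector involution of Lemma~\ref{1}. The one point that warrants care is the multiplicity bookkeeping of the second paragraph — in particular, that an index with $A_i\in\{0,C/2\}$ contributes \emph{two} copies of a single residue class whereas an index with $0<A_i<C/2$ contributes one copy of the pair of classes $n\equiv\pm A_i$ — since a stray factor of~$2$ there is the only realistic way the translation could go wrong.
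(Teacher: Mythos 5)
Your proposal is correct and is exactly the paper's route: the paper proves Theorem \ref{777} by citing Theorem \ref{main} together with Lemma \ref{7}, and your unwinding of condition (ii) — including the doubling convention for indices with $A_i\in\{0,C/2\}$ and the computation $p=5-2=3$, $2^p=8$, $m=1$, $N_0=1$ — matches the paper's (unstated) bookkeeping, as one can confirm against the analogous translations in Theorems \ref{111} and \ref{444}.
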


\begin{proof}
Straightforward from Theorem \ref{main} and Lemma \ref{7}.
\end{proof}



Combining the last two  partition identities, we immediately have:

\begin{theorem}\label{7'''}
Let $S$ be the set containing 8 copies of the odd positive integers and 4 copies of the even positive  integers, and $T$ the set containing 4 copies of the odd positive integers, 6 copies of the positive  multiples of 4, and 10 copies of the positive  integers that are congruent to 2 modulo 4. Then, for any $N\geq 1$,
$$D_S(N)=2D_T(N).$$\end{theorem}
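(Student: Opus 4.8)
The plan is to derive this identity purely formally from Theorems \ref{666} and \ref{777}, exactly as the sentence preceding the statement promises. First I would observe that the auxiliary set appearing on the right-hand side of both of those theorems is literally the same: in each case it consists of $4$ copies of the odd positive integers, $10$ copies of the positive multiples of $4$, and $6$ copies of the positive integers congruent to $2$ modulo $4$. Call this common set $U$. I would also note that the set $S$ of the present theorem ($8$ copies of the odd positive integers together with $4$ copies of the even positive integers) is exactly the set $S$ of Theorem \ref{777}, and that the set $T$ of the present theorem ($4$ copies of the odds, $6$ copies of the multiples of $4$, and $10$ copies of the integers congruent to $2$ modulo $4$) is exactly the set $S$ of Theorem \ref{666}.

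With these identifications in place, the two cited theorems read $D_S(N)=8\,D_U(N-1)$ and $D_T(N)=4\,D_U(N-1)$, both valid for all $N\ge 1$. I would then eliminate the common quantity $D_U(N-1)$: since $D_S(N)=8\,D_U(N-1)=2\cdot\bigl(4\,D_U(N-1)\bigr)=2\,D_T(N)$, the desired equality $D_S(N)=2\,D_T(N)$ follows immediately for every $N\ge 1$. No bijection, involution, or appeal to Theorem \ref{main} is needed beyond what was already used to establish the two input identities.

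The only thing that requires care — and the sole place an error could creep in — is the bookkeeping of multiplicities when matching the three sets $S$, $T$, and $U$ across the three theorems, since the copy-counts on the residue classes of odds, of $0$ modulo $4$, and of $2$ modulo $4$ must agree exactly. I would verify each of the three identifications class by class before concatenating the two equations. Once that matching is confirmed, the conclusion is purely arithmetic, and there is no genuine obstacle: the content of the result lies entirely in Lemmas \ref{6} and \ref{7} and the theorems they produce.
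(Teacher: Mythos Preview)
Your proposal is correct and is exactly the approach the paper takes: the paper's proof consists of the single line ``Straightforward from Theorems \ref{666} and \ref{777},'' and you have simply spelled out the elimination of the common set $U$ explicitly. The bookkeeping you describe (matching the sets $S$, $T$, and $U$ class by class) is accurate, so there is nothing to add.
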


\begin{proof}
Straightforward from Theorems \ref{666} and \ref{777}.
\end{proof}

\begin{lemma}\label{8}
Condition (i) of Theorem \ref{main} holds for $N_0=1$,  $C=6$, $m=1$, and
$$(A_1,\dots,A_{12})=(0,0,1,1,1,1,2,2,3,3,3,3), (B_1,\dots,B_{12})=(0,0,0,0,1,1,2,2,2,2,3,3).$$
\end{lemma}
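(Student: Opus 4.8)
The plan is to proceed exactly as in the proofs of Lemmas \ref{3}, \ref{4}, \ref{6}, and \ref{7}: reduce to condition (ii) of Lemma \ref{lemma2} via an already-established identity, then build an explicit value-preserving bijection between the relevant $Q$- and $R$-sets. Concretely, I would apply Lemma \ref{lemma2} together with Lemma \ref{7} (whose coefficient vector $(0,0,1,1,1,1,1,1,1,1,2,2)$, $(0,0,0,0,0,1,1,1,1,2,2,2)$ shares many entries with the target), so that the primed data becomes some explicit $(A_1',\dots,A_{12}')$, $(B_1',\dots,B_{12}')$ whose associated $k'$, $m'$ can be read off as in the earlier lemmas. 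The first step is thus the bookkeeping: identify $m'$, compute $k$ and $k'$, verify that $|S_k|$ and $|S_{k'}'|$ stand in a small integer ratio (presumably $|S_{k'}'| = c\,|S_k|$ for a small $c$), and reduce the claim to an equality of the form $c|Q_{N+k'}'| + |R_{N+k}| = c|R_{N+k'}'| + |Q_{N+k}|$ (or with the roles reversed, depending on the sign of $k-k'$).

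Next, following Lemma \ref{lemma3}, I would transfer everything into the ambient set $D = \{d \in \mathbb{Z}^{12}\cup(\mathbb{Z}+\tfrac12)^{12} : \sum d_i \in 2\mathbb{Z}+1\}$ and the companion set $D'$ with its own value function, so that both $d$-tuples/$e$-tuples and $d'$-tuples/$e'$-tuples live on the same underlying lattice and the bijection to be constructed is literally a (piecewise-defined, value-preserving) map of tuples that toggles ``integer type'' versus ``half-integer type'' in the right proportions. Because $C=6$ here and the coefficient pattern splits the twelve coordinates into blocks (the pairs with coefficient $0$, the quadruple with coefficient $1$, the pair with coefficient $2$, the quadruple with coefficient $3$, up to the $A_i + B_{13-i} = C/2$ symmetry), I expect the bijection to be assembled blockwise: coordinates with equal coefficients on both sides can be left fixed (as in Lemma \ref{6}), and the remaining coordinates get a small linear involution — of the shape $d_i \mapsto d_i \mp (x-a)/6$ with a sign flip and possibly a block swap, exactly the $x \equiv 1,3,5 \pmod 6$ case analysis used in Lemmas \ref{3} and \ref{4} — where $x$ is an appropriate signed sum of coordinates designed so that $C\sum \binom{d_i}{2} + \sum A_i d_i$ is preserved and the parity $x$ odd forces the type toggle.

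The main obstacle, as in the previous lemmas of this section, will be choosing the correct signed linear combination $x$ of the coordinates and the correct block decomposition so that (a) the resulting maps are genuinely value-preserving for the specific coefficient vectors of this lemma, (b) they are involutions (or pair up into mutually inverse bijections, after the usual cosmetic adjustment of replacing $x-1$ by $x+1$ in one branch), and (c) they move tuples between the integer and half-integer types in precisely the multiplicities demanded by the ratio $|S_{k'}'|/|S_k|$ computed in the first step — the ``one or three quadruples change type'' parity argument of Lemma \ref{lemma4} is the template. Once the right $x$ and block structure are fixed, verification of each branch is a routine computation using $\binom{a}{2} + \binom{b}{2}$-type identities and the fact that $\sum d_i$ odd controls the type, so the real content is the combinatorial design of the piecewise map; I would first handle the generic case and then check the degenerate possibilities (e.g. $\min B_i' = 0$) separately, as the earlier proofs do.
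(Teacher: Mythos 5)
There is a genuine gap here, on two levels. First, the intended (and essentially one-line) proof is missed: the coefficient vectors of Lemma \ref{8} are exactly the ``mixed'' vectors produced by Lemma \ref{lemma4} from $C^\ast=6$, $m^\ast=1$, $A^\ast=(1,1,3,3)$, $B^\ast=(0,0,2,2)$. Indeed these satisfy $A^\ast_i+B^\ast_{5-i}=3=C^\ast/2$ and $A^\ast_1+A^\ast_4=A^\ast_2+A^\ast_3=4=C^\ast/2+m^\ast$; the tripled pattern $(A^\ast,A^\ast,A^\ast)$, $(B^\ast,B^\ast,B^\ast)$ with $m=3m^\ast=3$ is precisely Lemma \ref{4}, while the mixed pattern $(B^\ast,A^\ast,A^\ast)=(0,0,2,2,1,1,3,3,1,1,3,3)$, $(B^\ast,B^\ast,A^\ast)=(0,0,2,2,0,0,2,2,1,1,3,3)$ with $m=m^\ast=1$ is, after reordering coordinates, exactly the statement of Lemma \ref{8}. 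So no new bijection is needed at all: Lemma \ref{4} plus Lemma \ref{lemma4} gives the result immediately, which is the paper's proof.

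Second, the route you do propose is both technically shaky and incomplete. Lemma \ref{7} is a $C=4$ statement, whereas Lemma \ref{8} has $C=6$; Lemma \ref{lemma2} compares two systems sharing a single modulus $C$, so you cannot feed Lemma \ref{7} into it directly. One could in principle rescale both systems to a common modulus $C=12$ (as the paper does with $C=30$ in the proof of Lemma \ref{10}), but your proposal does not address this, and the resulting primed data, $k$, $k'$, $m'$ and the ratio $|S'_{k'}|/|S_k|$ are never computed. More importantly, the actual content of such a proof --- the explicit piecewise value-preserving involutions between the $Q$- and $R$-sets --- is left entirely unspecified: you describe only the expected shape of the maps (``some signed sum $x$, maps $d_i\mapsto d_i\mp(x-a)/6$, a case analysis mod $6$'') and acknowledge that choosing them is the real difficulty. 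As written, the argument therefore does not establish the lemma; either supply the bijection for your chosen reduction, or (much better) simply invoke Lemma \ref{lemma4} with the data above applied to Lemma \ref{4}.
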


\begin{proof}
Straightforward from Lemmas \ref{4} and  \ref{lemma4}.
\end{proof}

\begin{theorem}\label{888}
Let $S$ be the set containing 4 copies of the positive integers that are congruent to either 0 or $\pm 1$ modulo 6, 2 copies of the even positive integers that are not multiples of 3, and 8 copies of the odd positive multiples of 3; let $T$ be the set containing 4 copies of the positive integers that are congruent to either 3 or $\pm 2$ modulo 6, 2 copies of the  positive integers that are congruent to $\pm 1$ modulo 6, and 8 copies of the  positive multiples of 6. Then, for any $N\geq 1$,
$$D_S(N)=4D_T(N-1).$$\end{theorem}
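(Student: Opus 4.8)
The plan is to obtain the identity by a routine application of Theorem \ref{main} to the data supplied by Lemma \ref{8}, in the same way that Theorems \ref{111} through \ref{777} were deduced from their corresponding lemmas. First I would invoke Lemma \ref{8}: it guarantees that condition (i) of Theorem \ref{main} holds for $N_0=1$, $C=6$, $m=1$, with $(A_1,\dots,A_{12})=(0,0,1,1,1,1,2,2,3,3,3,3)$ and $(B_1,\dots,B_{12})=(0,0,0,0,1,1,2,2,2,2,3,3)$. By the equivalence of (i) and (ii) in Theorem \ref{main}, this immediately yields $D_S(N)=2^p\cdot D_T(N-m)$ for all $N\ge N_0$, where $S$, $T$, and $p=|\{B_i=0\}|-|\{A_i=0\}|$ are the quantities attached to these parameters in the statement of Theorem \ref{main}. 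Everything that remains is to translate $S$, $T$, $p$, and $m$ into the language of the present theorem.

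To do this I would unwind the definition of $S$ as the multiset that contains, for each index $i$, one copy of the positive integers $\equiv A_i$ and one copy of the positive integers $\equiv -A_i$ modulo $6$. The two indices with $A_i=0$ each contribute two copies of the multiples of $6$, for a total of $4$; the four indices with $A_i=1$ contribute $4$ copies of the residue class $1$ and $4$ of the residue class $5$; together these three classes account for the ``$4$ copies of the positive integers congruent to either $0$ or $\pm1$ modulo $6$''. The two indices with $A_i=2$ give $2$ copies each of the classes $2$ and $4$, i.e.\ ``$2$ copies of the even positive integers that are not multiples of $3$''; and the four indices with $A_i=3=C/2$ each contribute two copies of the class $3$ (since $-3\equiv 3\pmod 6$), giving ``$8$ copies of the odd positive multiples of $3$''. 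This is exactly the set $S$ of the theorem. The analogous computation with $(B_1,\dots,B_{12})$ produces $8$ copies of the multiples of $6$, $2$ copies each of the classes $1$ and $5$, and $4$ copies each of the classes $2$, $4$, and $3$, which is exactly the set $T$ of the theorem. Finally $p=4-2=2$, so $2^p=4$; and since $A_1=A_2=0$ while $B_1=\dots=B_4=0$, neither $D_S$ nor $D_T$ carries a parity restriction on the number of parts. Substituting $2^p=4$, $m=1$, $N_0=1$ into the conclusion of Theorem \ref{main} gives $D_S(N)=4\,D_T(N-1)$ for all $N\ge1$, as desired.

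Since the genuine combinatorial content — the bijection underlying condition (i) — is already packaged in Lemma \ref{8} (which itself rests on Lemmas \ref{4} and \ref{lemma4}), there is no real obstacle in this argument. The only subtle point is keeping accurate track of the ``doubling'' that occurs for the residue classes $0$ and $C/2=3$ when the $A_i$ and $B_i$ are converted into multisets of positive integers, and then matching the resulting multisets of residues modulo $6$ with the phrasings ``congruent to either $0$ or $\pm1$'', ``congruent to either $3$ or $\pm2$'', ``odd positive multiples of $3$'', and so on used in the statement.
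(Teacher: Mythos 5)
Your proposal is correct and is essentially the paper's own proof: the paper simply cites Theorem \ref{main} together with Lemma \ref{8}, and your translation of the parameters $(A_i)$, $(B_i)$, $C=6$, $m=1$, $p=4-2=2$ into the multisets $S$, $T$ and the factor $2^p=4$ (including the doubling for the residue classes $0$ and $C/2=3$) matches the conventions used throughout the paper's other theorems.
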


\begin{proof}
Straightforward from Theorem \ref{main} and Lemma \ref{8}.
\end{proof}

\begin{lemma}\label{9}
Condition (i) of Theorem \ref{main} holds for $N_0=2$,  $C=6$, $m=1$, and
$$(A_1,\dots,A_{12})=(1,1,1,1,2,2,2,2,2,2,2,2), (B_1,\dots,B_{12})=(1,1,1,1,1,1,1,1,2,2,2,2).$$
\end{lemma}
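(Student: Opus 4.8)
The plan is to obtain this lemma from Lemma \ref{3} by a single application of the transfer result Lemma \ref{lemma4}, exactly in the way Lemma \ref{2} was deduced from Lemma \ref{1} and Lemma \ref{8} from Lemma \ref{4}.

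First I would pin down the starred parameters. Take $C^\ast=6$, $(A^\ast_1,A^\ast_2,A^\ast_3,A^\ast_4)=(2,2,2,2)$, $(B^\ast_1,B^\ast_2,B^\ast_3,B^\ast_4)=(1,1,1,1)$ and $m^\ast=1$. One checks at once that $A^\ast_i+B^\ast_{5-i}=3=C^\ast/2$, that $0\le A^\ast_i\le C^\ast/2$, and that $A^\ast_1+A^\ast_4=A^\ast_2+A^\ast_3=4=C^\ast/2+m^\ast$. With these choices the ``first'' coefficient vectors in Lemma \ref{lemma4} are $(A_1,\dots,A_{12})=(2,\dots,2)$ and $(B_1,\dots,B_{12})=(1,\dots,1)$, with $m=3m^\ast=3$ and $N_0=\min B^\ast_i+3m^\ast=4$, which is precisely the statement of Lemma \ref{3}; while the ``second'' coefficient vectors come out to be $(A'_1,\dots,A'_{12})=(1,1,1,1,2,2,2,2,2,2,2,2)$ and $(B'_1,\dots,B'_{12})=(1,1,1,1,1,1,1,1,2,2,2,2)$, with $m=m^\ast=1$ and $N_0=\min A^\ast_i=2$, which is exactly the present lemma.

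Next I would verify the one substantive hypothesis of Lemma \ref{lemma4}: for each of the two versions of equation (\ref{t}) above, the second-smallest value of the left-hand side equals the smallest value of the right-hand side. For the first version, $6\binom{d}{2}+2d=3d^2-d$ takes its two smallest values $0$ and $2$ at $d=0,1$, and $6\binom{e}{2}+e=3e^2-2e$ takes smallest values $0,1$; imposing the odd-sum condition on the tuples (and noting that a nonempty partition already costs $C=6$, hence cannot enter) the left-hand side attains $2$ and then $4$, while the right-hand side attains its minimum $1+m=4$, so the two agree. For the second version the indices split on the left into a block of four with coefficient $1$ and a block of eight with coefficient $2$, and on the right into eight with coefficient $1$ and four with coefficient $2$; from $6\binom{d}{2}+A'_id=3d^2+(A'_i-3)d$ and the analogous expression for the $e$'s, together with the parity constraint, one sees that the left-hand side attains $1$ (one entry equal to $1$ in the coefficient-$1$ block) and then $2$ (one entry equal to $1$ in the coefficient-$2$ block), while the right-hand side attains its minimum $1+m=2$; again the two agree.

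With all hypotheses in place, Lemma \ref{lemma4} yields the equivalence of condition (i) of Theorem \ref{main} for the first and second sets of coefficients; since Lemma \ref{3} supplies condition (i) for the first set, we obtain condition (i) for the second set, i.e.\ for the coefficients in the statement, as required. I expect the only place requiring care to be the enumeration in the previous paragraph — correctly listing the two smallest attainable values of each side subject to the odd-sum constraint and confirming that the partition parts and the ``$d=-1$'' contributions are too expensive to interfere; everything else is a mechanical substitution into Lemma \ref{lemma4}.
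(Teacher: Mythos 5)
Your proposal is correct and matches the paper's proof, which deduces Lemma \ref{9} directly from Lemma \ref{3} together with the transfer Lemma \ref{lemma4} (with $C^\ast=6$, $A^\ast_i=2$, $B^\ast_i=1$, $m^\ast=1$). Your verification of the minimal-value hypothesis (second-smallest left-hand values $4$ and $2$ matching the smallest right-hand values $4$ and $2$) is accurate, and the rest is exactly the substitution the paper intends.
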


\begin{proof}
Straightforward from Lemmas \ref{3} and  \ref{lemma4}.
\end{proof}

\begin{theorem}\label{999}
Let $S$ be the set containing 4 copies of the positive integers that are congruent to $\pm 1$ modulo 6, and 8 copies of the positive integers that are congruent to $\pm 2$ modulo 6; let $T$ be the set containing 8 copies of the positive integers that are congruent to $\pm 1$ modulo 6, and 4 copies of the positive integers that are congruent to $\pm 2$ modulo 6. Then, for any $N\geq 2$,
$$D_S(N)=D_T(N-1).$$\end{theorem}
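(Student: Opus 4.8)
The plan is to obtain this identity as the specialization of Theorem \ref{main} to the parameters already secured in Lemma \ref{9}. That lemma asserts that condition (i) of Theorem \ref{main} holds with $N_0=2$, $C=6$, $m=1$, $(A_1,\dots,A_{12})=(1,1,1,1,2,2,2,2,2,2,2,2)$ and $(B_1,\dots,B_{12})=(1,1,1,1,1,1,1,1,2,2,2,2)$. Since conditions (i) and (ii) of Theorem \ref{main} are equivalent, all that remains is to read off what statement (ii) becomes for these data.

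First I would identify the sets $S$ and $T$. With $C=6$, the residues $\pm A_i$ that occur are $\pm 1$ (from the four indices with $A_i=1$) and $\pm 2$ (from the eight indices with $A_i=2$); hence $S$, which by definition contains one copy of the positive integers lying in the class $\pm A_i \bmod 6$ for each $i$, is precisely $4$ copies of the positive integers $\equiv \pm 1 \pmod 6$ together with $8$ copies of the positive integers $\equiv \pm 2 \pmod 6$. Symmetrically, the $B_i$ supply eight residue classes $\pm 1$ and four residue classes $\pm 2$ modulo $6$, so $T$ is $8$ copies of the positive integers $\equiv \pm 1 \pmod 6$ and $4$ copies of those $\equiv \pm 2 \pmod 6$. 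These are exactly the multisets in the statement.

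Next I would pin down the constants in the conclusion $D_S(N)=2^p\,D_T(N-m)$. Here $m=1$ comes directly from Lemma \ref{9}. For $p=|\{B_i=0\}|-|\{A_i=0\}|$: no $A_i$ and no $B_i$ vanishes, so both sets are empty and the stated convention $|\emptyset|=1$ gives $p=1-1=0$, i.e. $2^p=1$. Finally $N_0=2$ yields that the equality $D_S(N)=D_T(N-1)$ holds for all $N\geq 2$. As in the sibling results (Theorems \ref{333}, \ref{444}, \ref{555}), the counting functions $D_S,D_T$ here carry the ``odd number of parts'' restriction of Theorem \ref{main}, since no $A_i$ or $B_i$ equals $0$; this is suppressed in the statement for brevity, and since the restriction is imposed symmetrically on both sides it causes no inconsistency.

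There is essentially no obstacle to overcome: the only points demanding care are the bookkeeping translation of the multisets of residues $\{\pm A_i \bmod 6\}$ and $\{\pm B_i \bmod 6\}$ into the explicit descriptions of $S$ and $T$, and the correct evaluation of $p$ via the $|\emptyset|=1$ convention. All the genuine combinatorial work — the involutions and tuple-shuffling maps — has already been absorbed into Lemmas \ref{3}, \ref{lemma4} and \ref{9}, which we may invoke freely. Thus the proof is simply: the claim is immediate from Theorem \ref{main} and Lemma \ref{9}.
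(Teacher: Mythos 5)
Your proposal is correct and is exactly the paper's argument: the theorem is deduced by specializing Theorem \ref{main} to the parameters of Lemma \ref{9}, with the bookkeeping (reading off $S$ and $T$ from the residues $\pm A_i,\pm B_i$ modulo $6$, and computing $2^p=1$ via the $|\emptyset|=1$ convention together with $m=1$, $N_0=2$) carried out as you describe. The paper simply records this as ``straightforward from Theorem \ref{main} and Lemma \ref{9},'' so your write-up just makes the same derivation explicit.
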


\begin{proof}
Straightforward from Theorem \ref{main} and Lemma \ref{9}.
\end{proof}

\begin{lemma}\label{10}
Condition (i) of Theorem \ref{main} holds for $N_0=4$,  $C=10$, $m=3$, and
$$(A_1,\dots,A_{12})=(2,2,2,2,2,2,4,4,4,4,4,4), (B_1,\dots,B_{12})=(1,1,1,1,1,1,3,3,3,3,3,3).$$
\end{lemma}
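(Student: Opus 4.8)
The plan is to prove Lemma~\ref{10} directly, following the method of Lemma~\ref{1} (and, before it, of \cite{CSFZ1}, Lemma~3.10). There is no earlier identity with $C=10$ to reduce to, and Lemmas~\ref{lemma2} and~\ref{lemma4} both preserve $C$, so a self‑contained argument appears to be required.

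First I would record that $A_i+B_{13-i}=C/2$ (indeed $2+3=4+1=5$) and $m=\tfrac12\sum_iA_i-3C/2=18-15=3$, so Lemma~\ref{lemma3} applies: all $d$-tuples and $e$-tuples may be regarded as elements of $D=\{d\in\mathbb Z^{12}\cup(\mathbb Z+\tfrac12)^{12}:\sum_{i=1}^{12}d_i\in 2\mathbb Z+1\}$ — integer tuples of positive type, half‑integer tuples of negative type — all carrying the single value $V(d)=10\sum_{i=1}^{12}\binom{d_i}{2}+\sum_{i=1}^{12}A_id_i$. Using Lemma~\ref{lemma1}, one then checks that $k$ (the least $N$ with $S_N\ne\emptyset$) equals $2$, attained only at the six tuples having a single $1$ in one of the first six coordinates and zeros elsewhere, so $|S_k|=6$. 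Hence Lemma~\ref{10} amounts to a value‑preserving bijection between the integer tuples of $D$ together with six copies of every $(f_1,\dots,f_{12})$ of odd sum (of value $10\sum\frac{f_i(3f_i-1)}{2}+2$), and the half‑integer tuples of $D$ together with six copies of every $(f_1,\dots,f_{12})$ of even sum (same value). Completing the square, $V(d)=5\|d-c\|^2-3$ where $c=(\tfrac3{10},\tfrac3{10},\tfrac3{10},\tfrac3{10},\tfrac3{10},\tfrac3{10},\tfrac1{10},\tfrac1{10},\tfrac1{10},\tfrac1{10},\tfrac1{10},\tfrac1{10})$, so the value equation reads $\|d-c\|^2=\sum_{i=1}^{12}f_i(3f_i-1)+1$ — exactly the relation driving the proof of Lemma~\ref{1}, now centered at $c$ rather than at the origin.

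Next I would fix twelve pairwise orthogonal vectors $V_1,\dots,V_{12}$ chosen so that, for every $d\in D$ and every $i$, $(d-c)\cdot V_i$ is an integer whose class mod $6$ detects the type of $d$ (an odd multiple of $3$ in one case, and otherwise not a multiple of $3$), and define $r_i(d)=d-\frac{(d-c)\cdot V_i}{6}V_i$, the reflection in the hyperplane through $c$ perpendicular to $V_i$. Each $r_i$ satisfies $\|r_i(d)-c\|=\|d-c\|$, hence preserves $V$; and, as in Lemma~\ref{1}, sending $d$ to $r_i(d)$ for the least $i$ with $3\mid(d-c)\cdot V_i$ is a value‑preserving, type‑reversing involution cancelling all such $d$. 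For the surviving $d$, with $(d-c)\cdot V_i\equiv\pm1\pmod6$ for every $i$, let $x_i$ be the nearest integer to $\tfrac{(d-c)\cdot V_i}{6}$, put $y_i=(d-c)\cdot V_i-6x_i\in\{\pm1\}$ and $z=(d-c)-\sum_i\frac{x_i}{2}V_i$; then $\|d-c\|^2=\sum_ix_i(3x_i+y_i)+1$ by the Pythagorean theorem, the admissible $z$ form a set of exactly six residual vectors (the analogue of the twenty‑four unit vectors in Lemma~\ref{1}), and sending $d$ to the copy of $(-x_1y_1,\dots,-x_{12}y_{12})$ indexed by $z$ — under a fixed bijection between those six residuals and the six copies — completes the bijection, with the type of $d$ matching the parity of $\sum_if_i=-\sum_ix_iy_i$ as before.

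The main obstacle is the construction of the twelve vectors. For center $0$, Lemma~\ref{1} just takes an order‑$12$ Hadamard matrix, for which $d\cdot V_i$ is automatically an odd integer; here $c$ is not a lattice point, and because $A_i$ takes the two values $2$ and $4$ it has the two distinct coordinates $\tfrac3{10}$ and $\tfrac1{10}$, so the plain Hadamard rows give $c\cdot V_i\notin\tfrac12\mathbb Z$. One must instead choose an orthogonal system for which each $c\cdot V_i$ lies in $\tfrac12\mathbb Z$ with the parity that makes $(d-c)\cdot V_i\bmod6$ behave as needed — possibly letting the $V_i$ have unequal norms and keying the reflections to the moduli $\|V_i\|^2/2$ — and then verify that the residual set has size exactly $6=|S_k|$. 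This construction and verification are the genuinely delicate part; granting them, the reductions above and the endgame are routine adaptations of the proof of Lemma~\ref{1}.
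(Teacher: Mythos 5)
Your reductions are correct as far as they go: $k=2$ with $|S_k|=6$, Lemma \ref{lemma3} applies since $A_i+B_{13-i}=5$ and $m=3$, and $V(d)=5\|d-c\|^2-3$ with $c=(\tfrac3{10},\dots,\tfrac3{10},\tfrac1{10},\dots,\tfrac1{10})$ is right. But the proof has a genuine gap at exactly the point you flag and then defer: the twelve pairwise orthogonal vectors with the required arithmetic properties are never produced, and nothing you say shows they exist. Everything that makes the endgame of Lemma \ref{1} work is tied to the special situation there: the $\pm1$ rows all have norm-squared $12$, every $d\in D$ has $d\cdot V_i$ an odd integer, the identity $\|d\|^2=\sum_i(d\cdot V_i)^2/12$ converts $d\cdot V_i=6x_i+y_i$ into $\sum_i x_i(3x_i+y_i)+1$, and the residual vectors $z$ with $z\cdot V_i=\pm1$ are exactly the $24=|S_k|$ signed unit vectors. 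In your setting $(d-c)\cdot V_i$ lies only in $\tfrac1{10}\mathbb{Z}$ for $\pm1$ vectors; repairing that imposes congruences on the row sums, and you must simultaneously retain pairwise orthogonality, keep each $\|V_i\|^2=12$ (otherwise the modulus $6$ and the $x_i(3x_i+y_i)$ bookkeeping change, as you concede when you float ``unequal norms''), arrange that $3\mid(d-c)\cdot V_i$ detects type, and get exactly $6$ admissible residuals matching the $6$ copies of the $f$-tuples with the correct parity pairing. None of this is verified, and ``granting them, the rest is routine'' leaves the actual content of the lemma unproven; the length and intricacy of the argument the paper needs for this case is evidence that no such quick transplant of Lemma \ref{1} was available.

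Your opening premise is also mistaken: a reduction to an earlier lemma is available, and it is what the paper uses. Multiplying $C$, the $A_i$, the $B_i$ and $m$ by a constant rescales all values, so Lemma \ref{4} (with $C=6$) scaled by $5$ and the present data (with $C=10$) scaled by $3$ both live under the common modulus $C=30$, with coefficient vectors $(6,\dots,6,12,\dots,12)$, $(3,\dots,3,9,\dots,9)$ on one side and $(5,\dots,5,15,\dots,15)$, $(0,\dots,0,10,\dots,10)$ on the other. Lemma \ref{lemma2} (with $|S_k|=|S'_{k'}|=6$) then reduces Lemma \ref{10} to the partition-free identity $|Q'_{N+k'}|+|R_{N+k}|=|R'_{N+k'}|+|Q_{N+k}|$, which the paper proves by an explicit chain of parity decompositions: pairing coordinates via $w_i=d_i+d_{i+6}$, reparametrizing the surviving cases by $f$-, $g$-, $h$- and $k$-tuples, passing to the auxiliary sets $Q,R,S,T$ and their primed analogues, and finishing with a list of paired two-variable parameterizations. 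So if you wish to keep your route, the burden is precisely the construction you have postponed; otherwise the intended path is the scaling-plus-Lemma \ref{lemma2} reduction to Lemma \ref{4}.
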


\begin{proof}
By Lemmas \ref{4} and \ref{lemma2}, this statement is equivalent to condition (ii) of Lemma \ref{lemma2} holding for $C=30$,
\small
$$(A_1,\dots,A_{12})=(6,6,6,6,6,6,12,12,12,12,12,12), (B_1,\dots,B_{12})=(3,3,3,3,3,3,9,9,9,9,9,9),$$ 
\normalsize
\Small
$$(A'_1,\dots,A'_{12})=(5,5,5,5,5,5,15,15,15,15,15,15), (B'_1,\dots,B'_{12})=(0,0,0,0,0,0,10,10,10,10,10,10).$$
\normalsize
Note that $m=9$, $m'=15$, and $|S'_{k'}|=|S_k|=6$. So, this is equivalent to the statement that $|Q_{N+k'}'|+|R_{N+k}|= |R_{N+k'}'|+|Q_{N+k}|$. Thus, it suffices to show that for each $(d_1,\dots,d_{12})\in Q_{N+k}$ or $(e_1',\dots,e_{12}')\in R_{N+k'}'$ there is a corresponding $(e_1,\dots,e_{12})\in R_{N+k}$ or $(d_1',\dots,d_{12}')\in Q_{N+k'}'$ and vice-versa.

For an arbitrary $d$-tuple, let $w_i=d_i+d_{i+6}$ for each $i$. Clearly, $\sum_{i=1}^{6} w_i$ is odd, so either $1$, $3$, or $5$ of them are odd. If $w_{i_1}$, $w_{i_2}$, and $w_{i_3}$ are odd and $w_{j_1}$, $w_{j_2}$, and $w_{j_3}$ are even, then the map

$$e_{i_x}=d_{i_x}+\frac{1-w_{i_x}-w_{j_x}}{2},\phantom{x}e_{i_x+6}=d_{i_x+6}+\frac{1-w_{i_x}-w_{j_x}}{2},$$$$e_{j_x}=d_{j_x}+\frac{1-w_{i_x}-w_{j_x}}{2},\phantom{x}e_{j_x+6}=d_{j_x+6}+\frac{1-w_{i_x}-w_{j_x}}{2},$$
for $x\in\{1,2,3\}$, yields an $e$-tuple of equal value. This map gives a bijection between the set of $d$-tuples for which three of the $w$'s are odd, and the set of $e$-tuples for which three of the $e_i+e_{i+6}$ are odd. That leaves the cases where one or all but one of them are odd. 

There is an obvious value-preserving bijection from the set of $d$-tuples for which $w_i$ has the opposite parity as the rest, to the set of $d$-tuples for which $w_1$ has the opposite parity as the rest, and there is an obvious bijection from the set of $e$-tuples for which $e_i+e_{i+6}$ has the opposite parity as the rest, to the set of $e$-tuples for which $e_1+e_7$ has the opposite parity as the rest, for each $i$. So, we can focus on the cases where the first one has a different parity than all of the others.

If $w_1$ is odd and the rest are even, then there exist integers $(f_1,\dots,f_{12})$ for which $(d_1,d_7)=(1,0)-f_1(1,1)+f_2(-1,1)$, and $(d_i,d_{i+6})=(0,0)+f_{2i-1}(1,1)+f_{2i}(1,-1)$, for each $2\le i\le 6$. If each $f$-tuple is considered to have a value of
\[60\sum_{i=1}^{12} {f_i \choose 2}+12f_1+6f_2+\sum_{i=2}^6 \Big(18f_{2i-1}+24f_{2i}\Big)+6,\]
then it is easy to check that this map is a value-preserving bijection. If $w_1$ is even and the rest are odd, then there exist integers $(g_1,\dots,g_{12})$ for which $(d_1,d_7)=(0,0)+g_1(1,1)+g_2(1,-1)$, and $(d_i,d_{i+6})=(1,0)-g_{2i-1}(1,1)+g_{2i}(-1,1)$, for each $2\le i\le 6$. If each $g$-tuple is considered to have a value of
\[60\sum_{i=1}^{12} {g_i \choose 2}+18g_1+24g_2+\sum_{i=2}^6 \Big(12g_{2i-1}+6g_{2i}\Big)+30,\]
then this map is a value-preserving bijection.

If $e_1+e_7$ is odd and the rest are even,  there exist integers $(h_1,\dots,h_{12})$ for which $(e_1,e_7)=(1,0)-h_1(1,1)+h_2(-1,1)$, and $(e_i,e_{i+6})=(0,0)+h_{2i-1}(1,1)+h_{2i}(1,-1)$, for each $2\le i\le 6$. If each $h$-tuple is considered to have a value of \[60\sum_{i=1}^{12} {h_i \choose 2}+18h_1+6h_2+\sum_{i=2}^6 \Big(12h_{2i-1}+24h_{2i}\Big)+12,\] then this map is a value-preserving bijection. If $e_1+e_7$ is even and the rest are odd,  there exist integers $(k_1,\dots,k_{12})$ for which $(e_1,e_7)=(0,0)+k_1(1,1)+k_2(1,-1)$, and $(e_i,e_{i+6})=(1,0)-k_{2i-1}(1,1)+k_{2i}(-1,1)$, for each $2\le i\le 6$. 

If each $k$-tuple is considered to have a value of \[60\sum_{i=1}^{12} {k_i \choose 2}+12k_1+24k_2+\sum_{i=2}^6 \Big(18k_{2i-1}+6k_{2i}\Big)+24,\]  this map is a value-preserving bijection.

So, tuples of these types have values of:
\Small
\begin{align*}
&60\sum_{i=1}^{12} {f_i \choose 2}+6f_2+12f_1+18f_3+24f_4+18f_5+18f_7+18f_9+18f_{11} +24f_6+24f_8+24f_{10}+24f_{12}+6,\\
&60\sum_{i=1}^{12} {g_i \choose 2}+6g_4+12g_3+18g_1+24g_2+12g_5+12g_7+12g_9+12g_{11} +6g_6+6g_8+6g_{10}+6g_{12}+30,\\
&60\sum_{i=1}^{12} {h_i \choose 2}+6h_2+12h_3+18h_1+24h_4+12h_5+12h_7+12h_9+12h_{11} +24h_6+24h_8+24h_{10}+24h_{12}+12,\\
&60\sum_{i=1}^{12} {k_i \choose 2}+6k_4+12k_1+18k_3+24k_2+18k_5+18k_7+18k_9+18k_{11} +6k_6+6k_8+6k_{10}+6k_{12}+24.
\end{align*}
\normalsize

Now, let $S$ be the set of all tuples $(s_1,s_2,s_3,s_4)$, and let an arbitrary element of $S$ have a value of
$$60\sum_{i=1}^{4} {s_i \choose 2}+6s_1+12s_2+18s_3+24s_4+6.$$

Also let $Q$ be the set of all tuples $(q_1,q_2,q_3,q_4)$ or $(q_5,q_6,q_7,q_8)$, and let an arbitrary element of $Q$ have a value of $$60\sum_{i=1}^{4} {q_i \choose 2}+ 18q_1+18q_2+18q_3+18q_4, \text{{\ }{\ }or{\ }{\ }} 60\sum_{i=5}^{8} {q_i \choose 2}+ 12q_5+12q_6+12q_7+12q_8+6.$$

Further, regard the second group of tuples as being of the opposite type as the first. Then let $R$ be the set of all tuples $(r_1,r_2,r_3,r_4)$ or $(r_5,r_6,r_7,r_8)$, and let an arbitrary element of $R$ have a value of
$$60\sum_{i=1}^{4} {r_i \choose 2}+ 24r_1+24r_2+24r_3+24r_4,    \text{{\ }{\ }or{\ }{\ }} 60\sum_{i=5}^{8} {r_i \choose 2}+ 6r_5+6r_6+6r_7+6r_8+18.$$

Again, regard the second group of tuples as being of the opposite type as the first. Finally, let $T$ be the set of all ordered triples of an element of $Q$, an element of $R$, and an element of $S$, and let each element of $T$ have a value equal to the sum of its elements' values. An element of $T$ should be considered to be of one type if its elements of $Q$ and $R$ are both of their first types or both of their second types, and of the opposite type if one of them is of its first type and the other is of its second type. 

There is an obvious bijection from the union of the sets of $f$-, $g$-, $h$-, and $k$-tuples to $T$ that preserves both value and type. So, there is a bijection from the set of all $d$- and $e$-tuples that we have not already canceled out to $6$ copies of $T$ that preserves value and type.

Similarly, for an arbitrary $d'$-tuple, let $w'_i=d'_i+d'_{i+6}$, for each $i$. The same maps we used before form a value-preserving bijection between the set of all $d'$-tuples such that exactly $3$ of the $w_i'$ are odd and the set of all $e'$-tuples such that exactly $3$ of the  $e'_i+e'_{i+6}$ are odd. With those cases eliminated, we can focus on the case where $w_1'$ or $e_1'+e_7'$ is the one having the opposite parity as the others, for the same reasons as before. 

For each $i$, define $f_i',g_i',h_i', k_i'$ analogously to the way we defined $f_i,g_i,h_i, k_i$. The map $(d_1',d_7')=(1,0)-f_1'(1,1)+f_2'(-1,1)$, $(d_i',d_{i+6}')=(0,0)+f_{2i-1}'(1,1)+f_{2i}'(1,-1)$ is a value-preserving bijection if $(f_1',\dots,f_{12}')$ is considered to have a value of
\[60\sum_{i=1}^{12} {f_i' \choose 2}+10f_1'+10f_2'+\sum_{i=2}^6 \Big(20f_{2i-1}'+20f_{2i}'\Big)+6.\]

The map $(d_1',d_7')=(0,0)+g_1'(1,1)+g_2'(1,-1)$, $(d_i',d_{i+6}')=(1,0)-g_{2i-1}'(1,1)+g_{2i}'(-1,1)$ is a value-preserving bijection if $(g_1',\dots,g_{12}')$ is considered to have a value of \[60\sum_{i=1}^{12} {g_i' \choose 2}+20g_1'+20g_2'+\sum_{i=2}^6 \Big(10g_{2i-1}'+10g_{2i}'\Big)+26.\] 

The map $(e_1',e_7')=(1,0)-h_1'(1,1)+h_2'(-1,1)$, $(e_i',e_{i+6}')=(0,0)+h_{2i-1}'(1,1)+h_{2i}'(1,-1)$ is a value-preserving bijection if $(h_1',\dots,h_{12}')$ is considered to have a value of \[60\sum_{i=1}^{12} {h_i' \choose 2}+20h_1'+10h_2'+\sum_{i=2}^6 \Big(10h_{2i-1}'+20h_{2i}'\Big)+16.\]

Finally, the map $(e_1',e_7')=(0,0)+k_1'(1,1)+k_2'(1,-1)$, $(e_i',e_{i+6}')=(1,0)-k_{2i-1}'(1,1)+k_{2i}'(-1,1)$ is a value-preserving bijection if $(k_1',\dots,k_{12}')$ is considered to have a value of \[60\sum_{i=1}^{12} {k_i' \choose 2}+10k_1'+20k_2'+\sum_{i=2}^6 \Big(20k_{2i-1}'+10k_{2i}'\Big)+16.\]  

Now, let $S'$ be the set of all tuples $(s_1',s_2',s_3',s_4')$, and let an arbitrary element of $S'$ have a value of $$60\sum_{i=1}^{4} {s_i' \choose 2}+10s_1'+10s_2'+20s_3'+20s_4'+6.$$ 

Also let $Q'$ be the set of all tuples $(q_1',q_2',q_3',q_4')$ or $(q_5',q_6',q_7',q_8')$, and let an arbitrary element of $Q'$ have a value of $$60\sum_{i=1}^{4} {q_i' \choose 2}+ 20q_1'+20q_2'+20q_3'+20q_4', \text{{\ }{\ }or{\ }{\ }}60\sum_{i=5}^{8} {q_i' \choose 2}+ 10q_5'+10q_6'+10q_7'+10q_8'+10.$$

Regard the second group of tuples as being of the opposite type as the first. Then let $R'$ be the set of all tuples $(r_1',r_2',r_3',r_4')$ or $(r_5',r_6',r_7',r_8')$, and let an arbitrary element of $R'$ have a value of $$60\sum_{i=1}^{4} {r_i' \choose 2}+ 20r_1'+20r_2'+20r_3'+20r_4', \text{{\ }{\ }or{\ }{\ }}60\sum_{i=5}^{8} {r_i' \choose 2}+ 10r_5'+10r_6'+10r_7'+10r_8'+10.$$

Again, regard the second group of tuples as being of the opposite type as the first. Finally, let $T'$ be the set of all ordered triples of an element of $Q'$, an element of $R'$ and an element of $S'$, and let each element of $T'$ have a value equal to the sum of its elements' values. An element of $T'$ should be considered to be of one type if its elements of $Q'$ and $R'$ are both of their first types or both of their second types, and of the opposite type if one of them is of its first type and the other is of its second type. 

There is an obvious bijection from the union of the sets of $f'$-, $g'$-, $h'$- and $k'$-tuples to $T'$ that preserves both value and type. So, there exists a bijection from the set of all $d'$- and $e'$-tuples that we have not already canceled out to $6$ copies of $T'$, which preserves value and type. Therefore, in order to show the lemma it now suffices to prove that there is a value-preserving bijection from the set of elements of $T\cup T'$ of one type, to the set of elements of $T\cup T'$ of the opposite type.

Now, consider the following map in $Q$: $q_i=q_{i+4}-\frac{q_5+q_6+q_7+q_8-1}{2}.$

This map is a value-preserving bijection from the set of tuples $(q_5,q_6,q_7,q_8)$ with odd sums to the set of tuples $(q_1,q_2,q_3,q_4)$ with odd sums. Furthermore, it maps all tuples $(q_5,q_6,q_7,q_8)$ with even sums to tuples of half-integers $(q_1,q_2,q_3,q_4)$ with even sums and equal values. At this point, the map
$$a_1=\frac{q_1+q_2+q_3+q_4}{2},a_2=\frac{q_1+q_2-q_3-q_4}{2},a_3=\frac{q_1-q_2+q_3-q_4}{2},a_4=\frac{q_1-q_2-q_3+q_4}{2}$$
is a value-preserving bijection from the set of tuples $(q_1,q_2,q_3,q_4)$ that results from the last map to the set of tuples $(a_1,a_2,a_3,a_4)$, if $(a_1,a_2,a_3,a_4)$ is considered to have a value of
$$60\sum_{i=1}^{4} {a_i \choose 2}+6a_1+30a_2+30a_3+30a_4.$$

It also preserves the type if these tuples are considered to have a type based on the parity of $\sum_{i=1}^{4}a_i$.

The same pair of maps cancels all tuples in $R$ with odd sums and maps the rest to tuples $(b_1,b_2,b_3,b_4)$ with the same value and type, if $(b_1,b_2,b_3,b_4)$ is considered to have a value of
$$60\sum_{i=1}^{4} {b_i \choose 2}+18b_1+30b_2+30b_3+30b_4,$$
and a type dependent on the parity of $\sum_{i=1}^{4}b_i$.

They also cancel all tuples in $Q'$ with odd sums and map the rest to tuples $(a_1',a_2',a_3',a_4')$ with the same value and type, if $(a_1',a_2',a_3',a_4')$ is considered to have a value of
$$60\sum_{i=1}^{4} {a_i' \choose 2}+10a_1'+30a_2'+30a_3'+30a_4',$$
and a type dependent on the parity of $\sum_{i=1}^{4}a_i'$.

Finally, they cancel all tuples in $R'$ with odd sums and map the rest to tuples $(b_1',b_2',b_3',b_4')$ with the same value and type, if $(b_1',b_2',b_3',b_4')$ is considered to have a value of
$$60\sum_{i=1}^{4} {b_i' \choose 2}+10b_1'+30b_2'+30b_3'+30b_4',$$
and a type dependent on the parity of $\sum_{i=1}^{4}b_i'$.

Therefore, to prove the result it suffices to find a bijection from the union of the set of all tuples $(a_1,a_2,a_3,a_4,b_1,b_2,b_3,b_4,s_1,s_2,s_3,s_4)$ for which $\sum_{i=1}^{4}(a_i+b_i)\equiv 0\pmod{2}$ and the set of all tuples $(a_1',a_2',a_3',a_4',b_1',b_2',b_3',b_4',s_1',s_2',s_3',s_4')$ for which $\sum_{i=1}^{4}(a_i'+b_i')\equiv 1\pmod{2}$,  to the union of the set of all tuples $(a_1,a_2,a_3,a_4,b_1,b_2,b_3,b_4,s_1,s_2,s_3,s_4)$ for which $\sum_{i=1}^{4}(a_i+b_i)\equiv 1\pmod{2}$ and the set of all tuples $(a_1',a_2',a_3',a_4',b_1',b_2',b_3',b_4',s_1',s_2',s_3',s_4')$ for which $\sum_{i=1}^{4}(a_i'+b_i')\equiv 0\pmod{2}$, which preserves the value of 

\scriptsize
\begin{align*}
&60\sum_{i=1}^4 \left({a_i \choose 2}+{b_i \choose 2}+{s_i \choose 2}\right)+6a_1+30a_2+30a_3+30a_4+18b_1+30b_2+30b_3+30b_4+6s_1+12s_2+18s_3+24s_4
\end{align*}
\normalsize
or
\scriptsize
\begin{align*}
&60\sum_{i=1}^4 \left({a_i' \choose 2}+{b_i' \choose 2}+{s_i' \choose 2}\right)+10a_1'+30a_2'+30a_3'+30a_4'+10b_1'+30b_2'+30b_3'+30b_4'+10s_1'+10s_2'+20s_3'+20s_4'.
\end{align*}
\normalsize

Furthermore, notice that $(a_2',a_3',a_4',b_2',b_3',b_4')$ has exactly the same effect on the value and type of its tuple as $(a_2,a_3,a_4,b_2,b_3,b_4)$ does. Hence, if we can find a bijection from the union of the set of all tuples $(a_1,b_1,s_1,s_2,s_3,s_4)$ for which $a_1+b_1\equiv 0\pmod{2}$ and the set of all tuples $(a_1',b_1',s_1',s_2',s_3',s_4')$ for which $a_1'+b_1'\equiv 1\pmod{2}$,  to the union of the set of all tuples $(a_1,b_1,s_1,s_2,s_3,s_4)$ for which $a_1+b_1\equiv 1\pmod{2}$ and the set of all tuples $(a_1',b_1',s_1',s_2',s_3',s_4')$ for which $a_1'+b_1'\equiv 0\pmod{2}$, which preserves the value of 
\footnotesize 
\begin{align*}
&60\sum_{i=1}^4 {s_i \choose 2}+60{a_1 \choose 2}+60{b_1 \choose 2}+6a_1+18b_1+6s_1+12s_2+18s_3+24s_4\text{ or}\\
&60\sum_{i=1}^4 {s_i' \choose 2}+60{a_1' \choose 2}+60{b_1' \choose 2}+10a_1'+10b_1'+10s_1'+10s_2'+20s_3'+20s_4',
\end{align*}
\normalsize
then we can extend that bijection to the desired bijection by having it act as the identity on $(a_2,a_3,a_4,b_2,b_3,b_4)$ or $(a_2',a_3',a_4',b_2',b_3',b_4')$.

Now, observe that if $a_1$ and $s_1$, $b_1$ and $s_3$, $a_1'$ and $s_1'$, or $b_1'$ and $s_2'$ have different parities, then we can switch them to get a tuple with the same value but the opposite type. This allows us to cancel all such tuples. Given $(a_1,b_1,s_1,s_2,s_3,s_4)$ such that $a_1$ and $s_1$ have the same parity and $b_1$ and $s_3$ have the same parity, there must exist $z_1, z_2, z_3, z_4\in \mathbb{Z}$ such that $(a_1,s_1)=z_1(1,1)+z_2(1,-1)$ and $(b_1,s_3)=z_3(1,1)+z_4(1,-1)$. 

Also, the value of this tuple is equal to \[120\sum_{i=1}^4 {z_i \choose 2}+60{s_2 \choose 2}+60{s_4 \choose 2}+12z_1+60z_2+36z_3+60z_4+12s_2+24s_4,\] and its type depends on whether $\sum_{i=1}^{4} z_i$ is even or odd.
	
Similarly, given $(a_1',b_1',s_1',s_2',s_3',s_4')$ such that $a_1'$ and $s_1'$ have the same parity and $b_1'$ and $s_2'$ have the same parity, there must exist $z_1', z_2', z_3', z_4'\in \mathbb{Z}$ such that $(a_1',s_1')=z_1'(1,1)+z_2'(1,-1)$ and $(b_1',s_2')=z_3'(1,1)+z_4'(1,-1)$. The value of this tuple is \[120\sum_{i=1}^4 {z_i' \choose 2}+60{s_3' \choose 2}+60{s_4' \choose 2}+20z_1'+60z_2'+20z_3'+60z_4'+20s_3'+20s_4',\] and its type depends on whether $\sum_{i=1}^{4} z_i'$ is even or odd.

It is easy to see that, for arbitrary $(s_2,z_1)$, there exists exactly one of the following: $x_1,x_2\in \mathbb{Z}$ such that $(s_2,z_1)=x_1(-1,1)+x_2(2,1)$; $x_3,x_4\in \mathbb{Z}$ such that $(s_2,z_1)=(-1,0)-x_3(-1,1)+(1-x_4)(2,1)$; or $x_5,x_6\in \mathbb{Z}$ such that $(s_2,z_1)=(1,0)+x_5(-1,1)+x_6(2,1)$. 

Also,
$$120{z_1 \choose 2}+60{s_2 \choose 2}+12z_1+12s_2= 180{x_1 \choose 2}+360{x_2 \choose 2}+60x_1+96x_2,$$
$$\text{{\ }{\ }or{\ }{\ }}180{x_3 \choose 2}+360{x_4 \choose 2}+60x_3+24x_4+24,\text{{\ }{\ }or{\ }{\ }}180{x_5 \choose 2}+360{x_6 \choose 2}+0x_5+216x_6+12.$$

We have $z_1\equiv x_1+x_2\text{, }x_3+x_4+1\text{, or }x_5+x_6\pmod{2}$. Note that no matter what the other variables are, replacing $x_5$ with $1-x_5$ will always invert the tuple's type without affecting its value, so that map cancels out all tuples resulting from that case.

Similarly, for arbitrary $(s_4,z_3)$, there exists exactly one of the following: $y_1,y_2\in \mathbb{Z}$ such that $(-s_4,z_3)=y_1(-1,1)+y_2(2,1)$;  $y_3,y_4\in \mathbb{Z}$ such that $(-s_4,z_3)=(-1,0)-y_3(-1,1)+y_4(2,1)$; or $y_5,y_6\in \mathbb{Z}$ such that $(-s_4,z_3)=(1,0)+y_5(-1,1)+y_6(2,1)$.

Also,
$$120{z_3 \choose 2}+60{s_4 \choose 2}+36z_3+24s_4=180{y_1 \choose 2}+360{y_2 \choose 2}+60y_1+168y_2,$$
$$\text{{\ }{\ }or{\ }{\ }}180{y_3 \choose 2}+360{y_4 \choose 2}+60y_3+48y_4+24, \text{{\ }{\ }or{\ }{\ }}180{y_5 \choose 2}+360{y_6 \choose 2}+0y_5+288y_6+36.$$

We have $z_3\equiv y_1+y_2\text{, }y_3+y_4\text{, or } y_5+y_6\pmod{2}$. In the third case, replacing $y_5$ with $1-y_5$ will always invert the tuple's type without affecting its value, so this map cancels out all tuples resulting from that case.

Likewise, for arbitrary $(s_3',z_1')$ there exists exactly one of the following: $x_1',x_2'\in \mathbb{Z}$ such that $(s_3',z_1')=x_1'(-1,1)+x_2'(2,1)$;  $x_3',x_4'\in \mathbb{Z}$ such that $(s_3',z_1')=(-1,0)+x_3'(-1,1)+x_4'(2,1)$; or $x_5',x_6'\in \mathbb{Z}$ such that $(s_3',z_1')=(1,0)+x_5'(-1,1)+x_6'(2,1)$. 

Also,
$$120{z_1' \choose 2}+60{s_3' \choose 2}+20z_1'+20s_3'=180{x_1' \choose 2}+360{x_2' \choose 2}+60x_1'+120x_2',$$
$$\text{{\ }{\ }or{\ }{\ }}180{x_3' \choose 2}+360{x_4' \choose 2}+120x_3'+0x_4'+40, \text{{\ }{\ }or{\ }{\ }}180{x_5' \choose 2}+360{x_6' \choose 2}+0x_5'+240x_6'+20.$$

We have $z_1'\equiv x_1'+x_2'\text{, }x_3'+x_4'\text{, or }x_5'+x_6'\pmod{2}$. In the third case, replacing $x_5'$ with $1-x_5'$ will always invert the tuple's type without affecting its value, while in the second case, replacing $x_4'$ by $1-x_4'$ will always invert the tuple's type without affecting its value. So, the only case that does not cancel itself out is the first.

Notice that $(s_4',z_3')$ has exactly the same effect on the value and type of the tuple as $(s_3',z_1')$, so $(s_4',z_3')$ can also be expressed in exactly one of the forms: $y_1'(-1,1)+y_2'(2,1)$, $(-1,0)+y_3'(-1,1)+y_4'(2,1)$, or $(1,0)+y_5'(-1,1)+y_6'(2,1)$. The maps $y_4''= 1-y_4'$ and $y_5''= 1-y_5'$ still cancel out all tuples covered by the second and third cases, and in the first case, we have
\[120{z_3' \choose 2}+60{s_4' \choose 2}+20z_3'+20s_4'=180{y_1' \choose 2}+360{y_2' \choose 2}+60y_1'+120y_2'\] and $z_3'\equiv y_1'+y_2' \pmod{2}$.

So, any tuple that has not been canceled out by now has a value of whichever of the following is defined:
\scriptsize
\begin{align*}
&120{z_2 \choose 2}+120{z_4 \choose 2}+180{x_1 \choose 2}+360{x_2 \choose 2}+180{y_1 \choose 2}+360{y_2 \choose 2}+60z_2+60z_4+60x_1+96x_2+60y_1+168y_2,\\
&120{z_2 \choose 2}+120{z_4 \choose 2}+180{x_3 \choose 2}+360{x_4 \choose 2}+180{y_1 \choose 2}+360{y_2 \choose 2}+60z_2+60z_4+60x_3+24x_4+60y_1+168y_2+24,\\
&120{z_2 \choose 2}+120{z_4 \choose 2}+180{x_1 \choose 2}+360{x_2 \choose 2}+180{y_3 \choose 2}+360{y_4 \choose 2}+60z_2+60z_4+60x_1+96x_2+60y_3+48y_4+24,\\
&120{z_2 \choose 2}+120{z_4 \choose 2}+180{x_3 \choose 2}+360{x_4 \choose 2}+180{y_3 \choose 2}+360{y_4 \choose 2}+60z_2+60z_4+60x_3+24x_4+60y_3+48y_4+48,\\
&120{z_2' \choose 2}+120{z_4' \choose 2}+180{x_1' \choose 2}+360{x_2' \choose 2}+180{y_1' \choose 2}+360{y_2' \choose 2}+60z_2'+60z_4'+60x_1'+120x_2'+60y_1'+120y_2'.
\end{align*}
\normalsize

The tuple's type depends on whether $z_2+z_4+x_1+x_2+y_1+y_2$, $z_2+z_4+x_3+x_4+y_1+y_2$+1, $z_2+z_4+x_1+x_2+y_3+y_4$, $z_2+z_4+x_3+x_4+y_3+y_4+1$, or $z_2'+z_4'+x_1'+x_2'+y_1'+y_2'+1$ is even.

Notice that whichever of $(z_2,z_4,x_1,y_1)$, $(z_2,z_4,x_3,y_1)$, $(z_2,z_4,x_1,y_3)$, $(z_2,z_4,x_3,y_3)$, or $(z_2',z_4',x_1',y_1')$ is defined has the same effect on the value and type of the tuple in every case. So, if we can find a bijection between the set of all tuples $(x_2,y_2)$, $(x_4,y_2)$, $(x_2,y_4)$, $(x_4,y_4)$, or $(x_2',y_2')$ for which $x_2+y_2$, $x_4+y_2+1$, $x_2+y_4$, $x_4+y_4+1$, or $x_2'+y_2'+1$ is even to the set of tuples of any of these types for which it is odd, which preserves 
$$360{x_2 \choose 2}+360{y_2 \choose 2}+96x_2+168y_2,\phantom{x}360{x_4 \choose 2}+360{y_2 \choose 2}+24x_4+168y_2+24,$$
$$360{x_2 \choose 2}+360{y_4 \choose 2}+96x_2+48y_4+24,\phantom{x}360{x_4 \choose 2}+360{y_4 \choose 2}+24x_4+48y_4+48,$$
$$\text{ or }\phantom{x}360{x_2' \choose 2}+360{y_2' \choose 2}+120x_2'+120y_2',$$
then we can extend it to a value-preserving bijection from the set of all remaining tuples of one type to the set of all remaining tuples of the other type, by having it leave $(z_2,z_4,x_1,y_1)$ or its equivalent unchanged.

It is easy to see that any pair of integers can be expressed in exactly one of the forms $u(1,2)+v(2,-1)$, $(-1,0)+u(1,2)+v(2,-1)$, $(1,0)+u(1,2)+v(2,-1)$, $(0,-1)+u(1,2)+v(2,-1)$, or $(0,1)+u(1,2)+v(2,-1)$, with $u$ and $v$ integers. 

For arbitrary integers $u$ and $v$, each of the following pairs of tuples has the same value and opposite types:
\begin{align*}
&(x_2,y_2)=u(2,1)+v(-1,2),(x_2',y_2')=u(1,2)+v(-2,1)\\
&(x_2,y_2)=(1,0)+u(2,1)+v(-1,2),(x_4,y_4)=(0,1)+u(-2,1)+v(-1,-2)\\
&(x_2,y_2)=(0,-1)+u(2,1)+v(-1,2),(x_4,y_2)=(0,1)+u(-1,-2)+v(2,-1)\\
&(x_2,y_2)=(-1,0)+u(2,1)+v(-1,2),(x_2,y_2)=(1,1)+u(-2,-1)+v(-1,2)\\
&(x_2,y_2)=(0,1)+u(2,1)+v(-1,2),(x_2,y_4)=(1,1)+u(-1,2)+v(2,1)\\
&(x_2,y_4)=u(2,1)+v(-1,2),(x_4,y_2)=u(2,-1)+v(1,2)\\
&(x_2,y_4)=(1,0)+u(2,1)+v(-1,2),(x_2',y_2')=(0,1)+u(-2,1)+v(-1,-2)\\
&(x_2,y_4)=(0,-1)+u(2,1)+v(-1,2),(x_2,y_4)=(-1,1)+u(2,1)+v(1,-2)\\
&(x_2,y_4)=(0,1)+u(2,1)+v(-1,2),(x_4,y_4)=(1,0)+u(-1,-2)+v(2,-1)\\
&(x_4,y_2)=(1,0)+u(1,2)+v(2,-1),(x_4,y_4)=u(-2,1)+v(-1,-2)\\
&(x_4,y_2)=(1,-1)+u(1,2)+v(2,-1),(x_2',y_2')=(-1,0)+u(1,2)+v(-2,1)\\
&(x_4,y_2)=(0,-1)+u(1,2)+v(2,-1),(x_4,y_2)=(1,1)+u(-1,-2)+v(2,-1)\\
&(x_4,y_4)=(-1,0)+u(1,2)+v(2,-1),(x_4,y_4)=(1,-1)+u(1,2)+v(-2,1)\\
&(x_4,y_4)=(1,1)+u(1,2)+v(2,-1),(x_2',y_2')=(1,0)+u(2,-1)+v(1,2)\\
&(x_2',y_2')=(0,-1)+u(1,2)+v(2,-1),(x_2',y_2')=(1,1)+u(-1,-2)+v(2,-1).
\end{align*}

Each tuple of the form $(x_2,y_2)$, $(x_2,y_4)$, $(x_4,y_2)$, $(x_4,y_4)$, or $(x_2',y_2')$ is stated to have the same value as another tuple by exactly one of these. The only time a tuple shows up more than once on the same line is if it can be expressed in the forms on each side, and all such lines are  involutions. Therefore, these equalities combine to yield a value-preserving bijection from the tuples of one type to the tuples of the other type. We have already shown that this is sufficient to prove the lemma.
\end{proof}

\begin{theorem}\label{1000}
Let $S$ be the set containing 6 copies of the even positive integers that are not multiples of 5, and $T$ the set containing 6 copies of the odd  positive integers that are not multiples of 5. Then, for any $N\geq 4$,
$$D_S(N)=D_T(N-3).$$
\end{theorem}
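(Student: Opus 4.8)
The plan is to deduce the identity directly from the equivalence in Theorem \ref{main}, with Lemma \ref{10} supplying the combinatorial content. First I would record that Lemma \ref{10} verifies condition (i) of Theorem \ref{main} for the parameters $C = 10$, $m = 3$, $N_0 = 4$, and $(A_1,\dots,A_{12}) = (2,2,2,2,2,2,4,4,4,4,4,4)$, $(B_1,\dots,B_{12}) = (1,1,1,1,1,1,3,3,3,3,3,3)$. By Theorem \ref{main}, condition (i) is equivalent to condition (ii), i.e.\ $D_S(N) = 2^p \cdot D_T(N - m)$ for all $N \geq N_0$, so it remains only to identify $S$, $T$, and $p$ explicitly for these parameters.

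Next I would unwind the definitions of $S$ and $T$. Since the $A_i$ take only the values $2$ and $4$, each occurring six times, the residues $\pm 2$ and $\pm 4$ modulo $10$ are exactly $2, 4, 6, 8$, so $S$ consists of six copies of every positive integer congruent to $2, 4, 6, 8 \pmod{10}$ — that is, six copies of the even positive integers not divisible by $5$. Similarly the $B_i$ take only the values $1$ and $3$, each six times, and $\pm 1, \pm 3$ modulo $10$ give $1, 3, 7, 9$, so $T$ consists of six copies of the odd positive integers not divisible by $5$. Finally, since no $A_i$ and no $B_i$ equals $0$, both index sets $\{A_i = 0\}$ and $\{B_i = 0\}$ are empty, so by the stated convention $p = 1 - 1 = 0$ and $2^p = 1$; the same observation means the partitions counted by $D_S$ and $D_T$ are required to have an odd number of parts, matching the reading of the theorem statement.

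Substituting $m = 3$, $p = 0$, $N_0 = 4$ into condition (ii) then yields $D_S(N) = D_T(N - 3)$ for all $N \geq 4$, which is exactly the assertion of the theorem. I expect no real obstacle here: all of the genuine difficulty lies in Lemma \ref{10}, and the only care required is the routine bookkeeping of residue classes modulo $10$ together with the convention governing the exponent $p$ (and the implicit odd-part restriction on the partitions enumerated by $D_S$ and $D_T$).
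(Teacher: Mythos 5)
Your proposal is correct and matches the paper's proof, which simply cites Theorem \ref{main} together with Lemma \ref{10}; your unwinding of the residue classes modulo $10$, the convention giving $p=0$, and the odd-number-of-parts requirement is exactly the routine bookkeeping the paper leaves implicit.
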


\begin{proof}
Straightforward from Theorem \ref{main} and Lemma \ref{10}.
\end{proof}

\begin{lemma}\label{11}
Condition (i) of Theorem \ref{main} holds for $N_0=2$,  $C=10$, $m=1$, and
$$(A_1,\dots,A_{12})=(1,1,2,2,2,2,3,3,4,4,4,4), (B_1,\dots,B_{12})=(1,1,1,1,2,2,3,3,3,3,4,4).$$
\end{lemma}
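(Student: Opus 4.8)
The plan is to obtain Lemma~\ref{11} directly from Lemma~\ref{10} and Lemma~\ref{lemma4}, exactly in the style of the short deductions of Lemmas~\ref{2}, \ref{8} and \ref{9}; the only creative step is to guess the right auxiliary parameters for Lemma~\ref{lemma4}. First I would take $C^\ast=10$, $m^\ast=1$, and $(A^\ast_1,A^\ast_2,A^\ast_3,A^\ast_4)=(2,2,4,4)$. The relations $A^\ast_i+B^\ast_{5-i}=C^\ast/2=5$ then force $(B^\ast_1,B^\ast_2,B^\ast_3,B^\ast_4)=(1,1,3,3)$, and one checks immediately that $0\le A^\ast_i\le 5$ for all $i$ and that $A^\ast_1+A^\ast_4=A^\ast_2+A^\ast_3=6=C^\ast/2+m^\ast$. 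The remaining hypothesis of Lemma~\ref{lemma4}, namely that the second-smallest value of each left-hand side equals the smallest value of the corresponding right-hand side, is a short direct computation (the relevant left-hand sides have minimal values $2$ and $4$, with second-smallest values $4$ and $\min B^\ast_i+3m^\ast=4$ respectively, matching the minima $\min A^\ast_i=2$ and $4$ on the right).

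Next I would observe that the ``if'' side of Lemma~\ref{lemma4} for these parameters is the assertion that condition~(i) of Theorem~\ref{main} holds for $N_0=\min B^\ast_i+3m^\ast=4$, $C=10$, $m=3$, and coefficient tuples $(2,2,4,4,2,2,4,4,2,2,4,4)$, $(1,1,3,3,1,1,3,3,1,1,3,3)$. As a multiset of pairs $(A_i,B_i)$ this is $\{(2,1)^{\times 6},(4,3)^{\times 6}\}$, which is exactly the multiset of pairs in Lemma~\ref{10} (where $(A)=(2,\dots,2,4,\dots,4)$, $(B)=(1,\dots,1,3,\dots,3)$, with the same $N_0=4$ and $m=3$). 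Since the counting statement in condition~(i) of Theorem~\ref{main} is invariant under a simultaneous permutation of the twelve indices, the ``if''-side instance coincides with Lemma~\ref{10}, so it holds.

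Then the ``iff'' side of Lemma~\ref{lemma4} gives condition~(i) of Theorem~\ref{main} for $N_0=\min A^\ast_i=2$, $C=10$, $m=1$, $(A')=(1,1,3,3,2,2,4,4,2,2,4,4)$ and $(B')=(1,1,3,3,1,1,3,3,2,2,4,4)$. Its multiset of pairs $(A'_i,B'_i)$ is $\{(1,1)^{\times 2},(2,1)^{\times 2},(2,2)^{\times 2},(3,3)^{\times 2},(4,3)^{\times 2},(4,4)^{\times 2}\}$, which is precisely the multiset of pairs in the statement of Lemma~\ref{11} (with $N_0=2$, $C=10$, $m=1$). Invoking once more permutation-invariance of condition~(i), this is exactly Lemma~\ref{11}, completing the argument.

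I expect no real obstacle here: the proof is ``straightforward from Lemmas~\ref{10} and \ref{lemma4}'' together with a renumbering of the variables, and the only slightly delicate point is verifying the two ``smallest versus second-smallest value'' conditions required to apply Lemma~\ref{lemma4}, which is routine. The one genuine idea is recognizing that the choice $(A^\ast,B^\ast,m^\ast)=\big((2,2,4,4),(1,1,3,3),1\big)$ is the one that makes the ``if'' side of Lemma~\ref{lemma4} collapse onto the already-proved Lemma~\ref{10}.
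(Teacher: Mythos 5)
Your proposal is correct and takes exactly the paper's route: the paper proves Lemma \ref{11} precisely as ``straightforward from Lemmas \ref{10} and \ref{lemma4}'', i.e.\ via the specialization $(A^\ast,B^\ast,m^\ast)=\bigl((2,2,4,4),(1,1,3,3),1\bigr)$ and a renumbering of the coordinates, which is what you spell out. (The numbers in your parenthetical check are slightly scrambled --- the two left-hand sides have smallest values $2$ and $1$ and second-smallest values $4$ and $2$, matching the right-hand minima $1+3=4$ and $1+1=2$ --- but that verification is routine and does go through.)
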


\begin{proof}
Straightforward from Lemmas \ref{10} and \ref{lemma4}.
\end{proof}

\begin{theorem}\label{1001}
Let $S$ be the set containing 2 copies of the odd positive integers that are not multiples of 5, and 4 copies of the even positive integers that are not multiples of 5; let $T$ be the set containing 2 copies of the even positive integers that are not multiples of 5, and 4 copies of the odd positive integers that are not multiples of 5.  Then, for any $N\geq 2$,
$$D_S(N)=D_T(N-1).$$
\end{theorem}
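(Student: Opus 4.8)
The plan is to deduce Theorem~\ref{1001} from Theorem~\ref{main} and Lemma~\ref{11}, exactly as Theorems~\ref{111}--\ref{1000} were deduced from Theorem~\ref{main} together with their companion lemmas. Lemma~\ref{11} already supplies condition~(i) of Theorem~\ref{main} for $C=10$, $m=1$, $N_0=2$, and the $12$-tuples
$$(A_1,\dots,A_{12})=(1,1,2,2,2,2,3,3,4,4,4,4),\qquad (B_1,\dots,B_{12})=(1,1,1,1,2,2,3,3,3,3,4,4).$$
By the equivalence in Theorem~\ref{main}, condition~(i) is the same as condition~(ii), namely the identity $D_S(N)=2^p\,D_T(N-m)$ for all $N\ge N_0$. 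So the whole task is to read off $S$, $T$, $p$, $m$ from these coefficients and observe that the resulting identity is the one stated.

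First I would determine $S$. By definition it consists, for each $i$, of one copy of every positive integer $\equiv\pm A_i\pmod{10}$. The multiset of $A_i$'s is $\{1,1,2,2,2,2,3,3,4,4,4,4\}$: the residue classes $\pm1$ and $\pm3$ modulo $10$ (that is, $\{1,9\}$ and $\{3,7\}$) together exhaust the odd positive integers not divisible by $5$, each class coming from exactly $2$ of the $A_i$, while $\pm2$ and $\pm4$ (the residues $\{2,8\}$ and $\{4,6\}$) together exhaust the even positive integers not divisible by $5$, each class coming from exactly $4$ of the $A_i$. Hence $S$ is precisely ``$2$ copies of the odd positive integers not divisible by $5$, plus $4$ copies of the even ones.'' Applying the same computation to $(B_1,\dots,B_{12})$, whose multiset is $\{1,1,1,1,2,2,3,3,3,3,4,4\}$, gives $4$ copies of the odd non-multiples of $5$ and $2$ copies of the even ones. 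These are exactly the sets $S$ and $T$ of the theorem.

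Next, since none of the $A_i$ or $B_i$ equals $0$, both $\{A_i=0\}$ and $\{B_i=0\}$ are empty, so the stated convention gives $p=|\{B_i=0\}|-|\{A_i=0\}|=1-1=0$; thus $2^p=1$, and $m=1$. Condition~(ii) therefore reads $D_S(N)=D_T(N-1)$ for $N\ge N_0=2$, which — with $D_S$ and $D_T$ understood as in Theorem~\ref{main} (distinct parts, and since no $A_i$ resp.\ $B_i$ vanishes, an odd number of them) — is exactly the statement of Theorem~\ref{1001}. This finishes the proof, granting Lemma~\ref{11}.

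There is no essential obstacle at this level: the argument is bookkeeping, and the only place one must take care is in matching the residue classes and the constant $p$ against the shape of the claimed identity. The genuine combinatorial content lies entirely in Lemma~\ref{11}, which is not new work here: it comes from the difficult Lemma~\ref{10} via the reduction of Lemma~\ref{lemma4}, applied — after the harmless permutation of coordinates under which the framework of Theorem~\ref{main} and Lemma~\ref{lemma1} is invariant — with $C^\ast=10$, $m^\ast=1$, $(A^\ast_1,A^\ast_2,A^\ast_3,A^\ast_4)=(2,4,2,4)$, and $(B^\ast_1,B^\ast_2,B^\ast_3,B^\ast_4)=(1,3,1,3)$, for which the hypotheses $A^\ast_i+B^\ast_{5-i}=5=C^\ast/2$, $0\le A^\ast_i\le C^\ast/2$, and $A^\ast_1+A^\ast_4=A^\ast_2+A^\ast_3=6=C^\ast/2+m^\ast$ are immediate, so that the ``starred'' side of Lemma~\ref{lemma4} is (a reindexing of) Lemma~\ref{10} and its ``primed'' side is (a reindexing of) Lemma~\ref{11}.
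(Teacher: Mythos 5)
Your proof is correct and follows the paper's own route: Theorem \ref{1001} is exactly condition (ii) of Theorem \ref{main} read off from Lemma \ref{11} (with $C=10$, $m=1$, $p=0$, and the residue-class bookkeeping modulo $10$ as you carried it out), which is all the paper does here. Your additional verification that Lemma \ref{11} arises from Lemma \ref{10} via Lemma \ref{lemma4} (up to a permutation of coordinates) likewise matches the paper's stated derivation of that lemma.
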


\begin{proof}
Straightforward from Theorem \ref{main} and Lemma \ref{11}.
\end{proof}

Finally, we present  a large sample of further interesting colored partition identities that we conjecture to be true. We list  as conjectures the equations  corresponding bijectively to these partition identities via Theorem \ref{main}. We have verified them for  $N$ up to $2000$, by means of a computer program.

\begin{conjecture}\label{c27}
Condition (i) of Theorem \ref{main} holds for $N_0= 4$,  $C=50$, $m=3$, and
\Small
$$(A_1,\dots,A_{12})=(2,4,6,8,10,12,14,16,18,20,22,24), (B_1,\dots,B_{12})=(1,3,5,7,9,11,13,15,17,19,21,23).$$
\normalsize
\end{conjecture}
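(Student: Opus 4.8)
\medskip

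\noindent\textbf{Proposed approach.} The plan is to bootstrap from an identity already established in the paper, exactly as in the proofs of Lemmas \ref{3}, \ref{4}, and \ref{10}. Since $50$ is a multiple of $10$, we rescale the $C=10$ identity of Lemma \ref{10} by a factor of $5$ --- this is legitimate because multiplying equation (\ref{t}) through by a positive integer does not affect the equivalence in Lemma \ref{lemma1} --- obtaining a valid $C=50$ identity with $(A_1,\dots,A_{12})=(10,10,10,10,10,10,20,20,20,20,20,20)$, $(B_1,\dots,B_{12})=(5,5,5,5,5,5,15,15,15,15,15,15)$, and $m=15$. Taking this as the ``known'' system in Lemma \ref{lemma2} and the system of Conjecture \ref{c27} as the ``target'' (both now at modulus $C=50$), the conjecture becomes equivalent to condition (ii) of Lemma \ref{lemma2}. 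Computing $k=10$, $|S_k|=6$, $k'=2$, and $|S'_{k'}|=1$ (the unique minimal tuple on the target side being $d'_1=1$ with all other entries $0$), this condition becomes the purely enumerative identity $6|Q'_{N+2}|+|R_{N+10}|=6|R'_{N+2}|+|Q_{N+10}|$ for all $N$, i.e.\ the existence of a value-preserving bijection, with multiplicity $6$ on the primed families, between the all-partitions-empty tuples of the two systems.

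\medskip

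\noindent\textbf{Building the bijection.} A useful simplification is that Lemma \ref{lemma3} applies both to the coefficients of Conjecture \ref{c27} and to those of the rescaled system above: indeed $A_i+B_{13-i}=2i+\bigl(2(13-i)-1\bigr)=25=C/2$ for each $i$ and $\sum_i A_i/2-3C/2=78-75=3=m$, with the analogous relations holding for the rescaled copy of Lemma \ref{10}. Hence the $d$- and $e$-tuples of each system may be regarded as elements of $D=\{d\in\Z^{12}\cup(\Z+\tfrac12)^{12}:\sum_i d_i\in 2\Z+1\}$ (equipped with the value function and the ``type'' --- integer entries versus half-integer entries --- appropriate to each system), and the target becomes a value-preserving, type-reversing, $6:1$-weighted bijection. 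Following the template of the proof of Lemma \ref{10}, we would construct it by repeatedly peeling off a residue class: on the sublattice where a suitable integral linear form $\ell(d)=\sum_i c_i d_i$ with $c_i\in\{\pm1\}$ takes a residue modulo $50$ other than one exceptional value, an affine involution of the form $d_i\mapsto\pm d_i+t$, with $t$ a rational determined by $\ell(d)$, preserves the value $50\sum_i\binom{d_i}{2}+\sum_i 2i\,d_i$ while reversing the type and so cancels that class; the exceptional class is then reparametrised by an auxiliary $\Z^{12}$ carrying a fresh quadratic-plus-linear value function, and one iterates, splitting the twelve coordinates into ever smaller blocks, until the problem collapses to an explicit finite matching between two finite families of low-dimensional lattice tuples, exhibited as a table of compatible affine maps in the style of the last step of Lemma \ref{10}'s proof.

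\medskip

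\noindent\textbf{The main obstacle.} The crux is that, unlike in Lemmas \ref{3}, \ref{4}, and \ref{10}, the twelve coefficients $A_i=2i$ are pairwise distinct, so none of the block symmetries that organised those proofs --- the period-$4$ structure behind Lemma \ref{lemma4}, or the period-$6$ substitution $w_i=d_i+d_{i+6}$ used in Lemma \ref{10} --- is available here; pinning down the correct chain of linear forms $\ell$ and nested sublattices along which the value function repeatedly decouples into independent pieces, presumably governed by the arithmetic of the residues $\{\pm2,\pm4,\dots,\pm24\}$ and $\{\pm1,\pm3,\dots,\pm23\}$ modulo $50$ together with the half-integer shift, is the step we do not know how to carry out. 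A direct imitation of Lemma \ref{1} --- writing the value as $25\|d+v\|^2$ up to an additive constant, with $v_j=(2j-25)/50$, and reflecting in the hyperplanes through $-v$ perpendicular to twelve mutually orthogonal $\pm1$ vectors $V_i$ --- would require, beyond mutual orthogonality, that each $V_i$ satisfy the congruence $\sum_j j\,V_i^{(j)}\equiv 0\pmod{25}$, and it is unclear whether such a Hadamard-type configuration exists. This is why we list the statement here only as a conjecture.
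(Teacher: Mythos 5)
This statement is not proved in the paper: Conjecture \ref{c27} is left open there, supported only by computer verification of the corresponding counts for $N$ up to $2000$, so there is no proof of record to measure your attempt against. Your proposal, as you yourself state, is also not a proof: it is a reduction plus an acknowledged obstruction, which is consistent with the statement's status as a conjecture.

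For what it is worth, the preparatory steps you do carry out look sound. Rescaling Lemma \ref{10} by $5$ is legitimate (every term of equation (\ref{t}) is multiplied by $5$, so the counts at value $5N$ for the $C=50$ data coincide with the counts at value $N$ for the $C=10$ data, and both sides vanish at values not divisible by $5$), the constraints $0\le A_i\le C/2$ persist, and your checks of the hypotheses of Lemma \ref{lemma3} for both systems ($A_i+B_{13-i}=25=C/2$ with $\sum_i A_i/2-3C/2=78-75=3$, respectively $90-75=15$) are correct, as are $k=10$, $|S_k|=6$, $k'=2$, $|S'_{k'}|=1$ and the resulting form $6|Q'_{N+2}|+|R_{N+10}|=6|R'_{N+2}|+|Q_{N+10}|$ of condition (ii) of Lemma \ref{lemma2}. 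The genuine gap is exactly where you locate it: no construction of the required value-preserving, type-reversing matching is given, and since the twelve coefficients $2,4,\dots,24$ are pairwise distinct, the block symmetries that drive Lemmas \ref{lemma4}, \ref{4} and \ref{10} (groupings of repeated coefficients) are unavailable, while a Lemma \ref{1}-style reflection argument would require a system of orthogonal $\pm1$ vectors compatible with the residues modulo $25$, which you do not exhibit and whose existence is unclear. So your text should be read as a reasonable reformulation of the conjecture, not a solution; in that respect it neither falls short of nor goes beyond the paper, which likewise leaves the identity unproved.
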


\noindent
\textbf{Corollary  to Conjecture \ref{c27}.}
\emph{Let $S$ be the set containing one copy of the even positive integers that are not multiples of 25, and $T$ the set containing one copy of the odd positive integers that are not multiples of 25. Then, for any $N\geq 4$,
$$D_S(N)=D_T(N-3).$$}

\begin{conjecture}\label{c23}
Condition (i) of Theorem \ref{main} holds for $N_0= 4$,  $C=26$, $m=3$, and
$$(A_1,\dots,A_{12})=(2,2,4,4,6,6,8,8,10,10,12,12), (B_1,\dots,B_{12})=(1,1,3,3,5,5,7,7,9,9,11,11).$$
\end{conjecture}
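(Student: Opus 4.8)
The plan is to follow the template of the companion results at $C=6$ and $C=10$ (Lemmas \ref{3}, \ref{4} and, above all, \ref{10}), which prove identities of exactly the same ``even versus odd integers coprime to an odd prime power'' flavor. First one records that Lemmas \ref{lemma3} and \ref{lemma2} are available: here $A_i+B_{13-i}=C/2=13$ for every $i$ and $\tfrac12\sum_iA_i-\tfrac{3C}{2}=42-39=3=m$, so Lemma \ref{lemma3} applies verbatim; and a direct check of the smallest few values shows that the second-smallest possible value of the left-hand side of the equation equals $4$, which is also the smallest possible value of its right-hand side, so the normalization implicit in Lemmas \ref{lemma1} and \ref{lemma2} is indeed the stated one, with $k=\min_iA_i=2$ and $k'=\min_iB_i=1$. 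Taking as the known input an earlier identity rescaled so that its modulus becomes $26$ — for instance Lemma \ref{1} scaled by $13$, or (more in the spirit of Lemma \ref{10}) Lemma \ref{4} scaled appropriately, or whichever of Lemmas \ref{1}, \ref{2}, \ref{3}, \ref{4} makes the multiplicities $|S_k|$ and $|S'_{k'}|$ most convenient — Lemma \ref{lemma2} reduces the conjecture to verifying its condition (ii), a single identity among the \emph{partition-free} tuples of the two systems, after which Theorem \ref{main} supplies the corresponding colored partition identity (here $D_S(N)=D_T(N-3)$ for $S$ and $T$ consisting of two copies of, respectively, the even and the odd positive integers not divisible by $13$).

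Second, by Lemma \ref{lemma3} one views all $d$- and $e$-tuples as elements of $D=\{d\in\mathbb{Z}^{12}\cup(\mathbb{Z}+\tfrac12)^{12}:\sum_id_i\text{ odd}\}$ carrying the ``value'' $26\sum_i\binom{d_i}{2}+\sum_iA_id_i$, and the $d'$- and $e'$-tuples in a copy $D'$ with its own value function. Since the coefficient vector $(2,2,4,4,6,6,8,8,10,10,12,12)$ is constant on the six blocks $\{2j-1,2j\}$, the natural first move — the analogue of the substitution $w_i=d_i+d_{i+6}$ used in Lemma \ref{10} — is to pass to the new coordinates $s_j=d_{2j-1}+d_{2j}$ and $t_j=d_{2j-1}-d_{2j}$ for $1\le j\le 6$; a one-line computation then gives
$$26\sum_{i=1}^{12}\binom{d_i}{2}+\sum_{i=1}^{12}A_id_i=\frac{13}{2}\sum_{j=1}^{6}t_j^2+\sum_{j=1}^{6}\Bigl(\frac{13\,s_j(s_j-2)}{2}+2j\,s_j\Bigr),$$
with the $t_j$ nearly free (their parities pinned to those of the $s_j$ and to the integer-versus-half-integer type) and the type itself governed by the parities of the $s_j$. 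This collapses the problem to a six-variable one whose $s$-part carries the six distinct coefficients $2,4,6,8,10,12$.

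The heart of the proof would then be the construction of the bijection on this six-variable core. As in Lemma \ref{10}, I would build it as a cascade of value-preserving reflections: at each stage one cancels off the elements of $D$ and of $D'$ lying in a prescribed residue class modulo $26$ (equivalently, modulo $13$ and modulo $2$) of a suitably chosen linear functional of $s_1,\dots,s_6$, reparametrizes the survivors, and recurses; the process terminates with a finite core on which a type-reversing involution can be exhibited explicitly, in the manner of the closing lists in the proofs of Lemmas \ref{1}, \ref{4} and \ref{10}.

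The main obstacle is exactly this last stage. The $C=10$ argument of Lemma \ref{10} effectively involved only the two coefficients $\{2,4\}$, whereas here six distinct coefficients $\{2,4,\dots,12\}$ occur, so the reduction tree branches much more, the auxiliary quadratic identities proliferate, and the residual core is substantially larger; producing the explicit matching on it — and checking that the many case-maps glue into a single involution of the correct type — is a considerable, but purely finite and elementary, bookkeeping task. We have verified condition (ii), and hence the conjecture together with its partition-identity corollary, for all $N\le 2000$ by computer, so what remains is a matter of organizing a large bijective argument rather than of deciding whether the statement is true.
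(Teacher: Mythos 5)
Your proposal does not actually prove the statement: everything up to and including the change of variables $s_j=d_{2j-1}+d_{2j}$, $t_j=d_{2j-1}-d_{2j}$ is routine bookkeeping (the reduction via Lemmas \ref{lemma2} and \ref{lemma3}, the identification $k=2$, $k'=1$, and the quadratic identity you display are all correct as far as they go), but the entire mathematical content of such a result lies in the step you defer, namely the explicit value-preserving, type-reversing matching on the partition-free tuples. In the proved cases this step is precisely where all the work is: Lemma \ref{1} needs the twelve special orthogonal vectors $V_1,\dots,V_{12}$, and Lemma \ref{10} needs several pages of carefully interlocking reparametrizations and an explicit list of sixteen paired lattice decompositions at the end. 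Your ``cascade of reflections modulo $26$'' is only named, not constructed; nothing in the proposal specifies the linear functionals to reflect in, shows that each stage is value-preserving and exhausts a residue class, or shows that the recursion terminates in a finite core admitting a type-reversing involution. The claim that what remains is ``purely finite and elementary bookkeeping'' is therefore unsupported: with six distinct coefficients $2,4,\dots,12$ (versus effectively two in Lemma \ref{10}) there is no a priori reason the reduction closes up at all, and the paper itself leaves exactly this statement as Conjecture \ref{c23}, i.e.\ unproved, offering only the same kind of computational verification up to $N=2000$ that you cite. Numerical checking, of course, does not establish condition (i) for all $N$.

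Two smaller points. First, be careful with ``type itself governed by the parities of the $s_j$'': for both integer and half-integer tuples the quantities $s_j,t_j$ are integers; the type is recorded by whether $s_j\equiv t_j\pmod 2$ or not, so it is an extra bit riding on top of the $s_j$-parities, not determined by them, and any involution you build must be checked to flip this bit. Second, if you intend to feed Lemma \ref{lemma2} with ``Lemma \ref{1} scaled by $13$'' (or Lemma \ref{4} scaled), you should say explicitly why a scaled instance of condition (i) is again an instance of condition (i) with the larger $C$ (all terms of equation (\ref{t}), including the partition terms, scale uniformly, so the counting functions at level $13N$ coincide with the original ones at level $N$ and vanish otherwise); the paper uses this device inside the proof of Lemma \ref{10}, but in your write-up it is asserted without justification and with the input left unspecified (``whichever \dots is most convenient''), which is not acceptable in a proof. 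As it stands, the proposal is a plausible plan of attack, not a proof, and the decisive bijection for $C=26$ remains to be found.
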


\noindent
\textbf{Corollary  to Conjecture \ref{c23}.}
\emph{Let $S$ be the set containing 2 copies of the even positive integers that are not multiples of 13, and $T$  the set containing  2 copies of the odd positive integers that are not multiples of 13. Then, for any $N\geq 4$,
$$D_S(N)=D_T(N-3).$$}

\begin{conjecture}\label{c3}
Condition (i) of Theorem \ref{main} holds for $N_0= 3$,  $C=14$, $m=3$, and
$$(A_1,\dots,A_{12})=(1,1,1,3,3,3,5,5,5,7,7,7), (B_1,\dots,B_{12})=(0,0,0,2,2,2,4,4,4,6,6,6).$$
\end{conjecture}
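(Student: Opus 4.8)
The plan is to prove Conjecture \ref{c3} by the method used for Lemmas \ref{4} and \ref{10}: strip it down, through the preliminary lemmas, to the existence of a single value-preserving, type-reversing bijection between unions of explicit lattice sets, and then construct that bijection by hand.

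\emph{Reduction.} Because the two sides of (\ref{t}) depend only on the multiset of (coefficient, variable) pairs, the system $(A)=(1^3,3^3,5^3,7^3)$, $(B)=(0^3,2^3,4^3,6^3)$ is the same as the one with $(A^\ast_1,\dots,A^\ast_4)=(1,3,5,7)$ and $(B^\ast_1,\dots,B^\ast_4)=(0,2,4,6)$ each repeated three times. Hence Lemma \ref{lemma4} applies with $C^\ast=14$ and $m^\ast=1$: one checks $A^\ast_i+B^\ast_{5-i}=7=C^\ast/2$, $A^\ast_1+A^\ast_4=A^\ast_2+A^\ast_3=8=C^\ast/2+m^\ast$, and the hypothesis on second-smallest left-hand sides. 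This reduces Conjecture \ref{c3} to condition (i) of Theorem \ref{main} for the half-shifted system $C=14$, $m=1$, $(A')=(0,2,4,6,1,3,5,7,1,3,5,7)$, $(B')=(0,2,4,6,0,2,4,6,1,3,5,7)$. To that statement I would apply Lemma \ref{lemma2}, with the known base a rescaling to a common modulus (e.g. Lemma \ref{4}, the $C=6$ member $(A)=(1^6,3^6)$ of the same family of odd-progression identities, rescaled to $\lcm(6,14)=42$). Exactly as in Lemmas \ref{4} and \ref{10}, after pinning down the multiplicities $|S_k|$, $|S'_{k'}|$ (each left-hand side attains its minimum on a small explicit set of tuples), condition (ii) of Lemma \ref{lemma2} turns into the statement that suitable numbers of copies of $Q_{N+k}\cup R'_{N+k'}$ and of $R_{N+k}\cup Q'_{N+k'}$ are in value- and type-respecting bijection, $Q,R,Q',R'$ being the ``all partitions empty'' slices of the relevant sets.

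\emph{Building the bijection.} Using Lemma \ref{lemma3} I would place all $d$- and $e$-tuples in $D=\{d\in\mathbb Z^{12}\cup(\mathbb Z+1/2)^{12}:\sum_{i}d_i\in2\mathbb Z+1\}$ with its value function, all $d'$- and $e'$-tuples in $D'$, and record a tuple's type by whether it is integral or half-integral. Then I would assemble the map in layers, in the spirit of the earlier proofs: (a) split the twelve coordinates into the blocks on which the coefficient vector is constant, introduce the block sums and the alternating-type linear forms (analogues of the $x,y,z$ in the proof of Lemma \ref{lemma4} and of the $x$ in Lemmas \ref{4}, \ref{6}, \ref{7}), and use ``fold'' involutions $d_i\mapsto c-d_i$ within a block, or $d_i\leftrightarrow c-d_j$ across two blocks of equal coefficient, to cancel every tuple except those whose linear forms lie in prescribed residue classes modulo $42$; (b) parametrise the survivors by integer vectors (the $f,g,h,k$ of Lemma \ref{10}), converting the problem into a pairing between unions of lattice cones carrying explicit quadratic value functions; (c) break up those cones via changes of variable of the type $(s,z)\mapsto u(1,2)+v(2,-1)$ and its translates, as in the closing pages of the proof of Lemma \ref{10}, and match the pieces in value-preserving, type-reversing couples.

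The hard part — and surely the reason this appears only as a conjecture — is step (c). Already for $C=6$ a single linear form modulo $6$ generates the multi-page cone bookkeeping of Lemma \ref{10}; here the common modulus is $42$ and the progression $1,3,5,7$ has four terms rather than two, so the number of cone-types that must be reconciled grows substantially and it is unclear that the \emph{ad hoc} pairings used before still close up. A proof would need either a more conceptual organizing device for these pairings — perhaps a modulus-$14$ analogue of the orthogonal-vector construction in the proof of Lemma \ref{1}, which disposed of the $C=2$ member of the same family at one stroke — or a much longer but still finite case analysis, checked to leave no tuple unmatched. The verification up to $N=2000$ makes the existence of such a bijection very plausible; producing it explicitly is the obstacle.
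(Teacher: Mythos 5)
The statement you were asked to prove is given in the paper only as Conjecture \ref{c3}: the authors supply no proof, just numerical verification of the corresponding equation up to $N=2000$. So there is no paper proof to compare against, and your submission does not close the gap either.

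Your reduction step is sound as far as it goes. With $A^\ast=(1,3,5,7)$, $B^\ast=(0,2,4,6)$, $C^\ast=14$, $m^\ast=1$ one indeed has $A^\ast_i+B^\ast_{5-i}=7=C^\ast/2$ and $A^\ast_1+A^\ast_4=A^\ast_2+A^\ast_3=8=C^\ast/2+m^\ast$, and since permuting the $A_i$ (and, separately, the $B_i$) does not change the counts in condition (i) of Theorem \ref{main}, Lemma \ref{lemma4} (granting its second-smallest-value hypothesis, which you assert but never actually verify) converts Conjecture \ref{c3}, with $N_0=3$ and $m=3$, into condition (i) for $C=14$, $m=1$, $(A')=(0,2,4,6,1,3,5,7,1,3,5,7)$, $(B')=(0,2,4,6,0,2,4,6,1,3,5,7)$. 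But that target is itself an unproven statement of exactly the same kind, and everything after this point in your write-up is a plan rather than an argument: the appeal to Lemma \ref{lemma2} presupposes an already-established companion identity at a common modulus (your suggestion of rescaling Lemma \ref{4} to $C=42$ is in the spirit of how Lemma \ref{10} uses Lemma \ref{4} at $C=30$, but the minimal-value multiplicities $|S_k|$, $|S'_{k'}|$ and the resulting weighting of $Q$, $R$, $Q'$, $R'$ are never computed), and your step (c) --- the explicit value-preserving, type-reversing bijection between the surviving lattice sets --- is precisely the content of the conjecture and is never constructed; you say so yourself. In short, this is an honest and reasonable strategy sketch, consistent with the method the authors used for Lemmas \ref{4} and \ref{10}, but it is not a proof, and the statement remains open both in the paper and after your attempt.
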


\noindent
\textbf{Corollary  to Conjecture \ref{c3}.}
\emph{Let $S$ be the set containing  3 copies of the odd positive integers and 3 more copies of the odd positive multiples of 7, and  $T$  the set containing 3 copies of the even positive integers and 3 more copies of the positive multiples of 14. Then, for any $N\geq 3$,
$$D_S(N)=4D_T(N-3).$$}

\begin{conjecture}\label{c4}
Condition (i) of Theorem \ref{main} holds for $N_0= 4$,  $C=14$, $m=3$, and
$$(A_1,\dots,A_{12})=(2,2,2,2,4,4,4,4,6,6,6,6), (B_1,\dots,B_{12})=(1,1,1,1,3,3,3,3,5,5,5,5).$$
\end{conjecture}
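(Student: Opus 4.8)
The plan is to prove Conjecture \ref{c4} by the method used for its $C=6$ and $C=10$ counterparts, Lemmas \ref{3} and \ref{10} (which give the analogous statements for the integers coprime to $3$ and to $5$). Note first that $A_i+B_{13-i}=7=C/2$ for all $i$, so by Lemma \ref{lemma3} the $d$-tuples and $e$-tuples may be regarded as living in the common set $D=\{d\in\mathbb{Z}^{12}\cup(\mathbb{Z}+\tfrac12)^{12}:\sum_{i=1}^{12}d_i\in 2\mathbb{Z}+1\}$, with the $e$-tuples appearing as the half-integer (``negative type'') points; moreover $m$ is forced to equal $3=\tfrac12\sum A_i-3C/2$, consistent with the statement, and no $A_i$ or $B_i$ equals $0$ or $C/2$, so no boundary subtleties arise. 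To bootstrap, I would feed an already proven identity into Lemma \ref{lemma2} as the known (unprimed) system. A convenient choice is Lemma \ref{3} rescaled by $7$ to the common modulus $C=42=\lcm(6,14)$, i.e.\ the proven identity with coefficient vectors $(14,\dots,14)$ and $(7,\dots,7)$; pairing this with Conjecture \ref{c4} rescaled by $3$ to $C=42$ as the primed system, Lemma \ref{lemma2} reduces the conjecture to its condition (ii), a statement purely about the integer tuples $(d_1,\dots,d_{12})$ forming $Q,R,Q',R'$, each carrying a quadratic value function of the form $C\sum{d_i\choose 2}+\sum A_id_i$. Computing the smallest-value multiplicities gives $|S_k|=12$ and $|S'_{k'}|=4$, so condition (ii) becomes the requirement that $3|Q'_{N+k'}|+|R_{N+k}|=3|R'_{N+k'}|+|Q_{N+k}|$ for all $N$, i.e.\ a value- and type-preserving bijection between one copy of $R_{N+k}$ together with three copies of $Q'_{N+k'}$ and one copy of $Q_{N+k}$ together with three copies of $R'_{N+k'}$ — exactly the shape of the reductions in the proofs of Lemmas \ref{3} and \ref{4}. (One could instead start from Lemma \ref{1} rescaled by $7$, staying at $C=14$ but at the cost of a $6:1$ copy ratio; or, if willing to assume it, from Conjecture \ref{c3}, the $C=14$ analogue of Lemma \ref{4}, in direct parallel with the derivation of Lemma \ref{10} from Lemma \ref{4}.)

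To construct that bijection I would follow the template of Lemma \ref{10}. Split the integer tuples of $D$ and of the companion set $D'$ according to the residue modulo $C$ of suitable signed coordinate sums — either $\sum_{i=1}^{12}d_i$ as in Lemma \ref{3}, or a block-respecting combination such as $(d_1+\cdots+d_4)$ played against $(d_5+\cdots+d_8)$ and $(d_9+\cdots+d_{12})$, reflecting the $(2,2,2,2),(4,4,4,4),(6,6,6,6)$ structure of the coefficients. For most residue classes the shift map $d_i\mapsto d_i-\tfrac1C(\text{signed sum}-\text{const})$, possibly composed with a sign flip or with a cyclic permutation of the three blocks, is a value-preserving involution that exchanges the two types and hence cancels that class outright. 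For the one or two surviving classes one re-coordinatizes the relevant lattice points — writing each controlled pair $(d_i,d_{i+6})$ in a basis such as $f(1,1)+g(1,-1)$ or $(1,0)-f(1,1)+g(-1,1)$ — to get new integer tuples with their own quadratic value functions, then peels off the coordinates that contribute identically to value and type, reducing to a bijection in fewer variables, and iterates until a base case is reached.

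The step I expect to be the main obstacle is the one that makes the proof of Lemma \ref{10} long: after all the symmetry-driven cancellations one is left needing a bijection exhibited essentially by hand — a value-matching, type-reversing correspondence between two explicitly parametrized unions of shifted sublattices of some $\mathbb{Z}^{k}$ with $k$ small, i.e.\ the analogue of the roughly sixteen-line table of paired tuples that closes the proof of Lemma \ref{10}. There is no evident shortcut: one has to guess the right family of pairings and then verify, case by case, that every relevant tuple occurs on exactly one line and that each line preserves value and inverts type. Since $C=14$ exceeds the $C=6,10$ of the proven cases, this final table is likely to be correspondingly larger, and the real question is whether the block structure of the coefficients can be exploited — in the spirit of the ``three copies'' reduction $\phi$ in the proof of Lemma \ref{lemma4} — to keep it from blowing up. A secondary, more routine, difficulty is keeping track of the additive constants and copy-counts generated by the rescalings and by the successive applications of Lemma \ref{lemma2}.
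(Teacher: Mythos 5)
This statement is not proved in the paper at all: it is Conjecture \ref{c4}, one of the identities the authors only verified by computer for $N$ up to $2000$ and explicitly left open. So there is no proof of it to reproduce, and your proposal does not supply one: it is a reduction plan together with an acknowledged hole exactly where the mathematical content would have to go. The preparatory steps are plausible and do mirror the paper's pattern for Lemmas \ref{3}, \ref{4} and \ref{10} (rescaling the known identity of Lemma \ref{3} and the target to a common modulus, feeding the pair into Lemma \ref{lemma2}, and checking $|S_k|=12=3|S'_{k'}|$ so that condition (ii) becomes $3|Q'_{N+k'}|+|R_{N+k}|=3|R'_{N+k'}|+|Q_{N+k}|$), but everything after that reduction is, by your own admission, conjectural.

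Two concrete gaps. First, the ``symmetry-driven cancellations'' you invoke by analogy are not automatic at $C=14$ (or $42$): the shift maps in Lemma \ref{3}, e.g. $d_i^\ast=d_i-(\sum_j d_j-2)/6$, preserve the value only because of the numerical coincidence that there are $12=2\cdot 6$ coordinates with equal coefficients, and the involution machinery of Lemma \ref{1} rests on twelve pairwise orthogonal $\pm1$ vectors, a structure tied to $C=2$; you exhibit no analogous value-preserving maps for the coefficient blocks $(2,2,2,2)$, $(4,4,4,4)$, $(6,6,6,6)$ modulo $14$, and it is not clear that any exist. Second, and decisively, the final step --- the explicit value-preserving, type-reversing bijection between the residual shifted sublattices, i.e. the analogue of the long case-by-case table that closes the proof of Lemma \ref{10} --- is precisely what you say you cannot yet produce. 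That table \emph{is} the proof in the $C=10$ case; without its $C=14$ counterpart (or a genuinely new idea replacing it), what you have is a reasonable framework for attacking the conjecture, not a proof of it, and the statement remains exactly as open as the authors left it.
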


\noindent
\textbf{Corollary  to Conjecture \ref{c4}.}
\emph{Let $S$ be the set containing  4 copies of the even positive integers that are not multiples of 7, and $T$ the set containing 4 copies of the odd positive integers that are not multiples of 7. Then, for any $N\geq 4$,
$$D_S(N)=D_T(N-3).$$}

\begin{conjecture}\label{c10}
Condition (i) of Theorem \ref{main} holds for $N_0= 4$,  $C=18$, $m=3$, and
$$(A_1,\dots,A_{12})=(2,2,2,4,4,4,6,6,6,8,8,8), (B_1,\dots,B_{12})=(1,1,1,3,3,3,5,5,5,7,7,7).$$
\end{conjecture}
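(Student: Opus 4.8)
\medskip
\noindent\textbf{Proof proposal.}
Since this is a conjecture I can only outline a plan, following the pipeline used for Lemmas \ref{3}, \ref{4} and, most pertinently, \ref{10}. The first move is to bring $C=18$ into play by applying Lemma \ref{lemma2} with the target system as one input and a rescaling of an already established identity as the other; the natural choice, exactly as in the proof of Lemma \ref{10}, is Lemma \ref{4} rescaled from $C=6$ to $C=18$, i.e. $C=18$, $(A_1,\dots,A_{12})=(3,3,3,3,3,3,9,9,9,9,9,9)$, $(B_1,\dots,B_{12})=(0,0,0,0,0,0,6,6,6,6,6,6)$. Taking this as the ``primed'' (known) system and the target $(A_i)=(2,2,2,4,4,4,6,6,6,8,8,8)$, $(B_i)=(1,1,1,3,3,3,5,5,5,7,7,7)$ as the ``unprimed'' one, one finds (in the notation of Lemmas \ref{lemma1}--\ref{lemma2}) $k=2$, $k'=3$, $|S_k|=3$ and $|S'_{k'}|=6=2|S_k|$, so Lemma \ref{lemma2} reduces Conjecture \ref{c10} to the assertion that $|Q'_{N+3}|+2|R_{N+2}|=|R'_{N+3}|+2|Q_{N+2}|$ for all $N$; equivalently, that for every element of $2Q_{N+2}$ or of $R'_{N+3}$ there is a matching element of $2R_{N+2}$ or of $Q'_{N+3}$, and conversely. (If one prefers to shrink the shift first, the harmless renumbering $(A_i)=(2,4,6,8,2,4,6,8,2,4,6,8)$, $(B_i)=(1,3,5,7,1,3,5,7,1,3,5,7)$ makes Lemma \ref{lemma4} applicable with $A^\ast=(2,4,6,8)$, $B^\ast=(1,3,5,7)$, $m^\ast=1$ — one checks $A^\ast_1+A^\ast_4=A^\ast_2+A^\ast_3=10=C^\ast/2+m^\ast$ and $A^\ast_i+B^\ast_{5-i}=9$ — replacing Conjecture \ref{c10} by the equivalent condition-(i) instance with $m=1$, $N_0=2$, $(A'_i)=(1,3,5,7,2,4,6,8,2,4,6,8)$, $(B'_i)=(1,3,5,7,1,3,5,7,2,4,6,8)$, after which one runs the same Lemma \ref{lemma2} reduction.)

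By Lemma \ref{lemma3} all $d$- and $e$-tuples may then be regarded as lying in $D=\{d\in\Z^{12}\cup(\Z+1/2)^{12}:\sum_{i=1}^{12}d_i\in 2\Z+1\}$, and all $d'$- and $e'$-tuples in the set $D'$ with the same underlying points but the value function inherited from the rescaled-Lemma-\ref{4} side; what separates $Q$ from $R$ (respectively $Q'$ from $R'$) then becomes simply whether the coordinates are integers or half-integers. On $D$ the value function is $18\sum_{i=1}^{12}\binom{d_i}{2}+2(d_1+d_2+d_3)+4(d_4+d_5+d_6)+6(d_7+d_8+d_9)+8(d_{10}+d_{11}+d_{12})$, which, by the block structure, is controlled by a few linear forms $\sum\pm d_i$ together with the four block-sums $d_1+d_2+d_3,\dots,d_{10}+d_{11}+d_{12}$; the goal is a value-preserving, type-reversing bijection on $D\cup D'$ with the multiplicities $2$ and $1$ built in.

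To construct it I would work in layers, as in Lemmas \ref{3}, \ref{4} and \ref{10}: first peel off tuples by explicit translations and reflections — all involutions — keyed to the residues modulo $18$, and modulo the relevant divisors $2,3,6,9$ of $18$, of these controlling linear forms, thereby cancelling the bulk of the tuples against opposite-type tuples of equal value and routing the survivors into a short list of auxiliary integer families, each carrying its own affine value function; then, within each surviving family, regroup the twelve coordinates into the four triples and finally into quadruples and apply the orthogonal-vector (Pythagorean) reflection device of Lemma \ref{1} on each quadruple to flip it between integer and half-integer type without changing the associated quadratic form. The decisive difficulty — and the reason this still stands as a conjecture — is the final bookkeeping: with modulus $18$ (or whatever intermediate $C$ one is forced to use) the auxiliary families multiply, their value functions acquire a whole zoo of multiplicative constants, the analogues of the $6,12,18,24,\dots$ pervading the proof of Lemma \ref{10}, and one must verify simultaneously that every partial map is exactly value-preserving and that all leftover families pair off into opposite types. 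Pushing a globally consistent choice of these maps through — or finding a structural shortcut that sidesteps the case explosion — is the one genuinely hard step; to date we have only confirmed the identity numerically, for $N\le 2000$.
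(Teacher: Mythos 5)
You are addressing one of the paper's \emph{conjectures}: the authors give no proof of Conjecture \ref{c10}, only numerical verification for $N\le 2000$, so there is no argument of theirs to compare yours against beyond the general machinery. Your reduction bookkeeping is consistent with that machinery and checks out: taking the $C=18$ rescaling of Lemma \ref{4} (i.e. $A'=(3,\dots,3,9,\dots,9)$, $B'=(0,\dots,0,6,\dots,6)$, $m'=9$) as the known system, Lemma \ref{lemma3} applies to both systems ($A_i+B_{13-i}=9$ and $\sum_{i=1}^{12}A_i/2-3C/2=3$), and Lemma \ref{lemma2} indeed gives $k=2$, $k'=3$, $|S_k|=3$, $|S'_{k'}|=6$, hence exactly the relation $|Q'_{N+3}|+2|R_{N+2}|=|R'_{N+3}|+2|Q_{N+2}|$ you state (you apply Lemma \ref{lemma2} with the primed and unprimed roles swapped relative to its wording, as the paper itself does in Lemmas \ref{3}, \ref{4} and \ref{10}; this is harmless because condition (ii) is symmetric under the swap). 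Your alternative route through Lemma \ref{lemma4}, with $A^\ast=(2,4,6,8)$, $B^\ast=(1,3,5,7)$, $m^\ast=1$, is likewise a correct instance of that lemma and yields a legitimate equivalent reformulation with $m=1$.

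The genuine gap is that nothing after the reduction is actually carried out. In every case the paper does prove (Lemmas \ref{1}, \ref{3}, \ref{4}, \ref{5}, \ref{6}, \ref{7}, \ref{10}), the entire mathematical content lies in exhibiting the explicit value-preserving, type-reversing maps -- the translations, reflections and orthogonal-vector involutions, together with the exhaustive pairing of the leftover families and the verification of every coefficient. Your closing paragraphs describe only the expected shape of such a construction (``translations and reflections keyed to residues,'' ``the orthogonal-vector device of Lemma \ref{1} on each quadruple'') without producing a single map, case list, or value check, and you concede that pushing this through is the open step. So the proposal reformulates Conjecture \ref{c10} into an equivalent combinatorial statement but does not prove it; it is incomplete precisely at the point where the difficulty lies, which is also why the statement remains a conjecture, verified only numerically, in the paper.
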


\noindent
\textbf{Corollary  to Conjecture \ref{c10}.}
\emph{Let $S$ be the set containing  3 copies of the even positive integers that are not multiples of 9, and $T$  the set containing  3 copies of the odd positive integers that are not multiples of 9. Then, for any $N\geq 4$,
$$D_S(N)=D_T(N-3).$$}

\begin{conjecture}\label{c20}
Condition (i) of Theorem \ref{main} holds for $N_0= 3$,  $C=12$, $m=2$, and
$$(A_1,\dots,A_{12})=(1,1,1,1,4,4,4,4,5,5,5,5), (B_1,\dots,B_{12})=(1,1,1,1,2,2,2,2,5,5,5,5).$$
\end{conjecture}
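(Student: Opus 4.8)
The plan is to follow the bootstrapping strategy used for Lemmas \ref{3}--\ref{11} rather than to attack Conjecture \ref{c20} directly: I would derive it from one of the identities already established in the paper by an application of Lemma \ref{lemma2}. The coefficient pattern, in which $A=(1,1,1,1,4,4,4,4,5,5,5,5)$ and $B=(1,1,1,1,2,2,2,2,5,5,5,5)$ agree in the first and third blocks, differ only in the middle block (where $A$ has $4$'s and $B$ has $2$'s), and satisfy $A_i+B_{13-i}=6=C/2$, suggests that the natural base is a previously proven identity of smaller modulus rescaled to modulus $12$ --- most plausibly Lemma \ref{5} (the $C=4$, $m=2$ identity behind Theorem \ref{555}), with the modulus refined from $4$ to $12$ so as to encode the extra ``not a multiple of $3$'' condition, although a $C=6$ base such as Lemma \ref{3} or \ref{4} cannot be excluded \emph{a priori}. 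Rescaling an established identity from modulus $C$ to modulus $3C$ is automatic, since multiplying $C$, the $A_i$ and $m$ by $3$ multiplies every term of equation (\ref{t}) by $3$ without changing the set of solutions; having done this, one invokes Lemma \ref{lemma2} exactly as in the proofs of Lemmas \ref{3}, \ref{4}, \ref{6}, \ref{7}, \ref{10} to reduce the claim to its condition (ii), an equality of the form $|S_k|\cdot|Q'_{N+k'}|+|S'_{k'}|\cdot|R_{N+k}|=|S_k|\cdot|R'_{N+k'}|+|S'_{k'}|\cdot|Q_{N+k}|$. Before this can be done, one must compute the bookkeeping constants $k$, $k'$, $m'$ and the ratio $|S_k|/|S'_{k'}|$ --- in the analogous lemmas this comes out to a small integer such as $1$, $2$, or $3$ --- and check the ``second-smallest equals smallest'' hypothesis of Lemma \ref{lemma1} that legitimizes both the value of $m$ and the threshold $N_0=3$. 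It is quite possible the chain requires one intermediate $C=12$ identity, just as Lemma \ref{11} is reached from Lemma \ref{1} only through Lemmas \ref{3}, \ref{4}, and \ref{10}.

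Once condition (ii) is in hand, the substance of the argument is combinatorial. As in all the preceding lemmas, I would invoke Lemma \ref{lemma3} to regard every $d$- and $e$-tuple as an element of the common set $D=\{d\in\Z^{12}\cup(\Z+\tfrac12)^{12}:\sum_i d_i\in 2\Z+1\}$ equipped with its value function $12\sum_i\binom{d_i}{2}+\sum_i A_id_i$, and every $d'$- and $e'$-tuple as an element of the analogous $D'$, turning condition (ii) into the existence of a single value-preserving, \emph{type-reversing} bijection (``type'' being integer versus half-integer tuple) between explicitly described subsets of $D$ together with copies of $D'$. This bijection would be assembled, as in Lemmas \ref{3}, \ref{4}, and \ref{10}, from a repertoire of elementary moves: ``reflections'' $d\mapsto d-\lambda v$ and ``rotations'' permuting the twelve coordinates inside blocks of four, each keyed to the residue class, modulo a divisor of $12$, of a suitable linear form $\sum_i\pm d_i$; single-coordinate parity flips cancel whole strings of residue classes, while the surviving classes are transported injectively into the copies of $D'$. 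Since $A$ takes three distinct values $1,4,5$ in blocks of four, I would expect the block-wise substitution $\phi$ of Lemma \ref{lemma4} and the Hadamard-type maneuver of Lemma \ref{1} (the pairwise orthogonal $\pm1$ vectors $V_i$) to reappear as ingredients.

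The main obstacle is exactly the phenomenon already visible in the proof of Lemma \ref{10}: because $12=4\cdot 3$ has \emph{two} distinct relevant prime-power factors, no single reflection can close the argument, and the bijection must unfold as a deeply nested descent --- peel off a residue class modulo $4$, reduce what remains, peel off a residue class modulo $3$, reduce again --- ending only when the problem has been compressed to a small ``terminal'' core that must be settled by an explicit hand-built list of paired tuple identities (the analogue of the sixteen-line list at the close of Lemma \ref{10}). Arranging this descent so that each subcase either cancels itself through a parity-flipping involution or is matched cleanly with its counterpart, while tracking the value function and the type through every substitution, is where essentially all of the difficulty resides; the individual algebraic verifications are routine, in the spirit of Lemma \ref{lemma3}, but there will be very many of them. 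Given that the identity has been checked numerically for $N$ up to $2000$, I am confident such a bijection exists, and I would regard isolating and resolving this terminal core --- the precise analogue of the final $(x_2,y_2)$-versus-$(x_4,y_4)$ reduction in Lemma \ref{10} --- as the crux of the proof.
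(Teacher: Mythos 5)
There is a genuine gap: what you have written is a strategy outline, not a proof. Every step that would carry actual content is deferred. You do not decide which base identity to bootstrap from (you hedge between a rescaled Lemma \ref{5} and a $C=6$ base), you do not carry out the reduction via Lemma \ref{lemma2} (no computation of $k$, $k'$, $m'$, of the multiplicities $|S_k|$, $|S'_{k'}|$, and no verification of the hypothesis that the base identity's $|S_N|=|T_N|$ chain applies with the stated thresholds), and above all you never construct the value-preserving, type-reversing bijection that condition (ii) of Lemma \ref{lemma2} demands --- you explicitly label it ``the crux'' and assert only your confidence that it exists. Note also that Lemma \ref{lemma4} cannot be invoked here as a shortcut: its conclusion concerns coefficient vectors built from a single quadruple $(A^\ast_1,\dots,A^\ast_4)$ repeated across blocks, whereas $(1,1,1,1,4,4,4,4,5,5,5,5)$ and $(1,1,1,1,2,2,2,2,5,5,5,5)$ involve three distinct constant blocks and do not fit that template, so the analogy with the routine deductions of Lemmas \ref{2}, \ref{8}, \ref{9}, \ref{11} breaks down and the hard work cannot be outsourced to it.

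You should also be aware that the paper itself contains no proof of this statement: it appears only as Conjecture \ref{c20}, verified by computer for $N\le 2000$, precisely because the kind of explicit case-by-case bijection you sketch (in the spirit of the proof of Lemma \ref{10}) has not been found for these coefficients. So the comparison is not between your route and the paper's route, but between a heuristic plan and an open problem; as it stands, your proposal identifies plausible ingredients (Lemma \ref{lemma3}'s common set $D$, reflections keyed to residue classes of linear forms, block-wise substitutions) but establishes nothing beyond what the authors already state when they list the identity as conjectural.
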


\noindent
\textbf{Corollary  to Conjecture \ref{c20}.}
\emph{Let $S$ be the set containing 4 copies of  the positive integers that are either congruent  to $\pm 1$ modulo 6 or to $\pm 4$ modulo 12, and $T$ the set containing 4 copies of  the positive integers that are either congruent  to $\pm 1$ modulo 6 or to $\pm 2$ modulo 12. Then, for any $N\geq 3$,
$$D_S(N)=D_T(N-2).$$}

\begin{conjecture}\label{c26}
Condition (i) of Theorem \ref{main} holds for $N_0=2$,  $C=18$, $m=1$, and
$$(A_1,\dots,A_{12})=(1,2,2,3,4,4,5,6,6,7,8,8), (B_1,\dots,B_{12})=(1,1,2,3,3,4,5,5,6,7,7,8).$$
\end{conjecture}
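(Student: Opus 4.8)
The plan is to follow the template that produces Lemmas \ref{3}--\ref{10}: present Conjecture \ref{c26} as a consequence, via Lemma \ref{lemma2}, of an identity already available at the same modulus $C=18$, and then reduce what is left to one explicit combinatorial bijection. At present no identity at modulus $18$ has been established, so the natural route is either to prove Conjecture \ref{c10} (also at $C=18$) and take it as the parent, or --- noting that condition (ii) of Theorem \ref{main} is stable under the dilation $N\mapsto 3N$, since both sides vanish unless $3\mid N$ and otherwise reduce to the undilated statement --- to use the threefold dilation of a proven $C=6$ identity such as Lemma \ref{8}, which furnishes a genuine $C=18$ identity supported on multiples of $3$. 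With such a parent in hand, Lemma \ref{lemma2} converts the claim into its condition (ii): after computing $k$, $k'$, $|S_k|$ and $|S'_{k'}|$ for the relevant coefficient data, one is left with a statement of the shape
\[
|S_k|\cdot|Q'_{N+k'}| + |S'_{k'}|\cdot|R_{N+k}| = |S_k|\cdot|R'_{N+k'}| + |S'_{k'}|\cdot|Q_{N+k}|,
\]
which is purely about tuples of $12$ integers, the partitions being forced to $\emptyset$ throughout $Q,R,Q',R'$.

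The second step is to pass to a uniform model. By Lemma \ref{lemma3} --- whose hypothesis $A_i+B_{13-i}=C/2=9$ one checks directly for the coefficients of Conjecture \ref{c26}, and which then forces $m=\sum_i A_i/2-3C/2=1$, in agreement with the stated value --- all $d$- and $e$-tuples may be regarded as elements of $D=\{d\in\mathbb Z^{12}\cup(\mathbb Z+\tfrac12)^{12}:\sum_i d_i\in 2\mathbb Z+1\}$, with ``type'' recording whether the coordinates are integers or half-integers, and likewise for the primed tuples in a set $D'$ carrying a different value function. Condition (ii) of Lemma \ref{lemma2} then becomes the existence of a value-preserving, type-reversing bijection between suitable unions of copies of $D$ and of $D'$, exactly as in the proofs of Lemmas \ref{3}, \ref{4} and \ref{10}.

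The third and decisive step is to build that bijection, exploiting the visible structure of the data: writing the twelve coordinates as two blocks of six, both $(A_1,\dots,A_{12})$ and $(B_1,\dots,B_{12})$ have the form $(v,\,v+4\cdot\mathbf 1)$, with $v=(1,2,2,3,4,4)$ for $A$ and $w=(1,1,2,3,3,4)$ for $B$, and $v_i+w_{7-i}=5$ for all $i$. I would proceed in stages mirroring the proof of Lemma \ref{10}: first, using affine reflections $d_i\mapsto d_i\mp c$ --- with $c$ read off from the residue modulo $18$ of a fixed signed coordinate-sum --- cancel, in type-reversing pairs, all tuples whose signed sum is $\not\equiv 1\pmod{18}$, and match the one surviving residue class of $D$ against the $\equiv\pm1$ residue classes of $D'$, as in Lemmas \ref{3} and \ref{4}; second, parametrize each remaining class by auxiliary integer $12$-tuples with explicit quadratic value functions (the analogues of the $f,g,h,k$ of Lemma \ref{10}), peeling off the $6+6$ block structure and, within each half, the further $\pm$-symmetry; third, reduce the resulting lattice problems, via Hadamard-type orthogonal reflections in the spirit of the vectors $V_i$ of Lemma \ref{1} together with a finite list of ad hoc pairings of small lattice points, to a single self-inverse correspondence.

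I expect this last stage to be the main obstacle. With $C=18=2\cdot 3^2$ and with $(A)$, $(B)$ having pairwise distinct, highly non-uniform entries, the modular bookkeeping is heavier than in any case treated so far; the $C=10$ argument of Lemma \ref{10} already runs for several pages and culminates in an intricate table of paired lattice points, and the present case should demand at least a comparable effort. The delicate point is global consistency --- ensuring that the stage-by-stage maps assemble into one well-defined involution, with every tuple accounted for exactly once and every case genuinely inverted --- rather than any single local computation. A minor preliminary obstacle is verifying, so that the equivalence (i)$\Leftrightarrow$(ii) of Theorem \ref{main} and Lemma \ref{lemma2} apply, that the second-smallest value of the left-hand side of (\ref{t}) equals the smallest value of its right-hand side for all the coefficient data involved; the identity $m=\sum_i A_i/2-3C/2$ noted above makes this plausible and reduces it to a finite check.
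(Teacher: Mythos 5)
There is a genuine gap here, and it starts with the status of the statement itself: Conjecture \ref{c26} is precisely that --- a conjecture. The paper offers no proof of it (only a computer verification of the corresponding counts for $N$ up to $2000$), so there is no argument of the authors to match yours against; what is required is an actual construction, and your text never supplies one. Your preliminary bookkeeping is sound --- the coefficients do satisfy $A_i+B_{13-i}=9=C/2$, the identity $m=\sum_i A_i/2-3C/2=1$ checks out, and the block observation $(A)=(v,v+4\cdot\mathbf 1)$, $(B)=(w,w+4\cdot\mathbf 1)$ with $v_i+w_{7-i}=5$ is correct --- but the ``third and decisive step'' is exactly the content of the conjecture, and you acknowledge it remains an obstacle. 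A plan to ``build the bijection in stages mirroring Lemma \ref{10}'' is not a proof: the residue-class cancellations of Lemmas \ref{3} and \ref{4} rely on the coefficients being constant on the blocks being translated (so that maps like $d_i\mapsto d_i-(x-2)/6$ preserve the value), and with the twelve pairwise non-uniform coefficients $1,2,2,3,4,4,5,6,6,7,8,8$ no such translation is value-preserving; nothing in your sketch explains what replaces it.

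The proposed reduction step is also weaker than it looks. Taking Conjecture \ref{c10} as the parent is circular, since it is equally unproven in the paper. The alternative parent, the threefold dilation of Lemma \ref{8}, is indeed a valid $C=18$ identity, but it does not genuinely share the burden: its coefficients (and its $m$) are all multiples of $3$, so the counting functions $Q_M$, $R_M$ of the parent vanish unless $3\mid M$. For all the (two thirds of) values of $N$ where they vanish, condition (ii) of Lemma \ref{lemma2} collapses to $|Q'_{N+k'}|=|R'_{N+k'}|$, i.e.\ to an unassisted instance of the very bijection you still need for the coefficients of Conjecture \ref{c26}; the parent only helps on the residue class where its terms are nonzero. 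So even granting the framework of Lemmas \ref{lemma2} and \ref{lemma3} (which you apply correctly), the heart of the problem --- a value-preserving, type-reversing involution for this highly non-uniform $C=18$ data, analogous to but harder than the several-page construction in Lemma \ref{10} --- is untouched, and the statement remains open.
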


\noindent
\textbf{Corollary  to Conjecture \ref{c26}.}
\emph{Let $S$ be the set containing one copy of the odd positive integers that are not multiples of 9 and 2 copies  of the even positive integers that are not multiples of 9, and $T$  the set containing  2 copies of the odd positive integers that are not multiples of 9 and one copy  of the even positive integers that are not multiples of 9. Then, for any $N\geq 2$,
$$D_S(N)=D_T(N-1).$$}

\begin{conjecture}\label{c5}
Condition (i) of Theorem \ref{main} holds for $N_0= 2$,  $C=8$, $m=2$, and
$$(A_1,\dots,A_{12})=(1,1,1,1,2,2,3,3,3,3,4,4), (B_1,\dots,B_{12})=(0,0,1,1,1,1,2,2,3,3,3,3).$$
\end{conjecture}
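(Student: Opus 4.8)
\emph{A line of attack.} The plan is to proceed exactly as in the proofs of Lemmas~\ref{5}--\ref{7}, and above all Lemma~\ref{10}: first use Lemma~\ref{lemma2} to reduce the statement to the existence of an explicit bijection between low-dimensional integer tuples, and then construct that bijection from a sequence of ``reflection'' involutions.

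First, since $C=8$ and $\lcm(4,8)=8$, one would rescale the already-established identity of Lemma~\ref{6} (which has $C=4$ and $m=1$) by a factor of $2$, obtaining a $C=8$ statement with $m=2$. A short computation of $k$, $k'$, both values of $m$, and the cardinalities $|S_k|$ and $|S'_{k'}|$ — one checks that $|S_k|=|S'_{k'}|$ here, so the bijection will involve only one copy of each set — shows, via Lemmas~\ref{lemma1} and~\ref{lemma2}, that Conjecture~\ref{c5} is equivalent to condition~(ii) of Lemma~\ref{lemma2} for this pair of coefficient data. Then, by Lemma~\ref{lemma3}, one views all $d$- and $e$-tuples inside $D=\{d\in\mathbb{Z}^{12}\cup(\mathbb{Z}+\frac{1}{2})^{12}:\sum_{i=1}^{12}d_i\in2\mathbb{Z}+1\}$ with one value function, and all $d'$- and $e'$-tuples inside the same underlying set $D'$ with a different value function; condition~(ii) then becomes the statement that there is a value-preserving bijection on one copy of $D$ together with one copy of $D'$ which always interchanges integer tuples with half-integer ones.

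Next one would cancel most residue classes. For suitable linear forms $L$ on $\mathbb{Z}^{12}$ — signed sums over blocks of the twelve coordinates, as in the proofs of Lemmas~\ref{3} and~\ref{4} — split $D$ according to the residue of $L(d)$ modulo $8$. On most of these classes the maps $d_i\mapsto d_i-(L(d)-c)/8$, together with the block-swap variants $d_i\leftrightarrow d_{i\pm4}$ followed by such a shift, are value-preserving involutions that flip the integer/half-integer type, so they cancel those classes against one another; the classes of $D$ that survive get matched bijectively onto $D'$ by similar shift maps, exactly the way the class $\sum_{i=1}^{12}d_i\equiv1\pmod 6$ was absorbed into copies of $D'$ in Lemmas~\ref{3} and~\ref{4}. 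What is left after this is a ``core'': finitely many families of $4$- and $8$-coordinate tuples carrying explicit quadratic value functions and parity-based type rules, organized into products of sets in the style of the $S,Q,R,T$ / $S',Q',R',T'$ decomposition used for Lemma~\ref{10}.

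Finally, one would resolve the core by repeatedly rewriting pairs of integers in the various affine forms $u\,(a,b)+v\,(c,d)$ (plus offsets), cancelling the cases in which replacing a single coordinate by $1-(\cdot)$ leaves the value unchanged, and ending with an explicit list of value-preserving, type-reversing pairings among the tuples that remain, just as the proof of Lemma~\ref{10} concludes with such a table. This last step is the main obstacle, and the reason we state this only as a conjecture: for $C=6$ and $C=10$ the quadratic forms appearing in the core factor nicely enough that the residual pairing collapses to a short finite list, whereas for $C=8$ the corresponding forms carry less convenient coefficients and it is not clear in advance that the core reduces to finitely many cases at all. A complete proof would stand or fall on finding the right chain of reparametrizations that tames the $C=8$ core; the verification for $N\le2000$ strongly suggests one exists.
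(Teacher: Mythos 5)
This statement is one of the paper's open conjectures: the authors give no proof at all, only computer verification for $N\le 2000$, so there is no argument of theirs to compare yours against. Measured as a proof, your submission has a genuine gap, and you say so yourself: everything after the routine reduction is left as a hope. The parts you do describe (rescaling Lemma~\ref{6} to the common modulus $C=8$, computing $k$, $k'$, $m'$ and the multiplicities $|S_k|$, $|S'_{k'}|$, and invoking Lemmas~\ref{lemma1}--\ref{lemma3} to restate condition~(ii) of Lemma~\ref{lemma2} as a value-preserving, type-reversing bijection on $D\cup D'$) are indeed the boilerplate of Lemmas~\ref{3}--\ref{7} and~\ref{10}, and your count $|S_k|=|S'_{k'}|$ is at least consistent; but this reduction is the easy half and is itself only asserted, not carried out.

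The substantive half is not merely unresolved, it is not even set up. You never specify the linear forms $L$, and there is a concrete reason to doubt that the shifts $d_i\mapsto d_i-(L(d)-c)/8$ you propose by analogy will preserve the value here: in Lemmas~\ref{3} and~\ref{4} those maps work because the coefficient vector is constant (or constant on two blocks of six), whereas $(A_1,\dots,A_{12})=(1,1,1,1,2,2,3,3,3,3,4,4)$ and $(B_1,\dots,B_{12})=(0,0,1,1,1,1,2,2,3,3,3,3)$ break into blocks of sizes $4,2,4,2$ with four distinct coefficients, so a uniform shift on a block of $L$ generally changes $C\sum\binom{d_i}{2}+\sum A_id_i$ unless the coefficients are matched in a way you have not exhibited. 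Likewise the ``core'' you appeal to --- the analogue of the $S,Q,R,T$ decomposition and the closing table of type-reversing pairings that carries essentially all the weight in Lemma~\ref{10} --- is exactly the content a proof would have to supply, and none of it is present. In short, your outline reproduces the method's scaffolding but omits the construction that would distinguish a proof from the numerical evidence the paper already records, so the conjecture remains exactly as open after your argument as before it.
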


\noindent
\textbf{Corollary  to Conjecture \ref{c5}.}
\emph{Let $S$ be the set containing  4 copies of the positive integers that are either odd or congruent to 4 modulo 8, and 2 copies of the positive integers that are  congruent to 2 modulo 4; let $T$ be the set containing 4 copies of the positive integers that are either odd or multiples of 8, and 2 copies of the positive integers that are  congruent to 2 modulo 4. Then, for any $N\geq 2$,
$$D_S(N)=2D_T(N-2).$$}

\begin{conjecture}\label{c6}
Condition (i) of Theorem \ref{main} holds for $N_0= 1$,  $C=8$, $m=1$, and
$$(A_1,\dots,A_{12})=(0,1,1,2,2,2,2,2,2,2,3,3), (B_1,\dots,B_{12})=(0,0,1,1,1,1,2,2,3,3,3,3).$$
\end{conjecture}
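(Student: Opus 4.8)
The plan is to reduce the final statement to Conjecture \ref{c5}, with which it is visibly entangled: the two have the \emph{same} vector $(B_1,\dots,B_{12})=(0,0,1,1,1,1,2,2,3,3,3,3)$, which is precisely the setting Lemma \ref{lemma2} is designed for. So I would apply Lemma \ref{lemma2} with the unprimed data that of Conjecture \ref{c5} (here $C=8$, the stated $A_i$, and $m=2$, which one checks is the value Lemma \ref{lemma1} prescribes) and the primed data that of Conjecture \ref{c6} ($C=8$, the stated $A'_i$, $m'=1$). A direct computation gives $k=\min_i A_i=1$, $k'=\min_i A'_i=0$, $|S_k|=4$ (the four coordinates carrying $A_i=1$) and $|S'_{k'}|=1$ (the single coordinate with $A'_1=0$). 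Granting Conjecture \ref{c5}, Lemma \ref{lemma2} makes Conjecture \ref{c6} equivalent to its condition (ii), i.e.\ to $4\,|Q'_N|+|R_{N+1}|=4\,|R'_N|+|Q_{N+1}|$ for all $N$. But $B'_i=B_i$, so $R'_N$ and $R_{N+1}$ are cut out by the very same quadratic form and parity condition, the additive constants $m'=1$ and $m=2$ merely accounting for the index shift; hence $|R'_N|=|R_{N+1}|$ and the whole statement collapses to the single clean identity
\begin{equation}\label{c6target}
4\,|Q'_N|\;=\;|Q_{N+1}|+3\,|R_{N+1}|\qquad\text{for all }N,
\end{equation}
where $Q'_N$, $Q_{N+1}$, $R_{N+1}$ are sets of \emph{integer} $12$-tuples, attached to $A'_i$, $A_i$, $B_i$ respectively.

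Next I would put all three counts in (\ref{c6target}) on a common footing by completing the square, as in the proof of Lemma \ref{1}. A tuple with $8\sum_i{d_i\choose 2}+\sum_i A_i d_i=M$ corresponds to the integer vector $v_i=8d_i+A_i-4$, and $\sum_i v_i^2=16M+\sum_i(A_i-4)^2$. Since $\sum_i(A'_i-4)^2=64$, $\sum_i(A_i-4)^2=48$ and $\sum_i(B_i-4)^2=80$, the sets $Q'_N$, $Q_{N+1}$, $R_{N+1}$ all become sets of integer $12$-vectors lying on the \emph{same} sphere $\sum_i v_i^2=16(N+4)$, each singled out by a fixed residue pattern for the $v_i$ modulo $8$ (prescribed by $A'_i$, $A_i$, $B_i$) together with a fixed residue of $\sum_i v_i$ modulo $16$ (the translate of the odd-sum condition). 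Thus (\ref{c6target}) says that four copies of the $A'$-pattern sphere points are equinumerous with one copy of the $A$-pattern points and three copies of the $B$-pattern points.

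With this picture in hand I would build the bijection using the apparatus of Lemmas \ref{3}, \ref{4}, and \ref{10}: the norm-preserving substitutions available on $\mathbb{Z}^{12}$ --- signed permutations and the $4\times4$ Hadamard-type maps $(v_1,v_2,v_3,v_4)\mapsto\frac12(v_1+v_2+v_3+v_4,\,v_1+v_2-v_3-v_4,\dots)$, which also act predictably on residues modulo $8$ --- combined with slicing the vectors by the residue class modulo $8$ (and, where the usual $\pm1$ ambiguity surfaces, modulo $16$) of suitable linear forms. One should first fold $Q_{N+1}$ and $R_{N+1}$ into a single half-integer lattice via Lemma \ref{lemma3} --- legitimate because the $A,B$ of Conjecture \ref{c5} satisfy $A_i+B_{13-i}=C/2$ --- so that they appear as the integer and half-integer points of one set carrying a single quadratic value; the $A'$-side is handled analogously. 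For the ``generic'' residue classes an affine reflection $v\mapsto v-\frac{L(v)-c}{8}u$ (with $u$ a $\pm1$-vector, or block-structured) is a value-preserving involution switching integer and half-integer type, so those classes cancel in pairs or get matched across the two sides; the few extremal classes that survive are matched after a change of variables into a common model (the analogue of $W$ in Lemma \ref{lemma4} and $T,T'$ in Lemma \ref{10}), and a last reflection --- using that $\sum_i d_i$ is odd, so an odd number of coordinate blocks flips parity --- supplies the final correspondence.

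The hard part is exactly that last construction. With $C=8$ and the six distinct coefficient values $\{0,1,2,3,4\}$ all occurring, the vector space fragments into a large number of residue subcases --- comparable to, and plausibly worse than, the $C=30$ bookkeeping in Lemma \ref{10} --- each requiring its own affine reflection, its own value-preservation check (a Pythagorean-type identity in the spirit of $\|d\|^2=\sum_i(d\cdot V_i)^2/12$ from Lemma \ref{1}), and careful tracking of parity and type across the single copy of the $A$-side and the three copies of the $B$-side. Finding the right system of mutually orthogonal vectors, or the right sequence of Hadamard changes of basis, that makes every one of these cases close up --- the creative analogue of the explicit $V_1,\dots,V_{12}$ in Lemma \ref{1} --- is where essentially all the work and all the risk lie; once that choice is made, the rest, though long, is routine. (As a consistency check one can also read (\ref{c6target}) as an identity among coefficients of products of Jacobi theta functions, which is in effect what the computer verification up to $N=2000$ confirms.)
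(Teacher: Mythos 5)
You should first be aware that the paper does not prove this statement at all: Conjecture \ref{c6} is one of the identities the authors leave open, verified only by computer up to $N=2000$, so there is no proof in the paper to match your argument against. Judged on its own terms, your preparatory reduction is sound and in the spirit of the paper's machinery: taking the unprimed data to be that of Conjecture \ref{c5} and the primed data that of Conjecture \ref{c6}, the values $k=1$, $k'=0$, $|S_k|=4$, $|S'_{k'}|=1$ are correct, the identification $|R'_N|=|R_{N+1}|$ from $B'_i=B_i$ and $m'=m-1$ is right, and your completion of the square (the constants $64$, $48$, $80$, all three sets landing on the sphere $\sum_i v_i^2=16(N+4)$) checks out. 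But the reduction is doubly conditional, and that is where it fails as a proof. First, Lemma \ref{lemma2} only applies under the hypothesis $|S_N|=|T_N|$ for all $N>k$, i.e.\ under Conjecture \ref{c5}, which is itself unproven in the paper; "granting Conjecture \ref{c5}" replaces one open conjecture by another, so even a complete bijection for your identity $4|Q'_N|=|Q_{N+1}|+3|R_{N+1}|$ would not establish Conjecture \ref{c6} unconditionally.

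Second, and more fundamentally, that bijection is never constructed. Everything after the sphere picture is a program, not an argument: you indicate the kind of tools that worked in Lemmas \ref{1}, \ref{3}, \ref{4} and \ref{10} (orthogonal reflections $r_i(d)=d-\frac{d\cdot V_i}{6}V_i$-style involutions, Hadamard-type changes of variables, folding via Lemma \ref{lemma3}, residue-class case analysis), but you do not exhibit the vectors, the affine maps, the case decomposition, or the value/type bookkeeping that would make the counts close up — and you say yourself that this is "where essentially all the work and all the risk lie." In every proved instance in the paper (most starkly Lemma \ref{10}), precisely this explicit construction is the entire content of the proof, and there is no guarantee that a workable system of reflections exists for the coefficient pattern of Conjecture \ref{c6}; indeed the authors' decision to leave it as a conjecture suggests they did not find one. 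So the proposal is a plausible reduction plus a plan of attack, not a proof: the missing step is the heart of the matter, namely an explicit value- and type-preserving correspondence realizing $4|Q'_N|=|Q_{N+1}|+3|R_{N+1}|$ (and, upstream of that, an actual proof of Conjecture \ref{c5} or a route that avoids it).
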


\noindent
\textbf{Corollary  to Conjecture \ref{c6}.}
\emph{Let $S$ be the set containing  2 copies of the positive integers that are either odd or multiples of 8, and 7 copies of the positive integers that are  congruent to 2 modulo 4; let $T$ be the set containing 4 copies of the positive integers that are either odd or multiples of 8, and 2 copies of the positive integers that are  congruent to 2 modulo 4. Then, for any $N\geq 1$,
$$D_S(N)=2D_T(N-1).$$}

\begin{conjecture}\label{c6'}
Condition (i) of Theorem \ref{main} holds for $N_0= 2$,  $C=8$, $m=1$, and
$$(A_1,\dots,A_{12})=(1,1,1,1,2,2,3,3,3,3,4,4), (B_1,\dots,B_{12})=(0,1,1,2,2,2,2,2,2,2,3,3).$$
\end{conjecture}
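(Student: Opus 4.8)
The plan is to deduce Conjecture \ref{c6'} as a formal consequence of Conjectures \ref{c5} and \ref{c6}, in exactly the way Theorem \ref{7'''} is obtained by combining Theorems \ref{666} and \ref{777}. Fix $C=8$. For a coefficient vector $A=(A_1,\dots,A_{12})$, let $f_A(N)$ denote the number of tuples $(\mu_1,\dots,\mu_{12};d_1,\dots,d_{12})$ with each $\mu_i$ a partition, each $d_i\in\mathbb{Z}$, $\sum_{i=1}^{12}d_i$ odd, and $C\sum_{i=1}^{12}|\mu_i|+C\sum_{i=1}^{12}{d_i\choose 2}+\sum_{i=1}^{12}A_id_i=N$. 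Since the two sides of (\ref{t}) have the same shape apart from the additive constant $m$ on the right, condition (i) of Theorem \ref{main} for the data $(A,B,m)$ is precisely the assertion that $f_A(N)=f_B(N-m)$ for all $N\ge N_0$.

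With this reformulation in hand, set $X=(1,1,1,1,2,2,3,3,3,3,4,4)$, $Y=(0,1,1,2,2,2,2,2,2,2,3,3)$, and $Z=(0,0,1,1,1,1,2,2,3,3,3,3)$. Then Conjecture \ref{c5} (data $(X,Z,2)$, $N_0=2$) reads $f_X(N)=f_Z(N-2)$ for $N\ge 2$, while Conjecture \ref{c6} (data $(Y,Z,1)$, $N_0=1$) reads $f_Y(N)=f_Z(N-1)$ for $N\ge 1$, equivalently $f_Z(M)=f_Y(M+1)$ for $M\ge 0$. Composing these, for every $N\ge 2$ one gets $f_X(N)=f_Z(N-2)=f_Y\bigl((N-2)+1\bigr)=f_Y(N-1)$, the middle step using the rewritten Conjecture \ref{c6} with $M=N-2\ge 0$. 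Since the coefficient vectors of Conjecture \ref{c6'} are exactly $X$ and $Y$, its shift is $m=2-1=1$, and its range is $N\ge N_0=2$, this is precisely condition (i) of Theorem \ref{main} for Conjecture \ref{c6'}. In fact $X$, $Y$, $Z$ form a triangle: any two of Conjectures \ref{c5}, \ref{c6}, \ref{c6'} imply the third, exactly as the identities of Theorems \ref{666} and \ref{777} combine to give Theorem \ref{7'''}.

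The honest caveat is that this argument is conditional: it reduces Conjecture \ref{c6'} to Conjectures \ref{c5} and \ref{c6}, which so far we have only checked numerically. An unconditional proof would require establishing, say, Conjectures \ref{c5} and \ref{c6} by the method of Section 3 --- use Lemma \ref{lemma2} to replace each by condition (ii) of that lemma against an already-proven identity with $C$ a suitable multiple of $8$, pass to the lattice description of $d$- and $e$-tuples provided by Lemma \ref{lemma3}, and then construct explicit value-preserving, type-reversing maps (sign-changing involutions on the tuples whose relevant partial sums are divisible by the appropriate modulus, followed by a reshuffling bijection on the residual tuples), exactly as in the proofs of Lemmas \ref{5}, \ref{6}, and \ref{7}. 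Locating the correct base identity and the right family of involutions is the main obstacle; once those are in place, the composition step above is immediate.
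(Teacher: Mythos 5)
Your reduction is arithmetically sound, but it does not prove the statement, because the statement is one of the paper's open conjectures and your argument only trades it for two other open conjectures. Concretely: with your $f_A$ notation, Conjecture \ref{c5} is $f_X(N)=f_Z(N-2)$ for $N\ge 2$, Conjecture \ref{c6} is $f_Y(N)=f_Z(N-1)$ for $N\ge 1$, and Conjecture \ref{c6'} is $f_X(N)=f_Y(N-1)$ for $N\ge 2$; applying \ref{c6} at $N-1\ge 1$ and \ref{c5} at $N\ge 2$ does give $f_X(N)=f_Z(N-2)=f_Y(N-1)$, and the identification of condition (i) with $f_A(N)=f_B(N-m)$ is correct, as is your remark that the composition mirrors the derivation of Theorem \ref{7'''} from Theorems \ref{666} and \ref{777} (the same formal redundancy occurs among Conjectures \ref{c7}, \ref{c8}, \ref{c8'}, among \ref{c16}, \ref{c17}, \ref{c17'}, and among \ref{c21}, \ref{c22}, \ref{c22'}). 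The difference is that Theorems \ref{666} and \ref{777} rest on proved bijections (Lemmas \ref{6} and \ref{7}), whereas Conjectures \ref{c5} and \ref{c6} are only verified by computer up to $N=2000$ in the paper, so nothing unconditional follows.

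The genuine gap is therefore exactly the part you defer at the end: to establish \ref{c6'} one must actually prove at least two members of the triangle $\{$\ref{c5}, \ref{c6}, \ref{c6'}$\}$ (or \ref{c6'} directly), e.g.\ by locating a proved base identity to feed into Lemma \ref{lemma2} or \ref{lemma4}, passing to the half-integer lattice description of Lemma \ref{lemma3}, and constructing the value-preserving, type-reversing involutions and residual bijections in the style of Lemmas \ref{5}--\ref{7} or \ref{10}. That construction is the entire content of the conjecture; your proposal correctly organizes the logical dependencies but leaves the conjecture as open as it was, which you acknowledge. If you can supply bijective proofs of \ref{c5} and \ref{c6}, the two-line composition you give would indeed finish \ref{c6'}, and all three results would then deserve to be stated as theorems.
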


\noindent
\textbf{Corollary  to Conjecture \ref{c6'}.}
\emph{Let $S$ be the set containing  4 copies of the positive integers that are either odd or congruent to 4 modulo 8, and 2 copies of the positive integers that are  congruent to 2 modulo 4; let $T$ be the set containing 2 copies of the positive integers that are either odd or multiples of 8, and 7 copies of the positive integers that are  congruent to 2 modulo 4. Then, for any $N\geq 2$,
$$D_S(N)=D_T(N-1).$$}

\begin{conjecture}\label{c2}
Condition (i) of Theorem \ref{main} holds for $N_0= 1$,  $C=6$, $m=1$, and
$$(A_1,\dots,A_{12})=(0,0,0,1,2,2,2,2,2,3,3,3), (B_1,\dots,B_{12})=(0,0,0,1,1,1,1,1,2,3,3,3).$$
\end{conjecture}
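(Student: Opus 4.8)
The plan is to follow the pattern of Lemmas \ref{3}, \ref{4} and \ref{10}: first reduce the assertion, by means of Lemma \ref{lemma2}, to a purely combinatorial identity about $12$-tuples of integers and half-integers, and then construct an explicit value-preserving, type-reversing bijection between the two sides.

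For the reduction one pairs Conjecture \ref{c2} (in the role of the ``new'' identity, condition (i) of Lemma \ref{lemma2}) with an already-established identity of the same modulus $C=6$ as the hypothesis $|S_N|=|T_N|$; the natural candidates are Lemma \ref{8} and Lemma \ref{9}, with Lemma \ref{4} or a common rescaling to a larger modulus (as in the proofs of Lemmas \ref{5} and \ref{10}) as fallbacks. Observe that the coefficient vector $(0,0,0,1,2,2,2,2,2,3,3,3)$ of Conjecture \ref{c2} satisfies $A_i+B_{13-i}=C/2$ and has $\sum A_i/2-3C/2=1=m$, so Lemma \ref{lemma3} applies to it and lets us regard its $d$- and $e$-tuples as a single family inside $D=\{d\in\Z^{12}\cup(\Z+\tfrac12)^{12}:\sum d_i\in 2\Z+1\}$ (a tuple being ``of negative type'' exactly when its entries are half-integers), equipped with the value function $6\sum\binom{d_i}{2}+d_4+2(d_5+\cdots+d_9)+3(d_{10}+d_{11}+d_{12})$. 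Once the parent identity is fixed, a computation of $k$, $k'$, $|S_k|$ and $|S'_{k'}|$ turns condition (ii) of Lemma \ref{lemma2} into a statement of the shape ``$a\,|Q'_N|+b\,|R_{N+c}|=a\,|R'_N|+b\,|Q_{N+c}|$ for all $N$'' — with $a=1$, $b=2$, $c=0$ if one uses Lemma \ref{8}, since there $|S_k|=2$ and $|S'_{k'}|=4$ — that is, into the existence of a value-preserving, type-reversing bijection carrying $a$ copies of the positive-type part of $D$ under the Conjecture \ref{c2} value function, together with $b$ copies of the negative-type part of $D$ under the parent value function (shifted by $c$), onto the two analogous sets with the types interchanged.

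To build the bijection one proceeds, as in Lemmas \ref{3}, \ref{4} and \ref{10}, through a cascade of partial bijections indexed by the residue modulo $6$ of suitable weighted coordinate sums: for Conjecture \ref{c2} the basic functional is, up to sign flips on the last block of three, $\ell(d)=d_4+2(d_5+\cdots+d_9)+3(d_{10}+d_{11}+d_{12})$, together with its analogue for the parent. First one cancels, by the translation- and reflection-type involutions of $D$ used in the proofs of Lemmas \ref{3} and \ref{4}, all tuples whose weighted sums are $\equiv 3$ or $\equiv 5\pmod 6$; next one re-parametrizes the surviving tuples (those $\equiv\pm1\pmod 6$) by auxiliary integer $12$-tuples whose value functions absorb the coefficients; and finally one matches the resulting auxiliary families by exhibiting a finite system of value-equal, type-opposite normal forms, repeatedly splitting pairs of coordinates into $u(1,2)+v(2,-1)$-type shapes and killing the ``shifted'' branches by $x\mapsto 1-x$ involutions.

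The main obstacle is precisely this last matching. In Lemmas \ref{3} and \ref{4} the coefficient vector splits into three equal blocks of four and the cascade terminates after one or two rounds of block reshuffling; but the vector of Conjecture \ref{c2} reads $(0,0,0\mid 1\mid 2,2,2,2,2\mid 3,3,3)$ — the isolated $1$-slot and the solitary $(2,2)$-slot, together with the need to match this pattern against the quite different coefficient pattern of the parent (possibly across a level shift and with unequal multiplicities $a,b$), destroy that symmetry. I therefore expect the residual bijection to require a Lemma \ref{10}-scale tabulation: peeling off pairs of coordinates one at a time, reducing each to a short list of normal forms, cancelling all but finitely many branches by sign-flip involutions, and then displaying by hand a system of value-preserving, type-reversing equalities that exhausts what remains. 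Showing that this process closes up — equivalently, that no parity defect survives all the involutions — is the crux, and is the reason the identity has so far only been checked numerically.
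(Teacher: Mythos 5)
This statement is one of the paper's open conjectures: the authors give no proof of Conjecture \ref{c2}, stating only that it (like the other conjectures in the final section) has been verified by computer for $N$ up to $2000$. So there is no ``paper's own proof'' to compare against, and the relevant question is whether your proposal actually closes the problem. It does not. What you have written is a strategy outline, and you concede as much in your final sentence: the entire content of the argument --- the explicit value-preserving, type-reversing bijection on the residual families after the preliminary cancellations --- is never constructed. Everything before that point (the verification that $A_i+B_{13-i}=C/2$ and $\sum A_i/2-3C/2=1=m$, the counts $|S_k|=2$ and $|S'_{k'}|=4$, the reformulation via Lemmas \ref{lemma2} and \ref{lemma3}) is correct but is pure bookkeeping; it reduces the conjecture to exactly the kind of combinatorial matching problem that Lemmas \ref{3}, \ref{4} and \ref{10} solve in their respective cases, without making any progress on solving it here.

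The gap is not a small one. As you correctly diagnose, the block structure $(0,0,0\mid 1\mid 2,2,2,2,2\mid 3,3,3)$ lacks the symmetry that makes the cascades in Lemmas \ref{3} and \ref{4} terminate, and the Lemma \ref{10}-style resolution requires producing and verifying an explicit finite table of value-equal, type-opposite normal forms --- the part of that proof that occupies several pages and is where all the difficulty lives. Asserting that one ``expects'' such a tabulation to exist and to close up without parity defect is precisely the conjecture restated, not a proof of it. If you want to pursue this, the honest next step is to carry out the cancellation cascade concretely for your chosen parent identity (Lemma \ref{8}), identify exactly which residual families survive, and either exhibit the matching table or locate the obstruction; until then the proposal should not be presented as a proof.
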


\noindent
\textbf{Corollary  to Conjecture \ref{c2}.}
\emph{Let $S$ be the set containing  one copy of the positive integers congruent to $\pm 1$ modulo 6, 5 copies of the positive integers congruent to $\pm 2$ modulo 6, and 6 copies of the positive multiples of 3; let $T$ be the set containing  5 copies of the positive integers congruent to $\pm 1$ modulo 6, one copy of the positive integers congruent to $\pm 2$ modulo 6, and 6 copies of the positive multiples of 3. Then, for any $N\geq 1$,
$$D_S(N)=D_T(N-1).$$}

\begin{conjecture}\label{c7}
Condition (i) of Theorem \ref{main} holds for $N_0= 1$,  $C=8$, $m=1$, and
$$(A_1,\dots,A_{12})=(0,0,1,1,2,2,2,3,3,4,4,4), (B_1,\dots,B_{12})=(0,0,0,1,1,2,2,2,3,3,4,4).$$
\end{conjecture}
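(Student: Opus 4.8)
The plan is to follow the strategy used to prove Lemmas \ref{3}--\ref{7} and \ref{10}: reduce the statement to condition (ii) of Lemma \ref{lemma2} relative to an already-established identity of the same modulus, and then produce the bijection that condition (ii) requires. First one records the structural facts. Here $A_i+B_{13-i}=C/2=4$ for every $i$, so Lemma \ref{lemma3} applies and all $d$-tuples and $e$-tuples may be regarded as lying in $D=\{d\in\mathbb{Z}^{12}\cup(\mathbb{Z}+1/2)^{12}:\sum_{i=1}^{12}d_i\in2\mathbb{Z}+1\}$, a tuple of value $C\sum_i{d_i\choose2}+\sum_i A_id_i=4\sum_i d_i^2+\sum_i(A_i-4)d_i$ being ``of negative type'' exactly when its entries are half-integers. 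Moreover $\sum_{i=1}^{12}A_i/2-3C/2=13-12=1=m$, and a direct check shows that the second-smallest value of the left-hand side of~(\ref{t}) and the smallest value of its right-hand side are both equal to~$1$, as Lemma \ref{lemma1} demands.

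For the reduction I would invoke Lemma \ref{lemma2} with, as anchor, a rescaled copy of Lemma \ref{6} or Lemma \ref{7}, whose coefficient data, after scaling to $C=8$, is structurally closest to that of the present statement. Lemma \ref{lemma2} then recasts the claim as an identity $|S_k|\,|Q'_{N+k'}|+|S'_{k'}|\,|R_{N+k}|=|S_k|\,|R'_{N+k'}|+|S'_{k'}|\,|Q_{N+k}|$, that is, as the existence of a type-reversing bijection --- respecting the above value function and the integer/half-integer dichotomy --- between explicit families of $12$-tuples all of whose associated partitions are empty. As in the earlier proofs I would build this bijection in stages: pick a suitable linear combination $x$ of the coordinates and use reflections and translations of the shape $d_i\mapsto d_i-(x-1)/2$ to cancel, in involutive pairs, all tuples with $x$ in a prescribed residue class modulo a divisor of $C$; parametrize the survivors by auxiliary integer vectors that record the local lattice geometry around each fixed configuration, reducing the problem to a matching among low-dimensional tuples carrying quadratic value functions (after completing the square, $4\sum_i d_i^2+\sum_i(A_i-4)d_i$ becomes a shifted lattice sum, making the orthogonal-vector device of Lemma \ref{1} available); and finally finish with either that device or a finite list of piecewise-linear involutions in the spirit of Lemma \ref{10}.

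The hard part is the last stage. The modulus $C=8$ is composite, and the pattern $(0,0,1,1,2,2,2,3,3,4,4,4)$ has neither the triple-repetition symmetry exploited through Lemma \ref{lemma4} nor the single clean ``staircase'' that kept the bijections for Lemmas \ref{6} and \ref{7} short; one should therefore expect that a single application of Lemma \ref{lemma2} will not suffice and that the argument must be chained through one or two intermediate $C=8$ identities (plausibly relatives of the conjectural identities listed above), each contributing its own explicit bijection. Keeping track, across such a multi-step construction, of the ``type'' of every tuple --- guaranteeing that the net effect sends integer tuples to half-integer tuples and vice versa, exactly as in the closing paragraphs of the proof of Lemma \ref{10} --- is where the genuine difficulty, and the risk of a hidden obstruction, resides; this is presumably why the statement appears here only as a conjecture.
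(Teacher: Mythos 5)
There is no complete argument here to assess: what you have written is a strategy outline, not a proof. The framework-level observations are fine --- the data do satisfy $A_i+B_{13-i}=C/2=4$ for all $i$ and $\sum_{i}A_i/2-3C/2=1=m$, so Lemma \ref{lemma3} applies, and reducing via Lemma \ref{lemma2} to a statement about the tuples $Q_N$, $R_N$, $Q'_N$, $R'_N$ with empty partitions, using as anchor a rescaled already-proved identity (as the paper does for Lemma \ref{10}, scaling Lemma \ref{4} from $C=6$ up to $C=30$), is exactly the template the paper follows in its proved cases. But the entire mathematical content of such a proof lies in the final stage you defer: the explicit value-preserving, type-reversing involutions and partial bijections on the residue classes of the chosen linear form $x$, and the matching of the leftover low-dimensional quadratic families. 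You never specify the anchor (you leave it as ``a rescaled copy of Lemma \ref{6} or Lemma \ref{7}''), never compute $k$, $k'$, $|S_k|$, $|S'_{k'}|$ or the resulting counting identity, and never exhibit a single map. Asserting that the orthogonal-vector device of Lemma \ref{1} or a ``finite list of piecewise-linear involutions in the spirit of Lemma \ref{10}'' will finish the job is precisely the step that might fail, as you yourself concede.

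For calibration: the paper does not prove this statement either. It is stated only as Conjecture \ref{c7}, verified by computer for $N$ up to $2000$, so there is no proof in the paper against which your route could be compared. Your proposal should therefore be regarded as a reasonable research plan consistent with the paper's methodology, but with a genuine gap --- the construction of the bijection required by condition (ii) of Lemma \ref{lemma2} --- which is the whole difficulty, and which remains open.
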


\noindent
\textbf{Corollary  to Conjecture \ref{c7}.}
\emph{Let $S$ be the set containing 2 copies of the odd positive integers,  3 copies of the positive integers that are  congruent to 2 modulo 4, 6 copies of the positive integers that are  congruent to 4 modulo 8, and 4 copies of the positive multiples of 8; let $T$ be the set containing 2 copies of the odd positive integers,  3 copies of the positive integers that are  congruent to 2 modulo 4, 4 copies of the positive integers that are  congruent to 4 modulo 8, and 6 copies of the positive multiples of 8. Then, for any $N\geq 1$,
$$D_S(N)=2D_T(N-1).$$}

\begin{conjecture}\label{c8}
Condition (i) of Theorem \ref{main} holds for $N_0= 1$,  $C=8$, $m=1$, and
$$(A_1,\dots,A_{12})=(0,1,1,1,1,2,2,3,3,3,3,4), (B_1,\dots,B_{12})=(0,0,0,1,1,2,2,2,3,3,4,4).$$
\end{conjecture}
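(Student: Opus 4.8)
\medskip\noindent
Although we leave Conjecture~\ref{c8} open, we sketch the line of attack we regard as most promising. Since $C=8$ is composite and the coefficient data here is irregular — the multiset $\{A_i\}=\{0,1,1,1,1,2,2,3,3,3,3,4\}$ is \emph{not} a threefold-repeated block, so Lemma~\ref{lemma4} does not apply, and it does not coincide with $\{C/2-B_i\}=\{4,4,4,3,3,2,2,2,1,1,0,0\}$, so no relabeling achieves $A_i+B_{13-i}=C/2$ and Lemma~\ref{lemma3} is unavailable — none of the one-line deductions behind Lemmas~\ref{2}, \ref{8}, \ref{9} applies, and the proof must follow the harder pattern of Lemmas~\ref{6}, \ref{7}, \ref{10}.

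\emph{Step 1 (reduction to condition (ii)).} Apply Lemma~\ref{lemma2} with the primed system $(A',B')$ taken to be the $(A_i),(B_i)$ of the conjecture and the known unprimed system $(A,B)$ taken to be a previously established identity of modulus dividing $8$, rescaled to $C=8$; the natural choice is Lemma~\ref{7} multiplied through by $2$, i.e.\ $C=8$, $m=2$, $(A_1,\dots,A_{12})=(0,0,2,2,2,2,2,2,2,2,4,4)$, $(B_1,\dots,B_{12})=(0,0,0,0,0,2,2,2,2,4,4,4)$ (alternatively, Conjecture~\ref{c8} would follow from Conjecture~\ref{c7} once the latter is known, as the two share $C$ and the same $B$-tuple). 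After the routine check of the smallest/second-smallest-value hypotheses and the computation of $k,k'$, Lemma~\ref{lemma2} reduces the claim to its condition (ii),
$$|S_k|\cdot|Q'_{N+k'}|+|S'_{k'}|\cdot|R_{N+k}|=|S_k|\cdot|R'_{N+k'}|+|S'_{k'}|\cdot|Q'_{N+k}|,\qquad N\in\mathbb{Z},$$
a \emph{partition-free} identity in which the four sets $Q'_N,R'_N,Q_N,R_N$ each consist of $12$-tuples of integers having a prescribed value of one of the quadratic forms $8\sum_{i=1}^{12}\binom{d_i}{2}+\sum_{i=1}^{12}A_i d_i$ or $8\sum_{i=1}^{12}\binom{e_i}{2}+\sum_{i=1}^{12}B_i e_i+m$.

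\emph{Step 2 (geometrization and the bijection).} Completing the square coordinatewise, $8\binom{d_i}{2}+A_i d_i=4\left(d_i-\frac{4-A_i}{8}\right)^2-\frac{(4-A_i)^2}{16}$, exhibits each of these sets as the intersection of a Euclidean sphere with a translate of $\mathbb{Z}^{12}$, so a value-preserving, type-reversing map between two of them is simply a lattice-respecting isometry swapping the relevant translates. As in Lemmas~\ref{6} and \ref{7}, the odd-sum constraints disappear at once via $d_1\mapsto 1-d_1$ (legitimate since $A_1=B_1=0$), and one then builds the bijection out of explicit piecewise-linear maps on $\mathbb{Z}^{12}$ — reflections $d\mapsto d-\lambda\,(d\cdot V)\,V$ in hyperplanes orthogonal to vectors $V$ chosen to respect the five coefficient blocks $A_i\in\{0,1,2,3,4\}$ and the near-periodicity of $(A')$ and $(B')$ across the coordinate blocks $\{1,\dots,4\}$, $\{5,\dots,8\}$, $\{9,\dots,12\}$ — splitting into cases on the parities of partial sums $\sum_{i\in I}d_i$ (to control type) and on residues modulo the divisors $2,4$ of $C$ (to select the reflection), peeling off in each case the tuples already matched to a partner of the opposite type, and passing the survivors to auxiliary tuple sets, iterating exactly as in the proof of Lemma~\ref{10} until both sides are exhausted.

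The obstacle is entirely in Step~2. Because $A_i+B_{13-i}=C/2$ cannot be arranged, the $d$-tuples and the $e$-tuples do not lie in a common quadratic space, so the short self-cancelling involutions that close Lemmas~\ref{2}, \ref{8}, \ref{9} are unavailable; and since the $(A_i)$ realize the residues $\pm1$ and $\pm3$ modulo $8$, which no proven $C\mid 8$ identity produces, those residues must be manufactured by the bijection itself. One is therefore committed to a Lemma~\ref{10}-scale bookkeeping: a long tree of case splits, a cascade of explicit changes of variables through a chain of auxiliary tuples, and, at every node, a verification that the map is a well-defined bijection reversing type. Guaranteeing that each lattice point is covered exactly once through this cascade, with the type consistently flipped, is the crux, and is why we record the identity here only as a conjecture, having checked it numerically for $N\le 2000$.
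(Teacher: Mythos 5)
Your proposal does not prove Conjecture~\ref{c8}, but neither does the paper: this statement appears only in the final section of conjectures, supported by numerical verification for $N\le 2000$, so there is no proof of record to compare against. Your assessment of its status is therefore accurate, and your diagnosis of \emph{why} it resists the quick deductions is sound: the $A$-multiset $\{0,1,1,1,1,2,2,3,3,3,3,4\}$ is not a threefold-repeated block (so Lemma~\ref{lemma4} is inapplicable) and does not match $\{4-B_i\}$ as a multiset (so the tuples cannot be placed in a single quadratic space via Lemma~\ref{lemma3}). Step~1 of your sketch — invoking Lemma~\ref{lemma2} against a known $C=8$ system such as Lemma~\ref{7} rescaled by $2$ to reduce to the partition-free condition (ii) — is the same reduction pattern the authors use for Lemmas~\ref{6}, \ref{7} and \ref{10}, and is plausible, though you have not verified the smallest/second-smallest-value hypotheses or computed $k,k'$. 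One small overstatement: Conjecture~\ref{c8} does not simply \emph{follow} from Conjecture~\ref{c7} by virtue of sharing $C$, $m$ and the $B$-tuple; via Lemma~\ref{lemma2} it would still require a separate verification of condition (ii) comparing the two left-hand systems.

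The genuine gap is that Step~2 — the actual construction of the value-preserving, type-reversing bijection — is entirely absent, and that is the whole content of such a proof. Everything up to that point is mechanical; the difficulty in every proved lemma of this paper (most starkly Lemma~\ref{10}) lies in exhibiting the explicit cascade of involutions and changes of variables and checking that each lattice point is matched exactly once with the type flipped. Naming the ingredients (reflections, parity case splits, residues modulo divisors of $C$) does not establish that such a cascade exists for this coefficient pattern, and the authors themselves were evidently unable to produce one, which is why the statement is recorded as a conjecture. Your write-up is an honest and well-motivated research plan, but as a proof of the statement it is incomplete in exactly the place where all the work lies.
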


\noindent
\textbf{Corollary  to Conjecture \ref{c8}.}
\emph{Let $S$ be the set containing 2 copies of the  positive integers and 2 more copies of the odd positive integers; let $T$ be the set containing 2 copies of the odd positive integers,  3 copies of the positive integers that are  congruent to 2 modulo 4, 4 copies of the positive integers that are  congruent to 4 modulo 8, and 6 copies of the positive multiples of 8. Then, for any $N\geq 1$,
$$D_S(N)=4D_T(N-1).$$}

\begin{conjecture}\label{c8'}
Condition (i) of Theorem \ref{main} holds for $N_0= 1$,  $C=8$, $m=0$, and
$$(A_1,\dots,A_{12})=(0,0,1,1,2,2,2,3,3,4,4,4), (B_1,\dots,B_{12})=(0,1,1,1,1,2,2,3,3,3,3,4).$$
\end{conjecture}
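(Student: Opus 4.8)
The plan is to deduce Conjecture~\ref{c8'} from Conjectures~\ref{c7} and~\ref{c8}, of which it is a formal consequence, in exactly the way Theorem~\ref{7'''} was deduced from Theorems~\ref{666} and~\ref{777}. By Theorem~\ref{main}, Conjecture~\ref{c8'} is equivalent to its condition~(ii); since here $m=0$ and $p=|\{B_i=0\}|-|\{A_i=0\}|=1-2=-1$, this asserts that $D_T(N)=2\,D_S(N)$ for all $N\ge 1$, where $S$ is the colored set attached to $(A_1,\dots,A_{12})=(0,0,1,1,2,2,2,3,3,4,4,4)$ and $T$ the one attached to $(B_1,\dots,B_{12})=(0,1,1,1,1,2,2,3,3,3,3,4)$ as in Theorem~\ref{main}.

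Now observe that the multiset $\{A_i\}$ of Conjecture~\ref{c8'} coincides with the multiset $\{A_i\}$ of Conjecture~\ref{c7}, that the multiset $\{B_i\}$ of Conjecture~\ref{c8'} coincides with the multiset $\{A_i\}$ of Conjecture~\ref{c8}, and that Conjectures~\ref{c7} and~\ref{c8} share the same multiset $\{B_i\}$. Since the sets $S$ and $T$ of Theorem~\ref{main} depend only on the multisets of the coefficients, $S$ is exactly the set called $S$ in the Corollary to Conjecture~\ref{c7}, $T$ is exactly the set called $S$ in the Corollary to Conjecture~\ref{c8}, and the ``$T$-side'' set, call it $U$, is common to both of those Corollaries. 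Those Corollaries then state that, for all $N\ge 1$,
\[
D_S(N)=2\,D_U(N-1)\qquad\text{and}\qquad D_T(N)=4\,D_U(N-1),
\]
so $D_T(N)=2\,D_S(N)$ for all $N\ge 1$. This is precisely condition~(ii) of Theorem~\ref{main} for Conjecture~\ref{c8'} (the value $N_0=1$ matches in all three statements), hence by Theorem~\ref{main} condition~(i) holds. So, modulo Conjectures~\ref{c7} and~\ref{c8}, the proof is immediate.

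Everything therefore reduces to settling Conjectures~\ref{c7} and~\ref{c8}. Both have $C=8$ and $m=1$, and the natural route --- in the spirit of Lemmas~\ref{3}, \ref{4}, \ref{6} and~\ref{10} --- is to apply Lemma~\ref{lemma2} with an already-established member of the family as the ``unprimed'' hypothesis (for instance Lemma~\ref{6}, whose coefficients may be doubled to land at $C=8$), thereby reducing each Conjecture to condition~(ii) of Lemma~\ref{lemma2}; one then passes to the $12$-tuples of integers and half-integers via Lemma~\ref{lemma3} and constructs an explicit value-preserving, type-reversing bijection between the relevant tuple sets, decomposing by the residue classes modulo $8$ of suitable linear forms in the coordinates and by the parities of coordinate-block sums. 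The main obstacle is precisely this last bijection: as the proof of Lemma~\ref{10} vividly illustrates, for $C\ge 8$ the case analysis proliferates and one typically needs several successive adroit changes of variables before every surviving case cancels, so the crux is to hit on the right sequence of such substitutions --- morally, an analogue of the orthogonal frame $V_1,\dots,V_{12}$ used in Lemma~\ref{1}. Once Conjectures~\ref{c7} and~\ref{c8} are in hand, Conjecture~\ref{c8'} costs nothing more.
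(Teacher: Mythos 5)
Your reduction step is formally correct: the $A$-multiset of Conjecture~\ref{c8'} equals that of Conjecture~\ref{c7}, its $B$-multiset equals the $A$-multiset of Conjecture~\ref{c8}, the two earlier conjectures share a common $B$-multiset, and with $m=0$, $p=1-2=-1$ the combination $D_S(N)=2D_U(N-1)$, $D_T(N)=4D_U(N-1)$ does give condition (ii) of Theorem~\ref{main} for $N_0=1$, hence condition (i). This exactly parallels how Theorem~\ref{7'''} is obtained from Theorems~\ref{666} and~\ref{777}, and it is a genuine (if implicit in the paper) observation that any two of Conjectures~\ref{c7}, \ref{c8}, \ref{c8'} imply the third.

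The gap is that this does not prove the statement. In the paper, \ref{c7}, \ref{c8} and \ref{c8'} are all open conjectures, verified only numerically up to $N=2000$; no proof of \ref{c8'} exists there to compare with, and your argument is strictly conditional on two statements that are no easier than the one you are asked to prove. Your closing paragraph, which is where the actual mathematical content would have to live, is a strategy sketch rather than a proof: you propose invoking Lemma~\ref{lemma2} against a doubled version of Lemma~\ref{6} and then building a value-preserving, type-reversing bijection on the sets supplied by Lemma~\ref{lemma3}, but you neither verify the hypotheses of Lemma~\ref{lemma2} (e.g.\ the required relation between the smallest values of $N$ with $S_N\ne\emptyset$, $T_N\ne\emptyset$, and the counts $|S_k|$, $|S'_{k'}|$) nor exhibit any of the substitutions or cancellations that constitute the bijection --- which, as the proof of Lemma~\ref{10} shows, is precisely where the difficulty lies. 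As it stands, the proposal reduces one conjecture to two others and stops; the statement remains unproven.
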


\noindent
\textbf{Corollary  to Conjecture \ref{c8'}.}
\emph{Let $S$ be the set containing 2 copies of the odd positive integers,  3 copies of the positive integers that are  congruent to 2 modulo 4, 6 copies of the positive integers that are  congruent to 4 modulo 8, and 4 copies of the positive multiples of 8; let $T$ be the set containing 2 copies of the  positive integers and 2 more copies of the odd positive integers. Then, for any $N\geq 1$,
$$D_S(N)=\frac{1}{2}D_T(N).$$}

\begin{conjecture}\label{c9}
Condition (i) of Theorem \ref{main} holds for $N_0= 3$,  $C=18$, $m=3$, and
$$(A_1,\dots,A_{12})=(1,1,1,3,5,5,5,7,7,7,9,9), (B_1,\dots,B_{12})=(0,0,2,2,2,4,4,4,6,8,8,8).$$
\end{conjecture}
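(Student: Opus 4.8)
The plan is to mimic the structure of the proofs of Lemmas \ref{3}, \ref{4} and \ref{10}. First one checks the arithmetic preliminaries: $C/2=9$, and $A_i+B_{13-i}=9$ for every $i$, while $\sum_{i=1}^{12}A_i=60$ gives $\sum A_i/2-3C/2=30-27=3=m$, so the hypotheses of Lemma \ref{lemma3} are met. Next, scaling Lemma \ref{4} by $3$ shows that condition (i) of Theorem \ref{main} also holds for $C=18$, $m'=9$, $(A'_1,\dots,A'_{12})=(3,3,3,3,3,3,9,9,9,9,9,9)$ and $(B'_1,\dots,B'_{12})=(0,0,0,0,0,0,6,6,6,6,6,6)$. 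Taking this as the known identity and Conjecture \ref{c9} as the new one, Lemma \ref{lemma2} applies with $k=\min A_i=1$, $k'=\min A'_i=3$, $|S_k|=3$ and $|S'_{k'}|=2|S_k|=6$, so the conjecture becomes equivalent to the existence, for every $N$, of a value-preserving correspondence associating to each element of $2Q_{N+1}$ or of $R'_{N+3}$ a corresponding element of $2R_{N+1}$ or of $Q'_{N+3}$, and conversely.

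By Lemma \ref{lemma3} all of $Q_{N+1},R_{N+1}$ (the Conjecture \ref{c9} side) and $Q'_{N+3},R'_{N+3}$ (the scaled Lemma \ref{4} side) may be regarded as sets of $12$-tuples in $D=\{d\in\mathbb Z^{12}\cup(\mathbb Z+1/2)^{12}:\sum d_i\in2\mathbb Z+1\}$, with integer tuples ``of positive type'' and half-integer tuples ``of negative type'', carrying the value functions $18\sum_i {{d_i}\choose{2}}+\sum_iA_id_i$ and $18\sum_i {{d_i}\choose{2}}+\sum_iA'_id_i$ respectively. Completing the square, the first equals $9\|d-v\|^2+c$ with $v=\frac1{18}(8,8,8,6,4,4,4,2,2,2,0,0)$ and the second $9\|d-v'\|^2+c'$ with $v'=\frac13(1,1,1,1,1,1,0,0,0,0,0,0)$, for suitable constants $c,c'$; so in both cases the value is, up to an additive constant, $9$ times a squared distance to a fixed center. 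The scaled Lemma \ref{4} side carries the familiar block-of-six structure, so that $x'=d'_1+\dots+d'_6-d'_7-\dots-d'_{12}$ controls it: adapting the block-wise translations and the block-swap used in the proof of Lemma \ref{4}, the residue classes $x'\equiv 3,5\pmod 6$ are killed by type-reversing involutions and the class $x'\equiv 1\pmod 6$ is transported to the Conjecture \ref{c9} side. The difficulty lies in the Conjecture \ref{c9} side.

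Here the coefficient vector $(1,1,1,3,5,5,5,7,7,7,9,9)$ is irregular and no single linear form controls it. Following the proof of Lemma \ref{1}, the plan is to build a frame of mutually orthogonal $\pm1$-vectors $W_1,\dots,W_{12}$ adapted to the center $v$ --- each with an even number of $+1$'s and with $v\cdot W_j$ an even integer --- so that the orthogonal reflection $d\mapsto d-\frac{(d-v)\cdot W_j}{6}W_j$ is value-preserving, preserves $D$, and flips the type precisely when $(d-v)\cdot W_j$ is an odd multiple of $3$. These reflections cancel every tuple meeting the divisibility condition for some $j$, and the exceptional tuples that remain (pairing to $\pm1\pmod 6$ against all of the $W_j$) must be matched against the $|S_k|=3$ copies of pentagonal-number tuples $(f_1,\dots,f_{12})$ of value $18\sum\frac{f_i(3f_i-1)}{2}+1$ coming from Lemma \ref{lemma1}, together with the exceptional tuples that remain on the scaled Lemma \ref{4} side. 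The main obstacle is exactly this matching: exhibiting an orthogonal $\pm1$-frame adapted to the awkward center $v$ (equivalently, an order-$12$ Hadamard-type matrix whose rows are all orthogonal to $v$ in the required sense --- it is not evident a priori that all twelve vectors can be chosen this way, and some may have to be replaced by translation-type moves), pinning down the finite family of ``special small'' tuples that then arise (the analogues of the $24$ tuples in the proof of Lemma \ref{1} and of the $(a_i),(b_i),(s_i)$ families in the proof of Lemma \ref{10}), and checking that the resulting finite system of coordinate substitutions is simultaneously value-preserving, type-reversing and exhaustive. As in the proof of Lemma \ref{10}, one should expect to iterate the Lemma \ref{lemma2} reduction once or twice at this stage, eventually arriving at a quadratic form in four variables with coefficients among small multiples of $6$, whose surviving instances can be checked by hand; the algebra is routine but voluminous. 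Assembling the partial bijections yields the required correspondence, and hence the conjecture.
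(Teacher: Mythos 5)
This statement is not proved in the paper at all: it is Conjecture \ref{c9}, one of the identities the authors explicitly leave open, verified only by computer for $N\le 2000$. So there is no ``paper proof'' to compare against, and the relevant question is whether your proposal actually closes the gap. It does not. The arithmetic preliminaries you check are correct ($A_i+B_{13-i}=9$, $\sum A_i/2-3C/2=3=m$), the scaling of Lemma \ref{4} to $C=18$ is legitimate (the paper itself scales Lemma \ref{4} to $C=30$ inside the proof of Lemma \ref{10}), and the reduction via Lemma \ref{lemma2} to a statement of the form $2|Q_{N+k}|+|R'_{N+k'}|=2|R_{N+k}|+|Q'_{N+k'}|$ is the right opening move. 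But everything after that is a plan, not a proof: the entire combinatorial content of such an argument is the explicit construction of the value-preserving, type-reversing involutions and transports, and you do not construct them. You yourself flag the decisive steps as unresolved --- whether an orthogonal $\pm1$-frame adapted to the center $v=\frac{1}{18}(8,8,8,6,4,4,4,2,2,2,0,0)$ exists at all, which exceptional tuples survive, and whether they can be matched --- and you close with ``the algebra is routine but voluminous,'' which for this family of identities is precisely the part that cannot be waved at (compare the proof of Lemma \ref{10}, where the analogous ``voluminous algebra'' occupies several pages of carefully verified case analysis and is the whole proof).

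Two further structural problems. First, your bookkeeping for Lemma \ref{lemma2} is inconsistent: you declare the scaled Lemma \ref{4} system to be the known (unprimed) one and Conjecture \ref{c9} the new (primed) one, but then compute $k=\min A_i=1$ and $|S_k|=3$ using the Conjecture \ref{c9} coefficients; the roles need to be fixed one way and carried through. Second, after applying Lemma \ref{lemma2} the pentagonal-number $f$-tuples of Lemma \ref{lemma1} have already been cancelled --- condition (ii) involves only the $Q$ and $R$ sets --- so your plan to match the exceptional tuples ``against the $|S_k|=3$ copies of pentagonal-number tuples coming from Lemma \ref{lemma1}'' mixes the direct Lemma \ref{1}-style framework with the relative Lemma \ref{lemma2}-style framework in a way that is not coherent as written. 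The proposal is a reasonable research outline, but it does not establish the conjecture.
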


\noindent
\textbf{Corollary  to Conjecture \ref{c9}.}
\emph{Let $S$ be the set containing  3 copies of the odd positive integers that are not multiples of 3, one copy of the odd positive multiples of 3 that are not multiples of 9,  and 4 copies of the odd positive multiples of 9; let $T$ be the set containing  3 copies of the even positive integers that are not multiples of 3, one copy of the  positive multiples of 6 that are not multiples of 18,  and 4 copies of the  positive multiples of 18. Then, for any $N\geq 3$,
$$D_S(N)=2D_T(N-3).$$}

\begin{conjecture}\label{c11}
Condition (i) of Theorem \ref{main} holds for $N_0= 2$,  $C=10$, $m=2$, and
$$(A_1,\dots,A_{12})=(1,1,1,2,2,3,3,3,4,4,5,5), (B_1,\dots,B_{12})=(0,0,1,1,2,2,2,3,3,4,4,4).$$
\end{conjecture}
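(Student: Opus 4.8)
The plan is to follow the template of the proofs of Lemmas \ref{3}--\ref{10}: reduce the statement, via Lemmas \ref{lemma1} and \ref{lemma2}, to an elementary (though intricate) assertion about finite families of integer and half-integer $12$-tuples, and then exhibit the required bijection explicitly.

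First I would record the structural facts that put the machinery in gear. Since $A_i+B_{13-i}=5=C/2$ for every $i$, Lemma \ref{lemma3} applies: it forces $m=\frac{1}{2}\sum_{i=1}^{12}A_i-\frac{3}{2}C=17-15=2$, in agreement with the stated value, and it lets us regard all $d$-tuples and $e$-tuples as living in the common set $D=\{d\in\Z^{12}\cup(\Z+\frac{1}{2})^{12}:\sum_{i=1}^{12}d_i\in 2\Z+1\}$, on which a tuple carries the value $C\sum_{i=1}^{12}{d_i\choose 2}+\sum_{i=1}^{12}A_id_i$ and is of ``positive'' or ``negative'' type according as its entries are integers or half-integers. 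One also checks directly that the second-smallest value attained by the left-hand side of (\ref{t}) equals the smallest value attained by the right-hand side: the left-hand minimum is $1$, attained by the three tuples with a single coordinate equal to $1$ in a position where $A_i=1$; its next value is $2$, attained by the two tuples with a single coordinate equal to $1$ in a position where $A_i=2$; and the right-hand minimum, after adding $m=2$, is $2$, attained by the two tuples with a single coordinate equal to $1$ in a position where $B_i=0$. Hence Lemma \ref{lemma1} is in force, with $k=1$.

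Next I would invoke Lemma \ref{lemma2}, pairing the target of Conjecture \ref{c11} against the already proved Lemma \ref{10}, which has the same modulus $C=10$ (so no rescaling is needed). There $k=2$ for the reference identity while $k'=1$ for the conjecture, but the two counts $|S_k|=6$ and $|S_{k'}'|=3$ differ only by the factor $2$, so condition (ii) of Lemma \ref{lemma2} takes the familiar shape $2|Q_{N+k'}'|+|R_{N+k}|=2|R_{N+k'}'|+|Q_{N+k}|$. Since this condition is unchanged under interchanging the primed and unprimed data, the problem becomes that of producing a single value-preserving bijection between the ``positive-type'' and ``negative-type'' elements of a combined, suitably weighted family assembled from the finite sets $Q,R$ attached to Lemma \ref{10} and $Q',R'$ attached to the conjecture, where ``type'' now records both the integer versus half-integer distinction and the side --- exactly the form of statement handled in the arguments above. (One could instead pair against Lemma \ref{11}, at the cost of the messier weights $|S_k|=2$, $|S_{k'}'|=3$.)

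Then I would build the bijection in two stages, imitating the proofs of Lemmas \ref{4} and \ref{10}. The first stage cancels the ``generic'' tuples: for an appropriate signed partial sum $x=\sum_{i\in I}d_i-\sum_{i\notin I}d_i$, the maps $d_i\mapsto d_i\mp\frac{x-a}{C}$, with the two halves of the index set also permuted when one wants to fix the value of $x$ rather than negate it, are value-preserving involutions that reverse the type and either fix a residue class of $x$ modulo $C$ or pair it with a complementary class; iterating over residue classes leaves only a bounded collection of ``exceptional'' tuples supported near the boundary. The second stage parametrizes each surviving family by ordinary integer tuples --- writing coordinate pairs in forms such as $(1,0)-f_1(1,1)+f_2(-1,1)$ or $u(1,2)+v(2,-1)$ --- so that the residual value functions (each of the shape $C\sum_{i=1}^{4}{d_i\choose 2}$ plus various linear terms) on the different families become comparable, and then matches those finitely many families against one another. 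I expect this second stage to be the genuine obstacle, exactly as it was the long part of the proof of Lemma \ref{10}: once the easy congruence reductions are done, finding the right succession of unimodular changes of variables, together with the attendant case distinctions, so that every family pairs off with one of the opposite type with no tuple used twice is a delicate bookkeeping problem rather than a single transparent argument; a subsidiary point is to choose the reference identity and its shift so that no spurious extra copies of tuples enter the count.
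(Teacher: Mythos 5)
This statement is one of the paper's open conjectures, not one of its theorems: the authors list it in the final section precisely because they could \emph{not} prove it, and they state that it has only been verified by computer for $N$ up to $2000$. So there is no proof in the paper to match yours against, and the real question is whether your proposal constitutes a proof on its own. It does not. Your preliminary reductions are correct and are indeed the standard opening moves of this paper: $A_i+B_{13-i}=5$ for all $i$, $m=\tfrac{1}{2}\sum A_i-\tfrac{3}{2}C=2$, $k=1$ with $|S'_{k'}|=3$, and pairing against Lemma \ref{10} (where $|S_k|=6$) via Lemma \ref{lemma2} to reach the condition $2|Q'_{N+k'}|+|R_{N+k}|=2|R'_{N+k'}|+|Q_{N+k}|$. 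But at that point you have only restated the problem in the form in which every one of Lemmas \ref{1}--\ref{10} becomes hard, and you explicitly defer the remaining work: you call the second stage ``the genuine obstacle'' and ``a delicate bookkeeping problem,'' and you never produce the value-preserving, type-reversing bijection that the reduction demands.

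That omission is the entire content of a proof here, not a routine verification. The congruence-class involutions of your first stage are not guaranteed to leave a tractable residue: in Lemma \ref{3} they happen to close up immediately, but in Lemma \ref{10} (your chosen reference, with the same modulus $C=10$) the surviving families required several successive reparametrizations and ultimately an ad hoc list of fifteen explicit unimodular pairings of the form $(x,y)=(a,b)+u(1,2)+v(2,-1)$, each of which had to be checked to cover every exceptional tuple exactly once with matching values and opposite types. Nothing in your outline shows that an analogous list exists for the coefficient pattern $(1,1,1,2,2,3,3,3,4,4,5,5)$ versus $(2,2,2,2,2,2,4,4,4,4,4,4)$, and there is no general principle in the paper that would supply it. Until you exhibit that final matching explicitly (or find a structural argument that bypasses it), the statement remains exactly what the authors say it is: a conjecture supported by numerical evidence.
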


\noindent
\textbf{Corollary  to Conjecture \ref{c11}.}
\emph{Let $S$ be the set containing 2 copies of the  positive integers that are not multiples of 10, one more copy of the odd positive integers, and one more copy of the odd positive multiples of 5; let $T$ be the set containing  2 copies of the  positive integers that are not odd multiples of 5, one more copy of the even positive integers, and one more copy of the  positive multiples of 10. Then, for any $N\geq 2$,
$$D_S(N)=2D_T(N-2).$$}

\begin{conjecture}\label{c12}
Condition (i) of Theorem \ref{main} holds for $N_0= 1$,  $C=10$, $m=1$, and
$$(A_1,\dots,A_{12})=(0,0,1,2,2,2,3,4,4,4,5,5), (B_1,\dots,B_{12})=(0,0,1,1,1,2,3,3,3,4,5,5).$$
\end{conjecture}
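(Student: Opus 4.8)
The plan is to follow the route by which the harder lemmas of Section~3 were proved, most closely the proof of Lemma~\ref{10}: pivot off an already established identity with the same modulus $C$ via Lemma~\ref{lemma2}, reduce to a purely combinatorial identity among the ``empty-partition'' tuples, and then exhibit that identity through a chain of explicit affine bijections on the relevant integer and half-integer lattices. The first, bookkeeping, step is to check that $A_i+B_{13-i}=5=C/2$ for every $i$, so that Lemma~\ref{lemma3} applies, and that $\sum_{i=1}^{12}A_i/2-3C/2=16-15=1$, which confirms both that $m=1$ is the value forced by Lemma~\ref{lemma1} and that $N_0=1$ is the correct threshold; as in the earlier arguments this lets us view the $d$-tuples and $e$-tuples as lying in a single set $D=\{d\in\mathbb{Z}^{12}\cup(\mathbb{Z}+\frac12)^{12}:\sum_i d_i\in2\mathbb{Z}+1\}$ carrying the value function $F(d)=10\sum_i\binom{d_i}{2}+\sum_i A_i d_i$, with the integer tuples of one ``type'' and the half-integer tuples of the opposite one.

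Next I would apply Lemma~\ref{lemma2} with the \emph{known} system taken to be that of Lemma~\ref{11} (it has the same $C=10$ and the same $m=1$, and is the natural next link in the chain that already produced Lemmas~\ref{2},~\ref{8},~\ref{9} and~\ref{11}) and the \emph{target} system taken to be that of Conjecture~\ref{c12}. In the notation of Lemma~\ref{lemma1} one computes $k=1$ and $k'=0$, while $|S_k|=|S'_{k'}|=2$, since in each case the minimal value of the left-hand side is attained only by the two empty-partition tuples with a single coordinate equal to $1$ on an index carrying the minimal coefficient. Condition (ii) of Lemma~\ref{lemma2} then collapses, after dividing by $2$, to
\[|Q'_N|+|R_{N+1}|=|R'_N|+|Q_{N+1}|\qquad\text{for every }N,\]
i.e.\ --- combining over all $N$ and using Lemma~\ref{lemma3} to pass to the $d'$-parametrization on the $e'$-side --- to the existence of a value-preserving bijection between the integer tuples of the Lemma~\ref{11} lattice together with the half-integer tuples of the Conjecture~\ref{c12} lattice, on the one hand, and the half-integer tuples of the former together with the integer tuples of the latter, on the other. (One could instead pivot off Lemma~\ref{10}, or off the five-fold dilations of Lemmas~\ref{1} or~\ref{2}, but those choices introduce larger multiplicities --- $3$, $6$, and $4$ respectively --- so Lemma~\ref{11} is the most economical base.)

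The remaining and decisive task is to build that bijection, in the style of Lemmas~\ref{4} and~\ref{10}. One fixes a $\pm1$-valued linear functional $\ell(d)=\sum_i\varepsilon_i d_i$ whose sign pattern is constant on each constant-coefficient block $\{1,2\},\{4,5,6\},\{8,9,10\},\{11,12\}$ of the vector $(0,0,1,2,2,2,3,4,4,4,5,5)$ (so that translating along $\varepsilon$ disturbs the linear part of $F$ in a controlled way), and partitions $D$ and $D'$ according to the residue of $\ell(d)$ modulo $10$. On most residue classes a single affine involution --- a translation along a half-integer multiple of $\varepsilon$, or a block-reversal $d_i\mapsto c-d_{\sigma(i)}$ of the kind furnished by the size-$4$ instance of Lemma~\ref{lemma3} --- sends each tuple to one of equal $F$-value and opposite type, so those tuples cancel in pairs; on the class(es) that survive, one instead maps the remaining tuples of each of the two lattices into a common auxiliary family of lower-dimensional lattice tuples, re-parametrizing $2\times2$ and $4\times4$ coordinate blocks (exactly as the sets $T$ and $T'$ were built in the proof of Lemma~\ref{10}) until the two value functions literally coincide, and finishes with one explicit value-preserving, type-exchanging bijection on that auxiliary family.

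I expect this last step to be the main obstacle. The coefficient vector $(0,0,1,2,2,2,3,4,4,4,5,5)$ carries essentially no useful symmetry --- in particular there is no orthogonal frame diagonalizing the quadratic part while respecting the linear part, as in Lemma~\ref{1} --- so, exactly as for Lemma~\ref{10}, the explicit bijection will demand a long, nested case analysis: repeatedly peeling off $2$- and $4$-dimensional blocks, re-coordinatizing them, and matching the additive constants that arise, now modulo $10$ (and modulo $2$ and $4$ at intermediate stages). Verifying that every affine substitution is genuinely an involution, or a bijection of the lattice in question, and that every value constant lines up after each re-parametrization, is the delicate and error-prone heart of the argument; the rest is the bookkeeping already exemplified in Lemmas~\ref{4} and~\ref{10}.
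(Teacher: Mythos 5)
The statement you are trying to prove is left as a \emph{conjecture} in the paper: the authors prove Lemmas \ref{1}--\ref{11} by exhibiting explicit bijections, but for Conjecture \ref{c12} (and the other conjectures of the final section) they offer only computational verification for $N\le 2000$. Your preparatory bookkeeping is correct and consistent with the paper's machinery: $A_i+B_{13-i}=5=C/2$ for all $i$, $\sum_i A_i/2-3C/2=16-15=1=m$, and pivoting off Lemma \ref{11} via Lemma \ref{lemma2} with $k=1$, $k'=0$, $|S_k|=|S'_{k'}|=2$ does reduce the claim to the identity $|Q'_N|+|R_{N+1}|=|R'_N|+|Q_{N+1}|$ for all $N$, i.e.\ to a value-preserving, type-exchanging bijection between the relevant integer and half-integer lattice tuples. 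Up to that point your plan mirrors exactly the strategy of Lemmas \ref{3}, \ref{4}, \ref{5} and \ref{10}.

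The genuine gap is that the decisive step --- the explicit bijection --- is never constructed. Your final two paragraphs describe only what \emph{kind} of maps one would look for (translations along a sign vector $\varepsilon$ constant on the constant-coefficient blocks, block reversals, re-coordinatizations of $2\times 2$ and $4\times 4$ blocks as in Lemma \ref{10}) and then concede that carrying this out is ``the main obstacle'' and ``the delicate and error-prone heart of the argument.'' That heart is precisely what a proof must supply: in every proved lemma of Section 3 the authors write down each affine map, verify it is an involution or a bijection onto the stated target, and check that the additive constants match after each re-parametrization. Nothing in your proposal guarantees that such a chain of maps exists for the coefficient vector $(0,0,1,2,2,2,3,4,4,4,5,5)$ --- indeed, the fact that the authors, having succeeded with this method eleven times, still list this case as open is strong evidence that the residue-class cancellation does not fall out of the same templates. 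As written, the proposal is a plausible research plan, not a proof; the conjecture remains unestablished.
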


\noindent
\textbf{Corollary  to Conjecture \ref{c12}.}
\emph{Let $S$ be the set containing 3 copies of the even positive integers, one  copy of the odd positive integers, 3 more copies of the odd positive multiples of 5, and one more copy of the  positive multiples of 10; let $T$ be the set containing  3 copies of the odd positive integers, one  copy of the even positive integers, one more copy of the odd positive multiples of 5, and 3 more copies of the  positive multiples of 10. Then, for any $N\geq 1$,
$$D_S(N)=D_T(N-1).$$}

\begin{conjecture}\label{c14}
Condition (i) of Theorem \ref{main} holds for $N_0= 2$,  $C=12$, $m=2$, and
$$(A_1,\dots,A_{12})=(0,2,2,2,2,3,3,3,3,4,4,6), (B_1,\dots,B_{12})=(0,0,0,1,2,2,3,3,4,4,5,6).$$
\end{conjecture}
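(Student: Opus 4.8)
The plan is to follow the blueprint used for Lemmas \ref{1}--\ref{11}, and in particular the long argument proving Lemma \ref{10}. The period-$4$ structure demanded by Lemma \ref{lemma4} is not available here: the blocks $(0,2,2,2)$, $(2,3,3,3)$, $(3,4,4,6)$ of $(A_1,\dots,A_{12})$ are not permutations of one another (and likewise for $(B_1,\dots,B_{12})$), so Lemma \ref{lemma4} cannot be invoked directly. Instead I would first use Lemma \ref{lemma2} to reduce the statement to a purely combinatorial identity about tuples of integers. Since no previously established identity has $C=12$, one chains Lemma \ref{lemma2} either off a rescaling (by the factor $12/C$) of one of the proved identities with $C\in\{2,4,6\}$ --- for instance Lemma \ref{3} rescaled by $2$, which gives $C=12$, $(A)=(4,\dots,4)$, $(B)=(2,\dots,2)$ --- or, more promisingly, off Conjecture \ref{c20} (which already has $C=12$ and $m=2$), once that conjecture is itself settled. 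After checking the auxiliary hypotheses of Lemma \ref{lemma1} (the coincidence of the second smallest left-hand value with the smallest right-hand value, and the value of $|S_k|$, here with $k=0$ since $A_1=0$), proving Conjecture \ref{c14} becomes equivalent to verifying condition (ii) of Lemma \ref{lemma2} for the resulting primed coefficients, i.e. to producing an explicit weight- and type-preserving bijection among prescribed copies of $Q$-, $R$-, $Q'$- and $R'$-type integer tuples.

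Next I would apply Lemma \ref{lemma3} to regard all $d$- and $e$-tuples as elements of the common lattice $D=\{d\in\Z^{12}\cup(\Z+\frac12)^{12}:\sum_{i=1}^{12} d_i\in 2\Z+1\}$ (and the primed tuples in an analogous $D'$ carrying a different value function), each element having value $12\sum_{i=1}^{12}\binom{d_i}{2}+\sum_{i=1}^{12} A_i d_i$; completing the square turns this into a shifted positive definite quadratic form, and the ``type'' of a tuple is the parity of the number of its half-integer coordinates. The bulk of the required bijection is then built from a cascade of reflection involutions in the spirit of the maps $r_i$ of Lemma \ref{1} and the congruence-class reflections of Lemmas \ref{3}, \ref{5} and \ref{10}: for each block of indices on which the $A_i$ (resp. $B_i$) agree --- here the blocks have sizes $1,4,4,2,1$ --- one forms the associated linear form $L(d)$, and whenever $L(d)$ lands in a suitable residue class modulo $12$ one sends $d$ to $d$ minus a multiple of $L(d)$ supported on that block, producing a tuple of the same value and opposite type. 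Iterating over blocks and over the relevant residues modulo $4$ and modulo $3$ cancels all but a controlled residual family of tuples, which one finally matches bijectively with the $|S_k|$ copies of the generalized pentagonal tuples $(f_1,\dots,f_{12})$, exactly as in Lemmas \ref{1} and \ref{10}.

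The main obstacle is precisely this last, residual step. Because $C=12$ is composite with two distinct prime-power layers, and because the coefficient vector is highly irregular --- four distinct nonzero levels, unequal block sizes, and no period-$4$ symmetry --- the reflections above do not commute cleanly and their uncancelled complement is large; describing that complement and constructing the final bijection on it requires an iterated sequence of coordinate changes and parity-splitting involutions of the same flavour as, but substantially longer and more delicate than, the already intricate chain in the proof of Lemma \ref{10}. Checking that all of these partial maps assemble into a single bijection (each tuple hit exactly once, with value and type preserved throughout) is the bookkeeping we were unable to complete, which is why we record the statement only as a conjecture.
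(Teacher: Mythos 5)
You should first note that the paper itself does not prove this statement: it is listed explicitly as a conjecture, verified only by computer for $N\le 2000$, so there is no proof in the paper against which to measure your outline. Your proposal correctly diagnoses the obstructions (the coefficient vectors lack the period-$4$ block structure required by Lemma \ref{lemma4}; no proved identity has $C=12$, so one must chain Lemma \ref{lemma2} off a rescaled base case in the manner of the proof of Lemma \ref{10}; the reduction via Lemmas \ref{lemma1}--\ref{lemma3} lands one in the lattice $D$ with value $12\sum_i\binom{d_i}{2}+\sum_i A_id_i$ and $k=0$), and the general strategy of block-wise reflection involutions followed by a matching of the residual tuples with the $|S_k|$ copies of pentagonal-type tuples is the right template.

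However, the proposal contains a genuine and decisive gap, which you yourself acknowledge: the entire content of such a proof lies in exhibiting the explicit value- and type-preserving bijection on the uncancelled residual family, and no such bijection is constructed. In every completed case in the paper (Lemmas \ref{1}, \ref{3}--\ref{7}, \ref{10}) the reflections and the final matching are written out coordinate by coordinate and verified to be involutions or bijections; the assertion that ``iterating over blocks and over the relevant residues modulo $4$ and modulo $3$ cancels all but a controlled residual family'' is precisely the claim that needs proof, and for the irregular coefficient vector $(0,2,2,2,2,3,3,3,3,4,4,6)$ there is no a priori guarantee that the block reflections even exist in the required residue classes (for the blocks of size $1$ and $2$ the analogue of the maps $r_i$ need not preserve the value function). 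A secondary concern is that one of your two proposed base cases is Conjecture \ref{c20}, which is itself unproven, so that branch of the argument would be circular as matters stand. The outline is therefore a reasonable research plan but not a proof, and the statement remains a conjecture exactly as in the paper.
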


\noindent
\textbf{Corollary  to Conjecture \ref{c14}.}
\emph{Let $S$ be the set containing 2 copies of the even positive integers,  2 more copies of the positive integers congruent to $\pm 2$ modulo 12, and 4 copies of the odd multiples of 3; let $T$ be the set containing 2 copies of the even positive integers, 4 more copies of the positive multiples of 12, one copy of the odd positive integers, and one more copy of the  odd multiples of 3. Then, for any $N\geq 2$,
$$D_S(N)=4D_T(N-2).$$}

\begin{conjecture}\label{c15}
Condition (i) of Theorem \ref{main} holds for $N_0= 2$,  $C=12$, $m=2$, and
$$(A_1,\dots,A_{12})=(1,1,2,2,3,3,3,3,5,5,6,6), (B_1,\dots,B_{12})=(0,0,1,1,3,3,3,3,4,4,5,5).$$
\end{conjecture}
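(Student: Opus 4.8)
The plan is to follow the template of Lemmas \ref{3}--\ref{11}: first reduce condition (i) to an explicit equality between numbers of integer $12$-tuples by means of Lemmas \ref{lemma1}--\ref{lemma3}, and then establish that equality through value- and type-preserving bijections.

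One begins by checking that Lemma \ref{lemma3} applies: indeed $A_i+B_{13-i}=6=C/2$ for every $i$, and $\frac12\sum_{i=1}^{12}A_i-\frac{3C}{2}=20-18=2=m$. Hence all $d$-tuples and $e$-tuples may be regarded as elements of the common set $D=\{d\in\mathbb{Z}^{12}\cup(\mathbb{Z}+1/2)^{12}:\sum_{i=1}^{12}d_i\in 2\mathbb{Z}+1\}$, where $(d_1,\dots,d_{12})$ carries the value $12\sum_{i=1}^{12}\binom{d_i}{2}+(d_1+d_2)+2(d_3+d_4)+3(d_5+d_6+d_7+d_8)+5(d_9+d_{10})+6(d_{11}+d_{12})$, and a tuple counts ``of negative type'' exactly when its entries are half-integers.

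The next step is to select an already-established identity as the base for an application of Lemma \ref{lemma2}. Since $C=12$, a natural candidate is a rescaling (and renumbering) of one of the earlier lemmas to modulus $12$; for instance, rescaling the identity of Lemma \ref{5} by $3$ gives the valid $C=12$ system with $(A'_1,\dots,A'_{12})=(3,3,3,3,3,3,3,3,6,6,6,6)$, $(B'_1,\dots,B'_{12})=(0,0,0,0,3,3,3,3,3,3,3,3)$ and $m'=6$. Taking this as the known system and the system of the statement as the other, and computing $k$, $k'$, $|S_k|$, $|S'_{k'}|$ from the definitions in Lemma \ref{lemma1} --- after checking the auxiliary hypothesis that the second-smallest values of the two left-hand sides coincide with the smallest values of their respective right-hand sides, as in the opening lines of the proof of Lemma \ref{lemma4} --- reduces the claim by Lemma \ref{lemma2} to condition (ii) of that lemma, i.e.\ to a single balance $|S_k|\cdot|Q'_{N+k'}|+|S'_{k'}|\cdot|R_{N+k}|=|S_k|\cdot|R'_{N+k'}|+|S'_{k'}|\cdot|Q_{N+k}|$ among counts of integer $12$-tuples with prescribed value and parity. (Alternatively, one might attempt a direct argument in the spirit of Lemma \ref{1} inside $D$ via Lemma \ref{lemma1}; however, the coefficient vector $(1,1,2,2,3,3,3,3,5,5,6,6)$ lacks the symmetry of $(1,\dots,1)$, so the orthogonal-reflection device of that proof would not carry over verbatim.)

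The substantive part is then the construction of the bijection witnessing condition (ii). Guided by the proofs of Lemmas \ref{4} and \ref{10}, one would split the $12$ coordinates into the blocks $\{1,2\}$, $\{3,4\}$, $\{5,6,7,8\}$, $\{9,10\}$, $\{11,12\}$ prescribed by the coefficient pattern, introduce a suitable signed sum $x$ of the coordinates, and first cancel all tuples on which $x$ lies in various residue classes modulo $C$ by reflection-type involutions $d_i\mapsto d_i\pm(x-c)/C$, which preserve the value while interchanging integer and half-integer tuples. The tuples surviving this step would then be carried, block by block, onto tuples of simpler auxiliary shapes --- playing the roles of the sets $W$, $S$, $Q$, $R$, $T$ in Lemmas \ref{lemma4} and \ref{10} --- and the process iterated on those. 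I expect the main obstacle to lie precisely in this last stage: because the multiplicities $1,1,2,2,3,3,3,3,5,5,6,6$ do not group into equal blocks and the base and target value functions have rather different coefficients, finishing the argument will likely require a long, carefully bookkept case analysis with several nested auxiliary bijections, of the kind that makes the proof of Lemma \ref{10} so involved.
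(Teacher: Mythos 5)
This statement is one of the paper's \emph{conjectures}: the authors give no proof of it, and say only that they have verified it by computer for $N\le 2000$. So there is no proof in the paper to compare your proposal against, and the question is whether your proposal actually closes the gap. It does not. What you have written is a plan, not a proof: the reduction via Lemmas \ref{lemma1}--\ref{lemma3} and the set-up for Lemma \ref{lemma2} are routine and are indeed how every proved lemma in Section 3 begins, but the entire mathematical content of such a proof lies in the final stage --- the explicit construction of the value- and type-preserving bijection witnessing condition (ii) of Lemma \ref{lemma2} --- and you explicitly defer that stage, acknowledging that it "will likely require a long, carefully bookkept case analysis." That case analysis is precisely what is missing and precisely why the statement remains a conjecture; nothing in your outline guarantees that the reflection-type involutions $d_i\mapsto d_i\pm(x-c)/C$ and the subsequent block-by-block reductions can actually be made to terminate in a clean cancellation for the coefficient pattern $(1,1,2,2,3,3,3,3,5,5,6,6)$.

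Two smaller points. First, your proposed base system (Lemma \ref{5} rescaled by $3$ to $C=12$) is a plausible guess, consistent with the resemblance between the Corollary to Conjecture \ref{c15} and Theorem \ref{555}, and the rescaling itself is harmless since every term of equation (\ref{t}) scales linearly; but you have not verified the hypotheses that make Lemma \ref{lemma2} applicable here (the relation between the smallest and second-smallest values of the two left-hand sides, and the actual values of $k$, $k'$, $|S_k|$, $|S'_{k'}|$), and in the paper's successful applications these checks sometimes force a renumbering or a different choice of auxiliary system (compare the proof of Lemma \ref{5} itself). Second, your parenthetical remark that the orthogonal-reflection device of Lemma \ref{1} would not carry over is correct, but it only reinforces that no existing technique in the paper is known to apply. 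As it stands, the proposal identifies the right framework but proves nothing new.
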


\noindent
\textbf{Corollary  to Conjecture \ref{c15}.}
\emph{Let $S$ be the set containing 2 copies of the positive integers that are not multiples of 4, and 2 more copies of the positive multiples of 3 that are not multiples of 4; let $T$ be the set containing 2 copies of the positive integers that are not congruent to 2 modulo 4, and 2 more copies of the positive multiples of 3 that are not congruent to 2 modulo 4. Then, for any $N\geq 2$,
$$D_S(N)=2D_T(N-2).$$}

\begin{conjecture}\label{c16}
Condition (i) of Theorem \ref{main} holds for $N_0= 2$,  $C=12$, $m=2$, and
$$(A_1,\dots,A_{12})=(1,1,1,2,3,3,4,4,5,5,5,6), (B_1,\dots,B_{12})=(0,1,1,1,2,2,3,3,4,5,5,5).$$
\end{conjecture}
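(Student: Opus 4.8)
The plan is to imitate the strategy that works for Lemmas \ref{3}--\ref{7} and \ref{10}. First one records the structural facts that put the problem inside the common framework: here $A_i+B_{13-i}=6=C/2$ for every $i$, and $\sum_{i=1}^{12}A_i/2-3C/2=20-18=2=m$, so Lemma \ref{lemma3} applies and every $d$- and $e$-tuple may be regarded as an element of $D=\{d\in\mathbb{Z}^{12}\cup(\mathbb{Z}+\frac{1}{2})^{12}:\sum_{i=1}^{12}d_i\in 2\mathbb{Z}+1\}$, carrying the value $12\sum_{i=1}^{12}\binom{d_i}{2}+\sum_{i=1}^{12}A_id_i$ (respectively the analogous form built from the $B_i$); one also checks, as the normalization $m=2$ demands, that the second-smallest value of the left-hand side of (\ref{t}) equals the smallest value of its right-hand side.

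Next I would invoke Lemma \ref{lemma2}, taking as ``parent'' an identity already available at $C=12$, for instance the scaling of Lemma \ref{3}, namely $C=12$, $(A_1,\dots,A_{12})=(4,\dots,4)$, $(B_1,\dots,B_{12})=(2,\dots,2)$, $m=6$: its condition (i) follows from Lemma \ref{3} because multiplying $C$, all the $A_i$, all the $B_i$ and $m$ by a common positive integer scales both sides of (\ref{t}) by that integer and so leaves Theorem \ref{main}(i) unchanged. (The scaled version of Lemma \ref{4}, or Conjecture \ref{c15} once it is established, are alternative parents; one picks whichever minimizes the bookkeeping.) Writing $k$ and $k'$ for the minimal left-hand-side values of the parent and of the present configuration, Lemma \ref{lemma2} turns the claim into an equality $|S_k|\cdot|Q'_{N+k'}|+|S'_{k'}|\cdot|R_{N+k}|=|S_k|\cdot|R'_{N+k'}|+|S'_{k'}|\cdot|Q_{N+k}|$, that is, into a single value- and type-preserving bijection between finite unions of copies of the partition-free tuple sets $Q,R,Q',R'$, with no partitions left to carry along.

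To produce that bijection I would argue by cases on the residues modulo $12$ of a handful of $\pm1$-weighted linear forms in the coordinates $d_i$, exactly as in Lemmas \ref{3}, \ref{4} and \ref{10}. For the ``bad'' residues one writes down an affine reflection $d\mapsto d^\ast$ --- a translation by a rational multiple of a suitable vector, possibly composed with a permutation of coordinates and/or a global change of sign --- that preserves the value, exchanges the integer and half-integer types, and is an involution; such a map cancels those tuples in opposite-type pairs. The tuples surviving all the bad residues are then transported onto the pentagonal-number tuples by a change of basis that diagonalizes the quadratic form on the relevant sublattice, in the spirit of the $V_i$-construction of Lemma \ref{1}. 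A genuine simplification available here is that $12=3\cdot 4$, so one may hope to split the construction into a ``mod $3$'' part (modelled on Lemmas \ref{3}--\ref{4}) and a ``mod $4$'' part (modelled on Lemmas \ref{5}--\ref{7}) and then glue them by a Chinese-Remainder-type decomposition of $D$.

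The hard part, as already in Lemma \ref{10}, will be this last step: choosing the explicit reflections and the change of basis that realize the leftover tuples as pentagonal-number tuples while respecting both value and type, and --- above all --- verifying that the resulting case analysis is exhaustive and that every tuple is used exactly once. Note that because the coefficient vectors $(1,1,1,2,3,3,4,4,5,5,5,6)$ and $(0,1,1,1,2,2,3,3,4,5,5,5)$ are not obtained from a single repeated block of length four, Lemma \ref{lemma4} does \emph{not} apply, so unlike Lemmas \ref{2}, \ref{8}, \ref{9} and \ref{11} one cannot shortcut the argument and must carry out both the Lemma \ref{lemma2} reduction and the hand-built bijection in full; this bookkeeping is presumably why the statement is recorded here only as a conjecture.
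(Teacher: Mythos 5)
Your proposal does not prove the statement; it is a plan for a proof whose decisive step is left blank. The framing you set up is correct and consistent with the paper's machinery: one checks that $A_i+B_{13-i}=6=C/2$ for every $i$ and that $\sum_{i=1}^{12}A_i/2-3C/2=20-18=2=m$, so Lemma \ref{lemma3} applies; scaling Lemma \ref{3} up to $C=12$ is legitimate (multiplying $C$, the $A_i$, the $B_i$ and $m$ by a common factor multiplies both sides of (\ref{t}) by that factor); and Lemma \ref{lemma2} then reduces the claim to a value- and type-preserving bijection between unions of copies of the partition-free sets $Q$, $R$, $Q'$, $R'$. But everything after that point --- the choice of the $\pm 1$-weighted linear forms, the explicit affine involutions that cancel the ``bad'' residue classes, the change of basis carrying the survivors to pentagonal-number tuples, and the verification that the cases are exhaustive and disjoint --- is described only in the conditional (``I would argue\dots'', ``one may hope to split\dots''). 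That construction \emph{is} the proof in every completed instance in this paper (Lemmas \ref{3}, \ref{4}, \ref{5}, \ref{6}, \ref{7}, \ref{10}), and in Lemma \ref{10} it runs to several pages of delicate case analysis. Omitting it is not a presentational shortcut; it is the entire mathematical content. Your proposed ``mod $3$ times mod $4$'' splitting is moreover only a hope: the coefficient vector $(1,1,1,2,3,3,4,4,5,5,5,6)$ has no visible block structure that would make such a Chinese-Remainder decomposition go through, and you give no candidate forms or maps.

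For calibration: the paper itself offers no proof of this statement. It is listed explicitly as a conjecture, supported only by machine verification of the equivalent counting identity for $N\le 2000$. So there is no argument in the paper for your sketch to match or diverge from; the gap in your proposal is exactly the gap that leaves the statement conjectural, and you acknowledge as much in your final sentence. To turn this into a proof you would need to exhibit the involutions and the final bijection explicitly and verify, case by case, that every tuple is accounted for exactly once.
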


\noindent
\textbf{Corollary  to Conjecture \ref{c16}.}
\emph{Let $S$ be the set containing 2 copies of the positive integers that are not congruent to 0 or $\pm 2$ modulo 12, one  copy of the positive integers that are  congruent to $\pm 2$ modulo 12, and one more copy of the positive integers that are  congruent to $\pm 1$ modulo 6; let $T$ be the set containing  2 copies of the positive integers that are not congruent to 6 or $\pm 4$ modulo 12, one  copy of the positive integers that are  congruent to $\pm 4$ modulo 12, and one more copy of the positive integers that are  congruent to $\pm 1$ modulo 6. Then, for any $N\geq 2$,
$$D_S(N)=D_T(N-2).$$}

\begin{conjecture}\label{c17}
Condition (i) of Theorem \ref{main} holds for $N_0= 1$,  $C=12$, $m=1$, and
$$(A_1,\dots,A_{12})=(0,1,2,2,2,3,3,4,4,4,4,5), (B_1,\dots,B_{12})=(0,1,1,1,2,2,3,3,4,5,5,5).$$
\end{conjecture}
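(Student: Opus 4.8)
The plan is to deduce Conjecture~\ref{c17} from Conjecture~\ref{c16} (assumed already established), in the spirit of the proofs of Lemmas~\ref{1}--\ref{10}. The two parameter sets share $C=12$ and the vector $(B_1,\dots,B_{12})=(0,1,1,1,2,2,3,3,4,5,5,5)$, and differ only in the $A$-vector and in $m$ (which is $2$ for~\ref{c16} and $1$ for~\ref{c17}). Writing $S_N,T_N,S'_N,T'_N$ for the tuple sets of~\ref{c17} and~\ref{c16} as in Lemma~\ref{lemma1}, and $Q_N,R_N,Q'_N,R'_N$ for their all-empty-partition subsets, the equality of the $B$-vectors together with $m^{(16)}=m^{(17)}+1$ gives $T'_{N+1}=T_N$, hence $R'_{N+1}=R_N$, as sets. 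One has $k=\min_i A_i^{(17)}=0$, $|S_k|=1$, $k'=\min_i A_i^{(16)}=1$, $|S'_{k'}|=3$; inserting this into condition~(ii) of Lemma~\ref{lemma2} — applied, as in the proofs of Lemmas~\ref{4}, \ref{7} and~\ref{10}, with the established identity~\ref{c16} furnishing the primed data (legitimate since condition~(ii) is symmetric in the primed and unprimed quantities) — one finds that Conjecture~\ref{c17} is equivalent to
\[
3\,|Q_N| \;=\; |Q'_{N+1}| + 2\,|R_N| \qquad\text{for all }N,
\]
where $Q_N=\{d\in\mathbb Z^{12}:\sum_i d_i\text{ odd},\ 12\sum_i\binom{d_i}{2}+\sum_i A_i^{(17)}d_i=N\}$, $R_N$ is the analogous set with $B_i^{(17)}$ in place of $A_i^{(17)}$ and $N-1$ in place of $N$, and $Q'_N$ the analogous set with $A_i^{(16)}$.

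It is convenient to complete the square: for any coefficient vector $A$ one has $12\sum_i\binom{d_i}{2}+\sum_i A_i d_i = 6\lVert d+u\rVert^2-6\lVert u\rVert^2$ with $u=(A-6\,\mathbf 1)/12$, and a short computation gives $6\lVert u\rVert^2=5,\,7,\,37/6$ for $A=A^{(16)},\,B^{(17)},\,A^{(17)}$ respectively. Thus the lattice spheres occurring on the two sides of the reduced identity satisfy a single relation: the sphere carrying $Q_N$ has squared radius exactly $1/36$ larger than the common squared radius of the two spheres carrying $Q'_{N+1}$ and $R_N$. So the task is to produce, sphere by sphere, an explicit three-to-one correspondence from three copies of a sphere in the coset $\{d\in\mathbb Z^{12}:\sum d_i\text{ odd}\}+u^{(17)}$ onto the disjoint union of one sphere in $\{d'\}+u^{(16)}$ with two copies of a sphere in $\{e\}+u^{B}$ of the slightly smaller radius. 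The natural machinery is that of the proof of Lemma~\ref{1}: the twelve mutually orthogonal $\pm1$-vectors $V_1,\dots,V_{12}$ (with $\lVert V_i\rVert^2=12$) and the reflections $r_i(x)=x-\tfrac{x\cdot V_i}{6}V_i$, now applied to these shifted lattices, the $1/36$ discrepancy in squared radii playing the role that the ``$+1$'' does in $\lVert d\rVert^2=2\sum_i f_i(3f_i-1)/2+1$ there.

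Alternatively one may bypass~\ref{c16} entirely and attack~\ref{c17} directly through Lemma~\ref{lemma1}, which with $k=0$, $|S_k|=1$, $m=1$ reduces the conjecture to a value-preserving bijection, for each $N$, between $Q_N\sqcup\{f\in\mathbb Z^{12}:\sum f_i\text{ odd},\ 12\sum_i f_i(3f_i-1)/2=N\}$ and $R_N\sqcup\{f:\sum f_i\text{ even},\ 12\sum_i f_i(3f_i-1)/2=N\}$ — a two-term identity of exactly the shape handled in Lemma~\ref{1}. Either route leads to the same core problem.

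The main obstacle is the bijection itself. Unlike every identity treated so far, Conjecture~\ref{c17} does not satisfy $A_i+B_{13-i}=C/2$ (the multiset $\{C/2-B_i^{(17)}\}$ is not $\{A_i^{(17)}\}$), so Lemma~\ref{lemma3} is unavailable and one cannot place the ``$d$-side'' and ``$e$-side'' tuples inside a single lattice and cancel most points by the usual value-preserving, type-reversing involutions. Moreover the shifts $u^{(16)},u^{(17)},u^{B}$ are genuinely distinct — their pairwise differences are $\tfrac1{12}$ times integer vectors of squared norm $6$ or $8$ — so the $V_i$-reflections do not preserve the relevant cosets for free. One will have to sort the lattices by the residues of the forms $x\cdot V_i$ modulo the small numbers forced by $C=12$ (that is, modulo $2$, $3$ and $6$), cancel the pieces carrying an obvious involution, and then pair what remains, with the factor $3$ and the mixed target $Q'_{N+1}\sqcup R_N\sqcup R_N$ compelling a genuinely three-to-one matching rather than a chain of involutions; the first reductions are easy (e.g.\ the odd-sum constraint on $Q_N$ and $R_N$ can be dropped via $d_1\mapsto 1-d_1$ since $A_1^{(17)}=B_1^{(17)}=0$, and that on $Q'_N$ via $d'_{12}\mapsto -d'_{12}$ since $A_{12}^{(16)}=C/2$), but the final pairing is precisely the sort of extended case-by-case bookkeeping that occupies the proof of Lemma~\ref{10}. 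I expect this to be the sole real difficulty; the reductions above are routine given Lemmas~\ref{lemma1}, \ref{lemma2} and~\ref{lemma4}.
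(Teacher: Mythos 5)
The first thing to say is that the paper does not prove this statement: Conjecture~\ref{c17} is listed among the open conjectures at the end, verified only by computer for $N\le 2000$, so there is no proof of record to compare against. Your proposal does not close the gap either, for two structural reasons. First, it is conditional on Conjecture~\ref{c16}, which is itself unproven in the paper (and in your write-up merely ``assumed already established''); at best you would obtain an equivalence between \ref{c16} and \ref{c17} modulo a residual lattice identity. Second, and decisively, the residual identity $3|Q_N|=|Q'_{N+1}|+2|R_N|$ is exactly where all the work lies, and you explicitly defer its proof (``the final pairing is precisely the sort of extended case-by-case bookkeeping that occupies the proof of Lemma~\ref{10}''). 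The reductions you do carry out are sound and match the paper's general machinery: $B^{(16)}=B^{(17)}$ with $m^{(16)}=m^{(17)}+1$ does give $R'_{N+1}=R_N$, the counts $|S_k|=1$, $|S'_{k'}|=3$ are right, condition (ii) of Lemma~\ref{lemma2} is indeed symmetric in the primed and unprimed data, and your observation that $A_i+B_{13-i}\ne C/2$ here — so that Lemma~\ref{lemma3} is unavailable and the $d$- and $e$-tuples cannot be unified into one set $D$ — correctly identifies why this case resists the paper's standard cancellation scheme. But a correct identification of the obstacle is not a way around it.

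Two concrete errors in the sketch itself. The completing-the-square constant for $A^{(17)}=(0,1,2,2,2,3,3,4,4,4,4,5)$ is $\|A^{(17)}-6\cdot\mathbf 1\|^2/24=144/24=6$, not $37/6$; consequently all three spheres carrying $Q_N$, $R_N$ and $Q'_{N+1}$ have the \emph{same} squared radius $(N+6)/6$, there is no ``$1/36$ discrepancy,'' and the proposed analogy with the pentagonal-number shift $+1$ in Lemma~\ref{1} evaporates (the equal radii actually make the target cleaner, but they invalidate the specific mechanism you sketch). Separately, your claim that the odd-sum constraint on $Q'_N$ can be dropped via $d'_{12}\mapsto -d'_{12}$ is wrong: since $A^{(16)}_{12}=C/2=6$, the value-preserving single-coordinate involution there is $d\mapsto-d$, which preserves the parity of $\sum_i d'_i$; a parity-flipping value-preserving involution $d\mapsto 1-d$ requires a coordinate with coefficient $0$, and $A^{(16)}$ has none. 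So even the ``routine'' reductions need repair before one confronts the genuinely hard three-to-one matching, which remains entirely unconstructed.
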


\noindent
\textbf{Corollary  to Conjecture \ref{c17}.}
\emph{Let $S$ be the set containing one copy of the  positive integers that are not odd multiples of 6, one more copy of the positive multiples of 3 that are not odd multiples of 6, 2 more copies of the positive integers that are  congruent to $\pm 2$ modulo 12, and 3 more copies of the positive integers that are  congruent to $\pm 4$ modulo 12; let $T$ be the set containing   2 copies of the positive integers that are not congruent to 6 or $\pm 4$ modulo 12, one  copy of the positive integers that are  congruent to $\pm 4$ modulo 12, and one more copy of the positive integers that are  congruent to $\pm 1$ modulo 6. Then, for any $N\geq 1$,
$$D_S(N)=D_T(N-1).$$}

\begin{conjecture}\label{c17'}
Condition (i) of Theorem \ref{main} holds for $N_0= 2$,  $C=12$, $m= 1$, and
$$(A_1,\dots,A_{12})=(1,1,1,2,3,3,4,4,5,5,5,6), (B_1,\dots,B_{12})=(0,1,2,2,2,3,3,4,4,4,4,5).$$
\end{conjecture}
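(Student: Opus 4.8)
The plan is to deduce Conjecture~\ref{c17'} from Conjectures~\ref{c16} and~\ref{c17} by composition, exactly as Theorem~\ref{7'''} is deduced from Theorems~\ref{666} and~\ref{777}. Writing $A^{(16)},B^{(16)}$ for the coefficient vectors of Conjecture~\ref{c16}, and similarly for \ref{c17} and \ref{c17'}, one records the bookkeeping coincidences
$$B^{(16)}=B^{(17)}=(0,1,1,1,2,2,3,3,4,5,5,5),\quad A^{(17')}=A^{(16)},\quad B^{(17')}=A^{(17)},$$
together with $m^{(17')}=m^{(16)}-m^{(17)}=2-1=1$. Passing through Theorem~\ref{main}, Conjecture~\ref{c16} becomes the identity $D_{S}(N)=D_{T}(N-2)$ and Conjecture~\ref{c17} becomes $D_{S'}(N)=D_{T}(N-1)$ with the \emph{same} set $T$ (the one attached to the common vector $B^{(16)}=B^{(17)}$), where $S$ is attached to $A^{(16)}=A^{(17')}$ and $S'$ to $A^{(17)}=B^{(17')}$; the factor $2^p$ equals $1$ in all three cases, since one checks $|\{A_i=0\}|=|\{B_i=0\}|=1$ each time using the convention $|\emptyset|=1$. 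Eliminating $D_T$ gives $D_S(N)=D_{S'}(N-1)$ for $N\ge 2$, which is precisely condition~(ii) of Theorem~\ref{main} for $C=12$, $m=1$, $(A_i)=A^{(17')}$, $(B_i)=B^{(17')}$, hence condition~(i) of Conjecture~\ref{c17'}. The threshold $N_0=2$ falls out of $N_0=2$ for Conjecture~\ref{c16} and $N_0=1$ for Conjecture~\ref{c17}, the latter being invoked at argument $N-1$.

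So the only work in this step is the elementary arithmetic above; the genuine content is the proof of Conjectures~\ref{c16} and~\ref{c17} themselves. For those I would follow the template of Lemmas~\ref{1}--\ref{11}: apply Lemma~\ref{lemma1} to reduce each statement to an equality $|U_N|=|V_N|$ between finite sets of partition-free $12$-tuples of integers; use Lemma~\ref{lemma2} to cancel against an already-proved identity, chaining back through the smaller cases (since Lemma~\ref{lemma2} requires a common $C$, one first scales a proven $C\in\{2,4,6\}$ lemma up to $C=12$ as in the proof of Lemma~\ref{10}), thereby reducing each conjecture to condition~(ii) of Lemma~\ref{lemma2}; and finally, via Lemma~\ref{lemma3}, regard the surviving $d$-, $e$-, $d'$-, $e'$-tuples as lattice points of $\Z^{12}\cup(\Z+\tfrac12)^{12}$ and build explicit value- and type-preserving involutions, organized by residue classes modulo small divisors of $2C$, that pair the two sides.

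The main obstacle is plainly this last step for Conjectures~\ref{c16} and~\ref{c17}: with $C=12$ and coefficient vectors that are not block-periodic, Lemma~\ref{lemma4} does not apply, so one is in the regime of the long multi-stage argument of Lemma~\ref{10} — repeatedly splitting off $2\times2$ and $4\times4$ sublattice blocks, re-coordinatizing by those sublattices, and matching residue classes one at a time until every remaining point is cancelled by an involution. Finding the exact sequence of such reductions so that nothing is left over is the real difficulty; once Conjectures~\ref{c16} and~\ref{c17} are established, the passage to Conjecture~\ref{c17'} is immediate.
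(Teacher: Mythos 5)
The statement you are addressing is listed in the paper only as a conjecture, verified numerically up to $N=2000$; the paper supplies no proof of it, so there is no argument of the authors' to compare yours against. Your reduction of Conjecture \ref{c17'} to Conjectures \ref{c16} and \ref{c17} is correct as far as it goes: the coincidences $A^{(17')}=A^{(16)}$, $B^{(17')}=A^{(17)}$, $B^{(16)}=B^{(17)}$ all check out, $p=0$ (hence $2^p=1$) in each of the three cases, and the odd-number-of-parts convention of Theorem \ref{main} attaches consistently to each set on both of its occurrences (no entry of $A^{(16)}$ vanishes, so the parity restriction applies to that set in both \ref{c16} and \ref{c17'}; $A^{(17)}$ contains a zero, so no restriction applies to that set in either \ref{c17} or \ref{c17'}). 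Eliminating $D_T$ yields exactly the Corollary to Conjecture \ref{c17'}, and Theorem \ref{main} converts it back into condition (i) with $m=1$ and $N_0=2$. This is the same mechanism by which Theorem \ref{7'''} follows from Theorems \ref{666} and \ref{777}, and it is visibly how the paper's primed conjectures (\ref{c6'}, \ref{c8'}, \ref{c17'}, \ref{c22'}) are generated from their unprimed neighbours.

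The gap is that this is only a conditional proof. Conjectures \ref{c16} and \ref{c17} are themselves open --- the paper proves neither --- and your plan for them is a restatement of the paper's general template (Lemmas \ref{lemma1}--\ref{lemma4} followed by explicit value- and type-preserving involutions as in Lemmas \ref{1}--\ref{11}) rather than an argument. In every proved case the entire content lies in the concrete constructions: the twelve orthogonal vectors of Lemma \ref{1}, the residue-class involutions of Lemmas \ref{3} and \ref{4}, the long chain of re-coordinatizations in Lemma \ref{10}. For $C=12$ with the non-block-periodic coefficient vectors of \ref{c16} and \ref{c17}, Lemma \ref{lemma4} does not apply, no already-proved lemma obviously chains to them through Lemma \ref{lemma2} even after the rescaling trick of Lemma \ref{10}, and you acknowledge that finding a sequence of reductions that leaves nothing uncancelled is the real difficulty. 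That difficulty is left entirely unresolved, so what you have established is an implication between conjectures, not the conjecture itself.
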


\noindent
\textbf{Corollary  to Conjecture \ref{c17'}.}
\emph{Let $S$ be the set containing 2 copies of the positive integers that are not congruent to 0 or $\pm 2$ modulo 12, one  copy of the positive integers that are  congruent to $\pm 2$ modulo 12, and one more copy of the positive integers that are  congruent to $\pm 1$ modulo 6; let $T$ be the set containing  one copy of the  positive integers that are not odd multiples of 6, one more copy of the positive multiples of 3 that are not odd multiples of 6, 2 more copies of the positive integers that are  congruent to $\pm 2$ modulo 12, and 3 more copies of the positive integers that are  congruent to $\pm 4$ modulo 12. Then, for any $N\geq 2$,
$$D_S(N)=D_T(N-1).$$}

\begin{conjecture}\label{c18}
Condition (i) of Theorem \ref{main} holds for $N_0= 1$,  $C=12$, $m=1$, and
$$(A_1,\dots,A_{12})=(0,1,1,2,2,2,4,4,5,5,6,6), (B_1,\dots,B_{12})=(0,0,1,1,2,2,4,4,4,5,5,6).$$
\end{conjecture}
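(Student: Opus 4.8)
The plan is to follow the template of Lemmas \ref{3}--\ref{10}: reduce the claim via Lemma \ref{lemma2} to a combinatorial identity of type (ii), and then build an explicit value- and type-preserving bijection out of affine reflections and lattice reparametrizations.

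First I would record the structural data. One checks directly that $A_i+B_{13-i}=6=C/2$ for every $i$ and that $\sum_i A_i=38$, so $\sum_i A_i/2-3C/2=1=m$; hence Lemma \ref{lemma3} applies verbatim and all $d$- and $e$-tuples may be viewed inside the common set $D=\{d\in\mathbb{Z}^{12}\cup(\mathbb{Z}+1/2)^{12}:\sum_i d_i\in2\mathbb{Z}+1\}$ carrying the single value function $\mathcal V(d)=12\sum_i {d_i\choose 2}+\sum_i A_i d_i=6\sum_i (d_i-c_i)^2+\mathrm{const}$, with $c_i=(6-A_i)/12$; the integer tuples are the ``positive'' type and the half-integer tuples the ``negative'' type. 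Since $A_1=0$ is the unique vanishing entry of $A$, one finds that the minimal left-hand value is $k=0$, attained only at $d_1=1$ with all other $d_i=0$, so $|S_k|=1$, and the normalization hypothesis of Lemma \ref{lemma1} (smallest attained right-hand value $=$ second-smallest attained left-hand value $=1$) is immediate from $m=1$. Lemma \ref{lemma1} then reduces condition (i) to producing a value-preserving bijection between the positive-type elements of $D$ together with one copy of every odd-sum ``pentagonal'' tuple $(f_1,\dots,f_{12})$ of value $12\sum_i f_i(3f_i-1)/2$, and the negative-type elements of $D$ together with one copy of every even-sum pentagonal tuple.

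Before building that bijection from scratch I would check whether Conjecture \ref{c18} reduces, through Lemma \ref{lemma2} (after rescaling $C$ to a common multiple if necessary), to one of the already-proved identities --- most plausibly to a scaled version of one of the $C=6$ cases (Lemmas \ref{3}, \ref{4}, \ref{8}, \ref{9}), since $m=1$ here matches $m$ for several of those after scaling. If such a reduction exists, the combinatorial core left to biject is correspondingly smaller. Absent it, I would cancel most of $D$ directly as in Lemma \ref{1}: because $C=12$ the relevant primes are $2$ and $3$, so for suitable pairwise-orthogonal integer vectors $V$ the norm-preserving involutions $d\mapsto d-\langle d-c,V\rangle V/\lambda$ flip the type exactly on a prescribed residue class modulo $3$ (respectively modulo $2$), and iterating them in two stages (the $3$-part and the $2$-part of $12$) should cancel every tuple whose coordinates are not extremal; the surviving ``core'' would then be matched against the pentagonal tuples by explicit piecewise-linear reparametrizations (changes of lattice basis such as $(a,b)=z_1(1,1)+z_2(1,-1)$, as used throughout Lemmas \ref{4}--\ref{10}).

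The step I expect to be the obstacle is exactly the construction of this last bijection for the uncancelled core. Unlike the fully symmetric $C=2$ situation of Lemma \ref{1}, here the quadratic form is centered at a point $c$ with denominator $12$ whose coordinates are all distinct, so there is no single clean reflection group, the cancellation must be carried out modulo $4$ and modulo $3$ separately, and --- as the proof of Lemma \ref{10} already demonstrates for the smaller modulus $C=10$ --- the residual case analysis can become very large. I would anticipate needing a computer search to locate the correct cascade of reparametrizations, after which each individual map is routine to verify as value- and type-preserving.
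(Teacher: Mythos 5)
This statement is one of the paper's open conjectures, not a proved result: the authors state it without proof and report only that it has been verified by computer for all $N\le 2000$. So there is no proof in the paper to compare yours against, and the real question is whether your proposal closes the gap. It does not. Your structural bookkeeping is correct --- $A_i+B_{13-i}=6=C/2$ for all $i$, $\sum_i A_i/2-3C/2=1=m$ so Lemma \ref{lemma3} applies, $k=0$ is attained only at $(1,0,\dots,0)$ so $|S_k|=1$, and the normalization hypothesis of Lemma \ref{lemma1} holds --- and the general strategy you describe is exactly the one the authors use for the lemmas they do prove. But the entire content of such a proof lies in the step you defer: exhibiting the explicit value- and type-preserving bijection (the cascade of reflections/involutions that cancels most of $D$, followed by the matching of the surviving core with the pentagonal $f$-tuples). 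You explicitly say you expect this to be the obstacle and that you would need a computer search to find the right reparametrizations. Until that construction is written down and verified, nothing has been proved.

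Two further cautions about the reduction route you float. First, Lemma \ref{lemma2} transfers condition (ii) only from an \emph{already established} identity with the \emph{same} modulus $C$; since no $C=12$ case is proved in the paper, you would have to rescale a $C=6$ or $C=4$ lemma (as Lemma \ref{10} rescales $C=6$ and $C=10$ to $C=30$), and then verify that the specific numerology --- the values of $k$, $k'$, $m'$, and the ratio $|S_k|/|S'_{k'}|$ --- produces a condition (ii) of the clean form $c\,|Q'_{N+k'}|+|R_{N+k}|=c\,|R'_{N+k'}|+|Q_{N+k}|$ that admits a tuple-by-tuple matching. There is no guarantee the data of Conjecture \ref{c18} lines up with any proved case in this way. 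Second, the reflection-group cancellation of Lemma \ref{1} relies on the exceptional symmetry of the $C=2$, all-$A_i$-equal situation (twelve pairwise orthogonal $\pm1$-vectors); for $C=12$ with the highly asymmetric coefficient vector $(0,1,1,2,2,2,4,4,5,5,6,6)$ the quadratic form is centered at a point with all-distinct coordinates, and, as you yourself note, no single clean reflection group is available. In short: right framework, correct preliminary computations, but the theorem-proving work has not been done, and the statement remains a conjecture.
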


\noindent
\textbf{Corollary  to Conjecture \ref{c18}.}
\emph{Let $S$ be the set containing 2 copies of the positive integers that are not odd multiples of 3, one more copy of the positive integers that are  congruent to $\pm 2$ modulo 12, and 2 more copies of the  positive odd multiples of 6; let $T$ be the set containing  2 copies of the positive integers that are not odd multiples of 3, one more copy of the positive integers that are  congruent to $\pm 4$ modulo 12, and 2 more copies of the  positive multiples of 12. Then, for any $N\geq 1$,
$$D_S(N)=2D_T(N-1).$$}

\begin{conjecture}\label{c19}
Condition (i) of Theorem \ref{main} holds for $N_0= 1$,  $C=12$, $m=0$, and
$$(A_1,\dots,A_{12})=(0,1,1,1,3,3,4,4,4,5,5,5), (B_1,\dots,B_{12})=(1,1,1,2,2,2,3,3,5,5,5,6).$$
\end{conjecture}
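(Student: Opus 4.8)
The plan is to prove Conjecture \ref{c19} along the lines of the proofs of Lemmas \ref{3}, \ref{4} and \ref{10}, namely by producing the underlying value-preserving, type-reversing bijection explicitly. First I would unwind the data. Since $C=12$ and $A_1=0$, the left-hand side of (\ref{t}) attains its minimum value $0$ only at $d_1=1$, $d_2=\dots=d_{12}=0$ (all partitions empty), so in the notation of Lemma \ref{lemma1} one has $k=0$ and $|S_k|=1$, and the normalization forces $m=0$, in agreement with the stated parameters. Hence, by Lemma \ref{lemma1}, condition (i) of Theorem \ref{main} is equivalent to the existence of a value-preserving bijection between (a) the set of integer $12$-tuples $(d_1,\dots,d_{12})$ with $\sum_{i=1}^{12} d_i$ odd and value $12\sum_{i=1}^{12} {{d_i}\choose{2}}+\sum_{i=1}^{12} A_id_i$, together with one copy of the pentagonal-type tuples $(f_1,\dots,f_{12})$ with $\sum_{i=1}^{12} f_i$ odd and value $12\sum_{i=1}^{12}\frac{f_i(3f_i-1)}{2}$, and (b) the analogous set of $e$-tuples of value $12\sum_{i=1}^{12} {{e_i}\choose{2}}+\sum_{i=1}^{12} B_ie_i$, together with one copy of the pentagonal-type tuples with $\sum_{i=1}^{12} f_i$ even.

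Next, one checks that $A_i+B_{13-i}=C/2=6$ for all $i$ and $m=\sum_{i=1}^{12}A_i/2-3C/2=0$, so Lemma \ref{lemma3} applies: the $d$- and $e$-tuples may both be viewed as elements of $D=\{d\in\mathbb{Z}^{12}\cup(\mathbb{Z}+1/2)^{12}:\sum_{i=1}^{12} d_i\in 2\mathbb{Z}+1\}$ equipped with the single value function $\mathcal{V}(d)=12\sum_{i=1}^{12} {{d_i}\choose{2}}+\sum_{i=1}^{12} A_id_i$, the integer tuples being ``of $d$-type'' and the half-integer ones ``of $e$-type.'' The task then becomes the construction of a value-preserving involution on $D$ that interchanges the two types, once the pentagonal-type contributions have been matched off. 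Exactly as in the proven cases, I would assemble this from a cascade of local moves along coordinate-residue patterns mod $12$: reflections $d_i\mapsto c-d_i$ on a block of coordinates sharing a common value $A_i$, and translations $d_i\mapsto d_i-t$, each leaving $\mathcal{V}$ invariant while flipping the integer/half-integer status of the entries it touches. Sorting the tuples by the residues mod $6$ of the partial sums attached to the five coefficient values $\{0,1,3,4,5\}$ occurring in $(A_1,\dots,A_{12})$, one would cancel in involution-pairs the tuples whose relevant partial sum is $\equiv 3$ or $5\pmod 6$, and match against the pentagonal-type tuples — or against an auxiliary identity brought in via Lemma \ref{lemma2} — those whose partial sum is $\equiv\pm1\pmod 6$.

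The main obstacle I expect is organizational rather than conceptual. Unlike Lemmas \ref{3} and \ref{4}, whose coefficient tuples are constant or two-valued, here $(A_1,\dots,A_{12})$ and $(B_1,\dots,B_{12})$ are genuinely five-valued and asymmetric, so there is no clean orthogonal-frame shortcut of the type that made the proof of Lemma \ref{1} short. The realistic route is to introduce, through Lemma \ref{lemma2}, a chain of intermediate identities — starting from a copy of Lemma \ref{3} or Lemma \ref{4} rescaled to $C=12$ — and to split off one coefficient value at a time, at each step pinning down the correct block involution and verifying that the values of all the auxiliary pentagonal-type tuples it generates line up. This last bookkeeping is precisely what makes the proof of Lemma \ref{10} lengthy, and it is where the bulk of the work would lie; no tool beyond those of Section 2 should be required.
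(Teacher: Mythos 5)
This statement is one the paper deliberately leaves open: it appears in Section 3 only as Conjecture \ref{c19}, verified by computer for $N$ up to $2000$, with no proof supplied. So there is nothing in the paper to compare your argument against, and the question is simply whether your proposal closes the gap on its own. It does not. Your preliminary reductions are all correct and match the paper's machinery: $A_i+B_{13-i}=6=C/2$ for every $i$, $\sum_i A_i/2-3C/2=0=m$, the unique minimizer of the left-hand side of (\ref{t}) is $d_1=1$ with the rest zero (since $A_1=0$ is the only vanishing coefficient), so $k=0$ and $|S_k|=1$, and Lemmas \ref{lemma1} and \ref{lemma3} reduce the conjecture to a value-preserving, type-reversing matching on $D$ after the pentagonal-type tuples are accounted for. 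But everything after that point is a description of what a proof would look like, not a proof. The entire mathematical content of each of Lemmas \ref{1}, \ref{3}, \ref{4}, \ref{5}, \ref{6}, \ref{7} and \ref{10} is the explicit construction of the cancelling involutions and cross-maps and the verification that their values and types line up; you defer exactly that construction ("the bulk of the work would lie" there), so the central step is missing.

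Moreover, the specific devices you invoke do not obviously transfer. The translation and swap maps of Lemmas \ref{3} and \ref{4} exploit coefficient blocks of size $12$ or $6+6$, Lemma \ref{lemma4} needs a $4+4+4$ repetition pattern, and Lemma \ref{1} rests on the special orthogonal frame $V_1,\dots,V_{12}$ available when all $A_i$ are equal. Here the multiset $(0,1,1,1,3,3,4,4,4,5,5,5)$ has blocks of sizes $1,3,2,3,3$, so none of these structures is present, and you would also need to identify a \emph{proved} base identity with $C=12$ (or a compatible rescaling) from which to launch the Lemma \ref{lemma2} chain; no such starting point is established in the paper or in your proposal. Sorting by partial sums modulo $6$ is a plausible opening move, but until the residue classes are actually paired off by concrete maps whose value computations check out, the conjecture remains exactly that.
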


\noindent
\textbf{Corollary  to Conjecture \ref{c19}.}
\emph{Let $S$ be the set containing  2 copies of the positive integers that are not congruent to 2 modulo 4, one more copy of the positive integers that are  congruent to $\pm 1$ modulo 6, and one more copy of the positive integers that are  congruent to $\pm 4$ modulo 12; let $T$ be the set containing 2 copies of the positive integers that are not multiples of 4, one more copy of the positive integers that are  congruent to $\pm 1$ modulo 6, and one more copy of the positive integers that are  congruent to $\pm 2$ modulo 12. Then, for any $N\geq 1$,
$$D_S(N)=D_T(N).$$}

\begin{conjecture}\label{c21}
Condition (i) of Theorem \ref{main} holds for $N_0= 1$,  $C=12$, $m=1$, and
$$(A_1,\dots,A_{12})=(0,0,1,2,3,3,4,4,4,5,6,6), (B_1,\dots,B_{12})=(0,0,1,2,2,2,3,3,4,5,6,6).$$
\end{conjecture}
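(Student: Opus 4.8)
The plan is to prove Conjecture \ref{c21} exactly along the lines of Lemmas \ref{3}--\ref{10}: peel off the partitions via Lemmas \ref{lemma1} and \ref{lemma2} so that the statement becomes a purely ``partition-free'' identity among finitely many tuples of integers and half-integers, and then realize that identity by an explicit bijection that preserves the quadratic-plus-linear ``value'' and reverses the integer/half-integer ``type.''

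First I would record that here $A_i+B_{13-i}=C/2=6$ for every $i$, so Lemma \ref{lemma3} applies and all $d$-tuples and $e$-tuples may be regarded as living inside the common set $D=\{d\in\mathbb{Z}^{12}\cup(\mathbb{Z}+1/2)^{12}:\sum_{i=1}^{12}d_i\in 2\mathbb{Z}+1\}$, each carrying the value $12\sum_{i=1}^{12}{d_i\choose 2}+\sum_{i=1}^{12}A_id_i$ with $A=(0,0,1,2,3,3,4,4,4,5,6,6)$, the $e$-tuples occurring ``of half-integer type.'' Since $A$ and $B$ agree outside the coordinate block $5,\dots,8$, where $A$ exceeds $B$ by exactly $m=1$, this is precisely the shape produced by the Lemma \ref{lemma2}/Lemma \ref{lemma4} bookkeeping. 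Because $C=12$ and $m=1$, the conjecture is not a rescaling of any single proven Lemma \ref{1}--\ref{11}; instead I would pair it, via Lemma \ref{lemma2}, with a statement one rung lower in the same chain (one of the $C=12$ statements such as Conjectures \ref{c17}, \ref{c17'}, \ref{c18}, and ultimately tracing back through a rescaling and a Lemma \ref{lemma4} reduction to one of the proven lemmas). Computing $k$, $k'$, $m'$, $|S_k|$ and $|S'_{k'}|$ for that pairing turns the conjecture into condition (ii) of Lemma \ref{lemma2}, i.e. an identity $\alpha|Q'_{N+k'}|+\beta|R_{N+k}|=\alpha|R'_{N+k'}|+\beta|Q_{N+k}|$ with small explicit constants $\alpha,\beta$.

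The core of the argument is then the construction of the bijection between ``$\beta$ copies of $Q_{N+k}$'' together with ``$\alpha$ copies of $R'_{N+k'}$'' and ``$\beta$ copies of $R_{N+k}$'' together with ``$\alpha$ copies of $Q'_{N+k'}$.'' Following the template of Lemmas \ref{3}, \ref{4}, \ref{6} and especially \ref{10}, I would (i) leave fixed the coordinates on which the unprimed and primed coefficient vectors agree, reducing to the ``essential'' coordinates around the block where $A$ exceeds $B$; (ii) split according to the residue modulo $6$, $4$ or $2$ of suitable $\pm1$-signed partial sums $\sum\varepsilon_id_i$, cancelling most residue classes by affine reflections $d_i\mapsto c-d_i$ and index swaps $d_i\leftrightarrow d_{i\pm4}$ or $d_i\leftrightarrow d_{i\pm6}$, each of which preserves the value form while toggling integer/half-integer type; (iii) on the surviving classes, change variables to new integer tuples $(f_i)$ whose value functions are again quadratic-plus-linear of the same shape, reducing the problem to a bounded number of two- and four-variable value-preserving correspondences that can be checked directly, exactly as in the closing pages of the proof of Lemma \ref{10}.

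The main obstacle will be this last bookkeeping. Because $A=(0,0,1,2,3,3,4,4,4,5,6,6)$ lacks the repeated-block-of-four symmetry that made Lemmas \ref{1}--\ref{5} tractable, the reflections and swaps preserving the value form are far less transparent, and several residue classes will have to be handled by ad hoc linear substitutions and then matched to one another two at a time — the phenomenon already visible in the long final case analysis of Lemma \ref{10}. Checking that the resulting partial bijections are mutually consistent, cover every tuple exactly once, and uniformly interchange integer and half-integer tuples — that is, that the assembled map is genuinely type-reversing — is where the real work lies; the verification up to $N=2000$ makes the existence of such a compatible system of maps very plausible, but exhibiting it explicitly is the delicate point.
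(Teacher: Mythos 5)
There is a fundamental problem: what you have written is a proof \emph{plan}, not a proof, and the statement you are addressing is one the paper itself leaves open. Conjecture \ref{c21} appears in the final section of the paper among a list of identities that the authors explicitly state they have only \emph{verified numerically} for $N$ up to $2000$; no proof is given there, and your proposal does not supply one either. Every step that would constitute actual mathematical content is deferred: you do not identify which established statement Conjecture \ref{c21} should be paired with via Lemma \ref{lemma2} (the candidates you name --- Conjectures \ref{c17}, \ref{c17'}, \ref{c18} --- are themselves unproven conjectures in the paper, so a chain through them does not ground out in a proven base case; you would need to rescale one of the proven Lemmas \ref{1}--\ref{11} to $C=12$ and exhibit the intermediate links); you do not compute the constants $k$, $k'$, $m'$, $|S_k|$, $|S'_{k'}|$ for any concrete pairing; and, most importantly, you do not construct the explicit value-preserving, type-reversing bijection that is the entire substance of the proofs of Lemmas \ref{3}, \ref{4}, \ref{6} and \ref{10}. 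Your own closing paragraph concedes that "exhibiting it explicitly is the delicate point" --- but that delicate point \emph{is} the theorem.

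To be clear about why this matters and is not merely a formality: the method of this paper has no general engine that guarantees such a bijection exists once the combinatorial reduction is set up. Each proven lemma succeeds because of special arithmetic structure in the coefficient vector (orthogonal $\pm1$ vectors in Lemma \ref{1}, block symmetry in Lemmas \ref{3}--\ref{5}, an elaborate bespoke cascade of substitutions in Lemma \ref{10}), and as you yourself observe, $A=(0,0,1,2,3,3,4,4,4,5,6,6)$ lacks the block structure that makes those constructions work. So the assertion that the residue classes "can be checked directly, exactly as in the closing pages of the proof of Lemma \ref{10}" is precisely the claim that needs to be established and is not. Until the reduction chain is pinned down and the partial bijections are written out and verified to cover every tuple exactly once while uniformly exchanging integer and half-integer types, the conjecture remains exactly as open as the paper leaves it.
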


\noindent
\textbf{Corollary  to Conjecture \ref{c21}.}
\emph{Let $S$ be the set containing  4 copies of the positive multiples of 6, one copy of the positive integers that are  congruent to $\pm 1$ modulo 6,  one  copy of the positive integers that are  congruent to $\pm 2$ modulo 6, 2 more copies  of the positive integers that are  congruent to $\pm 4$ modulo 12, and 2  copies of the odd positive multiples of 3; let $T$ be the set containing  4 copies of the positive multiples of 6, one copy of the positive integers that are  congruent to $\pm 1$ modulo 6,  one  copy of the positive integers that are  congruent to $\pm 4$ modulo 12, 2  copies of the odd positive multiples of 3, and 3 copies  of the positive integers that are  congruent to $\pm 2$ modulo 12. Then, for any $N\geq 1$,
$$D_S(N)=D_T(N-1).$$}

\begin{conjecture}\label{c22}
Condition (i) of Theorem \ref{main} holds for $N_0= 1$,  $C=12$, $m=1$, and
$$(A_1,\dots,A_{12})=(0,1,1,2,3,3,3,3,4,5,5,6), (B_1,\dots,B_{12})=(0,0,1,2,2,2,3,3,4,5,6,6).$$
\end{conjecture}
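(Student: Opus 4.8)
The plan is to follow the template used for Lemmas \ref{3}--\ref{11}: reduce the statement, by means of Lemma \ref{lemma2}, to an instance of condition (ii) of that lemma relative to an identity already proved here, and then build the required value- and type-preserving bijection between parametrized families of integer $12$-tuples. First I would check that the coefficients admit no reindexing into the ``three equal blocks of four'' shape demanded by Lemma \ref{lemma4} --- indeed $(A_1,\dots,A_{12})=(0,1,1,2,3,3,3,3,4,5,5,6)$ has only its middle block constant --- so Lemma \ref{lemma4} cannot be invoked directly. Instead, note that Conjecture \ref{c22} shares the same $C=12$, the same $m=1$, and the same tuple $(B_1,\dots,B_{12})=(0,0,1,2,2,2,3,3,4,5,6,6)$ with Conjecture \ref{c21}; hence, once Conjecture \ref{c21} is known, Lemma \ref{lemma2} reduces Conjecture \ref{c22} to a statement purely about the $d$-tuples attached to the two $A$-tuples $(0,0,1,2,3,3,4,4,4,5,6,6)$ and $(0,1,1,2,3,3,3,3,4,5,5,6)$. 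Alternatively, after rescaling $C$ to a common multiple one can attempt to anchor the reduction at an already-proven case such as Lemma \ref{6} (multiplied by $3$) or Lemma \ref{5}; the first task is to decide which choice yields the cleanest auxiliary problem.

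Once the reduction is fixed, I would compute $k=\min_i A_i$, $k'=\min_i A_i'$, the shift $m'$, and the ratio $|S_k|/|S'_{k'}|$, and rewrite the goal in the form $|S_k|\,|Q'_{N+k'}|+|S'_{k'}|\,|R_{N+k}|=|S_k|\,|R'_{N+k'}|+|S'_{k'}|\,|Q_{N+k}|$ of condition (ii) of Lemma \ref{lemma2}. Then, using Lemma \ref{lemma3} --- applied blockwise, since the full $12$-tuples of Conjecture \ref{c22} do \emph{not} satisfy $A_i+B_{13-i}=C/2$ --- I would realize all $d$- and $e$-tuples inside the common set $D=\{d\in\mathbb{Z}^{12}\cup(\mathbb{Z}+1/2)^{12}:\sum_{i=1}^{12}d_i\in 2\mathbb{Z}+1\}$, with the appropriate value functions, and likewise for the primed tuples in $D'$. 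The bijection itself would be assembled by a case analysis on the residues modulo $C=12$ (equivalently modulo $6$) of a few linear forms $\ell(d)$ in the coordinates: residue classes that do not resonate with the coefficients get cancelled by affine involutions of the shape $d_i\mapsto d_i-(\ell(d)-c)/C$, or coordinate-permuted variants of these, which reverse the integer/half-integer type while preserving the value, exactly as in the proofs of Lemmas \ref{3}, \ref{4} and \ref{10}; the surviving classes are transported bijectively onto lower-dimensional parametrized families, which are matched against one another by the same change-of-variables calculus --- splitting pairs of coordinates as $u(1,1)+v(1,-1)$, $u(1,2)+v(2,-1)$, $u(2,1)+v(-1,2)$, and so on --- that carries the bulk of the work in Lemma \ref{10}.

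The main obstacle is this last step. Since the patterns $(0,1,1,2,3,3,3,3,4,5,5,6)$ and $(0,0,1,2,2,2,3,3,4,5,6,6)$ are essentially generic --- they involve many distinct residues modulo $6$ and lack the block symmetry that simplifies Lemmas \ref{3}--\ref{9} --- the residue bookkeeping proliferates rapidly, and one should expect an argument at least as intricate as the proof of Lemma \ref{10}, with several layers of auxiliary families $Q,R,S,T$ and their primed copies to reconcile. It is precisely the absence of a transparent organizing symmetry that has so far prevented us from completing this bijection, which is why the statement is recorded here only as a conjecture; it has, however, been verified by computer for all $N\le 2000$.
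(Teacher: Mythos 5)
The statement you are addressing is one that the paper itself records only as a conjecture: the authors state that Conjectures \ref{c27}--\ref{c0} have been verified by computer for $N\le 2000$ but give no proof of any of them, so there is no argument in the paper to compare yours against. Your text is candid about this --- it is a strategy outline, not a proof, and you say explicitly that the decisive bijection has not been completed. That assessment is accurate, and the gap is genuine and twofold. First, of the two reductions you propose, the one anchored at Conjecture \ref{c21} is circular for present purposes: Lemma \ref{lemma2} requires the anchor system to already satisfy $|S_N|=|T_N|$ for all $N>k$, and Conjecture \ref{c21} is itself unproven in the paper, so that route only trades one open statement for another. The alternative anchors (Lemma \ref{6} rescaled, or Lemma \ref{5}) are legitimate, but you do not carry out the computation of $k$, $k'$, $m'$ and $|S_k|/|S'_{k'}|$ for either choice, so even the precise form of condition (ii) of Lemma \ref{lemma2} that would need to be verified is never written down.

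Second, and more fundamentally, the content of every proved lemma in Section 3 is the explicit construction of the value- and type-preserving involutions and cross-maps --- the specific linear forms $\ell(d)$, the residue classes they isolate, and the changes of variables such as $u(1,1)+v(1,-1)$ or $u(1,2)+v(2,-1)$ that reconcile the surviving families. Your sketch names this toolbox but does not produce a single concrete map for the coefficient patterns $(0,1,1,2,3,3,3,3,4,5,5,6)$ and $(0,0,1,2,2,2,3,3,4,5,6,6)$, and as you observe, these patterns lack the block symmetry that makes the toolbox tractable in Lemmas \ref{3}--\ref{11}. Until those maps are exhibited and checked to be well-defined involutions (or mutually inverse bijections) on the correct residue classes, the statement remains exactly where the paper leaves it: an open conjecture supported by computation up to $N=2000$.
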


\noindent
\textbf{Corollary  to Conjecture \ref{c22}.}
\emph{Let $S$ be the set containing 2 copies of the positive multiples of 6, 2 copies of the positive integers that are  congruent to $\pm 1$ modulo 6,  one  copy of the positive integers that are  congruent to $\pm 2$ modulo 6, and 4 copies of the odd positive multiples of 3; let $T$ be the set containing 4 copies of the positive multiples of 6, one copy of the positive integers that are  congruent to $\pm 1$ modulo 6,  one  copy of the positive integers that are  congruent to $\pm 2$ modulo 6, 2 more copies of the positive integers that are  congruent to $\pm 2$ modulo 12, and 2 copies of the odd positive multiples of 3. Then, for any $N\geq 1$,
$$D_S(N)=2D_T(N-1).$$}

\begin{conjecture}\label{c22'}
Condition (i) of Theorem \ref{main} holds for $N_0= 1$,  $C=12$, $m= 0$, and
$$(A_1,\dots,A_{12})=(0,0,1,2,3,3,4,4,4,5,6,6), (B_1,\dots,B_{12})=(0,1,1,2,3,3,3,3,4,5,5,6).$$
\end{conjecture}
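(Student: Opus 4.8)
The plan is to exploit the position of Conjecture~\ref{c22'} within the web of identities built up in this section. Comparing coefficient multisets one checks directly that the set of $A_i$ in Conjecture~\ref{c22'} equals the set of $A_i$ in Conjecture~\ref{c21}, the set of $B_i$ in Conjecture~\ref{c22'} equals the set of $A_i$ in Conjecture~\ref{c22}, and Conjectures~\ref{c21} and~\ref{c22} have the same set of $B_i$. Writing $S_{21}$, $S_{22}$, $T$ for the colored-partition sets attached to those three $A$-multisets and to the common $B$-multiset, Theorem~\ref{main}(ii) rewrites the three statements as $D_{S_{21}}(N)=D_T(N-1)$, $D_{S_{22}}(N)=2D_T(N-1)$, and $D_{S_{21}}(N)=\tfrac12 D_{S_{22}}(N)$, and the third is exactly the quotient of the first two. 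So the first thing I would record is that Conjecture~\ref{c22'} is an immediate formal consequence of Conjectures~\ref{c21} and~\ref{c22} via Theorem~\ref{main}. Since those remain open, a self-contained proof must instead proceed as in the proven lemmas of this section.

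For that I would imitate the arguments of Lemmas~\ref{3}, \ref{4}, \ref{5}, \ref{6}, \ref{7}, and~\ref{10}. Feed Lemma~\ref{lemma2} with a ``known'' identity of the same $C=12$ (the natural choice being Conjecture~\ref{c22} itself, or, if one insists on a provable input, a suitably rescaled earlier identity), verify the boundary hypotheses required by Lemmas~\ref{lemma1}--\ref{lemma2} --- that the relevant second-smallest left-hand value of~(\ref{t}) equals the smallest right-hand value, and compute $k$, $k'=k-m$, $m'$, and the ratio $|S_k|/|S'_{k'}|$ --- and thereby reduce Conjecture~\ref{c22'} to condition~(ii) of Lemma~\ref{lemma2}, an identity of the form $|S_k|\,|Q'_{N+k'}|+|S'_{k'}|\,|R_{N+k}|=|S_k|\,|R'_{N+k'}|+|S'_{k'}|\,|Q_{N+k}|$ among the subsets $Q,R,Q',R'$ in which all twelve partitions are empty. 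By Lemma~\ref{lemma3} one then regards every $d$-, $e$-, $d'$- and $e'$-tuple as an element of $D=\{d\in\mathbb Z^{12}\cup(\mathbb Z+\tfrac12)^{12}:\sum_i d_i\ \text{odd}\}$ carrying the quadratic value $12\sum_i\binom{d_i}{2}+\sum_i A_i d_i$ for the appropriate coefficient vector, together with a ``type'' recording whether its coordinates are integers or half-integers, so the goal becomes a single value- and type-preserving bijection between two explicitly described unions of such tuple-sets.

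Building that bijection is the crux of the matter, and is why the statement is still only a conjecture. The moves available are the ones used throughout the paper: reflections $d\mapsto d-\tfrac{d\cdot v}{\|v\|^2}v$ with $v$ chosen so that $d\cdot v$ lies in the residue class making the map an involution that swaps integer- and half-integer-type (the role of the orthogonal frame $V_1,\dots,V_{12}$ in Lemma~\ref{1}, and of the single modulo-$6$ reflections in Lemmas~\ref{3} and~\ref{4}); affine block-rearrangement maps that permute the coordinates in groups of four or six while absorbing an offset (as in Lemmas~\ref{5}, \ref{6}, \ref{7}, and~\ref{10}); and iterated Euclidean-style decompositions of coordinate pairs into pieces of the form $u(1,2)+v(2,-1)$, as in the long case analysis of Lemma~\ref{10}. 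The main obstacle is that for $C=12$ with the lopsided coefficient vector $(0,0,1,2,3,3,4,4,4,5,6,6)$ no useful symmetry is visible, so one must either discover an orthogonal-frame decomposition adapted to these exact coefficients or push a Lemma~\ref{10}-style exhaustive reduction through substantially more cases, all while keeping the numerous value-function constants and the parity/type bookkeeping consistent so that every partial map is simultaneously value- and type-preserving. Absent such a construction, the cleanest statement available remains the conditional one above.
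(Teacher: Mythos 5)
You have not proved the statement, but neither does the paper: Conjecture~\ref{c22'} is listed there only as a conjecture, verified by computer for $N$ up to $2000$, with no proof supplied. So there is no argument of the authors' to compare yours against, and your closing admission that the statement ``remains conditional'' is an accurate description of the state of affairs rather than a defect peculiar to your write-up.

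That said, the one substantive claim you do make checks out and is worth recording. The $A$-multiset of Conjecture~\ref{c22'} coincides with the $A$-multiset of Conjecture~\ref{c21}, its $B$-multiset coincides with the $A$-multiset of Conjecture~\ref{c22}, and Conjectures~\ref{c21} and~\ref{c22} share the same $B$-multiset; with the parameters $p=0$, $m=1$ for \ref{c21}, $p=1$, $m=1$ for \ref{c22}, and $p=-1$, $m=0$ for \ref{c22'}, Theorem~\ref{main} converts the three statements into $D_{S_{21}}(N)=D_T(N-1)$, $D_{S_{22}}(N)=2D_T(N-1)$, and $D_{S_{21}}(N)=\tfrac12 D_{S_{22}}(N)$, and the third is the quotient of the first two, valid on the common range $N\ge 1$. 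This is exactly the mechanism by which the paper deduces Theorem~\ref{7'''} from Theorems~\ref{666} and~\ref{777}, and the paper does not point out that \ref{c22'} sits in the same relation to \ref{c21} and \ref{c22}, so this observation has some independent value. The genuine gap is everything after that: your second and third paragraphs are a description of the paper's toolkit (Lemmas~\ref{lemma1}--\ref{lemma4}, reflections, block rearrangements, the case analysis of Lemma~\ref{10}) rather than an argument, and no value- and type-preserving bijection is actually constructed for the coefficient vector $(0,0,1,2,3,3,4,4,4,5,6,6)$ with $C=12$. Until either such a bijection is found or Conjectures~\ref{c21} and~\ref{c22} are themselves established, Conjecture~\ref{c22'} remains open.
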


\noindent
\textbf{Corollary  to Conjecture \ref{c22'}.}
\emph{Let $S$ be the set containing  4 copies of the positive multiples of 6, one copy of the positive integers that are  congruent to $\pm 1$ modulo 6,  one  copy of the positive integers that are  congruent to $\pm 2$ modulo 6, 2 more copies  of the positive integers that are  congruent to $\pm 4$ modulo 12, and 2  copies of the odd positive multiples of 3; let $T$ be the set containing 2 copies of the positive multiples of 6, 2 copies of the positive integers that are  congruent to $\pm 1$ modulo 6,  one  copy of the positive integers that are  congruent to $\pm 2$ modulo 6, and 4 copies of the odd positive multiples of 3. Then, for any $N\geq 1$,
$$D_S(N)=\frac{1}{2}D_T(N).$$}

\begin{conjecture}\label{c24}
Condition (i) of Theorem \ref{main} holds for $N_0= 1$,  $C=14$, $m=1$, and
$$(A_1,\dots,A_{12})=(0,1,1,2,3,3,4,5,5,6,7,7), (B_1,\dots,B_{12})=(0,0,1,2,2,3,4,4,5,6,6,7).$$
\end{conjecture}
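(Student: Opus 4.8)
First one checks the two compatibility relations $A_i+B_{13-i}=7=C/2$ (valid for every $i$) and $\tfrac12\sum_{i=1}^{12}A_i-\tfrac{3C}{2}=22-21=1=m$. Hence Lemma~\ref{lemma3} applies and, exactly as in the earlier proofs, one may view the $d$-tuples and $e$-tuples as a single family living in $D=\{d\in\Z^{12}\cup(\Z+\tfrac12)^{12}:\sum_{i=1}^{12}d_i\in 2\Z+1\}$, each carrying a \emph{type} according to whether its entries are integers or half-integers; the task becomes to pair off, in a value- and type-reversing way, the relevant tuples on the two sides of Theorem~\ref{main}.

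The plan is to reduce this statement to one already on the list and then to attack that one. The point is that $(A_1,\dots,A_{12})$ and $(B_1,\dots,B_{12})$ are, after reindexing, exactly of the ``mixed'' shape $(B^\ast,A^\ast,A^\ast)$, $(B^\ast,B^\ast,A^\ast)$ appearing in Lemma~\ref{lemma4}, with $C^\ast=14$, $A^\ast=(1,3,5,7)$, $B^\ast=(0,2,4,6)$, $m^\ast=1$: indeed $A^\ast_i+B^\ast_{5-i}=7$ for each $i$, $A^\ast_1+A^\ast_4=A^\ast_2+A^\ast_3=8=C^\ast/2+m^\ast$, and the ``second smallest left-hand side equals smallest right-hand side'' side conditions of Lemma~\ref{lemma4} are routine to verify for both coefficient choices. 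Since condition (i) of Theorem~\ref{main} is invariant under permuting the $A_i$ and, independently, the $B_i$, Lemma~\ref{lemma4} then reduces the present statement to condition (i) of Theorem~\ref{main} for $C=14$, $m=3$ and the periodic coefficients $(A^\ast,A^\ast,A^\ast)$, $(B^\ast,B^\ast,B^\ast)$ --- which, again up to reindexing, is precisely Conjecture~\ref{c3}.

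So everything comes down to Conjecture~\ref{c3}, the modulus-$14$ counterpart of Lemma~\ref{4}/Theorem~\ref{444}. Here I would follow the chain Lemma~\ref{1}$\to$Lemma~\ref{3}$\to$Lemma~\ref{4} one modulus at a time: start from the $7$-fold scaling of Lemma~\ref{1} (the identity $C=14$, $(A_i)=(7,\dots,7)$, $(B_i)=(0,\dots,0)$, $m=21$), use Lemma~\ref{lemma2} to pass to the intermediate constant-coefficient identity $C=14$, $(A_i)=(4,\dots,4)$, $(B_i)=(3,\dots,3)$, $m=3$, and then use Lemma~\ref{lemma2} once more to pass from there to Conjecture~\ref{c3}. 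Each of these two steps replaces the goal by the corresponding instance of condition (ii) of Lemma~\ref{lemma2}, i.e.\ a purely combinatorial count of lattice tuples in $D$ with all twelve partitions empty; one would then build the required value- and type-preserving bijections by splitting $D$ according to the residues modulo $14$ of suitable linear combinations $\pm d_1\pm\cdots\pm d_{12}$ of the entries, cancelling most tuples in pairs by affine sign-flip and translation involutions of the form $d_i\mapsto d_i-(L(d)-c)/14$ used in the proofs of Lemmas~\ref{3}, \ref{4} and \ref{10}, and clearing the remaining residue classes by a change of variables in the manner of the proof of Lemma~\ref{10}.

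The main obstacle is this last step --- more precisely, the step that produces Conjecture~\ref{c3} from the intermediate $C=14$ identity. In the modulus-$6$ situation the coefficient pattern $A^\ast=(1,1,3,3)$ is symmetric enough that the bijection in Lemma~\ref{4} is short; at modulus $14$ the pattern $A^\ast=(1,3,5,7)$ has no such internal repetition, so the analogous bijection must match up considerably more residue classes modulo $14$ and is likely to be as long and delicate as (or longer than) the bijection in the proof of Lemma~\ref{10}. It is this combinatorial bookkeeping, rather than any one ingenious map, that has so far kept the statement at the level of a conjecture.
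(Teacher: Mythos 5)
Conjecture \ref{c24} is stated in the paper only as a conjecture, verified by computer up to $N=2000$, so there is no proof in the paper for your argument to be measured against; the question is whether your proposal closes it, and it does not. The first part of what you do is correct and worth recording: the multisets $(0,1,1,2,3,3,4,5,5,6,7,7)$ and $(0,0,1,2,2,3,4,4,5,6,6,7)$ are indeed permutations of $(B^\ast,A^\ast,A^\ast)$ and $(B^\ast,B^\ast,A^\ast)$ for $A^\ast=(1,3,5,7)$, $B^\ast=(0,2,4,6)$, $C^\ast=14$, $m^\ast=1$; the hypotheses $A^\ast_i+B^\ast_{5-i}=7$ and $A^\ast_1+A^\ast_4=A^\ast_2+A^\ast_3=8=C^\ast/2+m^\ast$ hold; the minimality side conditions check out (second-smallest left-hand values $3$ and $1$ against smallest right-hand values $3$ and $1$, with $N_0=3$ and $N_0=1$ as required); and condition (i) of Theorem \ref{main} depends only on the multisets of coefficients. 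So Lemma \ref{lemma4} genuinely reduces Conjecture \ref{c24} to Conjecture \ref{c3}, by exactly the mechanism the paper uses to deduce Lemma \ref{8} from Lemma \ref{4}.

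The gap is everything after that. Conjecture \ref{c3} is itself one of the open conjectures of the same paper, so you have only traded one unproved statement for another, and your plan for it is a programme rather than a proof. Each step of the chain Lemma \ref{1} $\to$ Lemma \ref{3} $\to$ Lemma \ref{4} that you propose to imitate at modulus $14$ is carried in the paper by an explicit value- and type-preserving involution or bijection on the lattice sets $Q$, $R$, $Q'$, $R'$ of Lemma \ref{lemma2}; at $C=6$ the relevant linear form takes only the residues $\pm1,3\pmod 6$ and three short affine maps suffice, whereas at $C=14$ with the asymmetric pattern $(1,3,5,7)$ one must dispose of the residues $\pm1,\pm3,\pm5,7\pmod{14}$, and no candidate maps are exhibited --- you say as much yourself in your final paragraph. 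As it stands, the proposal establishes the implication ``Conjecture \ref{c3} $\Rightarrow$ Conjecture \ref{c24}'' (and, since Lemma \ref{lemma4} is an equivalence, the converse), which is a useful observation that would let one conjecture be struck from the paper's list conditional on another, but it is not a proof of the statement.
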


\noindent
\textbf{Corollary  to Conjecture \ref{c24}.}
\emph{Let $S$ be the set containing one copy of the even positive integers, 2 copies of the odd positive integers, one more copy of the positive multiples of 14, and 2 more copies of the odd positive multiples of 7; let $T$ be the set containing 2 copies of the even positive integers, one copy of the odd positive integers, 2 more copies of the positive multiples of 14, and one more copy of the odd positive multiples of 7. Then, for any $N\geq 1$,
$$D_S(N)=2D_T(N-1).$$}

\begin{conjecture}\label{c25}
Condition (i) of Theorem \ref{main} holds for $N_0= 2$,  $C=16$, $m=2$, and
$$(A_1,\dots,A_{12})=(1,1,2,3,3,4,5,5,6,7,7,8), (B_1,\dots,B_{12})=(0,1,1,2,3,3,4,5,5,6,7,7).$$
\end{conjecture}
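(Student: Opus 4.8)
The plan is to imitate the proofs of Lemmas \ref{1}--\ref{10}: first strip away the partition parts via Lemma \ref{lemma1}, turning Conjecture \ref{c25} into a purely lattice-combinatorial matching problem, and then assemble the required bijection from value-preserving affine involutions together with an explicit identification of the leftover ``core'' tuples. Applying Lemma \ref{lemma1} with $C=16$, $m=2$ and the stated $A,B$, the conjecture becomes the existence of a value-matching bijection between (i) the $d$-tuples $(d_1,\dots,d_{12})\in\Z^{12}$ of odd sum, weighted by $16\sum\binom{d_i}{2}+\sum A_id_i$, together with $|S_k|$ copies of the odd-sum generalized-pentagonal tuples, and (ii) the analogous $e$-tuples weighted by $16\sum\binom{e_i}{2}+\sum B_ie_i+2$, together with $|S_k|$ copies of the even-sum pentagonal tuples. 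Since $A_i+B_{13-i}=8=C/2$ for every $i$ and $m=\tfrac12\sum_iA_i-\tfrac{3C}{2}=2$, Lemma \ref{lemma3} lets me merge the $d$- and $e$-sides into the single set $D=\{d\in\Z^{12}\cup(\Z+\tfrac12)^{12}:\sum d_i\in2\Z+1\}$, with $e$-tuples realized as the half-integer (``negative-type'') points of $D$; the task is then to produce a value-preserving involution of $D$ (augmented by the two signed copies of the pentagonal tuples) that reverses type.

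I would build this involution as in the proofs of Lemmas \ref{3}--\ref{6}: use a family of value-preserving affine involutions of $D$ --- translations combined with coordinate sign-changes and block-swaps, of the kind $d_i^{*}=d_i-(\sum_jd_j-2)/6$ or $d_i^{*}=\sum_jd_j/6-d_i$ appearing there --- one for each residue class of a suitable linear functional of $d$ modulo a power of $2$, each sending a point of $D$ to an equal-value point of the opposite type and thereby cancelling that whole class. The functionals would be chosen to exploit the coefficient blocks $\{A_1,A_2\}=\{1,1\}$, $\{A_4,A_5\}=\{3,3\}$, $\{A_7,A_8\}=\{5,5\}$, $\{A_{10},A_{11}\}=\{7,7\}$ and the singletons $A_3=2,\ A_6=4,\ A_9=6,\ A_{12}=8$. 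Because $C=16=2^4$, I expect to have to iterate this ``halving'' step through residues modulo $2,4,8,16$ (compare the mod-$6$ analysis in Lemma \ref{3} and the mod-$4$ analysis in Lemma \ref{5}) until only a small core of tuples --- those whose relevant linear combinations are all odd --- survives; for that core I would imitate the end of the proof of Lemma \ref{1}, introducing nearest-integer roundings $x_i$ and a residual lattice vector $z$, using a Pythagorean-type identity to rewrite the weight in terms of the $x_i$, checking that only finitely many $z$ can occur, and matching them bijectively with the copies of the pentagonal tuples, with the parity of the resulting $f$-sum recording the type.

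The main obstacle is that, unlike Lemmas \ref{2}, \ref{8}, \ref{9} and \ref{11}, Conjecture \ref{c25} does not fit the template of Lemma \ref{lemma4} --- its coefficient vector $(1,1,2,3,3,4,5,5,6,7,7,8)$ is not a threefold repetition of a $4$-tuple (the multiplicity of $1$ is $2$) --- so there is no off-the-shelf reduction to a $t=4$ identity. One is left with two plausible but laborious routes: rescale both Conjecture \ref{c25} and one of the proved $C=6$ lemmas (Lemma \ref{3} or \ref{4}) up to $C=48$ and verify condition (ii) of Lemma \ref{lemma2} by an explicit bijection, in the style of the proof of Lemma \ref{10}; or push the reflection-plus-core argument above directly at $C=16$. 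Either way, as the proof of Lemma \ref{10} makes vivid, once $C$ has several prime-power factors the core identification can balloon into a long and delicate case analysis, which is where I expect the real difficulty to lie --- all the more because the $A_i$ are not uniform (in contrast to Lemma \ref{1}, where $A_i\equiv C/2$ and the weight is literally $\|d\|^2$), which makes the choice of reflections and the parametrization of the core much less transparent. As a possible shortcut I would first check whether the substitution $(C,A,B,m)\mapsto(C+2,\,B+(1,\dots,1),\,A,\,m+1)$ --- which carries the data of Conjecture \ref{c24} exactly to that of Conjecture \ref{c25}, respects both $A_i+B_{13-i}=C/2$ and the value of $p$, and on $D$ merely adds $\sum d_i^2$ to the weight of every point --- preserves condition (i) of Theorem \ref{main}; if so, Conjecture \ref{c25} would follow from Conjecture \ref{c24}, hence (by Lemma \ref{lemma4}, after a renumbering) from Conjecture \ref{c3}.
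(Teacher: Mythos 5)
This statement is not proved in the paper at all: Conjecture \ref{c25} is one of the identities the authors explicitly leave open, verified only by computer for $N\le 2000$. Your proposal does not close that gap. What you have written is a research plan, not a proof: you correctly set up the reduction via Lemma \ref{lemma1}, correctly verify that $A_i+B_{13-i}=8=C/2$ and $m=\tfrac12\sum_iA_i-\tfrac{3C}{2}=2$ so that Lemma \ref{lemma3} merges the two sides into the set $D$, and correctly observe that Lemma \ref{lemma4} is inapplicable because the coefficient vector is not a threefold repetition of a $4$-tuple. But the entire mathematical content of such a proof --- the explicit family of value-preserving, type-reversing involutions on $D$ and the identification of the surviving core with the pentagonal-number tuples --- is only described in the conditional mood (``I would build,'' ``I expect to have to iterate,'' ``which is where I expect the real difficulty to lie''). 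No functional is specified, no involution is written down, and no cancellation is verified. As the proofs of Lemmas \ref{1} and \ref{10} show, this construction is precisely where all the work lies and where an attempt can fail; nothing in your sketch guarantees that the residue-class cancellations you envisage actually exist for $C=16$ with these coefficients.

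The proposed shortcut does not rescue the argument. The substitution $(C,A,B,m)\mapsto(C+2,\,B+(1,\dots,1),\,A,\,m+1)$ does carry the data of Conjecture \ref{c24} to that of Conjecture \ref{c25}, but you give no reason why it should preserve condition (i) of Theorem \ref{main} --- you only propose to ``check whether'' it does. The parenthetical claim that on $D$ it ``merely adds $\sum d_i^2$ to the weight of every point'' needs justification (the $d$-side and $e$-side weights change in different, asymmetric ways), and even if it were literally true, adding $\sum d_i^2$ to a weight function is a drastic modification of the representation-counting problem, not an innocuous one. Finally, the reduction chain you sketch terminates at Conjecture \ref{c24} and thence (via Lemma \ref{lemma4}, where your renumbering observation is correct) at Conjecture \ref{c3} --- both of which are themselves unproven conjectures in the paper. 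So even in the best case you would have reduced one open conjecture to another, not proved anything.
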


\noindent
\textbf{Corollary  to Conjecture \ref{c25}.}
\emph{Let $S$ be the set containing 2 copies of the odd positive integers, one copy of the even positive integers that are not multiples of 16, and one more copy of the  positive odd multiples of 8; let $T$ be the set containing  2 copies of the odd positive integers, one copy of the even positive integers that are not odd multiples of 8, and one more copy of the  positive  multiples of 16. Then, for any $N\geq 2$,
$$D_S(N)=D_T(N-2).$$}

\begin{conjecture}\label{c0}
Condition (i) of Theorem \ref{main} holds for $N_0= 3$,  $C=30$, $m=3$, and
\small
$$(A_1,\dots,A_{12})=(1,3,3,5,5,7,9,9,11,13,15,15), (B_1,\dots,B_{12})=(0,0,2,4,6,6,8,10,10,12,12,14).$$
\normalsize
\end{conjecture}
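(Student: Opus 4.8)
\emph{Proof strategy.} The plan is to follow the pattern of Lemmas~\ref{3}, \ref{4} and \ref{10}: reduce the statement, via Lemma~\ref{lemma2}, to a purely combinatorial identity among tuples of integers, and then exhibit an explicit value-preserving, type-reversing bijection that realizes it. First I would record the arithmetic. One checks that $A_i+B_{13-i}=15=C/2$ for every $i$ and that $\sum_{i=1}^{12}A_i=96$, so that $m=\sum_{i=1}^{12}A_i/2-3C/2=48-45=3$; hence Lemma~\ref{lemma3} applies, and all $d$-tuples and $e$-tuples may be viewed inside $D=\{d\in\Z^{12}\cup(\Z+1/2)^{12}:\sum_{i=1}^{12}d_i\in 2\Z+1\}$, each carrying the value $30\sum_{i=1}^{12}\binom{d_i}{2}+\sum_{i=1}^{12}A_id_i$. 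In the notation of Lemma~\ref{lemma1}, $k=\min_i A_i=1$, realized only by $(1,0,\dots,0)$, so $|S_k|=1$; one should also check that $N_0=3$ is the correct threshold, i.e. that the second smallest value of the left-hand side of (\ref{t}) for this configuration ($=3$, attained by $(0,1,0,\dots,0)$ and $(0,0,1,0,\dots,0)$) equals the smallest value of its right-hand side ($=m=3$), which is the hypothesis behind the choice of $m$ in Lemma~\ref{lemma1}.

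Because $(A_1,\dots,A_{12})$ is not a threefold repetition of a quadruple, Lemma~\ref{lemma4} does not apply; and a direct attack in the spirit of Lemma~\ref{1} is awkward, since the linear part $\sum A_id_i$ is non-uniform, so the value function on $D$ is not a pure quadratic form. Instead I would invoke Lemma~\ref{lemma2} with a previously proved lemma rescaled to $C=30$ as the known identity --- the natural candidate is Lemma~\ref{10} scaled by $3$, giving $(A'_1,\dots,A'_{12})=(6,6,6,6,6,6,12,12,12,12,12,12)$, $(B'_1,\dots,B'_{12})=(3,3,3,3,3,3,9,9,9,9,9,9)$, $m'=9$, though Lemmas~\ref{3} or \ref{4} scaled by $5$ may be needed as well (or instead). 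After computing $k'$ and $|S'_{k'}|$ and cancelling the contributions of the pentagonal tuples $(f_1,\dots,f_{12})$ exactly as in the proofs of Lemmas~\ref{3}, \ref{4} and \ref{10}, Condition~(i) for the configuration of Conjecture~\ref{c0} becomes equivalent to Condition~(ii) of Lemma~\ref{lemma2}: a value-preserving, type-reversing bijection between suitably multiple-counted copies of $Q_N,R_N$ on one side and $Q'_N,R'_N$ on the other.

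Constructing that bijection is the crux, and since $30=2\cdot3\cdot5$ I expect it to interleave three families of moves from the earlier proofs: the mod-$2$ reflections of Lemma~\ref{1}, built from an orthogonal system of $\pm1$-vectors $V_1,\dots,V_{12}$ and the maps $d\mapsto d-\tfrac{d\cdot V_i}{6}V_i$, which cancel tuples whose coordinates in the $V$-basis land in a forbidden residue class; the mod-$3$ (equivalently mod-$6$) translation--reflections of Lemmas~\ref{3} and \ref{4}, which cancel tuples according to the residue modulo $6$ of a (possibly signed) coordinate sum and otherwise transport them to the other side; and the mod-$5$ and mod-$30$ apparatus of Lemma~\ref{10}, including the changes of basis (expressing a pair of integers as $u(1,2)+v(2,-1)$, as $x(-1,1)+y(2,1)$, and so forth) used there to convert the leftover pairs into binary quadratic forms with prescribed linear coefficients. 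The case tree would be indexed by the parities of the $d_i$ together with residues modulo $3$ and modulo $5$ of several fixed linear combinations of the coordinates, and one would match by hand the finitely many irreducible leftover tuples in each branch. Success here would give, through Theorem~\ref{main}, the identity $D_S(N)=4D_T(N-3)$ for $N\ge3$, with $S$ one copy of all odd positive integers plus a second copy of those divisible by $3$ or $5$, and $T$ one copy of all even positive integers plus a second copy of those divisible by $3$ or $5$.

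The principal obstacle is precisely this bijection. The proof of Lemma~\ref{10} --- the pure $C=10$ case --- already runs to several pages, and with all three of $2$, $3$, $5$ simultaneously active the case analysis branches considerably further, and several nested changes of basis are forced in order to reconcile the leftover quadratic-form tuples across branches. Keeping the linear coefficients matched in every case, and the type/parity bookkeeping consistent throughout (in particular the copy counts, here $1$ for the configuration of Conjecture~\ref{c0} against $|S'_{k'}|$ for the input lemma), is where the real work lies; as with Lemma~\ref{10}, I would anticipate needing two or three auxiliary reductions through intermediate $C=30$ configurations before the final tuple-matching can be written down cleanly.
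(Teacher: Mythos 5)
There is nothing in the paper to compare your proposal against: the statement you were asked to prove is stated in the paper as an open \emph{conjecture}. The authors verify it by computer for $N\le 2000$ and remark that the corresponding partition identity is known \emph{analytically} (it comes from Ramanujan's degree-$15$ modular equation, proved by Baruah and Berndt), but no combinatorial proof is given. Your submission does not close that gap. What you have written is a plan, not a proof: the entire content of the argument --- the value-preserving, type-reversing bijection that would realize condition (ii) of Lemma \ref{lemma2} --- is explicitly deferred (``the principal obstacle is precisely this bijection,'' ``I would anticipate needing two or three auxiliary reductions\dots before the final tuple-matching can be written down cleanly''). The preliminary reductions you describe (checking $A_i+B_{13-i}=C/2$, computing $m=3$, noting that Lemma \ref{lemma4} is inapplicable, proposing to feed a rescaled Lemma \ref{10} into Lemma \ref{lemma2}) are sensible and consistent with how the paper handles Lemmas \ref{3}, \ref{4} and \ref{10}, but they are exactly the part of the work that is routine; everything that would make this a theorem rather than a conjecture is absent.

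Two smaller points. First, your statement of the resulting partition identity is off: since no $A_i$ is zero and exactly two $B_i$ are zero, $p=|\{B_i=0\}|-|\{A_i=0\}|=2-1=1$, so the multiplier is $2^p=2$, not $4$; and the multiplicities in $S$ and $T$ are not ``one extra copy of the integers divisible by $3$ or $5$'' but rather one extra copy of the odd multiples of $3$, one extra of the odd multiples of $5$, \emph{and} one extra of the odd multiples of $15$ (so odd multiples of $15$ receive four copies in total, not two) --- and correspondingly for $T$ with even integers and multiples of $6$, $10$, $30$. Second, before Lemma \ref{lemma2} can even be invoked you must verify its hypothesis that the base identity ($|S_N|=|T_N|$ for the rescaled input configuration) holds, and you must check the counting hypothesis relating $|S_k|$ to $|S'_{k'}|$; you gesture at this but do not carry it out, and with $C=30$ and non-uniform $A_i$ the bookkeeping is precisely where such arguments tend to fail. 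Until the bijection is actually constructed, this remains a conjecture.
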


\noindent
\textbf{Corollary  to Conjecture \ref{c0}.}
\emph{Let $S$ be the set containing one copy of the odd positive integers, one more copy of the odd positive multiples of 3, one more of the odd positive multiples of 5,  and one more of the odd positive multiples of 15; let $T$ be the set containing one copy of the even positive integers, one more copy of the positive multiples of 6, one more of the  positive multiples of 10,  and one more of the  positive multiples of 30. Then, for any $N\geq 3$,
$$D_S(N)=2D_T(N-3).$$}

\begin{remark}
Notice that the last conjecture is already known to be true analytically, thanks to a result of N. D. Baruah and B. C. Berndt (see \cite{BB}, Theorem 8.1). In fact, as the authors of \cite{BB} remarked in the introduction to their paper, the partition identity  of Corollary  to Conjecture \ref{c0} is particularly interesting, since it  arises from another exceptional modular equation discovered by Ramanujan. However, unlike the five equations of the Schr\"oter, Russell and Ramanujan type, the degree of the modular equation corresponding to Corollary  to Conjecture \ref{c0} is 15, hence not a prime.
\end{remark}

\section*{Acknowledgements} This work, along with the previous paper \cite{CSFZ1}, is the result of the first author's MIT senior  thesis, done in Summer and Fall 2011 under the supervision of the second author, and funded by the Institute through two UROP grants. We wish to thank Nayandeep Deka Baruah  for pointing us to Theorem 8.1 of reference \cite{BB}, and the anonymous referee for a careful reading of our manuscript and  helpful comments. The second author warmly thanks Richard Stanley for his terrific hospitality  during calendar year 2011, the MIT Math Department for partial financial support, and Dr. Gockenbach and the Michigan Tech Math Department, from which he was on partial leave, for extra Summer support.



\begin{thebibliography}{ll}
\bibitem{And} G. Andrews: ``The theory of Partitions'', Encyclopedia of Mathematics and its Applications, Vol. II, Addison-Wesley, Reading, Mass.-London-Amsterdam (1976).
\bibitem{AE} G. Andrews and K. Eriksson: ``Integer Partitions'', Cambridge University Press, Cambridge, U.K. (2004).
\bibitem{BB} N.D. Baruah and B.C. Berndt: \emph{Partition identities and Ramanujan's modular equations}, J. Combin. Theory Ser. A \textbf{114} (2007), no. 6, 1024--1045.
\bibitem{BR} B.C. Berndt: ``Ramanujan's Notebooks'', Part III, Springer-Verlag, New York (1991).
\bibitem{Be} B.C. Berndt: \emph{Partition-theoretic interpretations of certain modular equations of Schr\"oter, Russell, and Ramanujan}, Ann. Comb. \textbf{11} (2007), no. 2, 115--125.
\bibitem{FK} H.M. Farkas and I. Kra: \emph{Partitions and theta constant identities}, in: ``The Mathematics of Leon Ehrenpreis'', Contemp. Math., no. 251, Amer. Math. Soc., Providence, RI (2000), 197--203.
\bibitem{Ki} S. Kim: \emph{Bijective proofs of partition identities arising from modular equations}, J. Combin. Theory Ser. A \textbf{116} (2009), no. 3, 699--712.
\bibitem{Pak} I. Pak: {\em Partition bijections, a survey},  Ramanujuan J. {\bf 12} (2006), 5--75.
\bibitem{Ra} S. Ramanujan: ``Notebooks'', Vol. 1-2, Tata Institute of Fundamental Research, Bombay, India (1957).
\bibitem{Ru1} R. Russell: \emph{On $\kappa \lambda-\kappa' \lambda'$ modular equations}, Proc. London Math. Soc. \textbf{19} (1887), 90--111.
\bibitem{Ru2} R. Russell: \emph{On modular equations}, Proc. London Math. Soc. \textbf{21} (1890), 351--395.
\bibitem{CSFZ1} C. Sandon and F. Zanello: \emph{Warnaar's bijection and colored partition identities, I}, J. Combin. Theory Ser. A \textbf{120} (2013), no. 1, 28--38.
\bibitem{Sc} H. Schr\"oter: \emph{Beitr\"age zur Theorie der elliptischen Funktionen}, Acta Math. \textbf{5} (1884), 205--208.
\bibitem{Wa} S.O. Warnaar: \emph{A generalization of the Farkas and Kra partition theorem for modulus 7}, J. Combin. Theory Ser. A \textbf{110} (2005), no. 1, 43--52.
\end{thebibliography}
\end{document}